	\DeclareMathOperator{\Lip}{Lip}
	\DeclareMathOperator{\id}{id}
	\DeclareMathOperator{\ind}{ind}
	\DeclareMathOperator{\im}{im}
	\DeclareMathOperator{\interi}{int}
	\DeclareMathOperator{\GL}{GL}
	\DeclareMathOperator{\pr}{pr}
	\DeclareMathOperator{\Hol}{Hol}
	\DeclareMathOperator{\Diff}{Diff}
	\newcommand{\C}{\mathbb{C}}
	\newcommand{\G}{\mathcal{G}}
	\newcommand{\K}{\mathbb{K}}
	\newcommand{\N}{\mathbb{N}}
	\newcommand{\R}{\mathbb{R}}
	\newcommand{\U}{\mathcal{U}}
	\newcommand{\ep}{\varepsilon}
	\renewcommand{\a}{\alpha}	
	\newcommand{\g}{\gamma}
	\renewcommand{\gg}{\Gamma}	
	\renewcommand{\d}{\delta}	
	\renewcommand{\i}{\iota}	
	\renewcommand{\l}{\lambda}	
	\newcommand{\m}{\mu}
	\newcommand{\p}{\pi}
	\newcommand{\ph}{\varphi}
	\newcommand{\gph}{\Phi}
	\newcommand{\ps}{\psi}
	\newcommand{\gps}{\Psi}
	\renewcommand{\o}{\omega}		
	\renewcommand{\le}{\left}
	\newcommand{\ri}{\right}
	\newcommand{\set}[1]{\le\{#1\ri\}}
	\newcommand{\ra}{\rightarrow}
	\newcommand{\hra}{\hookrightarrow}
	\newcommand{\subs}{\subseteq}
	\newcommand{\sups}{\supseteq}
	\newcommand{\ol}[1]{\overline{#1}}
	\newcommand{\ti}{\times}
	\newcommand{\what}{\widehat}
	\newcommand{\til}{\tilde}
	\newcommand{\wtil}{\widetilde}
	\newcommand{\fr}[2]{\frac{#1}{#2}}
	\newcommand{\ms}{\mapsto}
	\newcommand{\str}{\mathrm{st}}
	\newcommand{\ci}{\circ}
	\newcommand{\co}{\colon}
	\renewcommand{\-}{\item}
	\newcommand{\inv}{^{-1}}
	\newcommand{\8}{\infty}
	\newcommand{\tx}[1]{\text{ #1 }}
	\newcommand{\bl}{{\scriptscriptstyle \bullet}}
\theoremstyle{plain}
\newtheorem{satz}{Satz}[section]		
\newtheorem{theorem}[satz]{Theorem}
\newtheorem{remark}[satz]{Remark}
\newtheorem{lemma}[satz]{Lemma}
\newtheorem{satz/definition}[satz]{Satz/Definition}
\newtheorem{theorem/definition}[satz]{Theorem/Definition}
\newtheorem{definition/theorem}[satz]{Definition/Theorem}
\newtheorem{definition/lemma}[satz]{Definition/Lemma}
\newtheorem{lemma/definition}[satz]{Lemma/Definition}
\newtheorem{theoo}{Theorem}
\theoremstyle{definition}
\newtheorem{theoreme}[theoo]{Theorem}
\newtheorem{definition}[satz]{Definition}
\newtheorem{convention}[satz]{Convention}
\title{A Lie group structure on the group of real analytic diffeomorphisms of a compact real analytic manifold with corners}
\date{}
\setdefaultitem{\textbullet}{-}{}{}		
\author{Jan Milan Eyni}
\date{}
\renewcommand*\circled[1]{\tikz[baseline=(char.base)]{
    \node[shape=circle,draw,inner sep=0.4pt] (char) {#1};}}
\begin{document}

\maketitle

\begin{abstract}
We construct a smooth Lie group structure on the group of real analytic diffeomorphisms of a compact  analytic manifold with corners. This generalises the known analogous results in the situation where the real analytic manifold has no corners. Additionally our approach uses a different construction. 
\end{abstract}
{\footnotesize
{\bf Jan Milan Eyni},
Universit\"{a}t Paderborn,
Institut f\"{u}r Mathematik,
Warburger Str.\ 100,
33098 Paderborn, Germany;
{\tt janme@math.upb.de}\\[2mm]
}
\section*{Introduction}

The prime example of an infinite-dimensional Lie group is the diffeomorphism group $\Diff(M)$  of a finite-dimensional manifold $M$. First we categorise different approaches how to construct a Lie group structure on $\Diff(M)$. Then we recall the exact conditions for the existence of a Lie group structure on $\Diff(M)$.
It is well known that if $M$ is a smooth compact manifold (with $\dim(M)\geq 1$), then there exists no structure of a Banach manifold on $\Diff(M)$ (\cite[p. 457]{Kriegel u. Michor}, \cite{Omori}). Hence one has to model $\Diff(M)$ over a more general topological vector space. It turns out that locally convex spaces are the right choice. Actually it is possible to model $\Diff(M)$ over the vector fields of $M$ (in our notation $\gg(TM)$) as an infinite-dimensional Lie group. But now the question arises what differential calculus, on a locally convex space, one should use to obtain a differentiable structure on $\Diff(M)$. There are several approaches to differential calculus on locally convex spaces (for details we recommend \cite{Keller}). Among the  most popular approaches is the convenient setting, invented by Fr{\"o}licher, Kriegl and Michor (see \cite{Kriegel u. Michor}). A map is called smooth in the convenient setting if it is smooth along smooth curves (see \cite[Definition 3.11]{Kriegel u. Michor}). Of course this differential calculus is inspired by the Boman-theorem (see \cite[Theorem 3.4]{Kriegel u. Michor} and \cite{Boman}). The second popular approach is the differential calculus Known as Keller's $C^k_c$-theory  (obviously the name is inspired by \cite{Keller}). In this approach a continuous map is called continuously differentiable if all directional derivatives $df(x,v)$ exist and the map $(x,v) \ms df(x,v)$ is continuous. For details to this approach we recommend \cite{Milnor}, \cite{Hamilton} and  \cite[Chapter 1]{GloecknerNeeb}. Because Milnor used this differential calculus to turn the diffeomorphism group into a Lie group (see \cite{Milnor}), Lie groups constructed with the Keller's $C^k_c$-theory differential calculus are sometimes called Milnor-Lie groups. One can show that the convenient differential calculus and Keller's $C^k_c$-theory are equivalent on Fr{\'e}chet spaces (see \cite[p. 270]{GloecknerBertramNeeb} and \cite[Theorem 4.11]{Kriegel u. Michor}) but beyond the Fr{\'e}chet case this is false. For example a map that is smooth in the convenient sense need not be continuous (see \cite[p. 1]{Gloeckner2}).

Having chosen a  differential calculus, one has to choose a strategy how to turn $\Diff(M)$ into a Lie group. There are basically two different strategies. The first one (and most common one), is to turn the space of ($C^k$ respectively smooth respectively analytic) mappings from $M$ to $M$ (in our notation $C^k(M;M)$ with $k \in \set{r,\8,\o}$) into an infinite-dimensional manifold.  
To this end one chooses a Riemannian metric on $M$ and obtains a Riemannian exponential function $\exp$. For small $\eta \in \gg(TM)$ one can define the map $\gps_\eta:=\exp\ci \eta$. Now it turns out that in many cases it is possible to obtain a manifold structure on  the mapping space $C^k(M;M)$ by charts similar to $\gps\co \eta \ms \gps_\eta$. The second step in this strategy is to show that $\Diff(M)$ is an open submanifold of $C^k(M;M)$ and that the group operations have the required differential property (e.g. $C^r$, smooth or real analytic). In the following we call this strategy the ``global approach''(in the table further down we cite  articles that used this approach).

The second approach leads to the same Lie group structure on $\Diff(M)$ but its construction is very different. Again one chooses a Riemannian metric on $M$. With the help of the map $\gps \co \eta \ms \gps_\eta$ one obtains a manifold structure on a subset of $\Diff(M)$ that contains the identity $\id_M$. Now one uses the theorem of local description of Lie groups to extend the manifold structure to $\Diff(M)$ and to turn it into a Lie group. In the following we call this strategy the ``local approach''. This approach first was used in \cite{Gloeckner2}.

In the following table we cite  different articles that constructed Lie group structures on diffeomorphism groups. We emphasize that this list is not comprehensive. The list just contains the cases that are of interest for this paper.\\

\begin{tabular}{|l|l|l|l|}
\hline 
\multirow{3}{*}{{M}}  & \multirow{2}{*}{{Global,}} & \multirow{2}{*}{{Global,}} & \multirow{2}{*}{{Local,}}\\
&&&\\
&{Convenient}&{Keller-$C^k_c$}&{Keller-$C^k_c$}\\
\hline
$C^\8$, compact, no corners& & \cite{Milnor}  &\\
\hline
$C^\8$, non compact, no corners&\cite{Kriegel u. Michor} & & \cite{Gloeckner2}\\
\hline
$C^\8$, non compact, with corners& & \cite{Michor}&\\
\hline
orbifold, compact & \cite{Borze}& &\cite{BobDis}\\
\hline
orbifold, non compact & & &\cite{BobDis}\\
\hline
$C^\o_\R$, compact, no corners&\cite{Kriegel u. MichorII}  & \cite{Bob2}, \cite{Leslie2}&\\
\hline
\end{tabular}
\\
(We mention that in \cite[Remark 5.22]{BobDis} it was stated, that the proof of \cite{BobDis} circumvents some problems which remained in \cite{Borze}. Moreover in \cite[p.1]{Kriegel u. MichorII} it was stated that the proof of \cite{Leslie} has a gap.)
 
The aim of our paper is to turn the diffeomorphism group $\Diff(M)$ of a finite-dimensional compact real analytic manifold $M$ with \textbf{corners} into a smooth Milnor Lie group. This generalises \cite{Kriegel u. MichorII} and \cite{Bob2}. But as \cite{Kriegel u. MichorII} and \cite{Bob2} use the global approach while we use the local approach, our paper gives also an alternative construction for \cite{Kriegel u. MichorII} and \cite{Bob2}. In the following we want to describe our strategy in details.

Given a manifold with boundary one can use the double of the manifold to embed it into a manifold without boundary (see, for example, \cite[Example 9.32]{Lee}). But this does not work in the case of a manifold with corners, because the boundary of a manifold with corners is not a manifold. If one works with a smooth manifold with corners one can use a partition of unity to construct a ``strictly inner vector field'' (\cite[p. 21]{Michor}). With the help of this vector field one obtains the analogous result (see \cite[p. 21]{Michor} and \cite[Proposition 3.1]{Douady}). Obviously this approach does not work if one considers a real analytic manifold with corners. For technical reasons we want to show the following theorem in Section \ref{SecEnveloping Manifold}:
\begin{theoreme}
Given a compact real analytic finite-dimensional manifold with corners  $M$ we find an enveloping manifold $\til{M}$ of $M$. If there exist two enveloping manifolds $\til{M}_1$ and $\til{M}_2$ of $M$, then we find an open neighbourhood $U_1$ of $M$ in $\til{M}_1$, an  open neighbourhood $U_2$ of $M$ in $\til{M}_2$ and a real analytic diffeomorphism $\ph \co U_1\ra U_2$ with $\ph|_M=\id_M$.
\end{theoreme}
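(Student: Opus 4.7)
My strategy is to construct $\til{M}$ by thickening each local quadrant chart of $M$ to an $\R^n$-open chart and glueing along analytic extensions of the transition maps; the uniqueness statement then follows from the real analytic identity principle.

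\textbf{Existence.} Cover the compact $M$ by finitely many analytic charts $\ph_i\co U_i\ra V_i\subs [0,\8)^{k_i}\ti\R^{n-k_i}\subs\R^n$. On each non-empty overlap the transition $\ph_j\ci\ph_i\inv$ is a real analytic map between relatively open subsets of the model quadrant; real analyticity at a corner point means the defining power series converges on a genuine $\R^n$-neighbourhood, so after shrinking, each transition extends to a real analytic diffeomorphism $\til{\ph}_{ij}\co W_{ij}\ra W_{ji}$ between $\R^n$-open neighbourhoods of the original domains. On triple overlaps, the cocycle identity $\til{\ph}_{jk}\ci\til{\ph}_{ij}=\til{\ph}_{ik}$ holds on the part coming from $M$, a set with non-empty $\R^n$-interior in local coordinates, so the identity principle forces it on a connected $\R^n$-neighbourhood; a further (finite) shrinking secures the cocycle condition globally and makes the quotient $\til{M}:=\bigsqcup_i W_i/\sim$ a Hausdorff real analytic $n$-manifold in which $M$ sits as an analytic submanifold with corners.

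\textbf{Uniqueness.} Let $\til{M}_1,\til{M}_2$ be two enveloping manifolds. For every $p\in M$ pick a chart $\ps\co U\ra V\subs[0,\8)^k\ti\R^{n-k}$ of $M$ at $p$ and, using the defining property of an enveloping manifold, realise it as the $M$-restriction of charts $\bar\ps^\nu\co\bar U^\nu\ra\bar V^\nu\subs\R^n$ of $\til{M}_\nu$ ($\nu=1,2$), composing, if necessary, with an $\R^n$-extension of the analytic transition between the two induced charts on $M$, exactly as in the existence step. Then $\ph_p:=(\bar\ps^2)\inv\ci\bar\ps^1$ is a real analytic diffeomorphism between neighbourhoods of $p$ in $\til{M}_1$ and $\til{M}_2$ satisfying $\ph_p|_M=\id$. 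Whenever two such local maps $\ph_p,\ph_q$ overlap, they coincide on the intersection of the overlap with $M$, which has non-empty topological interior in $\til{M}_1$ (locally it contains $(0,\8)^k\ti\R^{n-k}$), so the identity principle forces $\ph_p=\ph_q$ on the overlap. A finite subcover, provided by compactness of $M$, glues the $\ph_p$ to the desired $\ph\co U_1\ra U_2$; injectivity and openness of the image are ensured by an additional shrinking around the compact set $M$.

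\textbf{Main obstacle.} The crux is the controlled extension-and-shrinking procedure: one must pass from real analytic maps on quadrant-open sets to their unique $\R^n$-extensions and then arrange the cocycle condition, Hausdorffness, and (for uniqueness) global injectivity, all while keeping the resulting neighbourhoods open around the compact set $M$. Each requirement is local, but must hold simultaneously, and this is precisely where the combination of compactness, finiteness of the chart cover, and the real analytic identity principle does the work.
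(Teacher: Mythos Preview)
Your approach is essentially the paper's: both transfer the Bruhat--Whitney complexification argument to the corner setting, building $\til{M}$ by extending the transition maps to $\R^n$-open sets, enforcing the cocycle via the identity principle, and glueing. The paper's existence proof is exactly this outline, with most of its length spent on the step you flag as the main obstacle---Hausdorffness of the quotient---which requires a delicate separation construction (auxiliary sets $\til Z_{i,j},\til X_{i,j},\til Y_{i,j}$ and the closure-controlling Lemma~\ref{GottVerdammt}); your phrase ``a further finite shrinking'' hides all of this work.

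For uniqueness the paper organises things slightly differently and more cleanly: rather than glueing local chart-comparisons $\ph_p$ and then arguing injectivity ``by shrinking,'' it first proves a general extension lemma (Lemma~\ref{AusdehnenGlobal}) and an inverse lemma (Lemma~\ref{Inverse}), then applies them to $\id_M$ viewed as a map $M\to\til M_2$ and to $\id_M\co M\to\til M_1$; the two resulting global extensions are shown to be mutually inverse on a neighbourhood of $M$ via the identity principle, which gives injectivity and diffeomorphism for free. Your route also works, but the injectivity step needs an actual argument (either this same construct-the-inverse trick, or a compactness argument that a local diffeomorphism injective on a compact set is injective nearby).
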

In this context an enveloping manifold $\til{M}$ of $M$ is a real analytic manifold without boundary that contains $M$ as a submanifold with corners. In \cite[Proposition 1]{Bruhat} Bruhat and Withney show that given a  real analytic paracompact manifold $M$ (without corners) there exists a complex analytic manifold $M_\C$ that contains $M$ as a real submanifold. The manifold $M_\C$ is called complexification of $M$. We can transfer their proof without difficulties to show our Theorem \ref{EnvelopTheo}. The proof of our Theorem \ref{EnvelopTheo} that is completely analogous to the one of Bruhat and Withney can be found in Appendix \ref{ProofEnv}. In addition, we show in Section \ref{SecEnveloping Manifold} some technical properties of real analytic mappings concerning extensions to enveloping manifolds. The proofs are analogous to the case of extensions of real analytic mappings to complexifications for example as in \cite[Chapter 2]{Bob}.

The main result of our paper is the following:
\begin{theoreme}\label{MainAAA}
Let $M$ be a finite-dimensional compact real analytic manifold with \textbf{corners} such that there exists a boundary respecting Riemannian metric on a real analytic enveloping manifold $\til{M}$. Then there exists a unique smooth Lie group structure on the group of real analytic diffeomorphisms $\Diff^\o(M)$ modelled over $\gg^\o_\str(TM)$ such that for one (and hence each) boundary respecting Riemannian metric on $\til{M}$ the map $\eta \ms \gps_\eta$ is a diffeomorphism from an open $0$-neighbourhood in $\gg^\o_\str(TM)$ onto an open identity neighbourhood in $\Diff^\o(M)$.
\end{theoreme}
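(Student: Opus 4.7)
The plan is to follow the \emph{local approach} pioneered in \cite{Gloeckner2}: first build a chart around $\id_M$ using the Riemannian exponential on the enveloping manifold, then establish smoothness of the local group operations in this chart, and finally invoke the standard theorem on local descriptions of Lie groups to promote the local structure to a global one.

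First, using the enveloping manifold $\til{M}$ from Theorem \ref{EnvelopTheo} and a boundary respecting Riemannian metric on $\til{M}$, I form the Riemannian exponential $\exp$ and define $\gps\co \eta \ms \exp\ci \eta$. I verify that there is an open $0$-neighbourhood $U$ in $\gg^\o_\str(TM)$ such that, for $\eta \in U$, the map $\gps_\eta$ extends to a real analytic diffeomorphism between open neighbourhoods of $M$ in $\til{M}$ (using compactness of $M$ and the extension results established in Section \ref{SecEnveloping Manifold}), maps $M$ into $M$ (from the boundary respecting property of the metric together with strict inwardness of $\eta$), and restricts to a bijection $M\ra M$. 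Hence $\gps(U) \subs \Diff^\o(M)$ and $\gps$ is injective on $U$, yielding a candidate chart at $\id_M$; charts at other $f\in\Diff^\o(M)$ are obtained by right translation $\eta \ms \gps_\eta\ci f$.

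The main technical obstacle is the smoothness of the local group operations. Concretely, one must exhibit an open $0$-neighbourhood $V\subs U$ such that the maps
\[
\mu(\eta_1,\eta_2) := \gps\inv(\gps_{\eta_1}\ci \gps_{\eta_2}), \qquad \iota(\eta):=\gps\inv(\gps_\eta\inv),
\]
take values in $U$ and are smooth in Keller's $C^\8_c$-sense. This reduces to smoothness of composition and inversion on spaces of real analytic mappings, which is substantially more delicate than in the smooth category: the natural locally convex topology on $\gg^\o_\str(TM)$ is a direct limit over spaces of complex analytic vector fields on shrinking Stein neighbourhoods of $\til{M}$ in a complexification, so every estimate has to track how domains of analyticity shrink under composition. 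Moreover, the corners force an indirect argument via extensions to $\til{M}$ rather than working on $M$ directly. I expect to adapt the analytic $\Omega$-lemma techniques of \cite{Kriegel u. MichorII} and \cite{Bob2} (developed there in the setting without corners) using the extension machinery from Section \ref{SecEnveloping Manifold}, exploiting compactness of $M$ to make the extension arguments tractable.

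With local smoothness in hand, the theorem on local descriptions of Lie groups (see \cite[Chapter II]{GloecknerNeeb}) yields the desired smooth Lie group structure on $\Diff^\o(M)$ for which $\gps$ restricts to a diffeomorphism from an open $0$-neighbourhood in $\gg^\o_\str(TM)$ onto an open identity neighbourhood. Uniqueness follows from the fact that any Lie group structure is determined by its restriction to an identity neighbourhood. The independence statement (``for one and hence each boundary respecting Riemannian metric'') amounts to showing that for two such metrics $g_1, g_2$ the transition map between the corresponding charts $\gps^{(g_1)}$ and $\gps^{(g_2)}$ is smooth near $0$; this is again a special case of the same composition-smoothness argument used for $\mu$, applied to $\gps^{(g_2)\,-1}\ci \gps^{(g_1)}$.
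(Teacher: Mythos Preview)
Your outline follows the paper's local approach and is broadly correct, but there are two substantive issues.

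First, a conceptual slip: the vector fields in the modelling space $\gg^\o_\str(TM)$ are \emph{stratified} (tangent to every stratum $\partial^j M$), not ``strictly inward''. Strictly inner vector fields appear in \cite{Michor} for a different purpose (building an enveloping manifold in the smooth case). That $\gps_\eta$ maps $M$ to $M$ for small stratified $\eta$ is not immediate from inwardness; the paper proves it (Lemma~\ref{GehtnachM}) by an induction over strata using that each $\partial^j M$ is totally geodesic, together with an open--closed argument on connected components showing $\ps_\eta(C)=C$ for each component $C\subs\partial^j M$.

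Second, and more seriously, you invoke the local description theorem to obtain a Lie group structure on all of $\Diff^\o(M)$, but that theorem (Lemma~\ref{LokaleBeschreibung}) only directly yields a Lie group structure on the subgroup $\Diff(M)_0=\langle \mathcal{U}_0\rangle$ generated by a symmetric identity neighbourhood. To extend this to the whole group one must show that $\Diff(M)_0$ is normal and that conjugation $\interi_f\co\Diff(M)_0\to\Diff(M)_0$ is smooth for every $f\in\Diff^\o(M)$. You do not address this. In the paper the argument is indirect and uses the metric-independence result as a \emph{tool}, not merely as an afterthought: for $f\in\Diff^\o(M)$ the pullback metric $g'=f^\ast g$ is again boundary respecting, its exponential satisfies $\exp'=f\ci\exp\ci Tf^{-1}$, and hence $f\ci\gps_\eta\ci f^{-1}=\gps'_{P_f\eta}$ with $P_f\eta=Tf\ci\eta\ci f^{-1}$. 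Since the Lie group structure has already been shown not to depend on the choice of boundary respecting metric, and $P_f$ is continuous linear (Lemma~\ref{fPullBack}), smoothness of $\interi_f$ follows. So your ordering (first Lie group structure, then metric independence as a corollary) should be reversed.

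A further technical point: for the smoothness of inversion the paper cannot use bump functions as in \cite{Gloeckner2}, and instead develops a quantitative inverse function theorem for Lipschitz maps on open sets with \emph{corners} (Lemma~\ref{QanUKFEcke}), which feeds into Lemma~\ref{WennNahe1} and ultimately into an implicit-function-theorem argument along smooth curves (Lemma~\ref{EndeInv}). Your reference to ``$\Omega$-lemma techniques'' is not wrong in spirit, but the actual machinery is closer to \cite{Gloeckner1} combined with this corner-adapted quantitative estimate.
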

In this context, a Riemannian metric on the enveloping manifold $\til{M}$ of a manifold with corners $M$ is called boundary respecting, if the strata $\partial^jM$ are totally geodesic submanifolds.
A map defined on an open subset of $\gg^\o(TM)$ is smooth in the convenient setting  if and only if it is smooth in the Keller's $C^\8_c$ theory (\cite[p.1]{Bob2}). But a map on $\gg^\o(TM)$ that is real analytic in the convenient sense need not be real analytic in the conventional sense as in \cite[p. 1028]{Milnor} (see also \cite[p. 2]{Bob2}).
In \cite{Kriegel u. MichorII}, Kriegl and Michor constructed a real analytic Lie group structure on $\Diff^\o(M)$ for a compact manifold $M$ without corners. But we emphasise that their Lie group structure is only real analytic in the convenient setting 
(cf. \cite[Proposition 1.9]{Bob2}). Because a real analytic structure induces a smooth structure, the Lie group structure of \cite{Kriegel u. MichorII} induces a structure of a smooth Milnor Lie group on $\Diff^\o(M)$ as mentioned in \cite[Proposoition 1.9]{Bob2}.

One might expect that there also exists a real analytic Lie group structure in the conventional sense   on $\Diff^\o(M)$. But Dahmen and Schmeding showed in \cite{Bob2} that there exists no real analytic structure on $\Diff^\o(\mathbb{S}^1)$ in the conventional sense of Milnor. Therefore we can not expect that there exists a real analytic structure in the conventional sense on $\Diff^\o(M)$ for a compact real analytic manifold $M$ with corners. Dahmen and Schmeding (\cite[Proposition 1.9]{Bob2}) respectively Kriegl and Michor (\cite{Kriegel u. MichorII}) used the global approach to turn $\Diff^\o(M)$ (with $M$ compact and without boundary) into a smooth respectively real analytic Lie group. We instead want to use the local approach for our Theorem \ref{MainAAA} ($M$ has corners). The local approach  was developed by Gl{\"o}ckner in \cite{Gloeckner2}, and we follow the line of thought of \cite{Gloeckner2}. But Gl{\"o}ckner considered smooth diffeomorphisms on a manifold without corners. Hence one obvious obstacle is that we can not use bump functions, because we work in the real analytic setting. Moreover because our manifold $M$ has corners, we will have to model our structure on the space of stratified vector fields as in \cite{Michor}: In \cite{Michor} Michor turned the group of smooth diffeomorphisms of a non-compact manifold with corners into a smooth Lie group. Michor  worked with the global approach and as mentioned above this leads to a very different construction.

We also mention \cite{Leslie2}. In this paper Leslie used the global approach to turn the group of real analytic diffeomorphisms of a compact real analytic manifold without corners into a smooth Lie group. But as pointed out in \cite[p.1]{Kriegel u. MichorII}, his proof has a gap.

That we use the local approach (\cite{Gloeckner2} and \cite{BobDis}), is resembled in the structure of the paper: In Section \ref{SecLocal manifoldstructure} we construct a manifold  on a subset $\mathcal{U}$ of $\Diff(M)$ that contains the identity $\id_M$. The next step is to show the smoothness of the group operations. To this point we elaborate some important preparatory results in Section \ref{SecPreparation for results of smoothness}. In Section \ref{SecSmoothness of composition} we show the smoothness of the multiplication on $\mathcal{U}$ and in Section \ref{SecSmoothnes of the inversion} the smoothness of the inversion.
The smoothness of the conjugation is proved in Section \ref{SecExistence and uniqueness of the Lie group structure}. Our proof of the smoothness of the conjugation map follows closely the ideas of \cite[Section 5]{Gloeckner2}: First we show that the Lie group structure on $\Diff^\o(M)$ is independent of the choice of the Riemannian metric (see \cite{Gloeckner2}). With help of this result, we then can show the smoothness of the conjugation map as in \cite[Section 5]{Gloeckner2} (see Lemma \ref{ConjugationGlatt}).

\section{Enveloping manifold}\label{SecEnveloping Manifold}

At fist we want to show some basic facts about real analytic maps on manifolds with corners and enveloping manifolds. The primary aim of this section is to show that a real analytic manifold with corners can be embedded into a real analytic manifold without corners. As mentioned in the introduction we can not use a construction like the double of a manifold, because the boundary of a manifold with corners in not a manifold. Moreover we can not use the construction from the smooth case (\cite[Proposition 3.1]{Douady} or \cite[p. 21]{Michor}), because of the lack of real analytic bump functions. Instead we use the proof of the existence and uniqueness of complexifications of real analytic manifolds (see \cite[Proposition 1]{Bruhat} respectively \cite[Chapter 2 and Chapter 3]{Bob}). With the help of our Lemma \ref{GottVerdammt}, our proof of the existence of enveloping manifolds (Theorem \ref{EnvelopTheo}) is completely analogous to  \cite[Proposition 1]{Bruhat} (see Appendix \ref{ProofEnv}). For technical reasons we work in this section with manifolds that are modelled over a quadrant $[0,\8[^m$. Of course this definition of a manifold with corners is equivalent to the one where manifolds with corners are modelled over sets of the form $[0,\8[^k \ti \R^m_{m-k}$ with $k \leq m$.

\begin{remark}
We recall some common definitions and basic facts: 
\begin{compactenum}
\- Let $U\subs \R^m$ be open and $f\co U\ra \R^n$ be a map. The map $f$ is called real analytic if we can find an open neighbourhood $V\subs \C^m$ of $U$ and a complex analytic function $f^\ast \co V\ra \C^n$ with $f^\ast|_{U}= f$.
\- Given a real analytic map $f\co \R^m \sups U\ra \R^n$ and $x\in U$ we find a $0$-neighbourhood $V\subs \R^m$ such that $x+V\subs U$ and for all $v\in V$ we get $f(x+v)= \sum_{k=0}^\8 \fr{\d_x^kf(v)}{k!}$. In this context $\d_x^kf$ is the $k$-th Gateaux differential of $f$ in $x$.
\- If $U$ is an open connected subset of $\R^m$, $f\co U\ra \R^n$ is a real analytic map and 
$x\in U$ with $\d^k_xf =0$ for all $k\in \N_0$,
 then $f=0$.
\item Let $U$ be an open subset of $[0,\8[^m$. We call a map  $f\co U \ra \R^n$ real analytic if every $x\in U$ has a neighbourhood $\til{U}\subs \R^m$ such that there exists an real analytic map $\til{f}\co \til{U}\ra  \R^n$ with $\til{f}|_U=f$.
\- A corner-atlas of an Hausdorff space $M$ is a set of homeomorphism $\ph \co U_\ph\ra V_\ph$ between open subsets $U$ of $M$ and $V$ of $[0,\8[^m$ such that the changes of charts $\ps\ci \ph\inv\co \ph\inv(U_\ps \cap U_\ph) \ra V_\ps$ are real analytic maps. The space $M$ together with a maximal corner-atlas is called real analytic manifold with corners.
\end{compactenum}
\end{remark}

\begin{lemma}\label{KonvexSchnitt}
If $C\subs \R^m$ is convex, $U\subs \R^m$ is open $\mathring{C}\neq \emptyset$ and $C\cap U \neq \emptyset$ then $\mathring{C}\cap U \neq \emptyset$.
\end{lemma}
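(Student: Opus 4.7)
The plan is to exploit the familiar convex-geometric fact that if $y$ lies in the interior of a convex set $C$ and $x$ lies in $C$, then every point of the half-open segment $[y,x)=\{(1-t)y+tx:t\in[0,1)\}$ lies in $\mathring{C}$. Given this, the argument is essentially one picture: pick any $x\in C\cap U$ and any $y\in\mathring{C}$ (which exists by the hypothesis $\mathring{C}\neq\emptyset$). Since $U$ is open and $x\in U$, the continuity of the map $t\mapsto(1-t)y+tx$ forces $(1-t)y+tx\in U$ for all $t$ sufficiently close to $1$; for any such $t\in[0,1)$ the point $(1-t)y+tx$ then lies in $\mathring{C}\cap U$.

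The only non-trivial ingredient is the above fact about half-open segments, so that is where I would spend the work. Concretely, fix $r>0$ with $B(y,r)\subseteq C$ (possible since $y\in\mathring{C}$), take $t\in[0,1)$, and set $z=(1-t)y+tx$. I would show $B\bigl(z,(1-t)r\bigr)\subseteq C$, which then gives $z\in\mathring{C}$. Indeed, any $w\in B(z,(1-t)r)$ can be written as $w=(1-t)y'+tx$ with $y':=y+(w-z)/(1-t)$, and by construction $\|y'-y\|=\|w-z\|/(1-t)<r$, so $y'\in B(y,r)\subseteq C$; convexity of $C$ then yields $w\in C$.

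I do not expect any real obstacle here; the statement is a standard convex-analysis fact, and the only care needed is the explicit computation of the radius $(1-t)r$ that witnesses $z\in\mathring{C}$. Once that is in hand, combining it with the openness of $U$ at the endpoint $x$ immediately produces a point of $\mathring{C}\cap U$.
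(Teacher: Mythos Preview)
Your argument is correct and essentially uses the same underlying fact as the paper --- that $\mathring{C}$ is dense in $C$ whenever $\mathring{C}\neq\emptyset$ --- but packages it differently. The paper's proof takes $z\in C\cap U$, observes that if $z\notin\mathring{C}$ then $z\in\partial C=\ol{C}\setminus\mathring{C}=\ol{\mathring{C}}\setminus\mathring{C}=\partial\mathring{C}$ (invoking the identity $\ol{\mathring{C}}=\ol{C}$ for convex sets with nonempty interior), and concludes that the open neighbourhood $U$ of $z$ must meet $\mathring{C}$. Your version instead proves the half-open segment lemma explicitly and then slides from $x$ toward an interior point $y$. The paper's route is shorter if one is willing to cite $\ol{\mathring{C}}=\ol{C}$; yours is more self-contained since you actually verify the needed convex-geometric fact with the radius computation $(1-t)r$.
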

\begin{proof}
Let $z\in C\cap U$. If $z \in \mathring{C}$ we are done. Hence we can assume $z\in \partial C$. Because $C$ is convex and $\mathring{C}\neq \emptyset$ we can calculate
\begin{align*}
\partial C = \ol{C}\setminus \mathring{C} = \ol{\mathring{C}} \setminus \mathring{C} = \partial \mathring{C}.
\end{align*}
Hence $z \in \partial \mathring{C}$. Therefore every $z$-neighbourhood intersects $\mathring{C}$. Thus $U\cap \mathring{C} \neq \emptyset$.
\end{proof}

\begin{lemma}
Let $U$ be an open subset of  $[0,\8[^m$ and $f\co U\ra \R^n$ be a real analytic map. There exists an open neighbourhood $\til{U}\subs \R^m$ of $U$ and a real analytic map $\til{f}\co \til{U}\ra \R^n$ with $\til{f}|_U= f$.
\end{lemma}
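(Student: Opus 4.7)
The plan is to produce a local real analytic extension near every point of $U$, shrink these to convex balls, and glue them together using the identity theorem for real analytic functions. The technical heart is ensuring that on overlaps the local extensions agree, and for this I would invoke Lemma~\ref{KonvexSchnitt} together with the convexity of the quadrant.

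First, by the very definition of real analyticity on $U$, for each $x\in U$ there exists an open $\R^m$-neighbourhood $W_x$ of $x$ together with a real analytic map $g_x\co W_x\ra \R^n$ satisfying $g_x|_{W_x\ca U}=f|_{W_x\ca U}$. Since $U$ is open in $[0,\8[^m$, I shrink $W_x$ to an open ball $V_x:=B(x,r_x)\subs W_x$ with $V_x\ca [0,\8[^m\subs U$. Setting $f_x:=g_x|_{V_x}$ yields a real analytic extension of $f|_{V_x\ca U}$ to the convex open set $V_x$, and moreover $V_x\ca U=V_x\ca [0,\8[^m$.

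Next, I would establish the key compatibility: $f_x=f_y$ on $V_x\ca V_y$ whenever this intersection is nonempty. If $V_x\ca V_y\neq \emptyset$ then $|x-y|<r_x+r_y$, so some point $z$ on the line segment $[x,y]$ lies in $V_x\ca V_y$; by convexity of $[0,\8[^m$ we also have $z\in [0,\8[^m$, hence $V_x\ca V_y\ca [0,\8[^m\neq\emptyset$. Applying Lemma~\ref{KonvexSchnitt} with $C=[0,\8[^m$ (whose interior $]0,\8[^m$ is nonempty) to the open set $V_x\ca V_y$ gives $V_x\ca V_y\ca ]0,\8[^m\neq\emptyset$. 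On this nonempty open subset of $\R^m$ both $f_x$ and $f_y$ coincide with $f$, because $V_x\ca ]0,\8[^m\subs V_x\ca [0,\8[^m\subs U$ and similarly for $V_y$. Since $V_x\ca V_y$ is open and convex, hence connected, the identity theorem for real analytic functions on connected open subsets of $\R^m$ forces $f_x=f_y$ on all of $V_x\ca V_y$.

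Finally, I set $\til{U}:=\bcu{x\in U}V_x$, an open neighbourhood of $U$ in $\R^m$, and define $\til{f}\co \til{U}\ra \R^n$ by $\til{f}|_{V_x}:=f_x$. The compatibility just proved makes $\til{f}$ well-defined, and it is real analytic as it agrees locally with the $f_x$. That $\til{f}|_U=f$ follows from $\til{f}(y)=f_y(y)=f(y)$ for $y\in U$. The main obstacle in this argument is verifying that overlaps $V_x\ca V_y$ meet the open quadrant $]0,\8[^m$, so that one has enough material for the identity theorem to apply; this is precisely what the line-segment argument together with Lemma~\ref{KonvexSchnitt} delivers. Everything else is routine local-to-global gluing.
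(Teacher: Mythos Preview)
Your proof is correct and follows the same overall strategy as the paper: produce local real analytic extensions on balls centred at points of $U$, show they agree on overlaps via the identity theorem after using Lemma~\ref{KonvexSchnitt} to find a point of $]0,\8[^m$ in the overlap, and glue. The only difference is in how you show that $V_x\ca V_y$ meets $[0,\8[^m$: the paper uses sup-norm balls and reflects the coordinates of an arbitrary point of the overlap into the quadrant, whereas you use Euclidean balls and observe that a suitable point on the segment $[x,y]$ lies in $V_x\ca V_y\ca [0,\8[^m$ by convexity of the quadrant---your variant is a touch slicker but leads to the same conclusion.
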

\begin{proof}
Given $x\in U$ we find $\ep_x>0$ and a real analytic map $\til{f}_x \co \til{U}_x \ra \R^n$ defined on $\til{U}_x:= B^\8_{\ep_x}(x)$ such that $\til{f}_x|_U = f$.  Let $x,y\in U$ with $\til{U}_x \cap \til{U}_y \neq \emptyset$. We choose $z\in \til{U}_x \cap \til{U}_y$. Now we define $\til{z}$ by $\til{z}_i:=z_i$ if $z_i \geq 0$ and $\til{z}_i:= -z_i$ if $z_i <0$. Obviously $\til{z}\in [0,\8[^m$. We show $\til{z}\in \til{U}_x\cap \til{U}_y$. If $z_i \geq 0$ we have $|\til{z}_i-x_i|<\ep_x$. Now let $z_i<0$. We have $x_i\geq 0$, $x_i-z_i = |x_i-z_i|<\ep_x$. We calculate $|x_i-\til{z}_i| = |x_i+z_i| \leq x_i-z_i <\ep_x$. Hence $\til{z}\in \til{U}_x$. In the analogous way one shows $\til{z}\in \til{U}_y$. We conclude $[0,\8[^m \cap \til{U}_x \cap \til{U}_y \neq \emptyset$. Using Lemma \ref{KonvexSchnitt} we conclude $C:=]0,\8[^m\cap \til{U}_x \cap \til{U}_y \neq \emptyset$. The set $C$ is connected, because it is convex and we have $\til{f}_x|_C = f|_C = \til{f}_y|_C$. Hence we get $\til{f}_x|_{U_x\cap U_y} =  \til{f}_y|_{U_x\cap U_y}$. Now we define $\til{U}:= \bigcup_{x\in U}U_x$ and $\til{f} \co U \ra \R^n, z\ms f_x(z)$ if $z\in U_x$. The map $\til{f}$ is well-defined according to the upper consideration. Moreover $\til{f}$ is real analytic and $\til{f}|_U=f$.
\end{proof}

\begin{convention}
Given a manifold $M$ we write $\mathcal{A}_x^M$ for the set of charts around a point $x \in M$. If $N$ is a further  manifold, $f\co M \ra N$ a map and $\ps \in \mathcal{A}^N_{f(x)}$, we write $f^{\ph,\ps}$ for the local representative of $f$ in the charts $\ph$ and $\ps$. 
\end{convention}

In the following lemma we recall the well known Identity Theorem for real analytic maps.
\begin{lemma}
Let $M$ and $N$ be a real analytic manifolds (without corners) and $g_1,g_2 \co M\ra N$ be real analytic maps that coincide on an open non-empty subset $V\subs M$. If $M$ is connected, then $g_1=g_2$.
\end{lemma}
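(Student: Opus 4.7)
The natural plan is a standard connectedness argument, powered by the Identity Theorem for real analytic maps on open connected subsets of $\R^m$ recalled in the third bullet of the remark above.

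First I would introduce the set
\[
A:=\set{x\in M : \tx{there exists an open neighbourhood } U \tx{ of } x \tx{ with } g_1|_U = g_2|_U}.
\]
By construction $A$ is open, and the hypothesis $g_1|_V=g_2|_V$ together with $V\neq\emptyset$ gives $A\supseteq V\neq\emptyset$. Since $M$ is connected, it suffices to show that $A$ is closed; then $A=M$ and we are done.

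For closedness, pick $x\in\ol{A}$. By continuity of $g_1,g_2$ we have $g_1(x)=g_2(x)=:y$. I would now pass to local coordinates: choose $\ph\in\mathcal{A}^M_x$ with connected chart domain $U_\ph\subs M$ and $\ps\in\mathcal{A}^N_y$, and (shrinking $U_\ph$ if necessary, still keeping it connected) arrange that $g_i(U_\ph)\subs U_\ps$ for $i=1,2$. Then the local representatives $g_1^{\ph,\ps},g_2^{\ph,\ps}\co \ph(U_\ph)\ra \R^n$ are real analytic maps defined on the connected open set $\ph(U_\ph)\subs\R^m$.

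Because $x\in\ol{A}$ and $U_\ph$ is an open neighbourhood of $x$, there exists some $z\in A\cap U_\ph$. By definition of $A$, $g_1$ and $g_2$ coincide on an open neighbourhood of $z$, hence $g_1^{\ph,\ps}$ and $g_2^{\ph,\ps}$ coincide on an open neighbourhood of $\ph(z)$ in $\ph(U_\ph)$. Therefore all Gateaux differentials of $f:=g_1^{\ph,\ps}-g_2^{\ph,\ps}$ vanish at $\ph(z)$. Applying the $\R^m$-version of the Identity Theorem (third bullet of the remark) to $f$ on the connected open set $\ph(U_\ph)$ yields $f\equiv 0$, i.e.\ $g_1|_{U_\ph}=g_2|_{U_\ph}$, which shows $x\in A$.

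There is no real obstacle here; the only points requiring attention are (i) choosing the chart domain to be connected so that the $\R^m$-Identity Theorem applies directly, and (ii) shrinking it to ensure both $g_1$ and $g_2$ land in a common target chart. Both are automatic by continuity and by taking a smaller connected neighbourhood of $x$ inside the original chart.
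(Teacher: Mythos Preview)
Your argument is correct. It differs from the paper's proof in the choice of the ``good'' set: the paper works with
\[
W=\set{x\in M: g_1(x)=g_2(x) \tx{and all} \d^k_{\ph(x)}g_1^{\ph,\ps}=\d^k_{\ph(x)}g_2^{\ph,\ps}},
\]
for which closedness is immediate from continuity of the derivatives along a convergent sequence, while openness requires invoking the power-series expansion. You instead take $A=\set{x: g_1=g_2 \tx{on a neighbourhood of} x}$, for which openness is free and closedness is obtained by a single application of the $\R^m$-Identity Theorem on a connected chart domain. Both are standard routes; yours is a bit more economical since it avoids tracking derivatives explicitly, at the modest cost of having to arrange a connected chart domain contained in $g_1^{-1}(U_\ps)\cap g_2^{-1}(U_\ps)$, which you handle correctly.
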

\begin{proof}
We  define 
\begin{align*}
W:= \set{x\in M: g_1(x)=g_2(x), ~ (\forall \ph \in \mathcal{A}_x^M,~ \ps\in \mathcal{A}_{g_1(x)}^N, ~  k\in \N_0) ~ \d_{\ph(x)}g_1^{\ph,\ps} = \d_{\ph(x)}g_2^{\ph,\ps}}.
\end{align*}
Because $V\neq \emptyset$ we get $W\neq \emptyset$. Let $(x_n)_{n\in \N}$ be a sequence that converges against $x\in M$ such that $x_n \in W$. We have $g_1(x_n)\ra g_1(x)$, $g_2(x_n)\ra g_2(x)$ and $g_1(x_n)=g_2(x_n)$ hence $g_1(x)=g_2(x)$. Let $\ph\co U_\ph \ra V_\ph$ be a chart around $x$ and $\ps \co U_\ps \ra V_\ps$ a chart around $g_1(x)=g_2(x)$. Without lose of generality we assume $x_n \in U_\ph$ and $g_1(x_n)=g_2(x_n)\in U_\ps$ for all $n$.  We get $\d_{\ph(x_n)}^kg_1^{\ph,\ps} \ra \d_{\ph(x)}^kg_1^{\ph,\ps}$, $\d_{\ph(x_n)}^kg_2^{\ph,\ps} \ra \d_{\ph(x)}^kg_2^{\ph,\ps}$ and $\d_{\ph(x_n)}^kg_1^{\ph,\ps} = \d_{\ph(x_n)}^kg_2^{\ph,\ps}$ for all $n$. Hence $\d_{\ph(x)}^kg_1^{\ph,\ps} = \d_{\ph(x)}^kg_2^{\ph,\ps}$. It is left to show that $W$ is open.
Let $x\in W$, $\ph \in \mathcal{A}_x^M$, $\ps\in \mathcal{A}_{g_1(x)}^N$.
Because $g_1$ and $g_2$ are real analytic we find an open $0$-neighbourhood $V\subs \R^m$ such that 
\begin{align*}
g_1^{\ph,\ps}(\ph(x)+z) = \sum_{k=0}^\8 \fr{\d_{\ph(x)}^kg_1(z)}{k!} = \sum_{k=0}^\8 \fr{\d_{\ph(x)}^kg_2(z)}{k!} = g_2^{\ph,\ps}(\ph(x)+z)
\end{align*}
for all $z\in V$. Hence there exists a $x$-neighbourhood on which $g_1$ and $g_2$ coincide.
\end{proof}

\begin{definition}
Let $M$ be a real analytic manifold with corners. A real analytic manifold without corners $\til{M}$ is called {\it enveloping manifold} for $M$ if $M\subs \til{M}$, and for every $x\in M$ there exists a chart $\til{\ph}\co \til{U} \ra \til{V}$ of $\til{M}$ around $x$, such that $\til{\ph}(\til{U}\cap M) = \til{V}\cap [0,\8[^m$ and $\til{\ph}|_{\til{U}\cap M}^{\til{U}\cap [0,\8[^m}$ is a chart of $M$. In other words: $M$ is an equidimensional sub manifold of $\til{M}$ with corners such that its sub manifold structure coincides with its original manifold structure. The chart $\til{\ph}$ is called {\it enveloping chart} of $M$.
\end{definition}

\begin{lemma}\label{GlobId}
Let $M\neq \emptyset$ be a real analytic manifold with corners and $\til{M}$ an enveloping manifold of $M$. Moreover let $N$ be a real analytic manifold without corners and $g_1,g_2\co \til{M}\ra N$ be real analytic maps.  If $g_1|_M=g_2|_M$ we find an open neighbourhood $V\subs \til{M}$ of $M$ such that $g_1|_V=g_2|_V$. 
\end{lemma}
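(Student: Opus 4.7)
The plan is to argue locally around each point of $M$ using the Identity Theorem for real analytic maps on connected manifolds (without corners), which was stated just above. The enveloping chart structure will provide us with a nonempty open subset of $]0,\infty[^m$ inside the chart domain on which the two local representatives agree; then the Identity Theorem will force agreement on the whole chart domain, and taking the union gives the desired neighbourhood.

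Fix $x\in M$ and choose an enveloping chart $\til{\ph}\co\til{U}\ra\til{V}$ of $\til{M}$ around $x$ with $\til{\ph}(\til{U}\cap M)=\til{V}\cap[0,\8[^m$, together with a chart $\ps$ of $N$ around $g_1(x)=g_2(x)$. By continuity of $g_1,g_2$ I may shrink $\til{U}$ so that $g_1(\til{U})\cup g_2(\til{U})\subs U_\ps$, and I may further shrink so that $\til{V}$ is connected (which is possible since $\til{M}$ has no corners, so charts can be taken to be connected open subsets of $\R^m$). The two local representatives $g_1^{\til{\ph},\ps}$ and $g_2^{\til{\ph},\ps}$ are then real analytic maps defined on the connected open set $\til{V}\subs\R^m$ with values in $V_\ps\subs\R^n$.

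Because $\til{\ph}(x)\in\til{V}\cap[0,\8[^m$, the convex set $C=[0,\8[^m$ (which has nonempty interior $]0,\8[^m$) satisfies the hypotheses of Lemma \ref{KonvexSchnitt} with $U=\til{V}$, so $\til{V}\cap\,]0,\8[^m$ is a nonempty open subset of $\til{V}$. For every $z$ in this set we have $\til{\ph}\inv(z)\in M$, hence by hypothesis $g_1(\til{\ph}\inv(z))=g_2(\til{\ph}\inv(z))$, which means $g_1^{\til{\ph},\ps}=g_2^{\til{\ph},\ps}$ on the nonempty open set $\til{V}\cap\,]0,\8[^m$. Regarding $\til{V}$ as a real analytic manifold without corners, the Identity Theorem (applied component-wise to the $\R^n$-valued map $g_1^{\til{\ph},\ps}-g_2^{\til{\ph},\ps}$, or directly to the two maps viewed into $N$) then yields $g_1^{\til{\ph},\ps}=g_2^{\til{\ph},\ps}$ on all of $\til{V}$, and therefore $g_1=g_2$ on $\til{U}$.

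Finally, setting $V:=\bigcup_{x\in M}\til{U}_x$, where $\til{U}_x$ denotes the chart neighbourhood constructed above for each $x\in M$, gives an open subset of $\til{M}$ containing $M$ on which $g_1$ and $g_2$ coincide. The only delicate point is the local step: ensuring that the set on which the two maps are known to agree has nonempty interior in $\R^m$ (and not only in $[0,\8[^m$), which is precisely what Lemma \ref{KonvexSchnitt} delivers and is the reason that lemma was stated.
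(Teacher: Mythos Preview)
Your proof is correct and follows essentially the same approach as the paper's: both pick an enveloping chart with connected image, use Lemma~\ref{KonvexSchnitt} to produce a nonempty open subset of $]0,\8[^m$ on which the local representatives agree, and then invoke the Identity Theorem on the connected chart domain. The paper's version is simply terser, leaving the appeal to the Identity Theorem implicit.
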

\begin{proof}
Let $x\in M$ and $\ph \in \mathcal{A}_x^{\til{M}}$ with $\ph(M\cap U_\ph) = V_\ph\cap [0,\8[^m$ such that $\ph|_{M\cap U_\ph}^{V_\ph \cap [0,\8[^m}$ is a chart of $M$. Let $\ps$ be a chart of $N$ around $g_1(x) = g_2(x)$. Without lose of generality we assume $g_1(U_\ph),g_2(U_\ph)\subs U_\ps$ and we assume $V_\ph$ to be connected. We get $g_1^{\ph,\ps}|_{V_\ph \cap [0,\8[^m} = g_2^{\ph,\ps}|_{V_\ph \cap [0,\8[^m}$. Because of Lemma \ref{KonvexSchnitt} we find an open $\ph(x)$-neighbourhood in $V_\ph$ on which $g_1^{\ph,\ps}$ and $g_2^{\ph,\ps}$ coincide. Hence $g_1$ and $g_2$ coincide on an open neighbourhood of $x$.
\end{proof}

The following Lemma comes from \cite[Lemma 2.1 (a)]{Bob}.
\begin{lemma}\label{Bob}
Let $X$ be a regular topological Hausdorff  space, $K\subs X$ be a compact subset and $(U_i)_{i\in I}$ be an open cover of $K$. Then there exists an open cover $(V_j)_{j\in J}$ of $K$ such that given $j_1,j_2\in J$ with $V_{j_1}\cap V_{j_2}\neq \emptyset$ we find $i\in I$ with $V_{j_1}\cup V_{j_2}\subs U_i$.
\end{lemma}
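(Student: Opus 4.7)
My plan is to build the refinement in two stages: first reduce to a finite subcover using compactness, then combine a shrinking of this finite subcover with an index-set bookkeeping that automatically encodes the pairwise-union condition. By compactness of $K$, extract a finite subcover $U_1,\dots,U_n$ from $(U_i)_{i\in I}$; any refinement of $K$ satisfying the conclusion with respect to this finite family satisfies it a fortiori with respect to $(U_i)_{i\in I}$.

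The next step is to construct closed sets $C_1,\dots,C_n\subseteq X$ with $C_k\subseteq U_k$ and $K\subseteq C_1\cup\dots\cup C_n$. For every $x\in K$ I fix an index $k(x)$ with $x\in U_{k(x)}$; by regularity of $X$, I shrink to an open neighbourhood $W_x$ of $x$ with $\overline{W_x}\subseteq U_{k(x)}$. Compactness of $K$ yields a finite subcover $W_{x_1},\dots,W_{x_m}$, and
$$C_k := \bigcup_{\ell\,:\, k(x_\ell)=k} \overline{W_{x_\ell}}$$
produces the desired shrinking (with the convention $C_k=\emptyset$ if no $x_\ell$ maps to $k$).

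For each nonempty $S\subseteq\{1,\dots,n\}$ I then define the open set
$$V_S := \Bigl(\bigcap_{k\in S} U_k\Bigr) \setminus \bigcup_{k\notin S} C_k.$$
For $x\in K$, the set $S(x):=\{k : x\in U_k\}$ is nonempty, and $x\in V_{S(x)}$, since $C_k\subseteq U_k$ forces $x\notin C_k$ whenever $k\notin S(x)$; hence the family $(V_S)$ covers $K$. Suppose $V_{S_1}\cap V_{S_2}\neq\emptyset$ and pick $x$ in the intersection. Because the $C_k$ cover $K$, there is some $k_0$ with $x\in C_{k_0}$, and the very definition of $V_{S_i}$ forces $k_0\in S_i$ for $i=1,2$. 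Consequently $V_{S_1}\subseteq U_{k_0}$ and $V_{S_2}\subseteq U_{k_0}$, so $V_{S_1}\cup V_{S_2}\subseteq U_{k_0}$, giving the required pairwise property.

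The main obstacle is the shrinking step: producing closed sets inside each $U_k$ that still cover $K$. The natural tool would be normality, but the hypothesis provides only regularity of $X$; the argument above bypasses this by combining regularity directly with the compactness of $K$, so that the finiteness of the chosen subcover of the $W_x$ does all the work.
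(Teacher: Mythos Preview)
Your argument has a genuine gap in the final step. You pick $x\in V_{S_1}\cap V_{S_2}$ and then assert that, because the $C_k$ cover $K$, there is some $k_0$ with $x\in C_{k_0}$. But nothing forces $x$ to lie in $K$: the $V_S$ are open subsets of $X$, and their pairwise intersections may well contain points outside $\bigcup_k C_k$. Concretely, if $S_1\cap S_2=\emptyset$, then any $x\in V_{S_1}\cap V_{S_2}$ satisfies $x\notin C_k$ for every $k$ (since each $k$ lies outside at least one of $S_1,S_2$), so your deduction collapses precisely in the case where it is needed.

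The repair is cheap: replace each $V_S$ by $V_S\cap(W_{x_1}\cup\dots\cup W_{x_m})$. These sets are still open and still cover $K$ (as $K\subseteq\bigcup_\ell W_{x_\ell}$), and now any $x$ in an intersection lies in some $W_{x_\ell}\subseteq\overline{W_{x_\ell}}\subseteq C_{k(x_\ell)}$, which restores your argument. Alternatively, and more directly, observe that once $V_{S_1}\cap V_{S_2}\neq\emptyset$ forces $S_1\cap S_2\neq\emptyset$ (which is what your corrected argument really shows), you can simply take any $k_0\in S_1\cap S_2$ and conclude $V_{S_1}\cup V_{S_2}\subseteq U_{k_0}$ without ever mentioning the $C_k$ at that stage. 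For comparison, the paper itself does not prove this lemma but quotes it from \cite[Lemma~2.1~(a)]{Bob}.
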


In the following Lemma we show an existence result for extensions of real analytic maps on real analytic manifolds with corners. The proof follows the idea of \cite[Lemma 2.2 (a)]{Bob}, where Dahmen, Gl{\"o}ckner and Schmeding showed an analogous result for extensions to complexifications.
\begin{lemma}\label{AusdehnenGlobal}
Let $M$ and $N$ be real analytic manifolds with corners, $M$ be compact and $\til{M}$ respectively $\til{N}$ be enveloping  manifold of $M$ respectively $N$. If $f\co M\ra N$ is a real analytic map, we find an open neighbourhood $U\subs \til{M}$ of $M$ and a real analytic map $g\co U\ra \til{N}$ with $g|_M=f$.
\end{lemma}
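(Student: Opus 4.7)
The plan is to build real analytic extensions of $f$ locally around each point of $M$ and then to glue them into a global extension on an open neighbourhood of $M$ in $\til M$, using the identity principle Lemma \ref{GlobId} and the refinement Lemma \ref{Bob}.

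\emph{Local extensions.} For every $x\in M$, choose an enveloping chart $\til\ph_x\in \mathcal{A}_x^{\til M}$ with associated corner-chart $\ph_x$ of $M$, and an enveloping chart $\til\ps_x\in \mathcal{A}_{f(x)}^{\til N}$ with corner-chart $\ps_x$ of $N$. After shrinking $\til\ph_x$, one may assume $f(U_{\til\ph_x}\cap M)\subs U_{\til\ps_x}\cap N$. The local representative $f^{\ph_x,\ps_x}$ is real analytic on the open subset $V_{\til\ph_x}\cap [0,\8[^m$ of $[0,\8[^m$, hence by the earlier extension lemma for real analytic maps on open subsets of $[0,\8[^m$ it extends to a real analytic map $F_x$ on an open neighbourhood $V_{\til\ph_x}^\ast\subs V_{\til\ph_x}$ of $V_{\til\ph_x}\cap [0,\8[^m$ in $\R^m$. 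Shrinking so that $F_x$ takes values in $V_{\til\ps_x}$, and setting $W_x:=\til\ph_x\inv(V_{\til\ph_x}^\ast)$ and $g_x:=\til\ps_x\inv\ci F_x\ci(\til\ph_x|_{W_x})$, one obtains a real analytic extension $g_x\co W_x\ra \til N$ of $f|_{W_x\cap M}$ on an open neighbourhood $W_x$ of $U_{\til\ph_x}\cap M$ in $\til M$.

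\emph{Refinement.} By compactness of $M$, finitely many $W_{x_1},\ldots,W_{x_p}$ cover $M$. Applying Lemma \ref{Bob} to the regular Hausdorff space $\til M$, the compact subset $M$ and the open cover $(W_{x_i})_{i=1}^{p}$, one obtains a finite open cover $V_1,\ldots,V_r$ of $M$ in $\til M$ such that $V_{j_1}\cap V_{j_2}\neq\emptyset$ implies $V_{j_1}\cu V_{j_2}\subs W_{x_i}$ for some $i$. Discarding any $V_j$ that does not meet $M$, one may assume $V_j\cap M\neq\emptyset$ for every $j$; for each $j$ fix some index $i(j)$ with $V_j\subs W_{x_{i(j)}}$.

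\emph{Gluing---the main obstacle.} For each pair $(j,i)$ with $V_j\subs W_{x_i}$, both $g_{x_{i(j)}}|_{V_j}$ and $g_{x_i}|_{V_j}$ are real analytic maps on the enveloping manifold $V_j$ of the non-empty manifold with corners $V_j\cap M$ and coincide on $V_j\cap M$ (both equal $f$ there). Lemma \ref{GlobId} produces an open neighbourhood $A_{j,i}$ of $V_j\cap M$ in $V_j$ on which $g_{x_{i(j)}}=g_{x_i}$. Set $V_j^\ast:=\bigcap_{i:\,V_j\subs W_{x_i}}A_{j,i}$, a finite intersection of open neighbourhoods of $V_j\cap M$, hence itself open with $V_j\cap M\subs V_j^\ast$. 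For any $z\in V_{j_1}^\ast\cap V_{j_2}^\ast$, the refinement property produces $i$ with $V_{j_1}\cu V_{j_2}\subs W_{x_i}$, and $V_{j_\ell}^\ast\subs A_{j_\ell,i}$ forces $g_{x_{i(j_\ell)}}(z)=g_{x_i}(z)$ for $\ell=1,2$; consequently $g_{x_{i(j_1)}}(z)=g_{x_{i(j_2)}}(z)$. The maps $h_j:=g_{x_{i(j)}}|_{V_j^\ast}$ therefore patch unambiguously to a real analytic map $g\co U\ra \til N$ on $U:=\bigcup_j V_j^\ast$, which is an open neighbourhood of $M$ in $\til M$; by construction $g|_M=f$. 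The crux of the argument is the coherent definition of the $V_j^\ast$ above, which works precisely because each $V_j\cap M$ is non-empty (allowing Lemma \ref{GlobId} to be applied) and because Lemma \ref{Bob} ensures that any non-empty pairwise intersection is already contained in a single ambient $W_{x_i}$.
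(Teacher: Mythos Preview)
Your proof is correct and follows the same overall architecture as the paper's: build local extensions via enveloping charts, refine the cover via Lemma \ref{Bob}, establish agreement of the local pieces via Lemma \ref{GlobId}, then glue. The one substantive difference is in how the gluing is arranged. The paper replaces each refined set by a connected component meeting $M$; then Lemma \ref{GlobId} combined with the identity theorem on connected real analytic manifolds forces the two candidate extensions to agree on the \emph{entire} connected piece, so no further shrinking is needed. You instead leave the $V_j$'s possibly disconnected and shrink to $V_j^\ast=\bigcap_i A_{j,i}$ by intersecting the finitely many agreement-neighbourhoods furnished by Lemma \ref{GlobId}; this is why you first pass to a finite subcover before invoking Lemma \ref{Bob}, so that the intersection is finite. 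Both manoeuvres are valid; the paper's is a touch shorter once connectedness is in hand, while yours uses Lemma \ref{GlobId} exactly as stated and avoids the connected-component step altogether.
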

\begin{proof}
Given $x\in M$ let $\ph_1 \co U_1 \ra V_1$ be an enveloping chart of $M$ around $x$ 
and  $\ph_2\co U_2\ra V_2$ be an enveloping chart of $N$ around $f(x)$ with $f(U_1\cap M)\subs U_2\cap N$. Especially $\ph_1|_{U_1\cap M}^{V_1\cap [0,\8[^m}$ and $\ph_2|_{U_2\cap M}^{V_2\cap [0,\8[^m}$ are charts of $M$ and $N$ respectively. We find a real analytic map $\ps\co V_1\cap [0,\8[^m \ra V_2\cap [0,\8[^m$ such that the diagram
\begin{align*}
\begin{xy}\xymatrixcolsep{5pc}
\xymatrix{
U_1\cap M \ar[r]^-{f} \ar[d]^{\ph_1}& U_2\cap N\ar[d]^{\ph_2}\\
V_1\cap [0,\8[^m \ar[r]^{\ps} & V_2\cap [0,\8[^m
}
\end{xy}
\end{align*}
commutes. Let $\ps_x\co V_x \ra \R^n$ be a real analytic map defined on an open neighbourhood $V_x$ of $V_1\cap [0,\8[^m$, such that $\ps_x|_{V_1\cap [0,\8[^m}=\ps$. Without lose of generality we can assume $V_x\subs V_1$ and $\ps_x(V_x)\subs V_2$, because $\ps_x(\ph_1(x))= \ph_2(f(x)) \in V_2$. 
Now we define the open 
set $U_x:= \ph_1\inv(V_x)$ and the real analytic map $g_x\co U_x \ra U_2\subs \til{N}$ by $g_x:=\ph_2\inv \ci \ps_x \ci \ph_1$. We have $g_x|_{U_x\cap M} = f|_{U_x\cap M}$.
With \cite[Lemma 2.1 (a)]{Bob} respectively Lemma \ref{Bob} we find an open cover $(W_j)_{j\in J}$ of $M$ such that given $j_1,j_2\in J$ with $W_{j_1}\cap W_{j_2}\neq \emptyset$ we find $x\in M$ with $W_{j_1}\cup W_{j_2}\subs U_{x}$. Every point $x\in M$ is contained in a set $W_i$. We can substitute $W_i$ by the connected component of $x$ in $W_i$ and hence we can assume all $W_i$ to be connected and intersecting $M$. 
For $i\in I$ we find $x_i\in M$ with $W_i \subs U_{x_i}$. Now let $g_i\co W_i \ra \til{N}$ be given by $g_i:= g_{x_i}|_{W_i}$.
If $i,k\in I$ with $W_i\cap W_k \neq \emptyset$ we find $x\in M$ with $W_i\cup W_k \subs U_x$. We have $g_i|_{W_i\cap M}= f|_{W_i\cap M} = g_x|_{W_i\cap M}$. Because $W_i$ is connected, we get with Lemma \ref{GlobId} that $g_i= g_x|_{W_i}$. In the analogous way we get $g_k= g_x|_{W_k}$. Hence $g_i|_{W_i \cap W_k} = g_k|_{W_i \cap W_k}$. Now we define the open set $U:=\bigcup_{i\in J}U_i$ and the real analytic map $g\co U\ra \til{N}$ by $g|_{W_i}:=g_i$. According to the upper consideration $g$ is well-defined and real analytic.
\end{proof}

In the following Lemma we show that a real analytic diffeomorphism of manifolds with corners has a diffeomorphic real analytic extension to open subsets of enveloping manifolds. Our proof is  analogous to \cite[Lemma 2.2 (e)]{Bob}.
\begin{lemma}\label{Inverse}
Let $f\co M\ra N$ be a real analytic diffeomorphism between real analytic manifolds with corners. Moreover let $\til{M}$ and $\til{N}$ be enveloping manifolds of $M$ and $N$ respectively and $U\subs \til{M}$ be an open neighbourhood of $M$. Furthermore let $V\subs \til{N}$ be an open neighbourhood of $N$, $\til{f}\co U \ra \til{N}$ be an extension of $f$ and $\til{g}\co V\co V\ra \til{M}$ be an extension of $g:=f\inv$. We find a neighbourhood $W\subs \til{M}$ of $M$ such that $\til{f}|_W$ is an diffeomorphism onto its open image with inverse $\til{g}|_{\til{f}(W)}$.
\end{lemma}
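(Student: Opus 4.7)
The plan is to apply the identity principle for analytic extensions (Lemma \ref{GlobId}) twice: once to the composition $\til{g}\ci\til{f}$ against the identity on $\til{M}$, and once to $\til{f}\ci\til{g}$ against the identity on $\til{N}$. Both compositions agree with the identity on $M$ respectively $N$, because $g\ci f=\id_M$ and $f\ci g=\id_N$.

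First, since $\til{f}$ is continuous and $\til{f}(M)=N\subs V$, I would shrink $U$ to an open neighborhood $U'\subs U$ of $M$ with $\til{f}(U')\subs V$; similarly I shrink $V$ to an open neighborhood $V'\subs V$ of $N$ with $\til{g}(V')\subs U$. On $U'$ the real analytic maps $\til{g}\ci\til{f}$ and $\id_{U'}$ (as maps into $\til{M}$) coincide on $M$, so Lemma \ref{GlobId} yields an open neighborhood $W_1\subs U'$ of $M$ in $\til{M}$ on which $\til{g}\ci\til{f}=\id$. The analogous argument on $V'$ produces an open neighborhood $W_2\subs V'$ of $N$ in $\til{N}$ on which $\til{f}\ci\til{g}=\id$.

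Next I would set $W:=W_1\cap\til{f}\inv(W_2)$, which is open and contains $M$ because $\til{f}(M)=N\subs W_2$. Since $\til{g}\ci\til{f}|_W=\id_W$, the map $\til{f}|_W$ is injective. To see that its image is open I would verify
\[
\til{f}(W)=\til{g}\inv(W)\cap W_2,
\]
where ``$\subs$'' follows from $\til{g}\ci\til{f}=\id_W$ (so $\til{g}(\til{f}(x))=x\in W$ for $x\in W$), and ``$\sups$'' follows from $\til{f}\ci\til{g}=\id_{W_2}$ (so any $y$ in the right-hand set satisfies $y=\til{f}(\til{g}(y))\in\til{f}(W)$). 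Hence $\til{f}(W)$ is open, and $\til{g}|_{\til{f}(W)}$ is a real analytic two-sided inverse of $\til{f}|_W$, making the latter a real analytic diffeomorphism onto its open image with the required inverse.

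The only delicate point is the bookkeeping needed to coordinate the successive shrinkings so that every composition is well-defined on the correct domain and each application of Lemma \ref{GlobId} takes place on a neighborhood of $M$ or $N$ inside the enveloping manifold. Beyond that, Lemma \ref{GlobId} carries all of the analytic content and no further obstacle arises.
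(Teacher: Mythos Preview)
Your proof is correct and follows essentially the same strategy as the paper: use the identity principle to make $\til{g}\ci\til{f}$ and $\til{f}\ci\til{g}$ equal to the identity on suitable neighbourhoods, then verify by a set-equality argument that the image of the restricted $\til{f}$ is open and that $\til{g}$ furnishes the inverse. The only cosmetic difference is that the paper works with the union $X$ of connected components of $\til{f}\inv(V)$ meeting $M$ (and similarly $Y$ on the other side) and invokes the Identity Theorem on each component, whereas you package this step by quoting Lemma~\ref{GlobId} directly and then intersect to form $W=W_1\cap\til{f}\inv(W_2)$; your set equality $\til{f}(W)=\til{g}\inv(W)\cap W_2$ plays the same role as the paper's $\til{f}(X)=Y$.
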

\begin{proof}
Let $X$ be the union of all connected components of $\til{f}\inv(V)$ that intersect $M$. Thus $\til{g}\ci \til{f}|_X \co X \ra \til{M}$ is real analytic and $\til{g}\ci \til{f}|_{X\cap M}= \id_{X\cap M}$. Hence $\til{g}\ci \til{f}|_X= \id_X$.

Let $Y$ be the union of all connected components of $\til{g}\inv(X)$ that intersect $N$. As above $\til{f}|_W \ci \til{g}|_Y \co Y\ra \til{M}$ is real analytic and $\til{f}|_X \ci \til{g}|_Y = \id_Y$. Now we show $Y=\til{f}(X)$. The inclusion ``$\subs$'' follows from $\til{f}|_X \ci \til{g}|_Y = \id_Y$. It is left to show $\til{f}(X)\subs Y$. With $\til{g}\ci \til{f}|_X= \id_X$ we get $\til{f}(X)\subs \til{g}\inv(X)$. If $C$ is a connected component of $\til{f}\inv(V)$ that intersect $M$, then $\til{f}(C)$ is a connected subset of $\til{g}\inv$ and intersects $N$. Hence $\til{f}(C)\subs Y$.
\end{proof}

\begin{lemma}\label{GottVerdammt}
Let $U\subs [0,\8[^m$ be open such that the closure of $U$ in $[0,\8[^m$ coincides with the closure of $U$ in $\R^m$. Given an open neighbourhood $O$ of $\ol{U}$ in $\R^m$ we find an open neighbourhood $\til{U}$ of $U$ in $\R^m$ such that $\ol{\til{U}}\subs O$, $\til{U}\cap [0,\8[ = U$ and $\ol{\til{U}}\cap [0,\8[^m = \ol{U}$.
\end{lemma}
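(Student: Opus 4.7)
The plan is to define a point-dependent radius $\rho\co U\ra ]0,\8[$ that shrinks both when $x\in U$ approaches the ``missing'' part of $[0,\8[^m$ and when $x$ approaches the complement of $O$, and then take $\til{U}$ as the union of open sup-norm balls $B^\8_{\rho(x)}(x)$ centred at points of $U$. Concretely, set $A:=[0,\8[^m\sm U$ and $B:=\R^m\sm O$. Since $U$ is open in $[0,\8[^m$ one may write $U=V\cap [0,\8[^m$ for an open $V\subs \R^m$, so $A=[0,\8[^m\cap(\R^m\sm V)$ is closed in $\R^m$; and $B$ is closed as the complement of the open set $O$. The hypotheses $\ol{U}\subs [0,\8[^m$ (this is exactly the assumption that the two closures agree) and $\ol{U}\subs O$ give $U\cap(A\cup B)=\emptyset$, so for every $x\in U$ the quantity
\[\rho(x):= \tfrac{1}{2}\min\{d(x,A),\, d(x,B)\}\]
is strictly positive, adopting the convention $d(x,\emptyset)=+\8$. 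I define $\til{U}:= \bigcup_{x\in U}B^\8_{\rho(x)}(x)$, which is open in $\R^m$ and manifestly contains $U$.

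All four required properties then follow from one self-defeating triangle-inequality trick. For $\til{U}\cap [0,\8[^m = U$: were $y\in \til{U}\cap [0,\8[^m$ with $y\notin U$, then $y\in A$, so for an anchor $x\in U$ with $y\in B^\8_{\rho(x)}(x)$ one would have $d(x,A)\leq |y-x|_\8 <\rho(x)\leq \tfrac{1}{2}d(x,A)$, a contradiction; the reverse inclusion is immediate. For $\ol{\til{U}}\subs O$: assume $z\in \ol{\til{U}}\cap B$ and pick $z_n\ra z$ with $z_n\in B^\8_{\rho(x_n)}(x_n)$, $x_n\in U$; since $\rho(x_n)\leq \tfrac{1}{2}d(x_n,B)\leq \tfrac{1}{2}|x_n-z|_\8$, the triangle inequality gives
\[|x_n-z|_\8\leq |x_n-z_n|_\8+|z_n-z|_\8<\rho(x_n)+|z_n-z|_\8\leq \tfrac{1}{2}|x_n-z|_\8+|z_n-z|_\8,\]
whence $|x_n-z|_\8\ra 0$; but then $x_n\ra z$ with $x_n\in U\subs \ol{U}\subs O$ forces $z\in \ol{U}\subs O$, contradicting $z\in B$. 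Finally, for $\ol{\til{U}}\cap [0,\8[^m = \ol{U}$: the inclusion ``$\sups$'' is immediate from $U\subs \til{U}$ together with $\ol{U}\subs [0,\8[^m$, while ``$\subs$'' follows by literally repeating the previous argument with $A$ in place of $B$ (any $z\in [0,\8[^m\sm \ol{U}$ automatically lies in $A$).

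The main obstacle is the closure condition $\ol{\til{U}}\subs O$ together with the analogous one for the quadrant: a union of small open balls each contained in $O$ can easily accumulate at points outside $O$, so a crude upper bound on the radii is not enough. The self-correcting choice $\rho(x)\leq \tfrac{1}{2}d(x,B)$ ensures that for any putative limit point $z\in B$ the approximating anchors $x_n$ are themselves forced to converge to $z$, which lands $z$ in $\ol{U}\subs O$ and yields the contradiction; the same mechanism with $A$ in place of $B$ handles the analogous inclusion into $\ol{U}$.
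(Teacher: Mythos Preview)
Your argument is correct. The only cosmetic point is the degenerate case $A=B=\emptyset$ (i.e.\ $U=[0,\8[^m$ and $O=\R^m$), where $\rho\equiv\8$; capping $\rho$ by $1$ or treating this trivial case separately fixes it.

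Your route differs from the paper's. The paper first builds $\til{U}_1$ using the reflection map $\l\co \R^m\to[0,\8[^m$, $z\mapsto(|z_1|,\dots,|z_m|)$: it takes balls $B_{\ep_x}(x)\subs\l\inv(U)$ around each $x\in U$, so that for any limit point $z\in\ol{\til{U}_1}\cap[0,\8[^m$ the reflected approximants $\l(z_n)$ lie in $U$ and still converge to $z=\l(z)$, forcing $z\in\ol{U}$. Only afterwards does the paper intersect with a set $W$ satisfying $\ol{U}\subs W\subs\ol{W}\subs O$ to obtain $\ol{\til{U}}\subs O$. Your construction instead handles both obstacles $A=[0,\8[^m\sm U$ and $B=\R^m\sm O$ at once via the single radius $\rho(x)=\tfrac12\min(d(x,A),d(x,B))$ and the self-correcting triangle inequality, so no second intersection step is needed. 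The paper's reflection trick is a clean geometric device specific to the quadrant, while your distance-halving argument is the standard metric-space mechanism and would transfer verbatim to any closed subset of $\R^m$ in place of $[0,\8[^m$.
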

\begin{proof}
Given $z\in \R^m$ we define $z_+\in [0,\8[^m$ by $(z_+)_i:= |z_i|$. Hence the map $\l \co \R^m \ra [0,\8[^m, z\ms z_+$ is continuous and $
\l\inv(U)$ is open in $\R^m$. Given $x\in U$ we find $\ep_x>0$ with $B_{\ep_x}(x)\cap [0,\8[^m \subs U$ and $B_{\ep_x}(x)\subs \l\inv(U)$. The set $\til{U}_1:= \bigcup_{x\in U} B_{\ep_x}(x)$ is open in $\R^m$. We also have $\til{U}_1\cap [0,\8[^m = U$, because $U\subs \til{U}_1$ and $\til{U}_1\cap [0,\8[^m = \bigcup_{x\in U} (B_{\ep_x}(x)\cap [0,\8[^m) \subs U$. Hence $\ol{U} = \ol{\til{U}_1 \cap [0,\8[^m} \subs \ol{\til{U}_1}\cap [0,\8[^m$. Now let $z\in \ol{\til{U}_1}\cap [0,\8[^m$. We find a sequence $(z_n)_{n\in \N}$ in $\til{U}_1$ with $\lim_{n\ra \8}z_n = z$. Hence $z= \l(z) = \lim_{n\ra \8}\l(z_n)$. But with $\til{U}\subs \l\inv(U)$ we get $\l(z_n)\in U$. Thus $z\in \ol{U}$. We conclude $\ol{\til{U}_1} \cap [0,\8[ = \ol{U}$. Now let $W$ be a neighbourhood of $\ol{U}$ in $\R^m$ with $\ol{U}\subs W \subs \ol{W}\subs O$. We define $\til{U}:= \til{U}_1\cap W$ and get $\ol{\til{U}} \subs \ol{W} \subs O$ and $\til{U}\cap [0,\8[^m = U\cap W =U$. Hence $\ol{U} = \ol{\til{U}\cap [0,\8[^m}\subs \ol{\til{U}}  \cap [0,\8[^m$. Moreover
\begin{align*}
\ol{\til{U}} \cap [0,\8[^m = \ol{\til{U}\cap W} \cap [0,\8[^m \subs \ol{\til{U}} \cap \ol{W} \cap [0,\8[^m = \ol{W} \cap \ol{U} =\ol{U}.
\end{align*}
Hence $\ol{\til{U}}\cap [0,\8[^m = \ol{U}$.
\end{proof}

With the help of Lemma \ref{GottVerdammt} it is possible to transfer the proof of \cite[Proposition 1]{Bruhat} respectively \cite[Proposition 3.1]{Bob} (existence of complexifications of real analytic manifolds) to our situation. Using Lemma \ref{GottVerdammt} our proof is complete analogous to the one of \cite[Proposition 1]{Bruhat} respectively \cite[Proposition 3.1]{Bob}.
\begin{theorem}\label{EnvelopTheo}
Given a compact real analytic finite-dimensional manifold with corners  $M$ we find a enveloping manifold $\til{M}$ of $M$. If there exist two enveloping manifolds $\til{M}_1$ and $\til{M}_2$ of $M$, then we find a neighbourhood $U_1$ of $M$ in $\til{M}_1$, a neighbourhood $U_2$ of $M$ in $\til{M}_2$ and a real analytic diffeomorphism $\ph \co U_1\ra U_2$ with $\ph|_M=\id_M$.
\end{theorem}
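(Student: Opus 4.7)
The plan is to mirror the Bruhat--Whitney construction of complexifications, using Lemma \ref{GottVerdammt} in place of the standard complex shrinking arguments. For existence, I would start by choosing a finite corner-atlas $(U_i,\ph_i)_{i=1}^N$ of $M$ (possible by compactness), write $\ph_i(U_i) = V_i \cap [0,\8[^m$ with $V_i \subs \R^m$ open, and extend each chart change $\ph_j \ci \ph_i\inv$ to a real analytic map $\g_{ji}$ on an open neighbourhood of $\ph_i(U_i\cap U_j)$ in $\R^m$. Then I would apply Lemma \ref{GottVerdammt} to select open $\til{V}_i \subs \R^m$ with $\til{V}_i \cap [0,\8[^m = V_i$ and with controlled closures. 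On triple intersections the cocycle identity $\g_{ki} = \g_{kj} \ci \g_{ji}$ is valid on the real part $\ph_i(U_i \cap U_j \cap U_k)$; by an identity-theorem argument analogous to Lemma \ref{GlobId} in $\R^m$, it propagates to an open neighbourhood of that set, and a further shrinking of the $\til{V}_i$ enforces the identity universally. Gluing the $\til{V}_i$ via the $\g_{ji}$ then produces $\til{M}$, into which the chart maps embed $M$ as an equidimensional submanifold with corners, so $\til{M}$ qualifies as an enveloping manifold.

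For uniqueness, given two enveloping manifolds $\til{M}_1$ and $\til{M}_2$, I would view $\id_M \co M\ra M$ as a real analytic map of manifolds with corners, with target sitting inside $\til{M}_2$. Lemma \ref{AusdehnenGlobal} then yields an open neighbourhood $U_1 \subs \til{M}_1$ of $M$ and a real analytic extension $f \co U_1 \ra \til{M}_2$ with $f|_M = \id_M$. Symmetrically, I obtain open $U_2 \subs \til{M}_2$ and a real analytic extension $g \co U_2 \ra \til{M}_1$ with $g|_M = \id_M$. Applying Lemma \ref{Inverse} to $(f,g)$ produces an open neighbourhood $W \subs U_1$ of $M$ such that $f|_W$ is a real analytic diffeomorphism onto its open image $f(W) \subs U_2$, with inverse $g|_{f(W)}$. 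Renaming $W$ back to $U_1$ and $f(W)$ to $U_2$ gives the required diffeomorphism $\ph \co U_1 \ra U_2$ with $\ph|_M = \id_M$.

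The main obstacle will be controlling the cocycle condition and the Hausdorff property during the gluing: the identity theorem propagates the cocycle equation only within connected neighbourhoods of the real locus, and Hausdorffness requires that closures of different $\til{V}_i$ do not meet away from the images they are supposed to meet. This is exactly the content of Lemma \ref{GottVerdammt}, which ensures that the closures behave well enough that the relevant shrinkings preserve all needed inclusions. Accordingly, the bulk of the remaining labour is careful bookkeeping: ordering the shrinkings and invoking the identity theorem on pairwise and triple intersections in the correct sequence, as in \cite[Proposition 1]{Bruhat}.
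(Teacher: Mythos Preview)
Your proposal is correct and follows essentially the same approach as the paper: the uniqueness argument via Lemmas \ref{AusdehnenGlobal} and \ref{Inverse} is identical, and the existence construction is the Bruhat--Whitney gluing you describe, with Lemma \ref{GottVerdammt} supplying the closure control needed for the Hausdorff step. One small refinement: the extended transition maps must be taken as real analytic \emph{diffeomorphisms} with mutually inverse pairs $\ps_{i,j},\ps_{j,i}$ (obtained via Lemma \ref{Inverse} rather than mere extension), since the gluing relation requires symmetry; the paper also uses a three-level nested refinement $U_i^1\supseteq \ol{U_i^2}\supseteq U_i^2\supseteq \ol{U_i^3}\supseteq U_i^3$ of the cover before shrinking, which is the standard bookkeeping device you allude to.
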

\begin{proof}
See Appendix \ref{ProofEnv}.
\end{proof}

\section{Local manifold structure}\label{SecLocal manifoldstructure}
As mentioned in the introduction we use the ``local approach'' developed in \cite{Gloeckner2} and transfer its line of thought to the case of a compact real analytic manifold $M$ with corners. In this section we construct an open subset $\mathbb{V}$ of $\gg^\o_\str(TM)$ that is small enough such that $\mathcal{U}:= \set{\exp\ci \eta: \eta \in \mathcal{V}}$ is a subset of $\Diff^\o(M)$. As in \cite{Gloeckner2} we control simultaneously the uniform norm of the vector fields of $\mathcal{V}$ on a compact set and the norms of the first derivative. For the rest of the paper  the manifolds with corners are modelled over spaces of the form $\R^m_k:= [0,\8[^k \ti \R^{m-k}$ instead of quadrants $[0,\8[^m$ (see Section \ref{SecEnveloping Manifold}). Of course both definitions are equivalent. As in \cite{Michor} we model the group of diffeomorphisms on stratified vector fields ($\gg_\str(TM)$). In Appendix \ref{SecBasic definitions and results for manifolds with corners} we fix some notation and recall basic definitions and facts concerning manifolds with corners that are used in this paper.

\subsection{Conventions and notations}

For the following definition we referee to \cite[Chapter 4; Definition 12 and Proposition 13]{ONeil}.
\begin{definition}
Let $M$ be a Riemannian manifold without boundary and $N \subs M$ be a Riemannian submanifold without boundary. We call $N$ {\it totally geodetic} if all geodetics of $N$ are also geodetics of $M$.
\end{definition}

\begin{convention}
Let $M$ be a $m$-dimensional compact real analytic manifold with corners and $\til{M}$ be an enveloping manifold of $M$. Moreover let $M_\C$ be a complexification of $\til{M}$. \textbf{ We assume that there exists a Riemannian metric $g$ on $\til{M}$ such that 
the submanifolds $\partial^jM$ are totally geodetic for all $j\in \set{1,\dots,m}$}. We call such a metric \it{boundary respecting}. In this context $\til{\Omega} \subs T\til{M}$ is a neighbourhood of the $0$-section.
\end{convention}

\begin{lemma}\label{Existenz von Karten}
There exist finite many enveloping-charts $\til{\ph}_i \co \til{U}_{i,5}\ra B_5(0)$ with $i=1,\dots,n$ and induced $M$-charts $\ph_i\co U_{i,5} \ra B_5^{k_i}(0)$ such that $M\subs \bigcup_{i=1}^n \ph_i\inv(B_{1}^{k_i}(0))$. There exist an open subset ${U}^\ast_i$ of $M_\C$, an open subset ${V}^\ast_i$ of $\C^m$ and a complex analytic diffeomorphism ${\ph}^\ast_i \co {U}^\ast_i \ra {V}^\ast_i$ such that $\til{U}_{i,5} \subs {U}^\ast_i$, $B_{i,5}(0) \subs {V}^\ast_i$ and ${\ph}^\ast_i$ is an extension of $\til{\ph}_i$.\end{lemma}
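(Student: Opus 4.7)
The strategy is to construct, for each point of $M$, a well-centered enveloping chart with codomain exactly $B_5(0)$, to extract a finite sub-cover by compactness, and then to holomorphically extend each chart by invoking the complexification $M_\C$ of $\til{M}$.

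For each $x \in M$, let $k_x$ denote the corner index of $x$ (the number of boundary hypersurfaces through $x$). Because $M$ is modelled over the spaces $\R^m_{k}$ and $\til{M}$ is an enveloping manifold, I can find an enveloping chart $\til{\ps}_x \co \til{W}_x \ra \til{V}_x$ with $\til{\ps}_x(x) = 0$ and $\til{\ps}_x(\til{W}_x \cap M) = \til{V}_x \cap \R^m_{k_x}$: pick a chart so that the hypersurfaces through $x$ become the first $k_x$ coordinate hyperplanes, and then compose with a translation only in the last $m-k_x$ coordinates (this preserves $\R^m_{k_x}$) so that $x$ is sent to $0$. Since $\til{V}_x$ is an open neighbourhood of $0$ in $\R^m$, I pick $\ep_x > 0$ with $B_{\ep_x}(0) \subs \til{V}_x$ and define $\til{\ph}_x := (5/\ep_x)\cdot\til{\ps}_x$ restricted to $\til{U}_{x,5} := \til{\ps}_x\inv(B_{\ep_x}(0))$. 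Positive scalar multiplication preserves $\R^m_{k_x}$, so $\til{\ph}_x \co \til{U}_{x,5} \ra B_5(0)$ is an enveloping chart inducing an $M$-chart $\ph_x \co U_{x,5} \ra B_5^{k_x}(0)$ with $\ph_x(x) = 0 \in B_1^{k_x}(0)$. The sets $\ph_x\inv(B_1^{k_x}(0))$ form an open cover of the compact space $M$, from which finitely many suffice; relabelling, this produces the desired family $\til{\ph}_1, \dots, \til{\ph}_n$.

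For the second assertion, I use that $M_\C$ is a complexification of $\til{M}$, so the proof of Lemma \ref{AusdehnenGlobal} applies verbatim to extensions across $\til{M} \hookrightarrow M_\C$ (compare \cite[Lemma 2.2]{Bob}). Both $\til{\ph}_i \co \til{U}_{i,5} \ra \R^m \subs \C^m$ and its inverse $\til{\ph}_i\inv \co B_5(0) \ra \til{U}_{i,5} \subs M_\C$ are real analytic and therefore extend complex analytically to open neighbourhoods of $\til{U}_{i,5}$ in $M_\C$ and of $B_5(0)$ in $\C^m$, respectively. The complex analogue of Lemma \ref{Inverse}, proved by the identical argument (replacing ``real analytic'' by ``complex analytic'' and applying the identity theorem on connected complex manifolds), shows that after shrinking these neighbourhoods the two extensions become mutually inverse. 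This yields a complex analytic diffeomorphism $\ph_i^\ast \co U_i^\ast \ra V_i^\ast$ with $\til{U}_{i,5} \subs U_i^\ast$, $B_5(0) \subs V_i^\ast$ and $\ph_i^\ast|_{\til{U}_{i,5}} = \til{\ph}_i$.

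The only genuine subtlety lies in the centering step: one must take $k_x$ to be the exact corner index of $x$, because only then is the translation needed to send $x$ to the origin compatible with the quadrant $\R^m_{k_x}$, and only then does the subsequent positive rescaling keep the boundary structure intact. Once this setup is correctly chosen, the compactness argument and the complexification extension are routine consequences of the results collected in Section \ref{SecEnveloping Manifold}.
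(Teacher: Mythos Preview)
The paper states Lemma~\ref{Existenz von Karten} without proof, treating it as a routine consequence of the compactness of $M$ and the existence of the complexification $M_\C$ of $\til{M}$. Your argument correctly supplies the details and follows exactly the natural route: center an enveloping chart at each point using its corner index, rescale to hit $B_5(0)$, extract a finite subcover, and then extend each chart holomorphically via the complexification, invoking the complex-analytic analogues of Lemmas~\ref{AusdehnenGlobal} and~\ref{Inverse} (as in \cite[Lemma~2.2]{Bob}) to make the extension a biholomorphism. One small remark: Lemma~\ref{AusdehnenGlobal} as stated assumes compactness of the source, which $\til{U}_{i,5}$ does not have, so strictly speaking you should appeal directly to the complexification property (every real analytic map on an open subset of $\til{M}$ extends holomorphically to a neighbourhood in $M_\C$, cf.\ \cite[Chapter~2]{Bob}) rather than to Lemma~\ref{AusdehnenGlobal} verbatim; you essentially acknowledge this by citing \cite{Bob}, but it is worth being explicit.
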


\begin{convention}
\begin{compactitem}
\item On $\R^m$ respectively on $\C^m$ we use the Euclidean norm.
\item If $f$ is a differentiable  map on an open subset $U$ of $\R^m_k$ respectively $\R^m$ respectively $\C^m$ then $f'$ always means the first derivative as a map from $U$ to $\mathcal{L}(\R^m)$ respectively $\mathcal{L}(\C^m)$. We equip $\mathcal{L}(\R^m)$ respectively $\mathcal{L}(\C^m)$ with the operator norm.
\item Let $\K \in \set{\R,\C}$. If $f\co X \ra \K^m$ is a map, we write $\|f\|_\8^U:= \sup\set{\|f(x)\|:x\in U}$ and $\|f\|_\8:=\|f\|_\8^X$ for the uniform norm.
\item We write $B_r(x)$ for balls with radius $r$ in $\R^m$ and $B_r(x)^\C$ for balls with radius $r$ in $\C^m$. Moreover we define $B^k_r(x):= B_r(x)\cap \R^m_k$.
\item We fix charts $\ph_i$, $\til{\ph}_i$ and ${\ph}^\ast_i$ like in Lemma \ref{Existenz von Karten}. Moreover we define
\begin{align*}
\til{U}_{i,r}:= \til{\ph}\inv(B_r(0)) \tx{and} {U}_{i,r}:= {\ph}\inv(B^{k_i}_r(0)).
\end{align*}
\item Let $g_i$ be the Riemannian metric on $B_5(0)$ that is induced by $g$ via $\til{\ph}_i$ and let $\exp_i \co \til{\Omega}_i  \ra B_5(0)$ be the exponential map on $B_5(0)$ that is induced by $g_i$.
\item If $\eta \in \gg^\o(TM)$ we define $\eta_{(i)}:= d\ph_i \ci \eta \ci \ph_i\inv \co B_5^{k_i}(0) \ra \R^m$. If $U \subs \til{M}$ is an open neighbourhood of $M$ in $\til{M}$ respectively in $M_\C$ and $\eta \in \gg^\o(TU)$ respectively $\eta \in \gg^\8_\C(TU)$ we define $\eta_{(i)}:= d\til{\ph}_i \ci \eta \ci \til{\ph}_i\inv \co \til{\ph}_i(U_{i,5}\cap U)\subs B_5(0) \ra \R^m$ respectively $\eta_{(i)}:= d{\ph}^\ast_i \ci \eta \ci {{\ph}^\ast_i}\inv \co {\ph}^\ast_i({U}^\ast_{i,5}\cap U) \subs {V}^\ast_{i,5} \ra \C^m$.
\item If $\eta \in \gg^\o(TM)$ we can use the identity theorem to obtain an extension that is a real analytic vector field $\til{\eta}$ of an neighbourhood of $M$ in $\til{M}$. 
Analogously we write ${\eta}^\ast$ for a complex analytic extension to  a vector fields on an open neighbourhood of $M$ in the  enveloping complex analytic manifold $M_\C:= (\til{M})_\C$.
\item If $f \in C^\o(B^{k}_R(0); \R^m)$ we write $\til{f}$ for an extension to an open neighbourhood of $B_R^k(0)$ in $\R^m$ and ${f}^\ast$ for an extension to an open neighbourhood of $B_R^k(0)$ in $\C^m$.
\item Given a compact connected set $K$ in a topological space $X$, we call a sequence of open connected relative compact subsets $(U_n)_{n\in \N}$ of $X$ a connected filtration of $K$ if ${U}_n \sups \ol{U}_{n+1} \sups K$ for all $n \in \N$ and $(U_n)_{n\in\N}$ is a neighbourhood basis of $K$ in $X$.
\item If $U$ is open in $\C^m$ and $k\in \set{0,1}$ we define $\Hol^k_b(U;\C^m):= \set{f\in \Hol(U;\C^m): \|f\|_\8,\|f'\|_\8<\8}$. With $\|\bl\|^1 \co \Hol^1_b(U;\C^m) \ra [0,\8[$, $f\ms \max(\|f\|_\8, \|f'\|_\8)$ respectively $\|\bl\|^0:= \|\bl\|_\8$ the space $\Hol^k_b(U;\C^m)$ becomes a Banach space.
\item If $\mathbb{V}$ is a vector bundle over a manifold $M$ and $K$ is a compact subset of $M$ then write $\gg(\mathbb{V}|K)$ for the space of germs of vector fields along $K$. If $K$ is a compact subset of $\C^m$ we write  $\mathcal{G}(\C;\C|K)$, for the space of germs of complex analytic functions along $K$.
\item If $U\subs \C^m$ is open we define $\Hol^k_b(U;\C^m)^\R:= \set{f\in \Hol^k_b(U;\C^m): f(U \cap \R^m) \subs \R^m}$. Analogously we define $\mathcal{G}(\C;\C|K)^\R$.
\end{compactitem}
\end{convention}

\begin{definition}
\begin{compactenum}
\item\label{strat} We call a section $\eta \in \Gamma^{\o}(TM)$ stratified if 
\begin{align*}
p \in \partial^j M \Rightarrow \eta(p) \in T_p\partial^jM
\end{align*}
for all $p \in M$ and write $\Gamma_\str^\o(TM)$ for the subspace of stratified sections.
\item A map $\eta \colon \ol{B}_r^k(0)\ra \R^m$ respectively $\eta \co {B}_r^k(0) \ra \R^m$ is called stratified if 
\begin{align*}
x_j = 0 \Rightarrow \eta(x)_j=0
\end{align*}
for all $x\in B_r^k(0)$ and $j=1,\dots,k$. With respect to the canonical identification this definition coincides with the one of (\ref{strat}). We write $C^\o(B_r^k(0); \R^m)_\str$ for the subspace of stratified real analytic maps.
\item Let $\K \in \set{\R,\C}$, $U\subs \K^m$ be open and $B^k_r(0) \subs U$. A $\K$-analytic map $f\co U \ra \K^m$ is called stratified along $B^k_r(0)$ if 
\begin{align*}
x_j = 0 \Rightarrow \eta(x)_j=0
\end{align*}
for all $x\in B_r^k(0)$ and $j=1,\dots,k$. We write $C^\o(U;\R^m)_\str$ respectively $\Hol(U;\C^m)_\str$ for the subspaces of stratified maps.
\item A germ $[f] \in \mathcal{G}(\C^m;C^m|B_r^k(0))$ is called stratified if one and hence all representatives are stratified along $B^k_r(0)$. We write $\mathcal{G}(\C^m;C^m|B_r^k(0))_\str$ for the subspaces of stratified germs.
\end{compactenum}
\end{definition}

\begin{remark}\label{SilvaUVR}
A section $\eta \in \gg^\o(TM)$ is stratified if and only if for all $i \in \set{1,\dots,n}$ there exists $R \in [1,5]$ such that $\eta_{(i)} \co B^{k_i}_R(0) \ra \R^m$ is stratified.
\end{remark}

\subsection{Topological considerations}
In the following we recall the definition of a Silva space for the convenience of the reader. For more details see \cite[p. 260]{Gloeckner4} and \cite{Leslie}.
\begin{definition}
A locally convex space is called a \it{Silva space} if it is the direct limit of Banach spaces in the category of locally convex spaces such that the transition maps are compact operators.
\end{definition}

The facts from the following Lemma \ref{SilvaGleichTop} about Silva spaces are a direct consequence of \cite[Proposition 4.5]{Gloeckner4}. See also \cite{Leslie}.
\begin{lemma}\label{SilvaGleichTop}
Let $(E,\ph_i)$ be a Silva space over an inductive system $(E_i,T_{i,j})$. Then the following holds:
\begin{compactenum}
\item The space $E$ is Hausdorff.
\item\label{SilvaGleichTopB} The topology on $E$ coincides with the inductive limit topology in the category of topological spaces.
\item Let $(U_i)_{i\in\N}$ be a sequence of open sets $U_i \subs E_i$, such that $(\ph_i(U_i))$ is an ascending sequence of subsets of $E$. Then $\bigcup_{n\in \N} U_n \subs E$ is open in $E$.
Moreover a map $f\co U \ra F$ into an other Hausdorff locally convex space $F$ is smooth if $f|_{U_n} \co U_n \ra F$ is smooth for all $n\in \N$.
\end{compactenum}
\end{lemma}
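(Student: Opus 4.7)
The plan is to derive all three assertions from \cite[Proposition 4.5]{Gloeckner4}, which is the main structural result for Silva spaces on which this lemma relies. The three assertions progress in a natural order: first separation, then the identification of the locally convex inductive-limit topology with the topological one, and finally the consequence for open unions and smoothness of maps.

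For (a), I would appeal to the well-known fact that Silva (or DFS) spaces are Hausdorff: the compactness of each $T_{i,i+1}$ means that for any two distinct points $x,y \in E$ one finds a level $E_n$ and a continuous seminorm on some later step $E_{n+k}$ that separates them, and these seminorms assemble under the universal property to a continuous seminorm on $E$ that separates $x$ and $y$. For (b), this is precisely the statement of \cite[Proposition 4.5]{Gloeckner4}: for compact (and hence Silva) inductive spectra, the locally convex inductive limit topology agrees with the inductive limit topology in the category of topological spaces. This is the property that fails for general inductive limits of locally convex spaces and that we will exploit in (c).

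For (c), by (b) a subset $W \subs E$ is open if and only if $\ph_i^{-1}(W)$ is open in $E_i$ for every $i$. Since $E$ is Hausdorff, the maps $\ph_i$ are injective, so for $U := \bigcup_n \ph_n(U_n)$ we obtain
\begin{align*}
\ph_k^{-1}(U) \;=\; \bigcup_{n\geq k} \ph_k^{-1}\bigl(\ph_n(U_n)\bigr) \;=\; \bigcup_{n\geq k} T_{k,n}^{-1}(U_n),
\end{align*}
which is open in $E_k$ as a union of preimages of open sets under continuous linear maps; hence $U$ is open in $E$. For the smoothness assertion, one again uses (b): since $U = \bigcup_n \ph_n(U_n)$ is exhausted by the images of the open sets $U_n$ in the Banach steps, the directional derivatives of $f$ at a point $x \in \ph_n(U_n)$ in a direction $v = \ph_n(v_n)$ can be computed level-wise from the smoothness of $f|_{U_n}$, and the iterated differentials $d^k f$ assemble to well-defined maps on $U \times E^k$ whose continuity again reduces to the topological identification in (b).

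The main technical obstacle is the smoothness claim rather than the openness one: one has to verify that the level-wise differentials $d^k (f|_{U_n})$ are compatible under the transition maps and that the resulting global maps $d^k f \co U \times E^k \ra F$ are continuous on the Silva topology of $U \times E^k$. Both points follow, however, from the continuity of each $T_{i,j}$ together with the characterization of continuity on Silva spaces provided by part (b), so no new ingredient beyond \cite[Proposition 4.5]{Gloeckner4} is required.
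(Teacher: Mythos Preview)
Your proposal is correct and takes essentially the same approach as the paper: the paper does not give an independent proof at all but simply states that the lemma is a direct consequence of \cite[Proposition 4.5]{Gloeckner4} (with a further reference to \cite{Leslie}), which is precisely the source you invoke. You supply more detail than the paper does for parts (a) and (c); one small caveat is that injectivity of the $\ph_i$ does not follow from Hausdorffness alone but from the standard assumption (implicit in the paper's applications) that the bonding maps $T_{i,j}$ are injective.
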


\begin{remark}
With Lemma \ref{SilvaGleichTop} (\ref{SilvaGleichTopB}) we conclude that a closed subspace $F$  of $E$ is a Silva space over the inductive system $(F_i,T_{i,j}|_{F_i})$ with $F_i:= \ph_i\inv(F)$.
\end{remark}

As in the proof of \cite[A.7]{Bob2}, we use in the following lemma the Cauchy integral formula to obtain an upper bound of the derivative of a complex analytic function.
\begin{lemma}
If $U$ and $V$ are open subsets of $\C^m$ such that $V$ is relative compact and $V\subs \ol{V}\subs U$, then the map $\Hol_b^0(U;\C^m) \ra \Hol_b^1(V;\C^m)$, $f \ms f|_V$ is continuous linear.  
\end{lemma}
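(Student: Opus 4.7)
Linearity of the restriction map is obvious, so the task reduces to producing a constant $C\geq 0$ with
\[
\|f|_V\|^1 \;\le\; C\,\|f\|_\infty^U \qquad\text{for every } f\in \Hol_b^0(U;\C^m).
\]
The bound on the sup part is free, since $\|f|_V\|_\infty \le \|f\|_\infty^U$. The substance of the lemma is the bound on $\|(f|_V)'\|_\infty$, and the plan is to obtain it from the Cauchy integral formula. Because $\ol V$ is compact and $\C^m\setminus U$ is closed and disjoint from $\ol V$, the number
\[
r \;:=\; \operatorname{dist}\bigl(\ol V,\,\C^m\setminus U\bigr)
\]
is strictly positive. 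Consequently, for every $z\in V$ and every $r'\in\,]0,r[$ the closed polydisc $\ol{D}(z,r'):=\prod_{j=1}^m \{w_j\in\C : |w_j-z_j|\le r'\}$ is contained in $U$.

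Next, I would apply the one–variable Cauchy integral formula coordinate by coordinate. For each component $f_i$ of $f$ and each index $j$, integrating around the circle $|w_j-z_j|=r'$ while freezing the remaining coordinates at $z_k$ yields
\[
\left|\frac{\partial f_i}{\partial z_j}(z)\right| \;\le\; \frac{1}{r'}\,\|f\|_\infty^U.
\]
Summing (or passing to the Frobenius bound) gives $\|f'(z)\|_{\op}\le \frac{m}{r'}\,\|f\|_\infty^U$, and letting $r'\nearrow r$ produces
\[
\bigl\|(f|_V)'\bigr\|_\infty \;\le\; \frac{m}{r}\,\|f\|_\infty^U.
\]

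Combining the two estimates, the restriction map satisfies
\[
\|f|_V\|^1 \;=\; \max\bigl(\|f|_V\|_\infty,\ \|(f|_V)'\|_\infty\bigr) \;\le\; \max\!\bigl(1,\ m/r\bigr)\,\|f\|_\infty^U,
\]
which is the required continuity estimate. I do not anticipate any real obstacle: the only non-routine point is keeping track of the operator-norm constant coming from the several-variable Cauchy estimate, and this is a standard computation using the distance $r>0$ between the compact set $\ol V$ and the complement of $U$.
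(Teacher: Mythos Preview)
Your approach is essentially the same as the paper's: find a uniform $\varepsilon>0$ with $\ol V + \ol B^\C_\varepsilon(0)\subseteq U$ (the paper does this via Wallace's lemma, you via $r=\operatorname{dist}(\ol V,\C^m\setminus U)$), then apply a Cauchy estimate. One small slip: with the Euclidean norm in use here, the full polydisc $\ol D(z,r')$ is \emph{not} contained in $U$ for $r'<r$ when $m\ge 2$, since its corners reach distance $r'\sqrt{m}$; fortunately your actual integration only uses the one-coordinate circles $\{w:w_k=z_k,\ |w_j-z_j|=r'\}$, which do lie in the Euclidean $r'$-ball, so the argument goes through. The paper sidesteps this bookkeeping by applying the one-variable Cauchy formula directly along an arbitrary unit direction $v\in\C^m$, obtaining the dimension-free bound $\|f'(p)(v)\|\le\frac{2}{\varepsilon}\|f\|_\infty$ rather than your $m/r$.
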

\begin{proof}
Let $\mu \co \C^m \ti \C^m \ra \C^m$, $(x,y) \ms x+y$ be the addition on $\C^m$. Now $\ol{V} \ti\set{0} \subs \mu\inv(U)$. With the Lemma of Wallace we find $\ep>0$ with $\ol{V} + \ol{B}_\ep^{\C}(0) \subs U$. Hence for all $p \in V$ we have $\ol{B}_\ep(p) \subs U$. If $v\in \C^m$ with $\|v\|=1$ we can use the Cauchy integral formula and get
\begin{align*}
\|f'(p)(v)\|\leq \frac{2}{\ep} \cdot \sup_{q \in \ol{B}^\C_\ep(p)} \|f(q)\| \leq \frac{2}{\ep} \cdot \|f\|_\8.
\end{align*}
Hence $\|f'\|^V_\8 \leq \frac{2}{\ep}\cdot \|f\|_\8^U$.
\end{proof}

\begin{definition}
Let $K$  be a connected compact subset of $\C^m$ and $(U_n)_{n\in \N}$ a connected filtration of $K$. As in \cite[Appendix A]{Bob2} we give $\mathcal{G}(\C^m;\C^m|K)$ the direct limit topology induced by the inductive system $\Hol^0_b(U_n ; \C^m) \ra \Hol^0_b(U_{n+1} ; \C^m)$, $f \ms f|_{U_{n+1}}$ in the category of locally convex spaces. Because of the diagram 
\begin{align*}
\begin{xy}\xymatrixcolsep{0.2pc}
\xymatrix{
&\cdots \Hol_b^0(U_n;\C^m)\ar[dr]^-{\text{res}}\ar[rr]^-{\text{res}}&  &\Hol_b^0(U_{n+1};\C^m)  \cdots\\
\cdots \Hol_b^1(U_n;\C^m)\ar@{^{(}->}[ur]\ar[rr]^-{\text{res}}&  &\Hol_b^1(U_{n+1};\C^m)\ar@{^{(}->}[ur]\cdots &                    
}
\end{xy}
\end{align*}
$\mathcal{G}(\C^m;\C^m|K)$ is the direct limit of the inductive system $\Hol^0_b(U_n ; \C^m) \ra \Hol^0_b(U_{n+1} ; \C^m)$, $f \ms f|_{U_{n+1}}$ in the category  of locally convex spaces.
\end{definition}

In  \cite[A.10]{Bob2} it was shown that $\mathcal{G}(\C^m;\C^m|K)$ becomes a Silva space as the inductive limit $\varinjlim \Hol^0_b(U_n;\C^m)$. In the following we show that $\mathcal{G}(\C^m;\C^m|K)$ also becomes an Silva space as the inductive limit $\varinjlim \Hol^1_b(U_n;\C^m)$.
\begin{lemma}
Let $U$ and $V$ be open subsets of $\C^m$ such that $V$ is relative compact and $V \subs \ol{V} \subs U$. Then the restriction $T_1\co \Hol^1_b(U;\C^m) \ra \Hol^1_b(V;\C^m)$, $\eta \ms \eta|_V$ is a compact operator. Hence if $K \subs \C^m$ is  connected and compact with a connected filtration $(U_n)_{n\in \N}$, then the space  $\mathcal{G}(\C^m;\C^m|K)$ becomes a Silva space as the inductive limit $\varinjlim \Hol^1_b(U_n;\C^m)$. Moreover $\mathcal{G}(\C^m;\C^m|\ol{B}_{r}^{k}(0))_{\str}^\R$ becomes an Silva space as the inductive limit $\varinjlim \Hol^1_b(U_n;\C^m)_\str^\R$ (see Lemma \ref{SilvaUVR}).
\end{lemma}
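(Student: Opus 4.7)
The plan is to establish compactness of $T_1$ by factoring it through the $\Hol^0_b$-spaces on an intermediate domain and then invoking Montel's theorem together with the continuous restriction from the previous lemma. First I would pick an open set $W$ with $\ol{V}\subs W\subs \ol{W}\subs U$ and $\ol{W}$ compact (such a $W$ exists since $\ol{V}$ is a compact subset of the open set $U$ in $\C^m$). Then I factor $T_1$ as
\begin{align*}
\Hol^1_b(U;\C^m)\xra{\iota} \Hol^0_b(U;\C^m)\xra{R_0} \Hol^0_b(W;\C^m)\xra{S} \Hol^1_b(V;\C^m),
\end{align*}
where $\iota$ is the continuous inclusion (using $\|\bl\|_\8\leq \|\bl\|^1$), $R_0$ is restriction, and $S$ is continuous by the previous lemma applied to the pair $(W,V)$. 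The key input is that $R_0$ is compact: a bounded sequence in $\Hol^0_b(U;\C^m)$ is uniformly bounded on the compact set $\ol{W}$, so by Montel's theorem a subsequence converges uniformly on $\ol{W}$ to a holomorphic function, which restricted to $W$ lies in $\Hol^0_b(W;\C^m)$. The composition of a compact operator with continuous ones is compact, so $T_1$ is compact.

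For the second statement, the defining property of a connected filtration gives $\ol{U}_{n+1}\subs U_n$, so the hypothesis of the first part applies to each consecutive pair, making every transition map of the system $\bigl(\Hol^1_b(U_n;\C^m)\bigr)_{n\in \N}$ compact. Hence the locally convex direct limit is a Silva space by definition. To identify it with the space $\mathcal{G}(\C^m;\C^m|K)$ already defined as $\varinjlim \Hol^0_b(U_n;\C^m)$, I would invoke the interlacing diagram already displayed in the preceding definition: restricting from $\Hol^0_b(U_n;\C^m)$ to $\Hol^1_b(U_{n+1};\C^m)$ is well defined (by the argument in the lemma just proved) and the continuous inclusion $\Hol^1_b(U_n;\C^m)\hookrightarrow \Hol^0_b(U_n;\C^m)$ goes the other way, so the two inductive systems are mutually cofinal. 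The universal property of the direct limit in the category of locally convex spaces then forces both systems to share the same colimit, which is therefore the Silva space $\mathcal{G}(\C^m;\C^m|K)$.

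For the stratified real subspace, the strategy is to apply the remark following Lemma \ref{SilvaGleichTop} on closed subspaces of Silva spaces. First I would check that $\Hol^1_b(U_n;\C^m)_\str^\R$ is a \emph{closed} linear subspace of $\Hol^1_b(U_n;\C^m)$: both the stratification conditions (vanishing of the prescribed coordinates at points of $B^k_r(0)$) and the reality condition (values in $\R^m$ at points of $U_n\cap \R^m$) are cut out by continuous evaluation functionals and hence survive uniform convergence. The cited remark then equips the closed subspace with a Silva space structure as the direct limit of the induced system, which is exactly $\varinjlim \Hol^1_b(U_n;\C^m)_\str^\R$. The step that requires the most care is checking that the preimage of $\mathcal{G}(\C^m;\C^m|\ol{B}^k_r(0))_\str^\R$ under the canonical map from $\Hol^1_b(U_n;\C^m)$ is exactly $\Hol^1_b(U_n;\C^m)_\str^\R$; this reduces to showing that the stratification and reality properties of a germ are reflected on any representative restricted to $U_n$, which follows from the identity theorem on the connected components of $U_n$ together with the fact that $B^k_r(0)\subs U_n$ already sees the defining conditions.
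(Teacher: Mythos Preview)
Your argument is correct and follows essentially the same route as the paper: choose an intermediate $W$ with $\ol{V}\subs W\subs\ol{W}\subs U$, factor $T_1$ through the inclusion $\Hol^1_b(U;\C^m)\hookrightarrow\Hol^0_b(U;\C^m)$, the restriction $\Hol^0_b(U;\C^m)\to\Hol^0_b(W;\C^m)$, and the continuous map $\Hol^0_b(W;\C^m)\to\Hol^1_b(V;\C^m)$ from the preceding lemma, and use compactness of the middle restriction. The only cosmetic difference is that the paper cites \cite[Theorem~3.4]{Kriegel u. MichorII} and \cite[A.10]{Bob2} for compactness of that middle map, whereas you invoke Montel's theorem directly; your additional elaboration on the identification of the two inductive systems and on the closed-subspace argument for the stratified real part is more detailed than what the paper writes out, but entirely in the same spirit.
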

\begin{proof}
We choose an open relative compact subset $W$ of $\C^m$ such that $\ol{V} \subs W \subs \ol{W} \subs U$. From \cite[Theorem 3.4]{Kriegel u. MichorII} and \cite[A. 10]{Bob2} we deduce that the map $T_0 \co \Hol^0_b(U;\C^m) \ra \Hol^0_b(W;\C^m)$, $\eta \ms \eta|_W$ is a compact operator.  Now we consider the following diagram:
\begin{align*}
\begin{xy}\xymatrixcolsep{5pc}
\xymatrix{
\Hol_b^1(U;\C^m) \ar[r]^{T_1} \ar@{_{(}->}[d]& \Hol_b^1(V;\C^m)\\
\Hol_b^0(U;\C^m) \ar[r]^{T_0}& \Hol_b^0(W;\C^m)\ar[u]_{\text{res}}
}
\end{xy}
\end{align*} 
where the second vertical arrow is the restriction to $V$. Because $T_0$ is compact and both vertical arrows are continuous linear, we get that $T_1$ is compact.
\end{proof}

\begin{definition}
\begin{compactenum}
\item We topologies the germs of vector fields around a compact set in the same way as  Dahmen and Schmeding respectively Krigel and Michor (see \cite{Kriegel u. MichorII} respectively \cite{Bob2}). Hence we give  $\gg^\o(T\til{M}|M)$ the topology of a  subspace of $\gg^\o(T\til{M}|M)_\C = \gg(TM_\C|K)$. With help of the bijection $\gg^\o(TM) \ra \gg^\o(T\til{M}|M)$, $\eta\ms [\til{\eta}]$ we turn $\gg^\o(TM)$ into a locally convex space. Therefore the closed subspace $\gg_\str^\o(TM)$ becomes a locally convex space. Given $R \in [1,5[$ we use \cite[Lemma A.16]{Bob2} and see that
\begin{align*}
\gg^\o_\str(TM) \ra \prod_{i=1}^n \mathcal{G}(\C;\C|\ol{B}^{k_i}_{R}(0))_\str^\R, ~ \eta \ms [{\eta}^\ast_{(i)}]
\end{align*}
is a linear topological embedding with closed image.
\item Given $r,\ep>0$ and $k\in \N_0$ we write 
\begin{align*}
\mathcal{B}_{r,\ep}^k:= \set{\eta\in \Gamma_\str^\o(TM): (\forall i \in \set{1,..,n}) ~ \|\eta_{(i)}\|_{\ol{B}^{k_i}_r(0)}^k <\ep}
\end{align*}
\end{compactenum}
\end{definition}

\begin{lemma}\label{SimultaneFortsetzung}
If $r \in ]0,5[$, $\ep>0$ and $f \co B_5^{k}(0) \ra \R^m$ is a real analytic map with $\|f\|^1_{\ol{B}_r^{k}(0)} <\ep$, then there exist a complex analytic map ${f}^\ast \co U \ra \C^m$ with $\|{f}^\ast\|_\8^1<\ep$ on an open subset $U \subs \C^m$ with $\ol{B}_r^{k}(0) \subs U$ such that ${f}^\ast|_{\ol{B}_r^{k}(0)} = f|_{\ol{B}_r^{k}(0)}$.
\end{lemma}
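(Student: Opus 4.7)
My plan is to first extend $f$ to a complex analytic map on some open complex neighbourhood $V$ of $\ol{B}_r^k(0)$ in $\C^m$, and then to shrink $V$ to an open set $U$ on which the desired norm bound $\|\bl\|^1<\ep$ holds. For the complex analytic extension, the previous lemma about extending real analytic maps from open subsets of $[0,\8[^m$ to open subsets of $\R^m$ transfers without changes to the setting of $\R^m_k=[0,\8[^k\ti\R^{m-k}$ (by reflecting only the first $k$ coordinates), yielding an open neighbourhood $W\subs \R^m$ of $B_5^k(0)$ and a real analytic extension $\til{f}\co W\ra \R^m$ of $f$. Since $\ol{B}_r^k(0)\subs W$, the locally convergent Taylor series of $\til{f}$ around each point of $\ol{B}_r^k(0)$ converges on a polydisc in $\C^m$; gluing these local complex analytic extensions via the Identity Theorem for complex analytic maps produces a complex analytic map $g\co V\ra \C^m$ on some open $V\subs \C^m$ with $\ol{B}_r^k(0)\subs V$ and $g|_{\ol{B}_r^k(0)}=f|_{\ol{B}_r^k(0)}$.

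For the shrinking step, I first observe that for every real point $x\in \ol{B}_r^k(0)$ the complex derivative $g'(x)\in \L(\C^m)$ is the $\C$-linear extension of $f'(x)\in \L(\R^m)$, and that the Euclidean operator norm of a real linear map coincides with the Euclidean operator norm of its complexification (both equal the largest singular value of the common matrix). Hence $\|g(x)\|<\ep$ and $\|g'(x)\|<\ep$ for all $x\in \ol{B}_r^k(0)$. Setting
\begin{align*}
\d:=\ep-\|f\|^1_{\ol{B}_r^k(0)}>0
\end{align*}
and fixing some $\d'\in\,]0,\d[$, continuity of $g$ and $g'$ at each $x\in \ol{B}_r^k(0)$ produces an open neighbourhood $W_x\subs V$ with $\|g(z)\|<\|g(x)\|+\d'$ and $\|g'(z)\|<\|g'(x)\|+\d'$ for all $z\in W_x$. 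The open set $U:=\bigcup_{x\in \ol{B}_r^k(0)}W_x$ satisfies $\ol{B}_r^k(0)\subs U\subs V$, and any $z\in U$ lies in some $W_x$, so both $\|g(z)\|$ and $\|g'(z)\|$ are bounded above by $\|f\|^1_{\ol{B}_r^k(0)}+\d'<\ep$. Defining $f^\ast:=g|_U$ gives a complex analytic map with $\|f^\ast\|^1_\8<\ep$ and $f^\ast|_{\ol{B}_r^k(0)}=f|_{\ol{B}_r^k(0)}$, as required.

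The main obstacle is conceptual rather than computational: converting a pointwise strict inequality on a compact set into a uniform strict inequality on an open neighbourhood. This is exactly what the margin $\d-\d'>0$ together with the open-cover argument above achieves. The only auxiliary fact needed is the equality of Euclidean operator norms under complexification of a real linear map, which is elementary.
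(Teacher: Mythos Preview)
Your argument is correct. The only technical point to note is that the step ``$g'(x)$ is the $\C$-linear extension of $f'(x)$'' really uses that $g$ agrees with the real analytic extension $\til f$ on an \emph{open} real neighbourhood of $x$ (not just on $\ol B_r^k(0)$); this is clear from your construction, but it is worth making explicit, since the paper's convention for $f'$ on a corner set is precisely ``derivative of any real analytic extension''.

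Your route, however, is different from the paper's. The paper does not extend $f$ and then shrink. Instead it bundles $f$ and $f'$ into a single real analytic map
\[
\ph\co \ol B_r^k(0)\ \longrightarrow\ B_\ep^{\C^m}(0)\times B_\ep^{\mathcal L(\C^m)}(0),\qquad x\ \longmapsto\ (f(x),f'(x)),
\]
and applies the extension machinery of Section~\ref{SecEnveloping Manifold} to $\ph$ as a map into the \emph{open} target $B_\ep\times B_\ep$. This yields a complex analytic extension $\ph^\ast=(\ph_1^\ast,\ph_2^\ast)$ on a connected neighbourhood $U$ whose values automatically satisfy $\|\ph_1^\ast\|_\8<\ep$ and $\|\ph_2^\ast\|_\8<\ep$; the remaining work is to show $(\ph_1^\ast)'=\ph_2^\ast$ via the Identity Theorem, whence $f^\ast:=\ph_1^\ast$ has the required $\|\cdot\|^1$ bound. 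In effect the paper hides the shrinking step inside the general principle ``a real analytic map into an open set extends complex analytically into the same open set'', at the price of an extra identity-theorem verification. Your approach is more hands-on: extend $f$ alone, then use compactness of $\ol B_r^k(0)$ and continuity of $g,g'$ to carve out $U$. Both arguments ultimately need the equality of Euclidean operator norms under complexification; the paper uses it implicitly when asserting $f'(x)\in B_\ep^{\mathcal L(\C^m)}(0)$, whereas you invoke it explicitly.
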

\begin{proof}
We define the real analytic map
\begin{align*}
\ph \co \ol{B}_r^{k}(0) \ra B_\ep^{\C^m}(0) \ti B_\ep^{\mathcal{L}(\C^m)}(0), ~x \ms (f(x), f'(x)),
\end{align*}
where we consider $\ol{B}_r^{k}(0)$ as a real analytic manifold with corners. We find a connected open neighbourhood $U \subs \C^m$ of $\ol{B}_{r}^{k}(0)$ and an extension ${\ph}^\ast \co U \ra B_\ep^{\C^m}(0) \ti B_\ep^{\mathcal{L}(\C^m)}(0)$ of $\ph$.
If $x \in B_r^{k_i}(0)$ and $v \in \R^m$, we get with \cite[Lemma 1.6.5]{GloecknerNeeb} that ${{\ph}^\ast_1}'(x)(v) = f'(x)(v)= {\ph}_2^\ast(x)(v)$. Using the linearity over $\C$ we conclude ${{\ph}_1^\ast}'(x)  = {\ph}_2^\ast(x)$ for $x\in B_{r}^{k_i}(0)$. Hence ${{\ph}^\ast_1}' = {\ph}^\ast_2$. Therefore ${\ph}^\ast_1$ is an extension of $f$ as needed.
\end{proof}

\begin{lemma}
Given $\ep>0$, $r\in [1,5[$ and $k\in \set{0,1}$, the set $\mathcal{B}_{r,\ep}^k$ is an open  $0$-neighbourhood in $\gg_\str^\o(TM)$.
\end{lemma}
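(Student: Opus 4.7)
The plan is to realise $\mathcal{B}_{r,\ep}^k$ as the preimage of an open set under the topological embedding
\[
\iota \co \Gamma_\str^\o(TM) \hookrightarrow \prod_{i=1}^n \mathcal{G}(\C;\C|\ol{B}_R^{k_i}(0))_\str^\R, \qquad \eta \ms ([\eta^\ast_{(i)}])_i,
\]
where I fix once and for all some $R \in [r,5[$, so that Lemma~A.16 of \cite{Bob2} applies and the embedding is continuous. Since $\mathcal{B}_{r,\ep}^k$ contains the zero section trivially, only openness needs to be argued.

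First, for each $i\in\{1,\dots,n\}$ I would introduce the functional
\[
Q_i \co \mathcal{G}(\C;\C|\ol{B}_R^{k_i}(0))_\str^\R \ra [0,\8[,\qquad [h] \ms \|h|_{\ol{B}_r^{k_i}(0)}\|^k,
\]
which is well-defined because any two representatives agree on an open neighbourhood of $\ol{B}_R^{k_i}(0)\sups \ol{B}_r^{k_i}(0)$, and because $h|_{\ol{B}_r^{k_i}(0)}\co \ol{B}_r^{k_i}(0)\ra \C^m$ is continuous (with bounded derivative when $k=1$) on the compact set $\ol{B}_r^{k_i}(0)$, so that the value is finite. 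Note that $Q_i$ is a seminorm on the germ space.

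Next I would show that $Q_i$ is continuous. Since $\mathcal{G}(\C;\C|\ol{B}_R^{k_i}(0))_\str^\R$ is a Silva space as the inductive limit $\varinjlim \Hol^k_b(U_n;\C^m)_\str^\R$ for a connected filtration $(U_n)_{n\in\N}$ of $\ol{B}_R^{k_i}(0)$, Lemma~\ref{SilvaGleichTop}(b) reduces continuity of $Q_i$ to continuity of each composition
\[
\Hol^k_b(U_n;\C^m)_\str^\R \ra [0,\8[, \qquad h \ms \|h|_{\ol{B}_r^{k_i}(0)}\|^k.
\]
This in turn follows from the obvious estimate $\|h|_{\ol{B}_r^{k_i}(0)}\|^k \leq \|h\|^k$ (using $\ol{B}_r^{k_i}(0)\subs U_n$ and the fact that for a complex-analytic $h$ whose values and derivative on $\ol{B}_r^{k_i}(0)\subs \R^m$ are real, the operator norm of $h'(x)\in \Mat(m,\R)\subs \Mat(m,\C)$ is the same whether computed on $\R^m$ or on $\C^m$).

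Finally, since $\iota$ is continuous and each $Q_i$ is continuous, the composition $\eta \ms Q_i([\eta^\ast_{(i)}]) = \|\eta_{(i)}\|_{\ol{B}_r^{k_i}(0)}^k$ is a continuous seminorm on $\Gamma_\str^\o(TM)$, and
\[
\mathcal{B}_{r,\ep}^k = \bigcap_{i=1}^n \bigl\{ \eta\in \Gamma_\str^\o(TM) : Q_i([\eta^\ast_{(i)}]) < \ep \bigr\}
\]
is open. The main technical point I expect to double-check is the coincidence of the real and complex operator norms for the derivative on the real ball in the $k=1$ case; everything else is a straightforward consequence of the Silva-space description of the germ space and the universal property recorded in Lemma~\ref{SilvaGleichTop}.
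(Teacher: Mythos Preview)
Your argument is correct, and it takes a genuinely different route from the paper. You show that the assignment $\eta \mapsto \|\eta_{(i)}\|^k_{\ol{B}_r^{k_i}(0)}$ is a continuous seminorm on $\Gamma^\o_\str(TM)$ by restricting to the real ball and checking continuity step-by-step on the Silva space; your remark that for $h\in\Hol^k_b(U_n;\C^m)^\R_\str$ the derivative $h'(x)$ at real points is (the complexification of) a real matrix, so that the real and complex operator norms agree, is exactly the point that makes this work. The paper instead embeds via germs at $\ol{B}^{k_i}_r(0)$ and identifies $\mathcal{B}^k_{r,\ep}$ with the preimage of the ascending union $\bigcup_{n}[\Hol^k_\ep(U^i_n;\C^m)]$, which is open in the Silva space by Lemma~\ref{SilvaGleichTop}. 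The nontrivial step there is the equality $(\dagger)=\mathcal{B}^k_{r,\ep}$: one inclusion is obvious, but the other requires Lemma~\ref{SimultaneFortsetzung}, which produces from a real-analytic map with $\|\cdot\|^1_{\ol{B}_r^{k}(0)}<\ep$ a complex-analytic extension whose full complex $\|\cdot\|^1$-norm on a neighbourhood is still $<\ep$. Your approach bypasses this extension lemma entirely, which is a real simplification; on the other hand, the paper's route yields the explicit description of the corresponding open set in the germ space as $G^k_\ep(\C^m;\C^m|\ol{B}^{k_i}_r(0))=\bigcup_n[\Hol^k_\ep(U^i_n;\C^m)]$, which is reused later when proving smoothness of composition and inversion.
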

\begin{proof}
Let $(U_n^i)_{n\in \N}$ be a connected filtration of $\ol{B}^{k_i}_r(0)$ in $\C^m$. Then 
\begin{align*}
\mathcal{U} := G_\ep^k(\C^m;\C^m|\ol{B}^{k_i}_r(0)) = \bigcup_{n\in \N} \left[\Hol_\ep^k(U_{n}^i;\C^m)\right] 
\end{align*}
is open in $\mathcal{G}(\C^m;\C^m|\ol{B}^{k_i}_r(0))$, because the right-hand side is an ascending union. Hence the set 
\begin{align*}
(\dagger) :=\set{\eta \in \gg^\o_\str(TM): (\forall i) ~ [{\eta}^\ast_{(i)}] \in \mathcal{U}}
\end{align*}
is open in $\gg^\o_\str(TM)$. Now we calculate with Lemma \ref{SimultaneFortsetzung}:
\begin{align*}
&(\dagger) =  \set{\eta \in \gg^\o_\str(TM): (\forall i) ~ (\exists n\in \N) ~  [({\eta}^\ast)_{(i)}]= [({\eta_{(i)})^\ast}] \in \left[\Hol^k_\ep(U_n^i;\C^m)\right]}\\
=& \set{\eta \in \gg^\o_\str(TM): (\forall i) ~ (\exists n\in \N)~ {\eta}_{(i)} \tx{has an extension} {\eta_{(i)}^\ast} \in \Hol^k_\ep(U_n^i;\C^m)}\\
=& \set{\eta \in \gg^\o_\str(TM): (\forall i) ~ \|{\eta}_{(i)}\|_{\ol{B}_r^{k_i}(0)}^k <\ep} = \mathcal{B}^k_{r,\ep}.
\end{align*}
\end{proof}

\subsection{A local chart}

\begin{remark}\label{OB}
Obviously we have
\begin{align*}
\exp_i(T\til{\ph}_i(v)) = \til{\ph}_i(\exp(v)) 
\end{align*}
for all $v\in T\til{\ph}_i\inv(\til{\Omega}_i)$ respectively
\begin{align*}
\exp_i(x,v) = \til{\ph}_i (\exp(T\til{\ph}_i\inv(x,v)))
\end{align*}
for $(x,v)\in \til{\Omega}_i$. Moreover we have
\begin{align*}
\exp_i(x,0)=x \text{ and } d_2\exp(x,0;\bl)=\id_{\R^m}
\end{align*}
for all $x\in B_5(0)$.
\end{remark}

Boiling down \cite[Theorem 2.3]{Gloeckner3} to our situation we get the following Lemma \ref{UKFunk} that is the analogous statement to \cite[Proposition 3.1]{Gloeckner2} in the analytic case:
\begin{lemma}\label{UKFunk}
Let $\K \in \set{\R,\C}$, $P\subs \K^n$ and $U\subs \K^m$ be open and $f\co P\ti U \ra \K^m$  be a $\K$-analytic map. Moreover let $(x_0,y_0)\in P\ti U$ and $d_2f(x_0,y_0;\bl)\in \GL(\K^m)$ 
There exists a $y_0$-neighbourhood $U'\subs U$ and a $x_0$-neighbourhood $P'\subs P$  such that:
\begin{compactitem}
\item For all $x\in P'$ the map $f(x,\bl)\co U' \ra \K^m$ has open image and is a  $\K$-analytic diffeomorphism onto its image;
\item The set $W:= \bigcup_{x\in P'} \set{x}\ti f(x,U')$ is open in $\K^n\ti \K^m$ and the map $P'\ti U' \ra W,~ (x,y)\ms (x,f(x,y))$ is a $\K$-analytic diffeomorphism with inverse function $W\ra B_{r_1}(x_0)\ti B_{r_2}(y_0), (x,z) \ms (x,f(x,\bl)\inv(z))$;
\item 
There exists $\delta >0$ such that for all $x\in P'$ we have $B_\delta(f(x,y_0)) \subs f(x,U')$ and $W':= \bigcup_{x\in P'} \set{x}\ti B_\delta(f(x,y_0)) \subs W$ is open.
\end{compactitem}
\end{lemma}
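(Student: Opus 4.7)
The plan is to reduce the statement to the $\K$-analytic inverse function theorem applied to the auxiliary map
\[
F\co P\ti U\ra \K^n\ti \K^m,\quad (x,y)\ms (x,f(x,y)).
\]
Since $f$ is $\K$-analytic, so is $F$, and its total derivative at $(x_0,y_0)$ has the block form $\begin{pmatrix}\id_{\K^n}&0\\ d_1f(x_0,y_0)&d_2f(x_0,y_0)\end{pmatrix}$, which lies in $\GL(\K^{n+m})$ because $d_2f(x_0,y_0;\bl)\in\GL(\K^m)$ by hypothesis.

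First I would invoke the $\K$-analytic inverse function theorem (in the form of \cite[Theorem 2.3]{Gloeckner3}) to obtain open neighbourhoods $P'$ of $x_0$ in $P$ and $U'$ of $y_0$ in $U$, and an open set $W\subs \K^n\ti \K^m$, such that $F|_{P'\ti U'}\co P'\ti U'\ra W$ is a $\K$-analytic diffeomorphism; after shrinking, $P'$ and $U'$ may be taken to be the balls $B_{r_1}(x_0)$ and $B_{r_2}(y_0)$. Since $F$ preserves the first coordinate, its inverse has the shape $W\ra P'\ti U'$, $(x,z)\ms(x,\gph(x,z))$ with $\gph$ a $\K$-analytic map and $\gph(x,\bl)=f(x,\bl)\inv$; this directly yields the second bullet. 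For the first bullet, fix $x\in P'$: the slice $W_x:=\set{z\in \K^m:(x,z)\in W}$ is open in $\K^m$ as the preimage of $W$ under $z\ms (x,z)$, and the restriction of $F$ to $\set{x}\ti U'$ bijects onto $\set{x}\ti W_x$; hence $f(x,\bl)\co U'\ra W_x$ is a bijection with $\K$-analytic inverse $\gph(x,\bl)$, so $f(x,U')=W_x$ is open and $f(x,\bl)$ is a $\K$-analytic diffeomorphism onto its image.

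It remains to establish the third bullet. The map $\g\co P'\ra W$, $x\ms (x,f(x,y_0))$, is continuous. After shrinking $P'$ once more to a smaller open neighbourhood of $x_0$ with compact closure $\ol{P'}$ inside the previous $P'$, the image $\g(\ol{P'})$ is a compact subset of the open set $W$, so the continuous function $x\ms \mathrm{dist}(\g(x),(\K^n\ti \K^m)\sm W)$ is strictly positive on $\ol{P'}$ and attains a positive minimum $m$. For any $\d\in\,]0,m[\,$, any $x\in P'$, and any $z\in \K^m$ with $\|z-f(x,y_0)\|<\d$, the estimate $\|(x,z)-\g(x)\|=\|z-f(x,y_0)\|<\d<m$ forces $(x,z)\in W$, giving $B_\d(f(x,y_0))\subs W_x=f(x,U')$ and $W'\subs W$. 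Openness of $W'$ follows at once, since $W'$ is the preimage of $[0,\d[$ under the continuous map $P'\ti \K^m\ra [0,\8[$, $(x,z)\ms\|z-f(x,y_0)\|$. The main \emph{obstacle} here is essentially bookkeeping: the decisive input is the parametric $\K$-analytic inverse function theorem already packaged in \cite[Theorem 2.3]{Gloeckner3}, and the uniform $\d$ comes from a routine compactness argument once $P'$ has been shrunk appropriately.
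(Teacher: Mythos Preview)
Your proposal is correct and matches the paper's approach: the paper does not give a proof but simply states that the lemma is obtained by ``boiling down \cite[Theorem 2.3]{Gloeckner3} to our situation,'' and you have carried out precisely that reduction by applying the $\K$-analytic inverse function theorem to $F(x,y)=(x,f(x,y))$ and then extracting the uniform $\delta$ via a compactness argument. There is nothing to add.
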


The following Lemma \ref{ExpDef} is the analogous statement to \cite[3.2]{Gloeckner2} in the real analytic case.
\begin{lemma}\label{ExpDef}
There exists $\ep_{\exp} >0$ such that:
\begin{compactenum}[(a)]
\item We have $\ol{B}_{4.5}(0)\ti \ol{B}_{\ep_{\exp}}(0)\subs \til{\Omega}_i \subs B_5(0)\ti \R^m$ for all $i\in \set{1,...,n}$.
\item For all $x\in \ol{B}_{4.5}(0)$ and  $i\in \set{1,...,n}$ the map $\exp_{i,x}:=\exp_i(x,\bl) \co B_{\ep_{\exp}}(0)\ra \R^m$ has open image and is a real analytic diffeomorphism onto its image. Moreover the map $B_{4.5}(0)\ti B_{\ep_{\exp}}(0) \ra B_{4.5}(0) \ti \R^m$, $(x,y)\ms \exp_i(x,y)$ has open image and is a real analytic diffeomorphism onto its image.
\end{compactenum}
\end{lemma}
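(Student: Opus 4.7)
My plan is to deduce (a) from the Lemma of Wallace and (b) from a pointwise application of Lemma~\ref{UKFunk} together with a finite-cover argument, reading the joint map in the second bullet as $(x,y)\ms (x,\exp_i(x,y))$ (which is the only way the displayed codomain $B_{4.5}(0)\ti \R^m$ makes sense).

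For (a), each $\til{\Omega}_i$ is open in $B_5(0)\ti \R^m$ and contains the zero section, in particular the compact set $\ol{B}_{4.5}(0)\ti \set{0}$. The Lemma of Wallace therefore yields $\ep_i>0$ with $\ol{B}_{4.5}(0)\ti \ol{B}_{\ep_i}(0)\subs \til{\Omega}_i$, and any $\ep_{\exp}\leq \min_{i=1,\dots,n}\ep_i$ will satisfy (a).

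For (b), the decisive input is Remark~\ref{OB}, which gives $\exp_i(x,0)=x$ and $d_2\exp_i(x,0;\bl)=\id_{\R^m}\in \GL(\R^m)$ for every $x\in B_5(0)$. I plan to apply Lemma~\ref{UKFunk} to $\exp_i$ at each pair $(x_0,0)$ with $x_0\in \ol{B}_{4.5}(0)$ to obtain open neighbourhoods $P_{x_0,i}\ni x_0$ and $B_{\d_{x_0,i}}(0)\ni 0$ such that, for every $x\in P_{x_0,i}$, the map $\exp_i(x,\bl)|_{B_{\d_{x_0,i}}(0)}$ is a real analytic diffeomorphism onto its open image and, moreover, $(x,y)\ms (x,\exp_i(x,y))$ is a real analytic diffeomorphism from $P_{x_0,i}\ti B_{\d_{x_0,i}}(0)$ onto an open subset $W_{x_0,i}$ of $B_5(0)\ti \R^m$. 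Using compactness of $\ol{B}_{4.5}(0)$ and finiteness of $\set{1,\dots,n}$, I extract a finite subcover by the $P_{x_0,i}$ for each $i$ and then fix $\ep_{\exp}>0$ smaller than all $\ep_i$ from (a) and smaller than each of the finitely many $\d_{x_0,i}$ selected in this way.

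With this choice, $\exp_{i,x}|_{B_{\ep_{\exp}}(0)}$ is a real analytic diffeomorphism onto its open image for every $i$ and every $x\in \ol{B}_{4.5}(0)$. The joint statement then follows by noting that the map $\gph_i\co B_{4.5}(0)\ti B_{\ep_{\exp}}(0)\ra B_{4.5}(0)\ti \R^m$, $(x,y)\ms (x,\exp_i(x,y))$, is globally injective (the first coordinate is preserved and $\exp_i(x,\bl)|_{B_{\ep_{\exp}}(0)}$ is injective for fixed $x$), it is a local real analytic diffeomorphism at every point by Lemma~\ref{UKFunk}, and its image is open as a union of the open sets $W_{x_0,i}\cap (B_{4.5}(0)\ti \R^m)$; an injective local real analytic diffeomorphism is a diffeomorphism onto its image. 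The only genuine obstacle will be the bookkeeping required to pin down a single $\ep_{\exp}$ uniform in $x\in \ol{B}_{4.5}(0)$ and in $i\in\set{1,\dots,n}$, but this is routine once Lemma~\ref{UKFunk} has been applied, since all the analytic substance is already encapsulated there.
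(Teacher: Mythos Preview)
Your proposal is correct and follows essentially the same route as the paper: apply Lemma~\ref{UKFunk} pointwise at $(x_0,0)$, extract a finite subcover of $\ol{B}_{4.5}(0)$ by compactness, take the minimum of the finitely many radii, and then note that the joint map $(x,y)\ms (x,\exp_i(x,y))$ is an injective local diffeomorphism, hence a diffeomorphism onto its open image. The only cosmetic difference is that you handle (a) separately via the Lemma of Wallace, whereas the paper absorbs (a) into the pointwise step by requiring $\ol{B}_{r_x}(x)\ti \ol{B}_{\ep_x}(0)\subs \til{\Omega}_i$ before invoking Lemma~\ref{UKFunk}.
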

\begin{proof}
\begin{compactenum}
\item[(a)/(b):] Let $i\in \set{1,...,n}$. Given $x\in B_5(0)$ we use Lemma \ref{UKFunk} to find $r_x>0$ and $\ep_x>0$ such that $\ol{B}_{r_x}(x)\ti \ol{B}_{\ep_x}(0) \subs \til{\Omega}_i$ and $\exp(y,\bl)\co B_{\ep_x}(0)\ra B_5(0)$ has open image and is a real analytic diffeomorphism onto its image for all $y \in B_{r_x}(x)$ and $B_{r_x}(x)\ti B_{\ep_x}(0) \ra B_{r_x}(x)\ti \R^m$ is a real analytic diffeomorphism onto its image. We find finite many $x_1,\cdots, x_k \in B_5(0)$ such that $\ol{B}_{4.5}(0) \subs \bigcup_{j=1}^kB_{r_{x_j}}(x_j)$ and set $\ep^i_{\exp}:=\min_{j}\ep_{x_j}>0$. Now we set $\ep_{\exp}:=\min_{i=1,\dots,n} \ep^i_{\exp}$.
Given $i\in \set{1,...,n}$ and $y \in \ol{B}_{4.5}(0)$ we find $j$ such that $y \in B_{r_{x_j}}(x_j)$. Hence $\set{y}\ti \ol{B}_{\ep_{\exp}}(0) \subs \til{\Omega}_i$ and $\exp_i(y,\bl)\co B_{\ep_{\exp}}(0)\ra B_5(0)$ is a real analytic diffeomorphism onto its open image. Moreover the map $B_{4.5}(0)\ti B_{\ep_{\exp}}(0) \ra B_{4.5}(0) \ti \R^m$, $(x,y)\ms \exp_i(x,y)$ is injective and a local diffeomorphism. Hence it has open image and is a real analytic diffeomorphism onto its image. Therefore we find $\ep_{\exp}$ as needed. 
\end{compactenum}
\end{proof}

\begin{remark}\label{Hilfreich1}
Using Remark \ref{OB} we make the following observation: For all $i\in \set{1,\dots,n}$, $x\in B_4(0)$ and $w \in B_{\ep_{\exp}}(0)$ we have $\exp(T\til{\ph}_i\inv(x,w)) \in U_{i,5}$ and $\exp(T\til{\ph}_i\inv(x,w)) = \til{\ph}_i\inv(\exp_i(x,w))$.
\end{remark}

\begin{definition}
\begin{compactenum}
\item Let $r\in [{1},4]$. If $\eta \in \mathcal{B}_{r, \ep_{\exp} }^0$, then $\im(\eta) \subs \til{\Omega}$, because $(x,\eta_{(i)}(x)) \in \til{\Omega}_i$ for $x\in \ol{B}_r^{k_i}(0)$. In this situation we define the real analytic map
\begin{align*}
\ps_\eta \co M \ra \til{M},~ p \ms \exp(\eta(p)).
\end{align*}
\item For $i \in \set{1,\dots,n}$, $U \subs B^{k_i}_4(0)$ open and $\eta\in C^\o(U;\R^m)_\str$ with $\|\eta\|^{U}_\8<\ep_{\exp}$ we define the real analytic map
\begin{align*}
\ps^i_\eta\co U \ra B_5(0),~ x\ms \exp_i(x,\eta(x)).
\end{align*}
\end{compactenum}
\end{definition}

\begin{lemma}\label{Hilfreich2}
For $r\in [{1},4]$ and $\eta \in \mathcal{B}_{r, \ep_{\exp} }^0$ we get $\ps_\eta(U_{i,r})\subs \til{U}_{i,5}$ and
\begin{align*}
\ps_\eta|_{U_{i,r}} = \til{\ph}_i\inv\ci \ps^i_{\eta_{(i)}} \ci \ph_i|_{U_{i,r}}.
\end{align*}
\end{lemma}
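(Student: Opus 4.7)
The plan is a direct unfolding of the definitions using Remark \ref{Hilfreich1} as the main tool. Fix $p \in U_{i,r}$ and set $x := \ph_i(p) \in B_r^{k_i}(0)$. Since $\ph_i = \til{\ph}_i|_{U_{i,5} \cap M}^{B_5^{k_i}(0)}$, we have $x = \til{\ph}_i(p)$ as well, and because $\eta(p) \in T_pM \subs T_p\til{M}$ one checks (via the chain rule, using that $d\ph_i$ is the restriction of $d\til{\ph}_i$ to $T_pM$) that
\begin{align*}
T\til{\ph}_i(\eta(p)) = (x, \eta_{(i)}(x)).
\end{align*}

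The hypothesis $\eta \in \mathcal{B}_{r,\ep_{\exp}}^0$ gives $\|\eta_{(i)}(x)\| < \ep_{\exp}$, and $x \in B_r^{k_i}(0) \subs \ol{B}_{4.5}(0)$ since $r \leq 4$. Lemma \ref{ExpDef}(a) therefore yields $(x,\eta_{(i)}(x)) \in \ol{B}_{4.5}(0) \ti \ol{B}_{\ep_{\exp}}(0) \subs \til{\Omega}_i$, so $\eta(p) \in T\til{\ph}_i\inv(\til{\Omega}_i)$ lies in the domain of $\exp$ and $\ps_\eta(p) = \exp(\eta(p))$ is well-defined. Since moreover $x \in B_4(0)$ and $\eta_{(i)}(x) \in B_{\ep_{\exp}}(0)$, Remark \ref{Hilfreich1} applies with $w := \eta_{(i)}(x)$, giving
\begin{align*}
\ps_\eta(p) \;=\; \exp\!\bigl(T\til{\ph}_i\inv(x, \eta_{(i)}(x))\bigr) \;=\; \til{\ph}_i\inv\!\bigl(\exp_i(x, \eta_{(i)}(x))\bigr) \;\in\; U_{i,5} \subs \til{U}_{i,5}.
\end{align*}

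Recognising $\exp_i(x, \eta_{(i)}(x)) = \ps^i_{\eta_{(i)}}(x) = \ps^i_{\eta_{(i)}}(\ph_i(p))$, the last display simultaneously establishes $\ps_\eta(U_{i,r}) \subs \til{U}_{i,5}$ and the identity $\ps_\eta|_{U_{i,r}} = \til{\ph}_i\inv \ci \ps^i_{\eta_{(i)}} \ci \ph_i|_{U_{i,r}}$.

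The only subtlety — scarcely an obstacle — is the bookkeeping around the two charts $\ph_i$ and $\til{\ph}_i$: one must verify that identifying $\eta(p)$ with $(\til{\ph}_i(p),\eta_{(i)}(\ph_i(p)))$ via $T\til{\ph}_i$ is consistent. This follows because $\ph_i$ is the restriction of the enveloping chart $\til{\ph}_i$ and $d\ph_i = d\til{\ph}_i|_{TM}$ on $U_{i,5} \cap M$, so $\eta_{(i)} = d\ph_i \ci \eta \ci \ph_i\inv$ coincides with $d\til{\ph}_i \ci \eta \ci \til{\ph}_i\inv|_{B_5^{k_i}(0)}$. Everything else is a mechanical application of Remark \ref{Hilfreich1} and Lemma \ref{ExpDef}.
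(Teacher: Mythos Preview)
Your proof is correct and follows essentially the same approach as the paper: both arguments fix $p\in U_{i,r}$, unwind the definition of $\ps_\eta$ through the chart $\ph_i$ (resp.\ $\til\ph_i$), and invoke Remark~\ref{Hilfreich1} to rewrite $\exp(T\til\ph_i\inv(x,\eta_{(i)}(x)))$ as $\til\ph_i\inv(\exp_i(x,\eta_{(i)}(x)))$. Your version is slightly more explicit about the domain verifications (Lemma~\ref{ExpDef}(a) and $x\in B_4(0)$) and the identification of $\ph_i$ with the restriction of $\til\ph_i$, but the argument is the same.
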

\begin{proof}
Given $p \in U_{i,r}$ we  use Remark \ref{Hilfreich1} and calculate
\begin{align*}
&\ps_\eta(p) = \ps_\eta(\ph_i\inv(\ph_i(p))) = \exp(T\ph_i\inv \ci T\ph_i \ci \eta \ci \ph_i\inv \ci \ph_i (p))) \\
=& \exp ( T\ph_i\inv (\ph_i(p),\eta_{(i)} (\ph_i(p))) = \ph_i\inv ( \exp_i(\eta_{(i)} (\ph_i(p)))) = \ph_i\inv\ci \ps^i_{\eta_{(i)}} \ci \ph_i (p)
\end{align*}
\end{proof}

\begin{remark}\label{idA}
If $A\in \mathcal{L}(\R^m)$, then $(\id,A)\co \R^m \ra \R^m\ti\R^m$ is linear with $\|(\id,A)\|_{op} \leq 1+\|A\|_{op}$.
\end{remark}

The following lemma is a stronger version of \cite[Lemma 3.7]{Gloeckner2} in the case of open sets with corners and with a variable radius an control of the norms.
\begin{lemma}\label{Nahe1}
Given $R\in ]0,5]$, $l\in ]0,R[$ and $r\in ]0,1[$, we find $\ep\in ]0,\ep_{\exp}]$ such that  for all $\eta\in C^\o(B^{k_i}_R(0);\R^m)_\str$ with $\|\eta\|^1_{l}<\ep$ and $i\in \set{1,\dots,n}$ the following assertions hold:
\begin{compactenum}
\item $\|{\ps^{i}_\eta}'(x)-\id_{\R^m}\|_{op} <r$ for all $x\in B_l^k(0)$;
\item $\|\ps^i_\eta(x)-x\| <r$ for all $x\in B_l^k(0)$.
\end{compactenum}
\end{lemma}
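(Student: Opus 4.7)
The plan is to exploit the behavior of $\exp_i$ at the zero section: by Remark~\ref{OB} we have $\exp_i(x,0)=x$ (so $\partial_1\exp_i(x,0)=\id_{\R^m}$) and $\partial_2\exp_i(x,0)=\id_{\R^m}$, together with the fact, guaranteed by Lemma~\ref{ExpDef}, that $\exp_i$ is real analytic on an open neighbourhood of the compact set $K_i:=\ol{B}_{4.5}(0)\times \ol{B}_{\ep_{\exp}}(0)\subs\til\Omega_i$. Throughout, I will assume $\ep\leq \ep_{\exp}$, so that $(x,\eta(x))\in K_i$ for $x\in B_l^{k_i}(0)$, ensuring $\ps^i_\eta(x)=\exp_i(x,\eta(x))$ is well defined and its values lie where the chain rule applies.

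For assertion~(2) I would apply the mean value inequality to the map $v\ms \exp_i(x,v)$. Since $\partial_2\exp_i$ is continuous on $K_i$, it is bounded there by some constant $L_i$; hence
\[
\|\ps^i_\eta(x)-x\|=\|\exp_i(x,\eta(x))-\exp_i(x,0)\|\leq L_i\,\|\eta(x)\|\leq L_i\,\|\eta\|^1_l .
\]
Choosing $\ep\leq r/(L_i+1)$ yields the claim for index $i$.

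For assertion~(1) I would expand by the chain rule:
\[
(\ps^i_\eta)'(x)=\partial_1\exp_i(x,\eta(x))+\partial_2\exp_i(x,\eta(x))\circ \eta'(x),
\]
and use $\partial_1\exp_i(x,0)=\id$ to write
\[
(\ps^i_\eta)'(x)-\id=\bigl[\partial_1\exp_i(x,\eta(x))-\partial_1\exp_i(x,0)\bigr]+\partial_2\exp_i(x,\eta(x))\circ \eta'(x).
\]
The first bracket is made small by uniform continuity of $\partial_1\exp_i$ on the compact set $K_i$: there exists $\delta_i>0$ such that $\|v\|<\delta_i$ forces $\|\partial_1\exp_i(x,v)-\partial_1\exp_i(x,0)\|_{op}<r/2$ uniformly in $x\in \ol{B}_{4.5}(0)$. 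The second summand has operator norm at most $M_i\,\|\eta'(x)\|$ where $M_i:=\sup_{K_i}\|\partial_2\exp_i\|_{op}$; thus $\|\eta\|^1_l<r/(2M_i)$ makes it $<r/2$. Taking $\ep\leq \min(\ep_{\exp},\delta_i,r/(2M_i),r/(L_i+1))$ secures both assertions for index $i$, and finally $\ep:=\min_{i=1,\dots,n}\ep_i$ handles the finitely many indices simultaneously.

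The argument is essentially just a careful first-order Taylor expansion of $\exp_i(x,v)$ in $v$ around $v=0$, combined with the uniform continuity of its partial derivatives on the compact set $K_i$; the only bookkeeping issue is to separate the two sources of smallness of $(\ps^i_\eta)'(x)-\id$, namely the smallness of $\eta(x)$ and that of $\eta'(x)$, and to treat them with bounds coming from $C^0$ and $C^1$ control respectively. No conceptual obstacle arises.
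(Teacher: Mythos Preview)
Your proof is correct and follows essentially the same idea as the paper's: both are first-order Taylor expansions of $\exp_i(x,v)$ around $v=0$ combined with compactness/uniform continuity of the derivatives. The only difference is in the packaging. The paper introduces the auxiliary function $H(x,y):=\exp_i(x,y)-x-y$, observes that the \emph{full} derivative $H'(x,0)$ vanishes (since both $d_1\exp_i(x,0)=\id$ and $d_2\exp_i(x,0)=\id$), and then uses the Wallace lemma once to make $\|H'(x,y)\|_{op}$ uniformly small near the zero section; both estimates then fall out of the single identity $\ps^i_\eta(x)=H(x,\eta(x))+x+\eta(x)$. You instead treat $\partial_1\exp_i$ and $\partial_2\exp_i$ separately, invoking uniform continuity for the former and a sup-bound for the latter. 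Your route is perhaps more transparent at first reading; the paper's $H$-trick has the advantage that it recurs verbatim in the complex-analytic analogue (Lemma~\ref{Nahe1II}), so it is worth internalising.
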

\begin{proof}
Obviously it is enough to show the Lemma for a fixed $i\in \set{1,\dots,n}$. Hence let $i\in \set{1,\dots,n}$ be fixed for the rest of the proof. As in \cite[Lemma 3.7]{Gloeckner2} we define $H\co B_5(0)\ti B_{\ep_{\exp}}(0)\ra \R^m$, $(x,y)\ms \exp_i(x,y)-x-y$ and $h\co B_5(0)\ti B_{\ep_{\exp}}(0)\ra [0,\8[$, $(x,y)\ms \|H'(x,y)\|_{op}$.  For all $x\in B_5(0)$ we get $d_1H(x,0;\bl)=0$ and $d_2H(x,0;\bl)=0$ and so $H'(x,0) = dH(x,0;\bl) =0$ in $\mathcal{L}(\R^m\ti\R^m;\R^m)$.  Hence $\ol{B}_l(0)\ti \set{0} \subs h\inv ([0,\frac{r}{r+10}[)$ and with the Lemma of Wallace we find $\ep\in ]0,\min(\ep_{\exp},\frac{r}{2})[$ such that $\|H'(x,y)\|_{op} <\frac{r}{r+10}$ for all $x\in {B}_l(0)$ and $y \in B_{\ep}(0)$. Now let $\eta \in C^\o(B_R^k(0),\R^m)_\str$ with $\|\eta\|_l^1<\ep$.
\begin{compactenum}
\item \label{kl1} We have 
\begin{align}\label{H}
\ps^i_\eta(x)=H(x,\eta(x)) +x + \eta(x) 
\end{align}
for all $x\in B_l^k(0)$. Hence  
\begin{align*}
{\ps^i_\eta}'(x)= H'(x,\eta(x);\bl) \ci (\id_{\R^m},\eta'(x)) + \id_{\R^m} + \eta'(x)
\end{align*}
for all $x\in B_l^k(0)$. With Remark \ref{idA} we calculate
\begin{align}\label{nahe1.1}
&\|{\ps^i_\eta}'(x)-\id_{\R^m}\|_{op} \leq \|H'(x,\eta(x))\|_{op} \cdot \|(\id_{\R^m},\eta'(x))\|_{op} + \|\eta'(x)\|_{op} \nonumber\\
<& \frac{r}{r+10} \cdot (1+\ep)+ \ep \leq \frac{r}{r+2} \cdot \left(1+\frac{r}{2}\right)+ \frac{r}{2} =r. 
\end{align}
\item \label{kl12} Let $x\in B_l(0)$ and $y \in B_{\ep}(0)$. Then $\|(x,y)\| \leq \|x\|+\|y\| < l+\ep\leq 5+\frac{r}{2}$. Hence
\begin{align*}
&\|H(x,y)\|=\|H(x,y)- H(0,0)\| = \left\|\int_0^1dH(tx,ty;x,y)dt \right\| \\
\leq &\int_0^1\|H'(tx,ty)\| \cdot \|(x,y)\|dt < \frac{r}{r+10} \cdot \|(x,y)\| \leq \frac{r}{2}.
\end{align*}
Thus given $x\in B^k_l(0)$ we can calculate with (\ref{H})
\begin{align}\label{nahe1.2}
\|\ps^i_\eta(x)-x\| \leq \|H(x,\eta(x))\| + \|\eta(x)\| <r.
\end{align}
\end{compactenum}
\end{proof}

As in \cite[Lemma 3.7]{Gloeckner2} we will use the following well known fact:
\begin{remark}\label{PraktischPraktisch}
Let $U \subs \C^m$ be open and convex and $f\co U \ra \C^m$ complex analytic with $\|df(x)-\id_{\C^m}\|<1$ for all $x\in U$. In this situation $f$ is injective and hence $f$ has open image and is a diffeomorphism onto its image: Let $x\neq y \in U$. Because $U$ is convex we can define  
\begin{align*}
\tau \co [0,1]\ra \C^m,~ t \ms df((1-t)x+ty;y-x)-(y-x).
\end{align*}
We get $\|\tau(t)\|< \|y-x\|$ for $t \in [0,1]$ and so $\int_0^1 \|\tau(t)\| dt < \|y-x\|$. With $f(y)-f(x)= y-x + \int_0^1\tau(t)dt$ we get $f(y)\neq f(x)$.
\end{remark}

\begin{lemma}\label{InjektivitaetPsiEta}
There exists $\ep \in ]0,\ep_{\exp}[$, such that for all $\eta \in \mathcal{B}_{1,\ep}^1$ the map $\ps_\eta \co M \ra \til{M}$ is injective.
\end{lemma}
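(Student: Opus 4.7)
The plan is to combine two ingredients: (i) local injectivity of $\ps_\eta$ on each chart neighbourhood $U_{i,1}$, derived from the derivative estimate of Lemma \ref{Nahe1}(1) via the convex-set mean value argument of Remark \ref{PraktischPraktisch}; and (ii) a compactness / Lebesgue-number argument forcing any two $\ps_\eta$-related points of $M$ to lie in a common chart, provided $\eta$ moves points by less than a fixed threshold.

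First I would apply Lemma \ref{Nahe1} with $R=5$, $l=1$ and, say, $r:=1/2$ to obtain some $\ep_1\in{}]0,\ep_{\exp}]$ such that every $\eta\in\mathcal{B}_{1,\ep_1}^1$ satisfies $\|{\ps^i_{\eta_{(i)}}}'(x)-\id_{\R^m}\|_{\mathrm{op}}<1/2$ for all $x\in B_1^{k_i}(0)$ and all $i\in\{1,\dots,n\}$. Since $B_1^{k_i}(0)=B_1(0)\cap\R^m_{k_i}$ is convex, the integral identity $\ps^i_{\eta_{(i)}}(y)-\ps^i_{\eta_{(i)}}(x)=(y-x)+\int_0^1\tau(t)\,dt$ with $\tau(t):={\ps^i_{\eta_{(i)}}}'((1-t)x+ty)(y-x)-(y-x)$, exactly as in Remark \ref{PraktischPraktisch} but carried out in the real setting, shows that each $\ps^i_{\eta_{(i)}}$ is injective on $B_1^{k_i}(0)$. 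By Lemma \ref{Hilfreich2} this already yields injectivity of $\ps_\eta$ on every single $U_{i,1}$.

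The second ingredient is to arrange that $\ps_\eta$ moves points uniformly little in a global metric on $\til{M}$. Equip $\til{M}$ with a metric $d$ compatible with its topology (for instance the Riemannian distance induced by $g$). Since $M$ is compact and $\{U_{i,1}\}_{i=1}^n$ covers $M$ by Lemma \ref{Existenz von Karten}, the Lebesgue number lemma supplies $\delta>0$ such that $d(p,q)<\delta$ for $p,q\in M$ forces $p,q\in U_{i,1}$ for some common $i$. Each inverse chart $\til{\ph}_i^{-1}$ is Lipschitz on the compact set $\ol{B}_2(0)$ with some constant $L_i$; set $L:=\max_iL_i$. Shrinking $\ep_1$ if necessary and invoking Lemma \ref{Nahe1}(2), I may assume in addition that $\|\ps^i_{\eta_{(i)}}(x)-x\|<\min(1,\delta/(2L))$ holds on $B_1^{k_i}(0)$ for every $\eta\in\mathcal{B}_{1,\ep_1}^1$ and every $i$. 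By Lemma \ref{Hilfreich2} this translates into $d(\ps_\eta(p),p)<\delta/2$ for every $p\in M$.

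To conclude, if $\ps_\eta(p)=\ps_\eta(q)$ for $p,q\in M$, the triangle inequality gives $d(p,q)\leq d(p,\ps_\eta(p))+d(\ps_\eta(q),q)<\delta$, so both points lie in a common $U_{i,1}$ and the local injectivity from the first step forces $p=q$. I expect the main obstacle to be the uniform displacement estimate of the preceding paragraph: Lemma \ref{Nahe1}(2) furnishes only Euclidean bounds in the model spaces $\R^m$, whereas the Lebesgue argument lives in the intrinsic metric on $\til{M}$, so one has to interface the two carefully via Lipschitz bounds of the finitely many charts on compact subsets of their domains, using the compactness of $M$ to obtain a single $\ep_1$ that works simultaneously for all $i$.
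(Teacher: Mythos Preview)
Your proof is correct and takes a genuinely different route from the paper's. Both arguments begin identically: Lemma \ref{Nahe1} with $r<1$ together with the convexity of $B_1^{k_i}(0)$ and the mean-value estimate of Remark \ref{PraktischPraktisch} yield injectivity of each $\ps^i_{\eta_{(i)}}$ on $B_1^{k_i}(0)$, hence of $\ps_\eta$ on each $U_{i,1}$ via Lemma \ref{Hilfreich2}. The divergence is in how global injectivity is obtained from this local information. You introduce an auxiliary metric on $\til{M}$ (the Riemannian distance of $g$ works, and the Lipschitz bound for $\til{\ph}_i^{-1}$ on $\ol{B}_2(0)$ follows from smoothness and compactness), then use a Lebesgue number for the cover $\{U_{i,1}\}$ of $M$ and a uniform displacement bound $d(\ps_\eta(p),p)<\delta/2$ to force any two points identified by $\ps_\eta$ into a common chart. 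The paper instead argues pairwise by contradiction: for each pair $(i,j)$ it assumes no $\ep_{i,j}$ works, extracts sequences $p_k\in U_{i,1}$, $q_k\in U_{j,1}$ with $p_k\neq q_k$, $\ps_{\eta^k}(p_k)=\ps_{\eta^k}(q_k)$ and $\|\eta^k\|^1<1/k$, passes to convergent subsequences using compactness of $\ol{U_{i,1}}$ and $\ol{U_{j,1}}$, and observes that the common limit lies in some $U_{l,1}$ where local injectivity gives a contradiction; the final $\ep$ is the minimum over all pairs.

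Your approach is more constructive (in principle $\ep$ is computable from $\delta$ and the chart Lipschitz constants) and is the textbook ``small displacement plus Lebesgue number'' argument. The paper's sequential-compactness argument avoids introducing any auxiliary metric on $\til{M}$ and stays entirely within the chart picture, at the cost of being non-constructive. One small cosmetic point: when you invoke Lemma \ref{Nahe1}(\ref{kl12}) you need $r\in{}]0,1[$, so write $r:=\min(\tfrac{1}{2},\delta/(2L))$ rather than $\min(1,\delta/(2L))$; this also keeps $\ps^i_{\eta_{(i)}}(B_1^{k_i}(0))\subs B_2(0)$, which is what your Lipschitz bound on $\ol{B}_2(0)$ requires.
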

\begin{proof}
Because of Lemma \ref{Nahe1}, Remark \ref{PraktischPraktisch} and Lemma \ref{Hilfreich1}, we can find $\ep_1\in ]0,\ep_{\exp}[$ such that $\ps_{\eta^{k_0}} \co U_{l,1} \ra M$ is injective for all $\eta \in \mathcal{B}_{1,{\ep_1}}^1$ and $l \in \set{1,\dots,n}$.
Similar to \cite[4.10]{Gloeckner2} one can show that given $i,j \in \set{1,\dots,n}$ we find $\ep_{i,j}\in ]0,\ep_1[$ such that 
$\ps_\eta$ is injective on $U_{i,1}\cup U_{j,1}$ for all $\eta \in \mathcal{B}^1_{1,\ep_{i,j}}$: 
Suppose the opposite. Then we find sequences $(\eta^k)_{k\in \N}$ in $\gg^\o_\str(TM)$, $(p_k)_{k\in\N}$ in $U_{i,1}$ and $(q_k)_{k\in\N}$ in $U_{j,1}$ such that for all $k \in \N$ we have $\|\eta^{k}\|_{\ol{B}^{k_l}_1}^1<\frac{1}{k}$, $p_k \neq q_k$ and $\ps_{\eta^k}(p_k) = \ps_{\eta^k}(q_k)$. Because $\ol{U_{i,1}}$ and $\ol{U_{j,1}}$ are compact, we can assume without loose of generality that there exist $p \in \ol{U_{i,1}}$ and $q \in \ol{U_{j,1}}$ such that $p_k \ra p$ and $q_k \ra q$. Hence $\eta^k(p_k) \ra 0_p$ and $\eta^k(q_k) \ra 0_q$ in $TM$. Therefore $\ps_{\eta^k}(p_k) \ra p$ and $\ps_{\eta^k}(q_k) \ra q$. Thus $p=q$. There exists $l \in \set{1,\dots,n}$ such that $p=q \in U_{l,1}$. Hence there exist $k_0$ and $j_0$ such that $p_{k_0},p_{j_0} \in U_{l,1}$. The map $\ps_{\eta^{k_0}} \co U_{l,1} \ra M$ is injective. But $p_{k_0} \neq q_{k_0}$ and $\ps_{\eta^{k_0}}(p_{k_0}) = \ps_{\eta^{k_0}}(q_{k_0})$. This is a contradiction. Now $\ep:= \min\{\ep_{i,j}: i,j\in \set{1,\dots,n}\}$ is as needed.
\end{proof}

\begin{definition}
For $j \in \set{1,\dots,m}$ and each connected component $C$ of $\partial^jM$ we fix a point $p_C^j\in \partial^jM$. The submanifold $\partial^jM$ is totaly geodetic in $\til{M}$. Hence given a point $p_C^j\in \partial^jM$ there exists an open neighbourhood $U$ of $0_{p_C^j}$ in $T_{p_C^j}\partial^jM$ such that $\exp_{p_C^j} (U) \subs C$ and $\exp_{p_C^j} \co U\ra C$ is continuous. Thus we find $\ep_{C}^j\in ]0,\ep_{\exp}[$ such that for all $\eta\in \mathcal{B}^0_{1,\ep_C^j}$ we have $\ps_\eta(p_C^j) \in C$.
\end{definition}

\begin{remark}\label{PC2Q}
Let $j \in \set{1,\dots,m}$ and $v \in T_p\partial^jM$ with $[0,1]v \subs \Omega_{\til{M}}$ and $\exp_{\til{M}}([0,1]v) \subs \partial^j M$. Then $v \in \Omega_{\partial^j M}$: We consider the curve $\gamma \co [0,1] \ra \partial^jM$, $t\ms \exp_{\til{M}}(tv)$. Because $\partial^jM$ is totally geodetic we see that $\gamma$ is also a geodetic for $\partial^jM$.
\end{remark}

\begin{lemma}\label{GehtnachM}
There exists $\ep \in ]0,\ep_{\exp}[$ such that for all $\eta\in \mathcal{B}_{1,\ep}^1$ the map $\ps_\eta$ is injective and $\ps_\eta(\partial^jM)=\partial^jM$. Moreover if $C$ is a connected component of $\partial^jM$ then $\ps_\eta(C)=C$.
\end{lemma}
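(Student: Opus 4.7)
The plan is to shrink $\ep$ until several bounds hold simultaneously, then run an open–closed argument on each connected component of each stratum. First, $\ep$ must lie below the threshold from Lemma \ref{InjektivitaetPsiEta}, securing injectivity of $\ps_\eta$. Second, applying Lemma \ref{Nahe1} with $R=4$, $l=1$, $r=\tfrac12$ forces $\|{\ps^i_\eta}'(x)-\id_{\R^m}\|_{op}<\tfrac12$ on $B^{k_i}_1(0)$ for every $i$. Third, $\ep<\ep^j_C$ for every stratum index $j$ and every connected component $C$ of $\partial^jM$; this is only a finite constraint because $M$ is compact, so each $\partial^jM$ has only finitely many connected components.

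The stratum-preservation $\ps_\eta(\partial^jM)\subseteq\partial^jM$ follows directly from the stratified property of $\eta$ combined with the boundary-respecting hypothesis: $\eta(p)\in T_p\partial^jM$ for $p\in\partial^jM$, and since $\partial^jM$ is totally geodesic in $\til{M}$, the unique $\til{M}$-geodesic $t\mapsto\exp(t\eta(p))$ coincides with the $\partial^jM$-geodesic in direction $\eta(p)$, hence lies in $\partial^jM$ (cf.\ Remark \ref{PC2Q}); evaluation at $t=1$ gives $\ps_\eta(p)\in\partial^jM$. For a connected component $C$ of $\partial^jM$, the image $\ps_\eta(C)$ is connected, lies in $\partial^jM$, and contains $\ps_\eta(p^j_C)\in C$ by the choice $\ep<\ep^j_C$; hence $\ps_\eta(C)\subseteq C$.

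To upgrade this to $\ps_\eta(C)=C$ I show $\ps_\eta(C)$ is both open and closed in $C$. For openness, $\ps_\eta|_C$ is a local diffeomorphism: in a chart $\ph_i$ the stratum corresponds to a union of coordinate subspaces, the map $\ps^i_\eta$ preserves each such subspace (being stratified), and the restriction of ${\ps^i_\eta}'(x)$ to it is a $(<\tfrac12)$-perturbation of $\id$, hence invertible. For closedness I first note $\ps_\eta(M)\subseteq M$: the boundary strata are handled by the stratum-preservation just proved, while for the interior one uses that $\mathring{M}$ is open in $\til{M}$ and $\ps_\eta$ is uniformly close to $\id$. Consequently $\ps_\eta(M)$ is compact, hence closed, in $M$, and I claim $\ps_\eta(C)=C\cap\ps_\eta(M)$: if $q=\ps_\eta(p)\in C\subseteq\partial^jM$, then the unique stratum $\partial^kM$ containing $p$ maps into $\partial^kM$, forcing $k=j$, and the component $C'$ of $p$ satisfies $\ps_\eta(C')\subseteq C'$, so $C'=C$ and $p\in C$. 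Since $\ps_\eta(C)$ is then open, closed, and nonempty in the connected set $C$, it equals $C$, and assembling over components yields $\ps_\eta(\partial^jM)=\partial^jM$. The main obstacle I expect is this last identification $\ps_\eta(C)=C\cap\ps_\eta(M)$, which couples the stratum preservation across all components with the global injectivity supplied by Lemma \ref{InjektivitaetPsiEta}; everything else is routine bookkeeping after the right choice of $\ep$.
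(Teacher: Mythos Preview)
Your stratum-preservation step contains a genuine gap. You write that ``since $\partial^jM$ is totally geodesic in $\til{M}$, the unique $\til{M}$-geodesic $t\mapsto\exp(t\eta(p))$ coincides with the $\partial^jM$-geodesic in direction $\eta(p)$, hence lies in $\partial^jM$''. But ``totally geodesic'' only says that $\partial^jM$-geodesics are $\til{M}$-geodesics; the two agree only on the interval where the $\partial^jM$-geodesic is defined, and since $\partial^jM$ is \emph{not closed} in $\til{M}$ (its frontier consists of the higher strata $\bigcup_{k>j}\partial^kM$), you do not know that this interval contains $[0,1]$. Your citation of Remark~\ref{PC2Q} is circular: that remark takes $\exp_{\til{M}}([0,1]v)\subs\partial^jM$ as a \emph{hypothesis}, which is precisely what you are trying to prove. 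What one actually gets from the totally-geodesic assumption, via the real-analytic Identity Theorem as in the proof of Lemma~\ref{invEcken}, is only that in a chart the coordinates that vanish at $p$ continue to vanish along the geodesic---i.e.\ $\ind(\ps_\eta(p))\ge\ind(p)$, not equality. For the same reason your argument for $\ps_\eta(\mathring{M})\subs M$ is insufficient: a point $p\in\mathring{M}$ arbitrarily close to $\partial M$ could, under a map merely ``uniformly close to $\id$'', land in $\til{M}\setminus M$.

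The paper closes this gap by running a \emph{downward induction on $j$} (from $m$ to $0$). Once $\ps_\eta(\partial^kM)=\partial^kM$ is known for all $k>j$, injectivity of $\ps_\eta$ forces $\ps_\eta(\partial^jM)\cap\partial^kM=\emptyset$ for every $k>j$, and combined with $\ind(\ps_\eta(p))\ge j$ this gives $\ps_\eta(\partial^jM)\subs\partial^jM$. The paper then runs the open--closed argument not on $\ps_\eta(C)$ but on the set $Z=\{x\in C:\ps_{t\eta}(x)\in C\text{ for all }t\in[0,1]\}$; working with the whole segment $t\in[0,1]$ is exactly what lets Remark~\ref{PC2Q} be invoked legitimately when proving $Z$ is open. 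Your closedness argument via $\ps_\eta(C)=C\cap\ps_\eta(M)$ is an elegant alternative to the paper's case analysis, but it only becomes available after the stratum-preservation for \emph{all} strata has been established, so you cannot avoid the induction.
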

\begin{proof}
We use Lemma \ref{InjektivitaetPsiEta} and Definition \ref{PC2Q} to choose $\ep >0$ such that for all $\eta\in \mathcal{B}_{1,\ep}^1$ the map $\ps_\eta \co M \ra \til{M}$ is injective and $\ps_\eta(p_C^j) \in C$ for all $j \in \set{1,\dots,m}$ and all connected components $C \subs \partial^jM$. Note that the strata of $M$ have only finite many connected components, because $M$ is compact. Now we show by induction over $j$ from $m$ to $0$ that $\ps_\eta(\partial^j M) = \partial^j M$. The case $j=m$ is clear. For the induction step we chose let $C$ be a connected component of $\partial^jM$ and $Z:=\set{x\in C: (\forall t \in [0,1])\ps_{t\eta}(x) \in C}$. Because $p_C^j \in Z$ we get $Z \neq \emptyset$. Now let $p \in C$. We have $[0,1] \eta|_C(p) \subs \Omega_{\til{M}}$ and $\exp_{\til{M}} (t\eta|C(p)) \in C$ for all $t \in [0,1]$. We conclude $\eta|_C(p) \in \Omega_{\partial^jM}$ and $\exp_{\partial^jM}(\eta|C(p)) = \exp_{\til{M}}(\eta|_C(p)) \in C$. Hence there exists a $p$-neighbourhood $V \subs C$ such that $\exp_{\partial^jM}\ci \eta|_C(V) \subs C$. We conclude that $Z$ is open in $C$. Now let $p \in C\setminus Z$. We get $\ps_\eta(p)\notin C$. First suppose $\ps_\eta(p) \in \ol{C}\setminus C = \bigcup_{j<i} \partial^iM$. Because $\ps_\eta$ is injective and $\ps_\eta(\partial^iM) = \partial^iM$ for all $j<i$ we conclude $p \in \partial^iM$ for $i>j$. But this is a contradiction. Now suppose $\ps_\eta(p) \in \til{M}\setminus \ol{C}$. Then there exists a $p$-neighbourhood $V$ in $M$ such that $\ps_\eta(V) \subs \til{M}\setminus \ol{C}$. But $C \cap V$ is a $p$-neighbourhood in $C$. Hence $Z$ is closed. Therefore $Z=C$. We conclude $\ps_\eta(C) \subs C$ and obtain a continuous injective map $\ps_\eta|_C^C \co C \ra C$. From $\ps_\eta(\ol{C}\cap C) \cap C = \emptyset$ we conclude $\ps_\eta(C)=\ps_\eta(\ol{C}) \cap C$. And because $\ol{C}$ is compact we see that $\ps_\eta(C)$ is closed in $C$. But $\ps_\eta$ is also an open map, because it is injective and continuous (invariance of domain). We conclude $\ps_\eta(C)=C$.
\end{proof}

The following Lemma \ref{PreUKF} is a direct consequence of \cite[Lemma 2.2.3]{Margalef}
\begin{lemma}\label{PreUKF}
Let $\til{f}\colon \til{U}\ra \til{V}$ be a homeomorphism between open subsets of $\R^m$ and $\til{V}$ be convex, such that 
\begin{align*}
\til{f}(\til{U}\cap \partial \R^m_k) \subs \til{V}\cap \partial \R^m_k \text{ and } \til{f}(\til{U}\cap \partial^0\R^m_k) \cap \partial^0\R^m_k \neq \emptyset
\end{align*}
then 
\begin{align*}
\til{f}(\til{U}\setminus \R^m_k) \subs \til{V}\setminus \R^m_k.
\end{align*}
\end{lemma}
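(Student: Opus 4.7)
The plan is to reduce the conclusion to two sub-claims: (i) $\til{f}\inv(\til{V}\cap\partial^0\R^m_k) \subs \til{U}\cap\partial^0\R^m_k$, and (ii) $\til{f}\inv(\til{V}\cap\R^m_k) \subs \til{U}\cap\R^m_k$; the latter is equivalent (via the bijection $\til{f}$) to the desired $\til{f}(\til{U}\setminus\R^m_k) \subs \til{V}\setminus\R^m_k$. The two key ingredients are the convexity of $\til{V}$, which forces connectedness of $\til{V}\cap\partial^0\R^m_k$, and the density of $\partial^0\R^m_k$ in $\R^m_k$.

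For (i), set $P := \til{f}\inv(\til{V}\cap\partial^0\R^m_k)$. Since $\til{V}\cap\partial^0\R^m_k = \til{V} \cap (\,]0,\8[^k \ti \R^{m-k}\,)$ is the intersection of two convex sets, it is convex and hence connected; thus $P$ is a connected open subset of the open set $\til{U} \subs \R^m$, hence path-connected. The boundary hypothesis $\til{f}(\til{U}\cap\partial\R^m_k) \subs \til{V}\cap\partial\R^m_k$ forces $P \cap \partial\R^m_k = \emptyset$, because any $y \in P \cap \til{U}\cap\partial\R^m_k$ would satisfy $\til{f}(y) \in (\til{V}\cap\partial\R^m_k) \cap \partial^0\R^m_k = \emptyset$. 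By hypothesis there exists $x_0 \in \til{U}\cap\partial^0\R^m_k$ with $\til{f}(x_0) \in \partial^0\R^m_k$, so $x_0 \in P$. For any $y \in P$, I would choose a path $\gamma \co [0,1] \ra P$ from $x_0$ to $y$; if $y \notin \R^m_k$, some $y_j < 0$ for $j \leq k$ while $(x_0)_j > 0$, and the intermediate value theorem yields $t_0$ with $\gamma_j(t_0) = 0$, contradicting $P \cap \partial\R^m_k = \emptyset$. Hence $P \subs \R^m_k$, and together with $P \cap \partial\R^m_k = \emptyset$, $P \subs \partial^0\R^m_k$.

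For (ii), I would use a closure argument. Any point $p \in \til{V}\cap\R^m_k$ is approximable from within $\til{V}$ by perturbations that add a small positive quantity to the first $k$ coordinates, and these perturbations land in $\til{V}\cap\partial^0\R^m_k$; hence $\til{V}\cap\partial^0\R^m_k$ is dense in $\til{V}\cap\R^m_k$ with respect to $\til{V}$. Since $\til{f}$ is a homeomorphism it commutes with closure, and so
\begin{align*}
\til{f}\inv(\til{V}\cap\R^m_k) \subs \til{f}\inv\bigl(\ol{\til{V}\cap\partial^0\R^m_k}^{\til{V}}\bigr) = \ol{P}^{\til{U}} \subs \ol{\til{U}\cap\partial^0\R^m_k}^{\til{U}} \subs \til{U}\cap\R^m_k,
\end{align*}
where (i) is used in the middle inclusion. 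Passing to complements in $\til{U}$ and $\til{V}$ and applying $\til{f}$ yields the stated inclusion.

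The main obstacle is (i): the interior-to-interior hypothesis cannot be dropped, as the reflection $\til{f}(x) = -x$ on $\R$ with $k=1$ preserves $\partial\R^1_1 = \{0\}$ but reverses the two sides and violates the conclusion. Since the lemma is stated as a direct consequence of \cite[Lemma 2.2.3]{Margalef}, one may alternatively verify that the cited statement applies in our setting and invoke it instead of running the argument above.
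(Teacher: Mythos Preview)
Your argument is correct. The paper itself does not prove this lemma but simply records it as a direct consequence of \cite[Lemma 2.2.3]{Margalef}, so there is no proof in the paper to compare against in detail.

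Your self-contained route---first showing $\til{f}\inv(\til{V}\cap\partial^0\R^m_k)\subs\partial^0\R^m_k$ via a connectedness/IVT argument exploiting the convexity of $\til{V}\cap\partial^0\R^m_k$, then passing to all of $\R^m_k$ by a density/closure argument---is elementary and avoids the external reference entirely. Each step checks out: the convexity of $\til{V}$ is used exactly where it is needed (connectedness of $\til{V}\cap\partial^0\R^m_k$), the boundary hypothesis rules out $P\cap\partial\R^m_k$, the interior-to-interior hypothesis supplies the basepoint $x_0$, and the closure step is clean since $\R^m_k$ is closed in $\R^m$. Your closing remark about the reflection $x\mapsto -x$ on $\R$ is a nice illustration of why the second hypothesis cannot be dropped.
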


\begin{lemma}
Let $U\subs \R^m_k$ be open, $f\colon U \ra \R^m_k$ be a real analytic map and $x_0 \in U$  such that
\begin{align*}
f(U\cap \partial \R^m_k) \subs \partial \R^m_k \text{ and } f'(x_0) \in \text{GL}(\R^m).
\end{align*}
We can find an open $x_0$-neighbourhood $U'\subs U$ and an open $f(x_0)$-neighbourhood $V\subs \R^m_k$ such that $f|_{U'}^V\colon U'\ra V$ is a real analytic diffeomorphism.
\end{lemma}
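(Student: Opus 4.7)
The plan is to extend $f$ to an ambient open subset of $\R^m$, apply the classical inverse function theorem to the extension, and then invoke Lemma \ref{PreUKF} to force the resulting local diffeomorphism to restrict to one between open subsets of $\R^m_k$.

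Since $U$ is open in $\R^m_k$, write $U = \hat U \cap \R^m_k$ for some open $\hat U \subs \R^m$. By the definition of real analyticity on a corner (a local version of the extension lemma proved earlier in Section~\ref{SecEnveloping Manifold}), $f$ admits a real analytic extension $\til f \co \til U_0 \ra \R^m$ on some open $\R^m$-neighbourhood $\til U_0$ of $x_0$; shrinking $\til U_0$ we may assume $\til U_0 \subs \hat U$, so that $\til U_0 \cap \R^m_k \subs U$. Since $\til f'(x_0) = f'(x_0) \in \GL(\R^m)$, the classical inverse function theorem supplies an open ball $\til V$ around $f(x_0)$ in $\R^m$ and an open neighbourhood $\til U \subs \til U_0$ of $x_0$ such that $\til f|_{\til U} \co \til U \ra \til V$ is a real analytic diffeomorphism. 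If additionally $x_0 \in \partial^0 \R^m_k$, shrink $\til U$ further so that $\til U \subs \partial^0 \R^m_k$.

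The next task is to verify the hypotheses of Lemma \ref{PreUKF} for $\til f|_{\til U} \co \til U \ra \til V$. The ball $\til V$ is convex; and since $\til U \cap \partial \R^m_k \subs \til U_0 \cap \R^m_k \subs U$, the extension $\til f$ coincides with $f$ there, whence $\til f(\til U \cap \partial \R^m_k) \subs \partial \R^m_k$ by the hypothesis on $f$. For the interior-hitting condition, $\til U \cap \partial^0 \R^m_k$ is a non-empty open subset of $\R^m$: if $x_0 \in \partial^0 \R^m_k$ it equals $\til U$, while if $x_0 \in \partial \R^m_k$ the neighbourhood $\til U$ of $x_0 \in \R^m_k = \ol{\partial^0 \R^m_k}$ must meet the dense open set $\partial^0 \R^m_k$. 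Because $\til U \cap \partial^0 \R^m_k \subs U$, its $\til f$-image equals its $f$-image and lies in $\R^m_k$; being simultaneously a non-empty open subset of $\R^m$, and since $\partial \R^m_k$ has empty interior in $\R^m$, this image lies entirely inside $\partial^0 \R^m_k$.

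Lemma \ref{PreUKF} then yields $\til f(\til U \sm \R^m_k) \subs \til V \sm \R^m_k$, and the bijectivity of $\til f|_{\til U}$ forces $\til f(\til U \cap \R^m_k) = \til V \cap \R^m_k$. Setting $U' := \til U \cap \R^m_k$ and $V := \til V \cap \R^m_k$ (both open in $\R^m_k$), the restriction $f|_{U'}^V$ agrees with $\til f|_{U'}^V$ and is therefore bijective; its inverse is the restriction of the real analytic map $(\til f|_{\til U})\inv$ to $V$, so $f|_{U'}^V$ is a real analytic diffeomorphism. The only delicate step is the interior-hitting hypothesis when $x_0 \in \partial \R^m_k$, which rests on the topological observation that a non-empty open subset of $\R^m$ contained in $\R^m_k$ must already lie in $\partial^0 \R^m_k$.
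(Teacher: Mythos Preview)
Your argument is correct and follows essentially the same route as the paper: extend $f$ real-analytically to an open set in $\R^m$, apply the classical inverse function theorem to obtain a local diffeomorphism $\til f|_{\til U}\co \til U \ra \til V$ with $\til V$ convex, verify the two hypotheses of Lemma~\ref{PreUKF}, and then restrict to $\R^m_k$. The only cosmetic difference is that the paper disposes of the case $x_0\in\partial^0\R^m_k$ at the outset via the ordinary inverse function theorem, whereas you fold it into the same argument; and when concluding $\til f(\til U\cap\R^m_k)=\til V\cap\R^m_k$, you invoke bijectivity for one inclusion while the other inclusion (which you leave implicit) comes from $f(U)\subs\R^m_k$.
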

\begin{proof}
Without loose of generality we can assume $x_0 \in \partial U:=U\cap \partial \R^m_k$, because otherwise we can use the standard inverse function theorem. Now let $\til{f}\colon \til{U}\ra \R^m$ be a real analytic extension of $f$. Without loose of generality  we can assume $\til{U}\cap \R^m_k=U$. We have $\til{f}|_U=f$ and $\til{f}'(x_0)= f'(x_0) \in \GL(\R^m)$. Let $\til{U}'\subs \til{U}$ be a $x_0$-neighbourhood, $\til{V}\subs\R^m$ a $f(x_0)$-neighbourhood such that $\til{f}|_{\til{U}'} \co \til{U}'\ra \til{V}$ is a real analytic diffeomorphism between open sets of $\C^m$. Without loose of generality we can assume that $\til{V}$ is convex. 
We have
\begin{align*}
\til{f}(\til{U}'\cap \partial\R^m_k) = f(\til{U}' \cap \partial\R^m_k) \subs \til{V}\cap \partial \R^m_k.
\end{align*}
On the other hand $\til{f}(\til{U}' \cap \partial^0\R^m_k) \subs \R^m$ is open in $\R^m$ and not empty. Therefore $\til{f}(\til{U}' \cap \partial^0\R^m_k)$ is not contained in $\partial \R^m_k$.
With Lemma \ref{PreUKF} we get $\til{f}(\til{U}'\setminus \R^m_k) \subs \til{V}\setminus \R^m_k$. Using $\til{f}(\til{U}'\cap \partial\R^m_k) \subs \til{V}\cap \partial \R^m_k$ we get 
\begin{align*}
\til{f} (\til{U}'\cap \R^m_k) = \til{V}\cap \R^m_k.
\end{align*}
Now we define $U':=\til{U}' \cap \R^m_k \subs \til{U}'$ and $V:= \til{V}\cap \R^m_k \subs \til{V}$. The map $f|_{U'}^V\colon U' \ra V$ is bijective real analytic and also $(f|_{U'}^V)\inv = \til{f}\inv|_{V}$ is real analytic.
\end{proof}

\begin{theorem}\label{Zentral}
There exists $\ep_{diff}\in ]0,\ep_{\exp^\ast}[$ such that, if $\eta \in \mathcal{B}_{1,\ep_{diff}}^1$, then $\ps_\eta \co M \ra M,~ p \ms \ps_\eta(p)$ is a diffeomorphism.
\end{theorem}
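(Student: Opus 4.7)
The plan is to combine the local estimates of Lemma \ref{Nahe1}, the global injectivity and stratum-preservation of Lemma \ref{GehtnachM}, and the inverse function theorem for manifolds with corners just proved, in order to show that each $\ps_\eta$ is simultaneously a real analytic bijection $M\ra M$ and a local real analytic diffeomorphism at every point.

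First I would choose $\ep_{diff}\in ]0,\ep_{\exp^\ast}[$ small enough so that, simultaneously: (i) Lemma \ref{GehtnachM} applies, ensuring that every $\eta\in \mathcal{B}^1_{1,\ep_{diff}}$ gives an injective $\ps_\eta$ with $\ps_\eta(\partial^jM)=\partial^jM$ for all $j$; and (ii) Lemma \ref{Nahe1}, invoked with $R=2$, $l=1$ and $r=\tfrac{1}{2}$, yields $\|({\ps^i_{\eta_{(i)}}})'(x)-\id_{\R^m}\|_{op}<\tfrac{1}{2}$ for all $x\in B^{k_i}_1(0)$ and all $i\in \set{1,\dots,n}$. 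A Neumann-series argument then upgrades~(ii) to $({\ps^i_{\eta_{(i)}}})'(x)\in \GL(\R^m)$ on the whole of $B^{k_i}_1(0)$.

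Fixing such an $\eta$, I would first deduce from~(i) that $\ps_\eta(M)\subs M$ and in fact $\ps_\eta(M)=\bigcup_{j}\ps_\eta(\partial^jM)=\bigcup_{j}\partial^jM=M$, so that $\ps_\eta\co M\ra M$ is a real analytic bijection. To verify that it is a local diffeomorphism at an arbitrary $p\in M$, I would choose $i$ with $p\in U_{i,1}$ and read off from Lemma \ref{Hilfreich2} that the local representative is $\ps^i_{\eta_{(i)}}\co B^{k_i}_1(0)\ra \R^m$. The inclusion $\ps_\eta(U_{i,1})\subs M\cap \til{U}_{i,5}=U_{i,5}$ forces this representative to take its values in $\ph_i(U_{i,5})=B^{k_i}_5(0)\subs \R^m_{k_i}$, and the stratum identity translates into $\ps^i_{\eta_{(i)}}(B^{k_i}_1(0)\cap \partial \R^m_{k_i})\subs \partial\R^m_{k_i}$. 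Together with~(ii), the corner inverse function theorem proved just before this theorem produces an open neighbourhood of $\ph_i(p)$ on which $\ps^i_{\eta_{(i)}}$ is a real analytic diffeomorphism onto an open subset of $\R^m_{k_i}$, and transporting back via the charts gives the local diffeomorphism property of $\ps_\eta$ at $p$. Combined with the bijectivity above, this makes $\ps_\eta$ a global real analytic diffeomorphism of $M$.

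The one point requiring care is to ensure that the local representative $\ps^i_{\eta_{(i)}}$ actually takes its values in the quadrant $\R^m_{k_i}$, as the corner inverse function theorem demands; this is the combined output of Lemma \ref{Hilfreich2} (which keeps us inside $\til{U}_{i,5}$) and Lemma \ref{GehtnachM} (which keeps us inside $M$). The boundary-mapping hypothesis of that theorem is then simply a chart rewriting of the stratum identity $\ps_\eta(\partial^jM)=\partial^jM$, after which the rest of the argument is routine.
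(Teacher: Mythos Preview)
Your proposal is correct and follows essentially the same route as the paper's proof: use Lemma~\ref{GehtnachM} for injectivity, the stratum identity $\ps_\eta(\partial^jM)=\partial^jM$ for surjectivity onto $M$, and the corner inverse function theorem just before this statement for the local diffeomorphism property. The paper's own proof compresses all of this into a single sentence (``bijective and a local real analytic diffeomorphism''), whereas you have spelled out the details --- in particular the verification that the local representative lands in $\R^m_{k_i}$ and maps $\partial\R^m_{k_i}$ into itself --- which the paper leaves implicit; your explicit invocation of Lemma~\ref{Nahe1} for the derivative bound is a harmless redundancy, since that bound is already built into the $\ep$ chosen in Lemma~\ref{GehtnachM} via Lemma~\ref{InjektivitaetPsiEta}.
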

\begin{proof}
We simply use the $\ep$ defined in Lemma \ref{GehtnachM}.
Then $\ps_\eta$ is a real analytic diffeomorphism, because it is bijective and a local real analytic diffeomorphism.
\end{proof}

The idea of the following Lemma \ref{HP} bases manly on \cite[4.12]{Gloeckner2}. But because our manifold is compact we can find a single $\ep$.
\begin{lemma}\label{HP}
There exists $\ep_{inj}\in ]0,\ep_{\exp}[$ such that  for all $p \in M$ the map $\exp_p \co \til{\Omega}_p \subs T_p\til{M}\ra \til{M}$ is injective on
\begin{align*}
W_pM:= \bigcup_{i=1}^n T{\ph}_i\inv (\set{\ph_i(p)} \ti B_{\ep_{inj}} (0)) \subs T_pM.
\end{align*}
\end{lemma}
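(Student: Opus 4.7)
The plan is to argue by compactness and contradiction, reducing injectivity on the union $W_pM$ to injectivity in a single coordinate chart near the limit point, where Lemma \ref{ExpDef} applies.

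Suppose no such $\ep_{inj}$ exists. Then for every $n \in \N$ we find $p_n \in M$, indices $i_n,j_n \in \set{1,\dots,n}$ (with $n$ being the fixed number of charts from Lemma \ref{Existenz von Karten}), and vectors $v_n,w_n \in T_{p_n}M$ with $v_n \neq w_n$ and $\exp_{p_n}(v_n) = \exp_{p_n}(w_n)$, such that $v_n \in T\ph_{i_n}\inv(\set{\ph_{i_n}(p_n)} \times B_{1/n}(0))$ and $w_n \in T\ph_{j_n}\inv(\set{\ph_{j_n}(p_n)} \times B_{1/n}(0))$. Since $M$ is compact and only finitely many chart indices occur, after passing to a subsequence we may assume $p_n \to p$ for some $p \in M$, $i_n = i$, and $j_n = j$ are constant. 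In particular $p \in \ol{U}_{i,5} \cap \ol{U}_{j,5}$. The coordinate representatives of $v_n$ and $w_n$ in the charts $T\ph_i$ and $T\ph_j$ lie in $B_{1/n}(0)$, so $v_n \to 0_p$ and $w_n \to 0_p$ in $TM$.

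Using the cover $M \subs \bigcup_{k=1}^n \ph_k\inv(B_1^{k_k}(0))$ from Lemma \ref{Existenz von Karten}, I choose some $k$ with $p \in U_{k,1}$. For $n$ sufficiently large we then have $p_n \in U_{k,4.5}$, and since the chart transitions $\ph_k \ci \ph_i\inv$ and $\ph_k \ci \ph_j\inv$ (and their derivatives) are bounded on the relevant relatively compact sets, the $k$-chart coordinate representatives $v_n^{(k)}, w_n^{(k)} \in \R^m$ of $v_n$ and $w_n$ also satisfy $\|v_n^{(k)}\|, \|w_n^{(k)}\| < \ep_{\exp}$ for $n$ large enough. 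By Remark \ref{OB} we have $\til{\ph}_k(\exp_{p_n}(v_n)) = \exp_k(\ph_k(p_n), v_n^{(k)})$ and likewise for $w_n$, so the identity $\exp_{p_n}(v_n) = \exp_{p_n}(w_n)$ translates into $\exp_k(\ph_k(p_n), v_n^{(k)}) = \exp_k(\ph_k(p_n), w_n^{(k)})$.

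By Lemma \ref{ExpDef}(b), for every fixed base point in $\ol{B}_{4.5}(0)$ the map $\exp_k(x,\bl) \co B_{\ep_{\exp}}(0) \to \R^m$ is injective, so $v_n^{(k)} = w_n^{(k)}$, hence $v_n = w_n$, contradicting the construction. The main technical point (and only real obstacle) is to make sure that small Euclidean norm in the $i$-th or $j$-th chart translates into being inside the $\ep_{\exp}$-ball in the $k$-th chart; this is where compactness enters a second time, providing a uniform bound on the differentials of the chart transitions on the fixed compact set containing all $p_n$. Choosing $\ep_{inj}$ to be $1/n_0$ for the index $n_0$ after which the contradiction begins to apply uniformly (equivalently, applying the argument directly with a fixed small $\ep$) yields the required constant.
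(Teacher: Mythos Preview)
Your argument is correct in spirit but takes a considerably more roundabout route than the paper, and there is one genuine soft spot.

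The paper's proof is a direct two-line argument with no sequences and no third chart: by compactness of the finitely many overlap sets $\ol{U}_{4,i}\cap\ol{U}_{4,j}$ one can choose $\ep_{inj}\in\,]0,\ep_{\exp}[$ so small that
\[
T(\ph_i\circ\ph_j^{-1})\bigl(\{\ph_j(p)\}\times B_{\ep_{inj}}(0)\bigr)\subseteq\{\ph_i(p)\}\times B_{\ep_{\exp}}(0)
\]
uniformly in $p,i,j$. Writing $A_l'=T\ph_l^{-1}(\{\ph_l(p)\}\times B_{\ep_{inj}}(0))$ and $A_l=T\ph_l^{-1}(\{\ph_l(p)\}\times B_{\ep_{\exp}}(0))$, any $v\in A_i'$ and $w\in A_j'$ then both lie in the single set $A_j$, on which $\exp_p$ is injective by Lemma~\ref{ExpDef}. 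That is the whole proof; your ``main technical point'' (small norm in one chart implies norm $<\ep_{\exp}$ in another) is isolated and settled \emph{first}, uniformly, rather than being pushed into a limiting argument.

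The soft spot in your version is the step ``the chart transitions $\ph_k\circ\ph_i^{-1}$ and their derivatives are bounded on the relevant relatively compact sets, so $v_n^{(k)},w_n^{(k)}\to 0$''. You only know $p\in\ol{U}_{i,5}$, not $p\in U_{i,5}$; if $p$ lies on the boundary of the $i$-th chart domain then $\ph_i(p_n)$ may run off to $\partial B_5^{k_i}(0)$ and there is no compact subset of the domain of $\ph_k\circ\ph_i^{-1}$ containing all $\ph_i(p_n)$, hence no a priori bound on the differentials. Equivalently, your earlier sentence ``$v_n\to 0_p$ in $TM$'' is precisely the assertion $v_n^{(k)}\to 0$ in the chart around $p$, which is what needs proving, not something that follows automatically from the $i$-chart representative being small. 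The paper avoids this by working only over the compact pieces $\ol{U}_{4,i}\cap\ol{U}_{4,j}$ (which suffices for every later use of the lemma); once you impose a similar restriction, the contradiction argument and the auxiliary chart $k$ become unnecessary and you recover the paper's direct proof.
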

\begin{proof}
Let $\ep_{inj}\in ]0,\ep_{\exp}[$ such that 
\begin{align*}
T(\ph_i\ci \ph_j\inv) \left(\set{\ph_j(p)} \ti B_{\ep_{inj}}(0)\right) \subs \set{\ph_i(p)} \ti B_{\ep_{\exp}}(0)
\end{align*}
for all $p\in \ol{U}_{4,i} \cap \ol{U}_{4,j}$ and $i,j\in \set{1,\dots,n}$. For $i \in \set{1,\dots,n}$ let $A_i':= T\ph_i\inv(\set{\ph_i(p)} \ti B_{\ep_{inj}}(0)) \subs T_pM$ and $A_i:= T\ph_i\inv(\set{\ph_i(p)} \ti B_{\ep_{\exp}}(0)) \subs T_pM$. Now  let $v,w\in \bigcup_{i}A_i'$ eg. $v \in A_i'$ and $w\in A_j'$ with $\exp(v) = \exp(w)$. We know that $\exp$ is injective on $A_j$. But obviously $v,w\in A_j$ by the choose of $\ep_{inj}$.
\end{proof}

As we use the local approach the following definition corresponds to \cite[4.13]{Gloeckner2}.
\begin{definition}
We define 
\begin{align*}
\ep_{\mathcal{U}}:= \min(\ep_{diff},\ep_{inj}).
\end{align*}
Moreover we define $\mathcal{V}:= \mathcal{B}^1_{1,\ep_{\mathcal{U}}} \subs \Gamma^\o_\str(TM)$,  $\mathcal{U}:= \set{\ps_\eta: \eta \in \mathcal{V}} \subs \Diff(M)$ and 
\begin{align*}
\Psi \co \mathcal{V}\ra \mathcal{U},~ \eta \ms \ps_\eta.  
\end{align*}
Given $\alpha \in \mathcal{U}$ we find $\eta \in \mathcal{V}$ with $\alpha = \ps_\eta$. We get $\eta_{(i)}(x) \in B_{\ep_{inj}}(0)$ for all $x \in \ol{B}^{k_i}_1(0)$. Hence $\eta(p)\in W_pM$ for all $p \in U_{i,1}$ and all $i=1,..,n$. Thus $\eta(p) \in W_pM$ for all $p \in M$. Therefore $\alpha (p) =\ps_\eta (p)= \exp|_{W_pM}(\eta(p)) \in \exp|_{W_pM}(W_pM)$ and $\eta(p) = \exp|_{W_pM}\inv(\alpha(p))$. Hence the map
\begin{align*}
\Phi \co \mathcal{U}\ra \mathcal{V},~ \alpha \ms \Phi(\alpha)
\end{align*}
with $\Phi(\alpha)(p)= \exp|_{W_pM}\inv(\alpha(p))$ makes sense and is inverse to $\Psi$.
\end{definition}

\section{Preparation for results of smoothness}\label{SecPreparation for results of smoothness}
To show the smoothness of the group actions we need some further definitions and results. Especially we need results concerning extensions of real analytic maps on $B^k_\ep(0)$ to open sets of $\C^m$. In this section we elaborate these foundations.

The following Lemma is the standard quantitative inverse function theorem for Lipschitz continuous maps (see \cite[Theorem 5.3]{Gloeckner5} and \cite{Wells}).
\begin{lemma}\label{StandQuand}
Let $A\co \R^m \ra \R^m$ be a linear isomorphism, $x_0 \in \R^m$, $r>0$ and $g\co B_r(x_0) \ra \R^m$ Lipschitz continuous with $\Lip(g)<\frac{1}{\|A\inv\|_{op}}$. If we define $a:= \frac{1}{\|A\inv\|_{op}} - \Lip(g)$, $b:= \|A\|_{op} + \Lip(g)$ and $f\co B_r(x_0) \ra \R^m$, $x\ms Ax+g(x)$ we have
\begin{align*}
B_{as}(f(x)) \subs f(B_s(x)) \subs B_{bs}(f(x))
\end{align*}
for all $x\in B_r(x_0)$ and $s\in ]0,r-\|x-x_0\|]$. Moreover $f$ has open image and is a homeomorphism onto its image.
\end{lemma}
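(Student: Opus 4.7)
The approach I would take is the classical Banach fixed-point argument: since $f = A + g$ is a Lipschitz perturbation of a linear isomorphism, the equation $f(y) = z$ rewrites as $y = T(y)$ with $T(y) := A\inv(z - g(y))$, and can be solved by contraction on an appropriate closed ball.

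First I would dispose of the elementary upper inclusion $f(B_s(x)) \subs B_{bs}(f(x))$: for $y \in B_s(x)$ the triangle inequality together with the Lipschitz bound immediately give $\|f(y) - f(x)\| \leq (\|A\|_{op} + \Lip(g))\|y - x\| = b\|y - x\| < bs$.

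For the nontrivial lower inclusion $B_{as}(f(x)) \subs f(B_s(x))$ I would fix $z \in B_{as}(f(x))$ and study $T(y) = A\inv(z - g(y))$. Contraction is automatic with constant $\|A\inv\|_{op}\Lip(g) < 1$ by hypothesis. The crucial step is checking invariance of a closed ball: rewriting $T(y) - x = A\inv(z - f(x)) + A\inv(g(x) - g(y))$, the strict bound $\|z - f(x)\| < as$ combined with the algebraic identity $\|A\inv\|_{op}(a + \Lip(g)) = 1$ yields $\|T(y) - x\| < s$ for every $y \in \ol{B}_s(x)$. Since $s \leq r - \|x - x_0\|$, the closed ball $\ol{B}_s(x)$ is contained in $\ol{B}_r(x_0)$, so after replacing $g$ by its unique Lipschitz extension to $\ol{B}_r(x_0)$ the map $T$ becomes a well-defined contraction of the complete metric space $\ol{B}_s(x)$ into itself. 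Banach's fixed-point theorem then produces a unique $y$ with $T(y) = y$; the strict inequality $\|T(y) - x\| < s$ forces $y \in B_s(x) \subs B_r(x_0)$, so $f(y) = z$ holds for the original $f$. The only delicate part of the proof is this closed/open bookkeeping around the boundary of $B_r(x_0)$, since the statement keeps $g$ only on the open ball but a complete domain is needed for the fixed-point argument.

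The remaining assertions are by-products. The reverse triangle inequality yields $\|f(y_1) - f(y_2)\| \geq \|A\inv\|_{op}^{-1}\|y_1 - y_2\| - \Lip(g)\|y_1 - y_2\| = a\|y_1 - y_2\|$, so $f$ is injective with Lipschitz inverse of constant $1/a$ on its image. Applying the already-established lower inclusion at every point of $B_r(x_0)$ shows that $f(B_r(x_0))$ is open, and Lipschitz continuity of both $f$ and $f\inv$ then gives the homeomorphism property.
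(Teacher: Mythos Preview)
Your proof is correct and complete. The paper itself does not prove this lemma at all; it merely records it as ``the standard quantitative inverse function theorem for Lipschitz continuous maps'' and refers the reader to \cite[Theorem 5.3]{Gloeckner5} and \cite{Wells}. Your Banach fixed-point argument is exactly the standard proof one finds in those sources, and you have handled the one genuinely subtle point (extending $g$ to the closed ball so that the contraction lives on a complete space, then observing that the fixed point lands in the open ball) cleanly.
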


In \cite{Gorni} Gorny showed a \textbf{qualitative} inverse function theorem for Lipschitz continuous maps on open sets of half-spaces  in  Banach spaces (``open sets with \textbf{boundary}'', that means the local  ``boundary''-case). In \cite[Remark on p 47]{Gorni} she states the open problem whether there is also a \textbf{qualitative} inverse function theorem for Lipschitz continuous functions on ``open sets with corners'' (that means the local "corner"-case).

Our Lemma \ref{QanUKFEcke} is a \textbf{quantitative} inverse function theorem for Lipschitz continuous maps on open sets with corners in $\R^m$, but the proof can be transferred to the Banach case verbatim by substituting $\R^m$ with a Banach space.

\begin{lemma}\label{LipAusd}
Let $x_0 \in \R^m$ and  $g \co B_r^k (x_0) \ra \R^m$ Lipschitz continuous with $\Lip(g)=:L$. Let $\|\bl\|_k\co B_r^k\ra \R^m_k$, $x\ms \|x\|_k$ with 
\begin{align*}
(\|x\|_k)_i= 
\begin{cases}
|x_i| &: i\leq k \\
x_i &: \text{otherwise}
\end{cases}
\end{align*}
and $\til{g} \co B_r(x_0) \ra \R^m$, $x \ms g(\|x\|_k)$. In this situation $\til{g}|_{B^k_r(x_0)} =g$ and $\Lip(\til{g}) =\Lip(g)$.
\end{lemma}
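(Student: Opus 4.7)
The plan is to verify that the geometric folding map $\|\cdot\|_k$, which reflects a vector into the quadrant $\R^m_k$ by replacing its first $k$ coordinates by their absolute values, is both a retraction onto $\R^m_k$ and non-expansive, and then to push these two properties through the composition $g \circ \|\cdot\|_k$. (It is implicitly assumed that $x_0 \in \R^m_k$ so that $B^k_r(x_0)$ is a genuine open neighbourhood of $x_0$; otherwise the restriction statement reduces to the trivial identity on the empty set.)

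First I would verify that $\|\cdot\|_k$ actually maps $B_r(x_0)$ into $B^k_r(x_0)$. By construction $\|x\|_k \in \R^m_k$, so it suffices to check $\|\,\|x\|_k - x_0\,\| < r$ whenever $\|x - x_0\| < r$. Coordinatewise, for $i > k$ nothing changes, while for $i \le k$ the assumption $(x_0)_i \ge 0$ yields
\begin{equation*}
\bigl||x_i| - (x_0)_i\bigr| \le |x_i - (x_0)_i|,
\end{equation*}
as one sees by splitting into the cases $x_i \ge 0$ and $x_i < 0$ (in the latter case $|x_i - (x_0)_i| = |x_i| + (x_0)_i \ge \bigl||x_i| - (x_0)_i\bigr|$). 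Summing squares gives the required estimate. Hence $\tilde g(x) := g(\|x\|_k)$ is well-defined on all of $B_r(x_0)$.

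Next I would establish that $\|\cdot\|_k$ is $1$-Lipschitz. For $i > k$ the contribution to $\|\,\|x\|_k - \|y\|_k\,\|^2$ is identical to that of $\|x-y\|^2$, while for $i \le k$ the standard inequality $\bigl||x_i| - |y_i|\bigr| \le |x_i - y_i|$ gives a componentwise domination. Composing with $g$ therefore yields
\begin{equation*}
\|\tilde g(x) - \tilde g(y)\| = \|g(\|x\|_k) - g(\|y\|_k)\| \le L \cdot \|\,\|x\|_k - \|y\|_k\,\| \le L \cdot \|x - y\|,
\end{equation*}
so $\Lip(\tilde g) \le L = \Lip(g)$.

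Finally, for $x \in B^k_r(x_0)$ we have $(x)_i \ge 0$ for $i \le k$, so $\|x\|_k = x$ and hence $\tilde g(x) = g(x)$. This simultaneously proves $\tilde g|_{B^k_r(x_0)} = g$ and the reverse inequality $\Lip(\tilde g) \ge \Lip(g)$, since the Lipschitz constant of a restriction cannot exceed that of the extension. Combining the two inequalities gives $\Lip(\tilde g) = \Lip(g)$, completing the proof. There is no real obstacle here; the only point demanding care is the sign-case analysis showing that the folding map is a $1$-Lipschitz retraction in the presence of the constraint $(x_0)_i \ge 0$ for $i \le k$.
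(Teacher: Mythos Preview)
Your proof is correct and follows the same approach as the paper's two-line sketch: both observe that the folding map $\|\cdot\|_k$ is $1$-Lipschitz and then bound $\Lip(\tilde g)\le\Lip(g)$ via composition. You are more careful than the paper in verifying that $\|x\|_k$ actually lands in the domain $B_r^k(x_0)$ (which, as you rightly note, requires the implicit assumption $x_0\in\R^m_k$) and in making the reverse inequality explicit via the restriction property.
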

\begin{proof}
The map $\|\bl\|_k\co B_r^k\ra \R^m_k$ is Lipschitz continuous with Lipschitz constant $1$. The assertion now follows from $\Lip(\til{g}) = \Lip(g\ci \|\bl\|_k) \leq \Lip(g)$.
\end{proof}

\begin{lemma}\label{QanUKFEcke}
Let $A\co \R^m \ra \R^m$ be a linear isomorphism, $x_0 \in \R^m$, $r>0$ and $g\co B^k_r(x_0) \ra \R^m$ be Lipschitz continuous with $\Lip(g)<\frac{1}{\|A\inv\|_{op}}$. We define $a:= \frac{1}{\|A\inv\|_{op}} - \Lip(g)$, $b:= \|A\|_{op} + \Lip(g)$ and $f\co B^k_r(x_0) \ra \R^m$, $x\ms Ax+g(x)$. If $f(\partial B_r^k(x_0)) \subs \partial \R^m_k$ and $f(\partial^0B_r^k(x_0)) \subs \partial^0\R^m_k$, then we have
\begin{align*}
B_{as}^k(f(x)) \subs f(B_s^k(x)) \subs B_{bs}^k(f(x))
\end{align*}
for all $x\in B_r^k(x_0)$ and $s\in ]0,r-\|x-x_0\|]$. Moreover $f(B^k_r(x_0))$ is open in $\R^m_k$ and $f$ is a homeomorphism onto its image.
\end{lemma}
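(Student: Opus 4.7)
The plan is to reduce the statement to the standard corner-free quantitative inverse function theorem by symmetrically extending $g$ past the boundary, and then to use the topological Lemma \ref{PreUKF} to ensure that the resulting homeomorphism restricts properly to $\R^m_k$. First I would invoke Lemma \ref{LipAusd} to obtain a Lipschitz extension $\til{g}\co B_r(x_0)\ra\R^m$ of $g$ with $\Lip(\til{g})=\Lip(g)<\|A\inv\|_{op}\inv$. Setting $\til{f}(x):=Ax+\til{g}(x)$, Lemma \ref{StandQuand} applies on $B_r(x_0)$ and yields that $\til{f}$ has open image $\til{V}:=\til{f}(B_r(x_0))$, is a homeomorphism onto $\til{V}$, and satisfies $B_{as}(\til{f}(x))\subs\til{f}(B_s(x))\subs B_{bs}(\til{f}(x))$ for all $x\in B_r(x_0)$ and $s\in]0,r-\|x-x_0\|]$. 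Note that $\til{f}|_{B_r^k(x_0)}=f$.

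For the upper inclusion $f(B_s^k(x))\subs B_{bs}^k(f(x))$ I would argue as follows. Given $x\in B_r^k(x_0)$ and admissible $s$, one has $\partial^0 B_s^k(x)=B_s(x)\cap\partial^0\R^m_k\subs\partial^0 B_r^k(x_0)$, so the standing hypothesis yields $f(\partial^0 B_s^k(x))\subs\partial^0\R^m_k\subs\R^m_k$. Since $\partial^0 B_s^k(x)$ is dense in $B_s^k(x)$ and $\R^m_k$ is closed in $\R^m$, continuity of $f$ forces $f(B_s^k(x))\subs\R^m_k$, which combined with the upper Lemma \ref{StandQuand}-inclusion gives $f(B_s^k(x))\subs B_{bs}(f(x))\cap\R^m_k=B_{bs}^k(f(x))$.

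For the lower inclusion $B_{as}^k(f(x))\subs f(B_s^k(x))$ take $y\in B_{as}(f(x))\cap\R^m_k$. Lemma \ref{StandQuand} provides a unique $\til{x}\in B_s(x)\subs B_r(x_0)$ with $\til{f}(\til{x})=y$, and it remains to verify $\til{x}\in\R^m_k$. Here I would apply Lemma \ref{PreUKF} to the homeomorphism $\til{f}|_{\til{U}}\co\til{U}\ra\til{V}'$ with $\til{U}:=\til{f}\inv(B_{as}(f(x)))\subs B_r(x_0)$ and the convex set $\til{V}':=B_{as}(f(x))$. The boundary-to-boundary condition $\til{f}(\til{U}\cap\partial\R^m_k)\subs\til{V}'\cap\partial\R^m_k$ follows from $B_r(x_0)\cap\partial\R^m_k=\partial B_r^k(x_0)$ (since $\partial\R^m_k\subs\R^m_k$) together with the hypothesis $f(\partial B_r^k(x_0))\subs\partial\R^m_k$. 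The delicate step, which I expect to be the main obstacle, is verifying the non-empty-interior hypothesis $\til{f}(\til{U}\cap\partial^0\R^m_k)\cap\partial^0\R^m_k\neq\emptyset$: I would produce such a point by choosing $\delta>0$ small and $e:=(1,\dots,1,0,\dots,0)\in\R^m$ (ones in the first $k$ slots), noting that $x+\delta e$ lies in $\til{U}\cap\partial^0 B_r^k(x_0)$ thanks to the openness of $\til{U}$ in $\R^m$ (not only in $\R^m_k$), and then invoking $f(\partial^0 B_r^k(x_0))\subs\partial^0\R^m_k$ to ensure that its $\til{f}$-image lies in $\partial^0\R^m_k$; this is precisely the ingredient which circumvents the possible non-convexity of the global image $\til{V}$. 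Lemma \ref{PreUKF} then yields $\til{f}(\til{U}\setminus\R^m_k)\subs\til{V}'\setminus\R^m_k$, forcing $\til{x}\in\R^m_k$ and hence $\til{x}\in B_s^k(x)$. Finally, openness of $f(B_r^k(x_0))$ in $\R^m_k$ and the homeomorphism property of $f$ onto its image follow routinely from the inclusions just established together with the fact that $f=\til{f}|_{B_r^k(x_0)}$ is the restriction of the homeomorphism $\til{f}$.
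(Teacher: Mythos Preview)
Your proof is correct and follows essentially the same route as the paper: extend $g$ via Lemma~\ref{LipAusd}, apply Lemma~\ref{StandQuand} to $\til{f}$, and then localize to $\til{U}=\til{f}\inv(B_{as}(f(x)))$ in order to invoke Lemma~\ref{PreUKF}. The only cosmetic differences are that the paper obtains the upper inclusion in one line from the hypotheses (since $B_r^k(x_0)=\partial^0 B_r^k(x_0)\cup\partial B_r^k(x_0)$ already forces $f(B_r^k(x_0))\subs\R^m_k$, no density argument is needed), and it verifies $\til{f}(\til{U}\cap\partial^0\R^m_k)\cap\partial^0\R^m_k\neq\emptyset$ simply by noting that $x\in\til{U}\cap\R^m_k$ and that an open subset of $\R^m$ meeting $\R^m_k$ must meet its interior $\partial^0\R^m_k$, rather than via your explicit perturbation $x+\delta e$.
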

\begin{proof}
Without loose of generality we can assume $B^k_r(x_0)\neq \emptyset$. Hence $\partial^0B^k_r(x_0)\neq \emptyset$.  
We define $\til{g} \co B_r(x_0) \ra \R^m$ as in Lemma \ref{LipAusd}. Then $\Lip(\til{g}) = \Lip(g) <\frac{1}{\|A\inv\|_{op}}$. Let $\til{f} \co B_r(x_0) \ra \R^m$, $x\ms Ax + \til{g}(x)$ and $a:= \frac{1}{\|A\inv\|_{op}} - \Lip(\til{g}) = \frac{1}{\|A\inv\|_{op}} - \Lip({g})$. Using Lemma \ref{StandQuand} we get:
\begin{compactenum}
\item The map $\til{f} \co B_r(x_0) \ra \R^m$ has open image and is a homeomorphism onto its image; 
\item\label{Qan1} For all $x \in B_r(x_0)$ and $s \in ]0,r-\|x-x_0\|[$ we have $B_{as}(\til{f}(x)) \subs \til{f}(B_s(x)) \subs B_{bs}(\til{f}(x))$;
\item\label{Qan2} $\til{f}|_{B_r^k(x_0)} = f$;
\item $\til{f}(\partial B_r^k(x_0)) = f(\partial B^k_r(x_0)) \subs \partial \R^m_k$ and $\til{f}(\partial^0 B_r^k(x_0)) \cap \partial^0 \R^m_k \neq \emptyset$.
\end{compactenum}
Let  $x \in B_r^k(x_0)$ and $s \in ]0,r-\|x-x_0\|[$.  We have to show 
\begin{align*}
B^k_{as}({f}(x)) \subs {f}(B^k_s(x)).  
\end{align*}
The inclusion $f(B_s^k(x)) \subs B^k_{bs}(b(x))$ follows directly from (\ref{Qan1}). With (\ref{Qan1}) and (\ref{Qan2}) we get $B_{as}^k(f(x))\subs B_{as}(\til{f}(x)) \subs \til{f}(B_s(x))$. Now let $y \in B_{as}^k(f(x))$. We find $z \in B_s(x)$ such that $y = \til{f}(z)$. It is left to show $z\in \R^m_k$. Let $\til{U}:= \til{f}\inv(B_{as}(f(x)))$ and $\til{V}:= B_{as}(f(x))$. Then $\til{U}$ is an open subset of $B_r(x_0)$ and $\til{f} \co \til{U} \ra \til{V} = B_{as}(f(x))$ is a homeomorphism. Because $\til{f}(z) =y \in B_{as}(f(x))$ and $\til{f}(x) = f(x) \in B_{as}(f(x))$ we get
\begin{align*}
z \in \til{U} \tx{and} x \in \til{U}.
\end{align*}
Hence $\til{U} \cap \R^m_k \neq \emptyset$ and so $\til{U} \cap \partial^0 \R^m_k \neq \emptyset$. Therefore 
\begin{align*}
\emptyset \neq \til{f}(\til{U}\cap \partial^0 \R^m_k) = f(\til{U} \cap \partial^0 \R^m_k) \subs \partial^0 \R^m_k.
\end{align*}
And so 
\begin{align}\label{QanW1}
\til{f}(\til{U} \cap \partial^0 \R^m_k) \cap \partial^0\R^m_k \neq \emptyset.
\end{align}
Moreover we get 
\begin{align}\label{QanW2}
\til{f}(\til{U} \cap \partial \R^m_k) = f(\til{U} \cap \partial\R^m_k) \subs \partial \R^m_k.
\end{align}
Using (\ref{QanW1}), (\ref{QanW2}) and Lemma \ref{PreUKF} we get 
\begin{align*}
\til{f}(\til{U}\setminus \R^m_k) \subs \til{V}\setminus \R^m_k.
\end{align*}
Suppose $z \notin \R^m_k$, then $\til{f}(z)=y \notin \R^m_k$. But this would be a contradiction. Hence $z \in \R^m_k$. Now we show that $f \co B_r^k(x_0) \ra \R^m_k$ has open image and is a homeomorphism onto its image. Let $\til{X}:=B_r(x_0)$, $X:=B_r^k(x_0)$, $\til{Y}:= \til{f}(\til{X}) \subs \R^m$ and $Y := f(X) \subs \R^m_k$. Now the assertion follows from the fact, that $\til{f} \co \til{X} \ra \til{Y}$ is a homeomorphism and $\til{f}|^Y_{X}  = f\co X \ra Y$.
\end{proof}

\begin{definition}
Given $r\in ]0,1[$ we choose $r^{op} \in ]0,1[$ such that for all $A\in \mathcal{L}(\C^m)$ we  have
\begin{align*}
\|A-\id\|_{op}<r^{op} \Rightarrow \|A\inv - \id\|_{op}<r.
\end{align*}
\end{definition}

\begin{remark}\label{Klar}
Let $\K\in \set{\R,\C}$, $U \subs \K^n$ and $f\co U \ra \K^n$ be an injective map. Then 
\begin{align*}
\|(f|^{f(U)})\inv -\id_{f(U)} \|_\8 = \|f- \id_U\|_\8.
\end{align*}
\end{remark}
\begin{proof}
Let $g:= f-\id_\U \co U \ra \K^m$. Then $f(x)=x+g(x)$ for all $x\in U$. Hence $f\inv(y) = y - g(f\inv(y))$ for all $y \in f(U)$. And so $f\inv = \id_{f(U)} - g\ci f\inv$. Therefore
\begin{align*}
\|f\inv- \id_{f(U)}\|_\8 = \|g\ci f\inv\|_\8 = \|g\|_\8 =\|f- \id_U\|_\8.
\end{align*}
\end{proof}

In the following Lemma the points  (\ref{OhMann1}) and (\ref{OhMann2}) are in some sense a stronger version of \cite[Lemma D.4 (b), (c)]{BobDis} (smooth case without boundary).
\begin{lemma}\label{WennNahe1}
Let $l\in ]0,\8[$ and $r\in ]0,1[$ such that $l':= (1-r)l-r>0$. For all $f\in C^\o(B_l^k(0); \R^m)$ with $\|f-\id\|^1_\8 <r^{op}$, $f(\partial(B_l^{k}(0))) \subs \partial \R^m_{k}$ and $f(\partial^0(B_l^{k}(0))) \subs \partial^0 \R^m_{k}$ we get:
\begin{compactenum}
\item $f(B_l^k(0)) \subs B^k_{l+r}(0)$;
\item $f \co B^k_l(0) \ra \R^m_k$ has open image and is a real analytic diffeomorphism onto its image; \label{OhMann1}
\item $B^k_{l'}(0)\subs f(B_l^k(0))$ and the map $f\inv \co B_{l'}(0)\ra B_l^k(0)$ is a real analytic diffeomorphism;\label{OhMann2}
\item $\|f\inv(x)-x\| <r$ for all $x\in B_{l'}^k(0)$;
\item $\|(f\inv)'(x)-\id_{\R^m}\|_{op} <r$ for all $x\in B_{l'}^k(0)$.
\end{compactenum}
\end{lemma}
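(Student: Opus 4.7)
The plan is to derive all five assertions from a single application of the corners quantitative inverse function theorem (Lemma \ref{QanUKFEcke}). I set $A := \id_{\R^m}$ and $g := f - \id$, so $f(x) = Ax + g(x)$ with $\|A\|_{op} = \|A\inv\|_{op} = 1$. Since $B_l^k(0)$ is convex, the mean value theorem yields $\Lip(g) \leq \|g'\|_\8 < r^{op}$, and the definition of $r^{op}$ forces $r^{op} \leq r < 1$, so $\Lip(g) < 1 = 1/\|A\inv\|_{op}$. The corner-preservation hypotheses on $f$ are exactly those of Lemma \ref{QanUKFEcke}, so with $a := 1 - \Lip(g) > 1 - r^{op}$ and $b := 1 + \Lip(g)$ one obtains
\begin{equation*}
B^k_{as}(f(x)) \subs f(B^k_s(x)) \subs B^k_{bs}(f(x))
\end{equation*}
for all $x \in B^k_l(0)$ and $s \in \,]0, l - \|x\|]$, together with the fact that $f$ is a homeomorphism of $B^k_l(0)$ onto an open subset of $\R^m_k$.

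Claim (1) follows from the estimate $\|f(y)\| \leq \|y\| + \|g(y)\| < l + r^{op} < l + r$, since Lemma \ref{QanUKFEcke} guarantees $f(y) \in \R^m_k$. Claim (2) holds because $f$ is a bijection onto its open image with everywhere invertible real analytic derivative $f'(y) = \id + g'(y)$ (invertible since $\|g'(y)\|_{op} < r^{op} < 1$), and a homeomorphism between open subsets of $\R^m_k$ that is real analytic with invertible derivative is a real analytic diffeomorphism. For (3), specialising the inclusion to $x = 0$, $s = l$ yields $B^k_{al}(f(0)) \subs f(B^k_l(0))$, and for any $y \in B^k_{l'}(0)$ one bounds
\begin{equation*}
\|y - f(0)\| \leq \|y\| + \|f(0)\| < l' + r^{op} \leq l' + r = (1-r)l < al,
\end{equation*}
where the final inequality uses $a > 1 - r^{op} \geq 1 - r$ and $l > 0$. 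Thus $y \in B^k_{al}(f(0)) \subs f(B^k_l(0))$, and $f\inv|_{B^k_{l'}(0)}$ is real analytic by (2).

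For (4), I apply Remark \ref{Klar} to the restriction of $f$ to $V := f\inv(B^k_{l'}(0))$: the resulting inverse satisfies $\|(f|_V)\inv - \id\|_\8 = \|f|_V - \id\|_\8 \leq \|f - \id\|_\8 < r^{op} < r$ on $B^k_{l'}(0)$. For (5), the chain rule delivers $(f\inv)'(x) = (f'(f\inv(x)))\inv$; since $\|f'(y) - \id\|_{op} < r^{op}$ for all $y \in B^k_l(0)$, the defining property of $r^{op}$ forces $\|(f'(y))\inv - \id\|_{op} < r$, hence the same bound holds for $(f\inv)'(x)$. I anticipate no serious obstacle: the main point of care is the arithmetic juggle between $r$ and $r^{op}$ in the chain that secures $B^k_{l'}(0) \subs B^k_{al}(f(0))$, for which the precise value $l' = (1-r)l - r$ is purpose-built.
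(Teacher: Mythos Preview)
Your proof is correct and follows essentially the same route as the paper's: write $f = \id + g$, bound $\Lip(g)$ via the mean value theorem on the convex set $B^k_l(0)$, invoke Lemma~\ref{QanUKFEcke} with $A = \id$, and then read off items (1)--(5) from the resulting inclusions together with Remark~\ref{Klar} and the defining property of $r^{op}$. The only cosmetic difference is that the paper works directly with the bound $\Lip(g) \leq r$ (hence writes $B^k_{(1-r)l}(f(0)) \subs f(B^k_l(0))$), whereas you keep the sharper $\Lip(g) < r^{op}$ and then use $r^{op} \leq r$; both lead to the same containment $B^k_{l'}(0) \subs f(B^k_l(0))$.
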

\begin{proof}
\begin{compactenum}
\item Given $x\in B^k_l(0)$ we calculate
\begin{align*}
\|f(x)\| =\|f(x)-x +x\| \leq r+l.
\end{align*}
Hence $f(B_l^k(0)) \subs B_{l+r}^k(0)$.
\item We define $g\co B^k_l(0)\ra \R^m$, $g:=f-\id_{\R^m}$. Given $x,y \in B_l^k(0)$ we calculate
\begin{align*}
&\|g(x)-g(y)\| = \left\|\int_0^1g'((1-t) x+ ty;x-y)dt\right\| \\
\leq&\int_0^1\|f'((1-t)x+ty;x-y)-\id_{\R^m}(x-y)\|dt \leq r \|x-y\|.
\end{align*}
Hence $g$ is Lipschitz continuous with $\Lip(g)<1$. Moreover $f(\partial^0 B^k_l(0)) \subs \partial^0 \R^m_k$ and $f(\partial B^k_l(0)) \subs \partial \R^m_k$. Therefore we can apply Lemma \ref{QanUKFEcke}. Hence $f(B^k_l(0))$ is open in $\R^m_k$ and $f\co B^k_l(0) \ra f(B^k_l(0))$ is a bijection. Because of $r^{op}<1$ the map $f$ is a local real analytic diffeomorphism and so $f \co B^k_l(0) \ra f(B^k_l(0))$ is a real analytic diffeomorphism.
\item Using Lemma \ref{QanUKFEcke} we get $B^k_{(1-r)l}(f (0)) \subs f (B_l^k(0))$. It is left to show $B^k_{l'}(0)\subs B^k_{(1-r)l}( f(0) )$. Given $\|x\| <l'$ we calculate
\begin{align*}
\|f (0)-x\| \leq \| f(0)-0\| + \|x\| <r+ (1-r)l-r= (1-r)l.
\end{align*}
\item This follows directly from Remark \ref{Klar}.
\item We have $\|f'(x)-\id_{\R^m}\|_{op} <r^{op}$ for all $x\in B_l^k(0)$. Hence $\|(f'(x))\inv-\id_{\R^m}\|_{op} < r$ for all $x\in B_l^k(0)$. Now let $y \in B_{l'}^k(0)$ then 
\begin{align*}
\|(f\inv)'(y) - \id_{\R^m}\|_{op} = \|(f'(f\inv(y)))\inv - \id_{\R^m}\|_{op} <r.
\end{align*}
\end{compactenum}
\end{proof}

\begin{definition}
Given $l,r$ like in Lemma \ref{WennNahe1} we write $\ep_{l,r}$ for the $\ep$ constructed in Lemma \ref{Nahe1} with $R=5$ and $r^{op}$ instate of $r$. 
\end{definition}

The following Lemma \ref{del} is the analogous statement to \cite[3.3]{Gloeckner2} in the real analytic case.
\begin{lemma}\label{del}
Given $\ep\in ]0,\ep_{\exp}]$ there exists $\delta(\ep) \in ]0,1[$ such that:
\begin{compactenum}
\item For $x\in \ol{B}_4(0)$ and $i\in \set{1,...,n}$, we have $B_{\delta(\ep)}(x) \subs \exp_i(x,B_{\ep}(0))$;
\item The set $D_\ep:= \bigcup_{x\in B_4(0)}\set{x}\ti B_{\delta(\ep)}(x)$ is open in $B_5(0)\ti \R^m$ and 
\begin{align*}
D_\ep \ra B_{\ep}(0),~ (x,z)\ms \exp_i(x,\bl)\inv(z) 
\end{align*}
is real analytic for all $i\in \set{1,...,n}$.
\end{compactenum}
\end{lemma}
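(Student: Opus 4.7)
The plan is to apply Lemma \ref{UKFunk} pointwise to each $\exp_i$ at every point $(x_0, 0)$ with $x_0 \in \ol{B}_4(0)$, then use compactness to obtain a uniform $\delta(\ep)$.

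First I would fix $i \in \{1, \dots, n\}$ and $x_0 \in \ol{B}_4(0)$. By Remark \ref{OB} the partial derivative $d_2 \exp_i(x_0, 0; \bl) = \id_{\R^m}$ is invertible, so Lemma \ref{UKFunk} applied to the real analytic map $\exp_i \co B_5(0) \ti B_\ep(0) \ra \R^m$ (using $\ep \leq \ep_{\exp}$ together with Lemma \ref{ExpDef}) yields an open $x_0$-neighbourhood $P_{x_0,i} \subs B_{4.5}(0)$, an open $0$-neighbourhood $U_{x_0,i} \subs B_\ep(0)$, and a $\delta_{x_0,i}>0$ such that for every $y \in P_{x_0,i}$ the map $\exp_i(y,\bl) \co U_{x_0,i} \ra \R^m$ is a real analytic diffeomorphism onto an open image, $B_{\delta_{x_0,i}}(y) = B_{\delta_{x_0,i}}(\exp_i(y,0)) \subs \exp_i(y, U_{x_0,i})$, and the set
\begin{align*}
W_{x_0,i} := \bigcup_{y \in P_{x_0,i}} \set{y} \ti B_{\delta_{x_0,i}}(y)
\end{align*}
is open in $B_5(0) \ti \R^m$ with $(y,z) \ms \exp_i(y,\bl)\inv(z)$ real analytic on $W_{x_0,i}$.

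Next I would use compactness of $\ol{B}_4(0)$ to extract, for each fixed $i$, a finite subcover $P_{x^i_1, i}, \dots, P_{x^i_{N_i}, i}$ of $\ol{B}_4(0)$, and define
\begin{align*}
\delta(\ep) := \min\set{1,\, \delta_{x^i_j, i} : i \in \set{1,\dots,n},\ j \in \set{1,\dots,N_i}} \in \; ]0,1[.
\end{align*}
Then for any $x \in \ol{B}_4(0)$ and $i$, choose $j$ with $x \in P_{x^i_j, i}$; this gives $B_{\delta(\ep)}(x) \subs B_{\delta_{x^i_j, i}}(x) \subs \exp_i(x, U_{x^i_j, i}) \subs \exp_i(x, B_\ep(0))$, which is claim (a).

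For claim (b), openness of $D_\ep$ is immediate: $D_\ep = \set{(x,z) \in B_4(0) \ti \R^m : \|z-x\|<\delta(\ep)}$, an open condition. For real analyticity of the inverse, given $(x_0, z_0) \in D_\ep$ and an index $i$, pick $j$ with $x_0 \in P_{x^i_j, i}$; then $z_0 \in B_{\delta(\ep)}(x_0) \subs B_{\delta_{x^i_j, i}}(x_0)$, so $(x_0, z_0) \in W_{x^i_j, i}$, and Lemma \ref{UKFunk} produces a real analytic inverse on this neighbourhood. Because $\exp_i(x,\bl)$ is injective on all of $B_\ep(0) \sups U_{x^i_j,i}$ (Lemma \ref{ExpDef}), this locally defined inverse must coincide with $(x,z) \ms \exp_i(x,\bl)\inv(z)$ on the overlap, so the global inverse is well-defined and real analytic.

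The main obstacle is merely bookkeeping: making sure the finitely many local inverses produced by Lemma \ref{UKFunk} paste together consistently, which is guaranteed by the global injectivity of $\exp_i(x,\bl)$ on $B_\ep(0) \subs B_{\ep_{\exp}}(0)$ from Lemma \ref{ExpDef}, so no genuine analytic subtlety arises beyond verifying that the uniform $\delta(\ep)$ exists by compactness.
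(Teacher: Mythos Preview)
Your proof is correct and follows essentially the same approach as the paper: apply Lemma \ref{UKFunk} pointwise, use compactness of $\ol{B}_4(0)$ to obtain a uniform $\delta(\ep)$, and verify openness of $D_\ep$ directly. The only minor difference is in part (b): rather than patching the local inverses from Lemma \ref{UKFunk}, the paper simply observes that $D_\ep$ lies in the image of the global diffeomorphism $B_4(0)\ti B_\ep(0) \ra B_4(0)\ti \R^m$, $(x,y)\ms (x,\exp_i(x,y))$ provided by Lemma \ref{ExpDef}(b), so the inverse is automatically real analytic as a restriction of that global inverse --- this avoids the consistency check you perform, but both arguments are valid.
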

\begin{proof}
\begin{compactenum}
\item Now again let $i\in \set{1,\dots, n}$ and consider the map $\exp_i \co B_{4,5}(0)\ti B_{\ep}(0) \ra B_5(0)$. Now we use Lemma \ref{UKFunk}. Given $x\in B_{4,5}(0)$, we find $r_x>0$ and $\delta_x>0$ such that for all $y \in B_{r_x}(x)$, we have $B_{\delta_x}(y) = B_{\delta_x}(\exp_i(y,0))\subs \exp_i(y,B_{\ep}(0))$. We find finite many $x_1,\dots,x_k \in B_{4,5}(0)$ with $\ol{B}_{4}(0)\subs \bigcup_{j=1}^kB_{r_{x_j}}(x)$ and set $\delta^i(\ep):= \min_{j}\delta_{x_j}$. We define $\delta(\ep):=\min_i\delta^i(\ep)$. Given $i\in \set{1,...,n}$ and $x \in \ol{B}_{4}(0)$ we find $j$ such that $x \in B_{r_{x_j}}(x_j)$. Hence $B_{\delta(\ep)}(x) \subs \exp_i(x,B_{\ep}(0))$.
\item For $(x_0,y_0) \in D_\ep$ we get $\|y_0-x_0\|<\delta(\ep)$. Let $\tau := \min\left(4-\|x_0\|, \fr{\delta(\ep)-\|y_0-x_0\|}{2}\right)$. Then $B_\tau(x_0)\ti B_\tau(y_0) \subs D_\ep$. Hence $D_\ep$ is open. Because $D_\ep$ is open and contained in the image of $B_4(0)\ti B_{\ep}(0) \ra B_4(0) \ti \R^m$, $(x,y)\ms \exp_i(x,y)$ we conclude that $D_\ep \ra B_{\ep}(0)$, $(x,z)\ms \exp_i(x,\bl)\inv(z)$ makes sense and is a real analytic diffeomorphism.
\end{compactenum}
\end{proof}

\begin{definition}
Let $k\in \set{1,\dots, n}$, $R>0$, $r>0$ and $x\in \R^m_k$. We define
\begin{align*}
B_{R,r}^{k,\C}(x):=B_R^k(x) + B^\C_r(0) \subs \C^m \tx{and} \ol{B}_{R,r}^{k,\C}(x) := \ol{B}_R^k(x) + \ol{B}^\C_r(0) = \ol{B_{R,r}^{k,\C}(x)}.
\end{align*}
Obviously $B_{R,r}^{k,\C}(x)$ is an open neighbourhood  of $\ol{B}_R^k(x)$ in $\C^m$. We also write $B_{R,r}^\C(x) := B_R(x) + B^\C_r(0)$ and $\ol{B}_{R,r}^\C(x) := \ol{B}_R(x) + \ol{B}^\C_r(0)$ for $x\in \R^m$.
\end{definition}

\begin{remark}\label{UmgebungenIII}
If $U\subs \C^m$ is open, $k\in \set{1,\dots,n}$, $x\in \R^m_k$  and $\ol{B}_R^k(x) \subs U$ then $\ol{B}_R^k(x)\ti \set{0} \subs \m\inv(U)$ for $\m\co \C^m \ti \C^m \ra \C^m$, $(x,y)\ms x+y$. With the Lemma of Wallace we find $r>0$ such that $\ol{B_{R,r}^{k,\C}(x)}\subs \ol{B}_R^{k}(x) + \ol{B}^\C_r(0)= \ol{B}_{R,r}^{k,\C}(x) \subs U$.
\end{remark}

\begin{definition}
For $i \in \set{1,\dots,n}$ let $\exp^\ast_i \co \Omega^\ast_i \ra B_5^{\C}(0)$ a complex analytic extension of $\exp_i \co \Omega_i \ra B_5(0)$. Because $\ol{B}_{4.7}(0)\ti \set{0} \subs \Omega^\ast_i$ we can use Remark \ref{UmgebungenIII} and find $r^{\ast}_1>0$ and $\ep^{\ast}_1>0$ such that $\ol{B}_{4.7,r^{\ast}_1}^{\C}(0)\ti B^\C_{\ep^{\ast}_1}(0) \subs \Omega^\ast_i$ for all $i\in \set{1, \dots, n}$.
\end{definition}

\begin{remark}
\begin{compactenum}
\item For all $i \in \set{1,\dots,n}$ the map $\exp_i^\ast(\bl,0) \co B^{\C}_{4.7,r^{\ast}_1}(0) \ra \C^m$, $x\ms \exp_i^\ast(x,0)$ is an extension of $\exp_i(\bl,0)\co B_{4.7}(0) \ra \R^m$, $x\ms x$. Hence we have 
\begin{align*}
\exp_i^\ast(x,0)= x,
\end{align*} 
for all $x\in B^{\C}_{4.7,r^{\ast}_1}(0)$, because $B^{\C}_{4.7,r^{\ast}_1}(0)$ is connected.
\item For all $i \in \set{1,\dots,n}$ the map $d_2\exp_i^\ast(\bl,0) \co B^{\C}_{4.7,r^{\ast}_1}(0) \ra \mathcal{L}(\C^m)$, $x\ms d_2\exp_i^\ast(x,0)$ is a complex analytic extension of $d_2\exp_i(\bl,0)\co B_{4.7}(0) \ra \mathcal{L}(\R^m)$, $x\ms d_2\exp_i(x,0) = \id_{\R^m}$. Hence we have 
\begin{align*}
d_2\exp_i^\ast(x,0)= \id_{\C^m},
\end{align*} 
for all $x\in B^{\C}_{4.7,r^{\ast}_1}(0)$, because $B^{\C}_{4.7,r^{\ast}_1}(0)$ is connected.
\end{compactenum}
\end{remark}

\begin{lemma}\label{ExpDefII}
We define $O_1:=\ol{B}_{R,r^{\ast}_1}^{k,\C}(0)\ti B^\C_{\ep^{\ast}_1}(0)$. There exists $\ep_{\exp^\ast} >0$ and $r_{\exp^\ast} > 0$ such that:
\begin{compactenum}[(a)]
\item We have 
\begin{align*}
\ol{B}^\C_{4.5,r_{\exp^\ast}}(0) \ti \ol{B}^\C_{\ep_{\exp^\ast}}(0)\subs O_1 \subs B^\C_5(0)\ti \C^m
\end{align*}
for all $i\in \set{1,...,n}$.
\item For all $x\in \ol{B}^\C_{4.5,r_{\exp^\ast}}(0)$ and  $i\in \set{1,...,n}$ the map $\exp^\ast_{i,x}:=\exp^\ast_i(x,\bl) \co B^\C_{\ep_{\exp^\ast}}(0)\ra \C^m$ has open image and is a complex analytic diffeomorphism onto its image. Moreover the map $B^\C_{4.5,r_{\exp^\ast}}(0)\ti B^\C_{\ep_{\exp^\ast}}(0) \ra B^\C_{4.5,r_{\exp^\ast}}(0) \ti \C^m$, $(x,y)\ms (x,\exp^\ast_i(x,y))$ has open image and is a complex analytic diffeomorphism onto its image.
\end{compactenum}
In order to shorten the notation we define $U_{\exp^\ast}:= B^\C_{4.5,r_{\exp^\ast}}(0)$.
\end{lemma}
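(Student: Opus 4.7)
The plan is to mimic the proof of Lemma \ref{ExpDef} verbatim, only now working over $\C$ and exploiting the complex-analytic extensions $\exp^\ast_i$. The two inputs that allow this transfer are the pointwise identities $\exp^\ast_i(x,0) = x$ and $d_2\exp^\ast_i(x,0) = \id_{\C^m}$ for $x\in B^\C_{4.7,r^\ast_1}(0)$, together with Lemma \ref{UKFunk}, which was stated and proved uniformly for $\K\in\{\R,\C\}$.

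First I would fix $i\in\{1,\dots,n\}$ and pick, with the help of Lemma \ref{UKFunk} applied to $f = \exp^\ast_i$ at base points $(x_0, 0)$ with $x_0 \in B^\C_{4.7, r^\ast_1}(0)$, open neighbourhoods $P_{x_0} \subs B^\C_{4.7,r^\ast_1}(0)$ of $x_0$ and $U_{x_0}\subs B^\C_{\ep^\ast_1}(0)$ of $0$ such that $\exp^\ast_i(x,\cdot)\co U_{x_0}\ra \C^m$ is a complex analytic diffeomorphism onto its open image for every $x\in P_{x_0}$, and such that the combined map $(x,y)\ms(x,\exp^\ast_i(x,y))$ is a complex analytic diffeomorphism on $P_{x_0}\ti U_{x_0}$. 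Since $\ol{B}^\C_{4.5,r^\ast_1/2}(0)$ is compact, I choose $r_{\exp^\ast}\in ]0,r^\ast_1[$ small enough that $\ol{B}^\C_{4.5,r_{\exp^\ast}}(0)\subs B^\C_{4.7, r^\ast_1}(0)$ and extract a finite subcover $\{P_{x_1},\dots,P_{x_N}\}$ of $\ol{B}^\C_{4.5,r_{\exp^\ast}}(0)$; set $\ep^i := \min_{j} \dist(0,\partial U_{x_j})/2$ and then $\ep_{\exp^\ast}:= \min_{i=1,\dots,n}\ep^i>0$, shrinking further if necessary so that $\ol{B}^\C_{\ep_{\exp^\ast}}(0)\subs B^\C_{\ep^\ast_1}(0)$ and $\ol{B}^\C_{4.5,r_{\exp^\ast}}(0)\ti \ol{B}^\C_{\ep_{\exp^\ast}}(0)\subs O_1$. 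This establishes~(a).

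For (b), fix $x\in \ol{B}^\C_{4.5,r_{\exp^\ast}}(0)$. It lies in some $P_{x_j}$, so $\exp^\ast_i(x,\cdot)\co B^\C_{\ep_{\exp^\ast}}(0)\ra \C^m$ is the restriction of a complex analytic diffeomorphism defined on the larger set $U_{x_j}$ and hence is itself a complex analytic diffeomorphism onto an open image. For the combined map $F\co B^\C_{4.5,r_{\exp^\ast}}(0)\ti B^\C_{\ep_{\exp^\ast}}(0)\ra \C^m\ti\C^m$, $(x,y)\ms(x,\exp^\ast_i(x,y))$, locally around each $(x,y)$ the full Jacobian is block-triangular with diagonal blocks $\id$ and $d_2\exp^\ast_i(x,y)$, the latter invertible since $y$ is close enough to $0$; hence $F$ is a local biholomorphism everywhere. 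To promote this to a global biholomorphism I would verify injectivity: if $F(x,y)=F(x',y')$ then immediately $x=x'$, and then $y=y'$ by the injectivity of $\exp^\ast_i(x,\cdot)$ on the common chart $U_{x_j}$ containing $x$. A locally biholomorphic injective map has open image and is a biholomorphism onto it.

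The main (but modest) obstacle is purely bookkeeping: one has to arrange $r_{\exp^\ast}$ and $\ep_{\exp^\ast}$ simultaneously small enough to work for every $i=1,\dots,n$, while staying inside $O_1$ and inside each of the local inverse-function neighbourhoods coming from Lemma \ref{UKFunk}. Since only finitely many $i$'s occur and the sup over each compact parameter set is attained, a finite number of minima suffices; no new analytic input beyond Lemma \ref{UKFunk} and compactness is needed.
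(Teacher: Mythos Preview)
Your proposal is correct and follows essentially the same route as the paper: both apply Lemma~\ref{UKFunk} pointwise at $(x_0,0)$ with $d_2\exp^\ast_i(x_0,0)=\id_{\C^m}$, cover the compact set $\ol{B}^\C_{4.5,r_{\exp^\ast}}(0)$ by finitely many of the resulting parameter neighbourhoods, take the minimum of the $\ep$'s, and then conclude that the global map $(x,y)\mapsto(x,\exp^\ast_i(x,y))$ is an injective local biholomorphism and hence a biholomorphism onto its open image. The only cosmetic difference is that the paper fixes $r_{\exp^\ast}=r^\ast_1/2$ from the outset and arranges the local data to work for all $i$ simultaneously, whereas you first work $i$ by $i$ and then take a further minimum; both variants are fine.
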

\begin{proof}
\begin{compactenum}
For every $x\in \ol{B}^\C_{4.7,r^{\ast}}(0)$ we find $r_x>0$ and $\ep_x>0$ such that:
\begin{compactitem}
\item
\item $\ol{B}_{r_x}^\C(x) \ti \ol{B}^\C_{\ep_x}(0) \subs O_1$.
\item for all $y \in B_{r_x}^\C(x)$ and $i\in \set{1,\dots,n}$ the map
\begin{align*}
\exp_i^\ast(y,\bl)\co B_{\ep_x}^\C(0) \ra \C^m
\end{align*}
has open image and is diffeomorphism onto its image.
\item For all $i\in \set{1,\dots,n}$ the map $\ol{B}_{r_x}^\C(x) \ti \ol{B}^\C_{\ep_x}(0) \ra \ol{B}_{r_x}^\C(x) \ti \C^m$, $(x,y) \ms \exp_i^\ast(x,y)$ 
has open image and is diffeomorphism onto its image.
\end{compactitem}
Let $r_{\exp^\ast}:=r^\ast/2$. We find finite many $x_1,\dots , x_n$ such that $\ol{B}^\C_{4.5,r_{\exp^\ast}}(0) \subs \bigcup_{j=1}^n B_{r_{x_j}}(x_{j})$. Let $\ep_{\exp^\ast}:=\min_j \ep_{x_j}$. Obviously we have
\begin{align*}
\ol{B}^\C_{4.5,r_{\exp^\ast}}(0) \ti \ol{B}^\C_{\ep_{\exp^\ast}}(0) \subs O_1
\end{align*}
Moreover the map $B_{4.5,r_{\exp^\ast}}^\C(0)\ti B^\C_{\ep_{\exp^\ast}}(0) \ra B_{4.5,r_{\exp^\ast}}^\C(0) \ti \C^m$, $(x,y)\ms (x,\exp_i(x,y))$ is injective and a local diffeomorphism. Hence it has open image and is a complex analytic diffeomorphism onto its image. Therefore we find $\ep_{\exp^\ast}$ as needed. 
\end{compactenum}
\end{proof}

\begin{lemma}\label{DeltaAst}
Let $r_{\log^\ast}:= r_{\exp^\ast}/2$. For all $\ep\in ]0,\ep_{\exp^\ast}]$ there exists $\delta^\C(\ep) \in ]0,1[$ such that:
\begin{compactenum}
\item For all $x\in \ol{B}_{4.7,r_{\exp^\ast}}^\C(0)$ and $i \in \set{1,\dots,n}$ we have $B_{\delta^\C(\ep)}^\C (x) \subs \exp_i^\ast(x,B_\ep^\C(0))$.
\item The set
\begin{align*}
D_\ep^\C:= \bigcup_{x\in B_{4,r_{\log^\ast}}^\C(0)} \set{x} \ti B_{\delta^\C(\ep)}^\C (x) \subs \C^m \ti C^m
\end{align*}
is open and $D_\ep^\C \ra B_\ep^\C(0)$, $(x,z) \ms \exp_i^\ast(x,\bl)\inv(z)$ is complex analytic.
\end{compactenum}
\end{lemma}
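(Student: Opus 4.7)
The plan is to mirror the argument of Lemma~\ref{del} in the complex-analytic setting, using Lemma~\ref{UKFunk} in place of its real counterpart together with the diffeomorphism property established in Lemma~\ref{ExpDefII}.

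First, fix $i\in\{1,\dots,n\}$. At every point $(x_0,0)$ with $x_0$ in the relevant disk, one has $\exp_i^\ast(x_0,0)=x_0$ and $d_2\exp_i^\ast(x_0,0)=\id_{\C^m}\in\GL(\C^m)$, so the third conclusion of Lemma~\ref{UKFunk} produces radii $r_{x_0}>0$ and $\delta_{x_0}>0$ such that
\[
B^\C_{\delta_{x_0}}(y)=B^\C_{\delta_{x_0}}\bigl(\exp_i^\ast(y,0)\bigr)\subs\exp_i^\ast\bigl(y,B^\C_\ep(0)\bigr)\quad\text{for all } y\in B^\C_{r_{x_0}}(x_0),
\]
after shrinking the associated $\ep_{x_0}$ from Lemma~\ref{UKFunk} down to the given $\ep\le\ep_{\exp^\ast}$ if necessary.

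Next, the closed set relevant to part~(1) is bounded, hence compact in $\C^m$, and (via the inequalities $r_{\log^\ast}<r_{\exp^\ast}$ and $4<4.5$) sits inside the domain $U_{\exp^\ast}$ supplied by Lemma~\ref{ExpDefII}. A finite subcover by balls $B^\C_{r_{x_j}}(x_j)$ therefore yields a uniform constant $\delta^{\C,i}(\ep):=\min_j\delta_{x_j}>0$ for this~$i$, and setting $\delta^\C(\ep):=\min_{i=1,\dots,n}\delta^{\C,i}(\ep)$ establishes part~(1).

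For part~(2), openness of $D_\ep^\C$ follows by the same elementary estimate as in Lemma~\ref{del}: given $(x_0,y_0)\in D_\ep^\C$, choose $\tau>0$ smaller than both the distance of $x_0$ to the boundary of $B^\C_{4,r_{\log^\ast}}(0)$ and $(\delta^\C(\ep)-\|y_0-x_0\|)/2$, and verify by the triangle inequality that $B^\C_\tau(x_0)\ti B^\C_\tau(y_0)\subs D_\ep^\C$. Complex analyticity of $(x,z)\ms\exp_i^\ast(x,\bl)\inv(z)$ on $D_\ep^\C$ is then inherited from Lemma~\ref{ExpDefII}: the map $(x,y)\ms(x,\exp_i^\ast(x,y))$ is a complex-analytic diffeomorphism from $U_{\exp^\ast}\ti B^\C_{\ep_{\exp^\ast}}(0)$ onto its open image, and $D_\ep^\C$ is contained in that image, so composing the inverse with the second projection yields the desired complex-analytic map.

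The only real obstacle is the careful bookkeeping of the various radii, above all checking that the compact set relevant to part~(1) is genuinely contained in the domain $U_{\exp^\ast}$ on which Lemma~\ref{ExpDefII} already supplies the global complex diffeomorphism; once that is arranged, both conclusions follow by a direct translation of the proof of Lemma~\ref{del} with Lemma~\ref{UKFunk} providing the local inverse-function input in the complex setting.
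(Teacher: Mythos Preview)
Your proposal is correct and follows essentially the same approach as the paper: both apply Lemma~\ref{UKFunk} pointwise to $\exp_i^\ast$ at $(x_0,0)$, extract a uniform $\delta^\C(\ep)$ by compactness and a finite subcover, then establish openness of $D_\ep^\C$ via the same triangle-inequality estimate and read off complex analyticity from the global diffeomorphism of Lemma~\ref{ExpDefII}. The paper's proof is slightly terser (it does not spell out the analyticity step explicitly), but the structure is identical; your flagging of the radii bookkeeping is apt, since the paper's own proof in fact covers $\ol{B}^\C_{4,r_{\log^\ast}}(0)$ rather than the larger set $\ol{B}^\C_{4.7,r_{\exp^\ast}}(0)$ appearing in the statement.
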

\begin{proof}
\begin{compactenum}
\item We consider the map $\exp_i^\ast \co B_{4.5,r_{\exp^\ast}}^\C(0) \ti B_{\ep_{\exp^\ast}}^\C \ra \C^m$ for $i =1, \dots ,n$. Given $x\in B_{4.5,r_{\log^\ast}}^\C(0)$ we use Lemma \ref{UKFunk} and find $r_x>0$ and $\delta_x>0$ such that for all $y \in B_{r_x}^\C(x)$ and $i\in \set{1,\dots,n}$
\begin{align*}
B_{\delta_x}^\C(y) = B_{\delta_x}^\C(\exp_i^\ast(y,0)) \subs \exp_i^\ast(y,B^\C_\ep(0)).
\end{align*}
We choose finite many $x_1,\dots, x_n\in B_{4.5,r_{\exp^\ast}}^\C(0)$ such that 
\begin{align*}
\ol{B}_{4,r_{\log^\ast}}^\C(0)\subs \bigcup_{j=1}^n B_{r_{x_j}}(x_j).
\end{align*}
We set $\delta^\C(\ep):= \min_j \delta_{x_j}$. If $i \in \set{1,\dots,n}$ and $x\in \ol{B}_{4,r_{\log^\ast}}^\C(0)$,
then $B_{\delta^\C(\ep)}^\C(x) \subs \exp_i^\ast(x,B_\ep^\C(0))$.
\item Given $(x_0,y_0) \in D_\ep^\C$ we find $\sigma>0$ with $B_\sigma^\C(x_0) \subs {B}_{4,r_{\log^\ast}}^\C(0)$. Defining $\tau:= \min \left(\sigma, \frac{\delta^\C(\ep)- \|y_0-x_0\|}{2}\right)$, we get $B_\tau(x_0)\ti B_\tau(y_0) \subs D_\ep^\C$.
\end{compactenum}
\end{proof}

\begin{definition}
Let $N_0 \in \N$ with $\frac{1}{N_0} <\min( r_{\exp^\ast},r_{\log^\ast} )$. For $R\in [0,4]$, $k\in \set{1,\dots,m}$ and $n \in \N$ we define 
\begin{align*}
U^k_{R,n} := B_{R,\frac{1}{n+N_0}}^{k,\C}(0) = B_R^{k}(0) + B^\C_{\frac{1}{n+N_0}}(0)
\end{align*}
and get a connected fundamental sequence of open neighbourhoods of $\ol{B}_R^{k}(0)$ in $\C^m$ with $U_n^k \subs  B^\C_{4.5,r_{\exp^\ast}}(0)$.
Moreover we define 
\begin{align*}
\ol{U}^k_{R,n} := \ol{B}_R^{k}(0) + \ol{B}^\C_{\frac{1}{n+N_0}}(0) = \ol{{U}^k_{R,n}}.
\end{align*}
\end{definition}

\begin{lemma}\label{Nahe1II}
For $r_0 \in ]0,1[$ there exists $\ep^\C_{r_0} \in ]0,\ep_{\exp^\ast}]$ such that for all $n\in \N$, $i\in \set{1,\dots,n}$ and $\eta \in \Hol^1_b(U_{4,n}^{k_i}; \C^m)_\str^\R$ with $\|\eta\|_\8^1<\ep^\C_{r_0}$ we have $\|\ps_\eta^{k_i} - \id\|^1_{U_{4,n}^{k_i}} <r_0$.
\end{lemma}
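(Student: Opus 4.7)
The plan is to mirror the proof of Lemma \ref{Nahe1} in the complex-analytic setting, replacing $\exp_i$ by $\exp_i^\ast$; the crucial additional point is that $\ep^\C_{r_0}$ must be chosen uniformly in $n$, which works because every $U_{4,n}^{k_i}$ sits inside the fixed set $B^\C_{4.5,r_{\exp^\ast}}(0)$ independently of $n$, by the choice $\tfrac{1}{N_0} < r_{\log^\ast}$. First I would define $H^\ast_i(x,y) := \exp_i^\ast(x,y) - x - y$ on the domain $O_1$ of $\exp_i^\ast$. The identities $\exp_i^\ast(x,0) = x$ and $d_2\exp_i^\ast(x,0) = \id_{\C^m}$ on $\ol{B}^\C_{4.7,r^\ast_1}(0)$ immediately give $(H^\ast_i)'(x,0) = 0$ there.

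Next I would apply the Lemma of Wallace to the continuous function $h_i \co (x,y) \ms \|(H^\ast_i)'(x,y)\|_{op}$ on the compact set $\ol{B}^\C_{4.5,r_{\exp^\ast}}(0) \ti \ol{B}^\C_{\ep_{\exp^\ast}}(0) \subs O_1$, on whose slice $\ol{B}^\C_{4.5,r_{\exp^\ast}}(0) \ti \set{0}$ the function $h_i$ vanishes. This produces $\ep_0 \in ]0,\min(\ep_{\exp^\ast},r_0/2)[$ with $\|(H^\ast_i)'(x,y)\|_{op} < \frac{r_0}{r_0+10}$ whenever $(x,y) \in B^\C_{4.5,r_{\exp^\ast}}(0) \ti B^\C_{\ep_0}(0)$, uniformly over the finitely many $i$ by taking a minimum. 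I would set $\ep^\C_{r_0} := \ep_0$ and, assuming without loss of generality $r_{\exp^\ast} < 1/2$ (which can be arranged by shrinking $r_{\exp^\ast}$ at the start), all subsequent numerical bookkeeping matches that of Lemma \ref{Nahe1}.

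Given $n \in \N$, an index $i$ and $\eta \in \Hol^1_b(U_{4,n}^{k_i};\C^m)_\str^\R$ with $\|\eta\|^1_\8 < \ep^\C_{r_0}$, the inclusion $U_{4,n}^{k_i} \subs B^\C_{4,r_{\log^\ast}}(0) \subs B^\C_{4.5,r_{\exp^\ast}}(0)$ combined with $\|\eta\|_\8 < \ep_{\exp^\ast}$ guarantees that $(x,\eta(x))$ lies in the domain of $\exp_i^\ast$ for every $x \in U_{4,n}^{k_i}$, so $\ps^{k_i}_\eta(x) = H^\ast_i(x,\eta(x)) + x + \eta(x)$ is defined. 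The chain rule together with Remark \ref{idA} then yield
\begin{align*}
\|(\ps^{k_i}_\eta)'(x) - \id_{\C^m}\|_{op} \leq \tfrac{r_0}{r_0+10}\bigl(1 + \|\eta'(x)\|_{op}\bigr) + \|\eta'(x)\|_{op} < r_0,
\end{align*}
exactly as in~(\ref{nahe1.1}); for the sup-norm bound I would exploit convexity of $B^\C_{4.5,r_{\exp^\ast}}(0) \ti B^\C_{\ep^\C_{r_0}}(0)$ and $H^\ast_i(0,0) = 0$ to integrate along the segment from $(0,0)$ to $(x,\eta(x))$, obtaining $\|H^\ast_i(x,\eta(x))\| \leq \frac{r_0}{r_0+10}\,\|(x,\eta(x))\|$ and hence $\|\ps^{k_i}_\eta(x) - x\| < r_0$ as in~(\ref{nahe1.2}). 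The main obstacle is purely bookkeeping: one must verify that every estimate is carried out on ambient sets independent of $n$, which is precisely the role of the constant $N_0$ introduced before the statement.
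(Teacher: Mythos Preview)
Your proposal is correct and follows essentially the same route as the paper's proof: define the defect map $H(x,y)=\exp_i^\ast(x,y)-x-y$, observe $H'(x,0)=0$, apply Wallace to get a uniform $\ep^\C_{r_0}$ with $\|H'(x,y)\|_{op}<\tfrac{r_0}{r_0+10}$, and then repeat the chain-rule and line-integral estimates from Lemma~\ref{Nahe1} verbatim. The only cosmetic difference is that the paper applies Wallace on the compact set $\ol{U_{4,1}^{k_i}}\times\{0\}$ (which already contains every $U_{4,n}^{k_i}$ since the filtration is decreasing), whereas you use the slightly larger $\ol{B}^\C_{4.5,r_{\exp^\ast}}(0)\times\{0\}$; this makes your auxiliary assumption $r_{\exp^\ast}<1/2$ unnecessary, because for $x\in U_{4,n}^{k_i}$ one has $\|x\|<4+\tfrac{1}{N_0+1}<5$ directly, and the constant $\tfrac{r_0}{r_0+10}(5+\tfrac{r_0}{2})\le \tfrac{r_0}{2}$ goes through without shrinking anything.
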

\begin{proof}
Obviously it is enough to show the Lemma for a fixed $i\in \set{1,\dots,n}$. Hence let $i\in \set{1,\dots,n}$ be fixed for the rest of the proof. We define $H\co U_{\exp^\ast} \ti B^\C_{\ep_{\exp^\ast}}(0)\ra \C^m$, $(x,y)\ms \exp^\ast_i(x,y)-x-y$ and $h\co U_{\exp^\ast} \ti B^\C_{\ep_{\exp^\ast}} (0) \ra [0,\8[$, $(x,y)\ms \|H'(x,y)\|_{op}$. For all $x\in U_{\exp^\ast}$ we get $d_1H(x,0;\bl)=0$ and $d_2H(x,0;\bl)=0$ and so $H'(x,0) = dH(x,0;\bl) =0$ in $\mathcal{L}(\C^m\ti\C^m;\C^m)$. Hence $\ol{ U_{4,1}^{k_i} } \ti \set{0} \subs h\inv ([0,\frac{r_0}{r_0+10}[)$ and with the Lemma of Wallace we find $\ep^\C_{r_0} \in ]0,\min(\ep_{\exp},\frac{r_0}{2})[$ such that $\|H'(x,y)\|_{op} <\frac{r_0}{r_0+10}$ for all $x\in \ol{ U_{4,1}^{k_i} }$ and $y \in B^\C_{\ep^\C_{r_0}}(0)$. Now let $\eta \in \Hol(U_{4,n}^{k_i},\C^m)^\R_\str$ with $\|\eta\|_\8^1<\ep^\C_{r_0}$.
\begin{compactitem}
\item  We have 
\begin{align}\label{HII}
\ps^i_\eta(x)=H(x,\eta(x)) +x + \eta(x) 
\end{align}
for all $x\in U_{4,n}^{k_i}$. Hence  
\begin{align*}
{\ps^i_\eta}'(x)= H'(x,\eta(x);\bl) \ci (\id_{\R^m},\eta'(x)) + \id_{\R^m} + \eta'(x)
\end{align*}
for all $x\in U_{4,n}^{k_i}$. With Remark \ref{idA} we calculate
\begin{align}\label{nahe1.1II}
&\|{\ps^i_\eta}'(x)-\id_{\C^m}\|_{op} \leq \|H'(x,\eta(x))\|_{op} \cdot \|(\id_{\C^m},\eta'(x))\|_{op} + \|\eta'(x)\|_{op} \nonumber\\
<& \frac{r}{r+10} \cdot (1+\ep)+ \ep \leq \frac{r}{r+2} \cdot \left(1+\frac{r}{2}\right)+ \frac{r}{2} =r. 
\end{align}
\item \label{kl12II} Let $x\in U_{4,n}^{k_i}$ and $y \in B^\C_{\ep^\C_{r_0}}(0)$. Then $\|(x,y)\| \leq \|x\|+\|y\| < 5+\frac{r}{2}$. Hence
\begin{align*}
&\|H(x,y)\|=\|H(x,y)- H(0,0)\| = \left\|\int_0^1dH(tx,ty;x,y)dt \right\| \\
\leq &\int_0^1\|H'(tx,ty)\| \cdot \|(x,y)\|dt < \frac{r}{r+10} \cdot \|(x,y)\| \leq \frac{r}{2}.
\end{align*}
Thus given $x\in U_{4,n}^{k_i}$ we calculate with (\ref{HII})
\begin{align}\label{nahe1.2II}
\|\ps^i_\eta(x)-x\| \leq \|H(x,\eta(x))\| + \|\eta(x)\| <r.
\end{align}
\end{compactitem}
\end{proof}

\begin{lemma}\label{WennNahe1II}
Let $R \in ]0,\8[$, $r\in ]0,\8[$, $r_0 \in ]0,1[$, $R':=(1-r_0)R -r_0>0$, $r':=(1-r_0)r$, $R'':=R+r_0$ and $r'':=(1+r_0)r$. For all $k\in \set{1,\dots,m}$ and $f\in \Hol_b^1(B^{k,\C}_{R,r}(0);\C^m)$ such that $\|f-\id\|_{\8}^1<r_0^{op}$, $f(\partial(B_l^{k}(0))) \subs \partial \R^m_{k}$ and $f(\partial^0(B_l^{k}(0))) \subs \partial^0 \R^m_{k}$, we get the following assertions:
\begin{compactenum}
\item $f \co B^{k,\C}_{R,r}(0) \ra \C^m$ has open image and is a real analytic diffeomorphism onto its image;
\item $B^{k,\C}_{R',r'}(0) \subs f(B^{k,\C}_{R,r}(0))$ and the map $f\inv \co B^{k,\C}_{R',r'}(0) \ra B^{k,\C}_{R,r}(0)$ is a real analytic diffeomorphism;
\item $f(B^{k,\C}_{R,r}(0)) \subs B^{k,\C}_{R,r+r_0}(0)$ and $f(B^{k,\C}_{R,r}(0)) \subs B^{k,\C}_{R'',r''}(0)$;
\item $\|f\inv(x)-x\| <r_0$ for all $x\in B^{k,\C}_{R',r'}(0)$;
\item $\|(f\inv)'(x)-\id_{\C^m}\|_{op} <r_0$ for all $x\in B^{k,\C}_{R',r'}(0)$.
\end{compactenum}
\end{lemma}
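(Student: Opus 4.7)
The plan is to mirror the structure of the proof of Lemma \ref{WennNahe1}, but replacing the quantitative real inverse function theorem on quadrants (Lemma \ref{QanUKFEcke}) by a combination of Lemma \ref{WennNahe1} itself, applied on the real slice $B_R^k(0)$, together with the standard quantitative inverse function theorem (Lemma \ref{StandQuand}) applied fibrewise on complex balls $B^\C_r(x_0)$. Throughout, I shall use (and can assume without loss of generality) that $r_0^{op}\leq r_0$, which in turn gives $r_0^{op}\leq r_0<1$.

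First I would handle the local diffeomorphism assertion. The set $B^{k,\C}_{R,r}(0)=B^k_R(0)+B^\C_r(0)$ is convex as the Minkowski sum of two convex sets, and by hypothesis $\|f'(z)-\id_{\C^m}\|_{op}<r_0^{op}<1$ for every $z\in B^{k,\C}_{R,r}(0)$. Hence every $f'(z)$ is invertible and Remark \ref{PraktischPraktisch} applies and yields injectivity of $f$ on $B^{k,\C}_{R,r}(0)$. Thus $f$ is a complex-analytic, and in particular real-analytic, diffeomorphism onto its open image. Similarly, the bound $\|(f\inv)'(x)-\id_{\C^m}\|_{op}<r_0$ on $B^{k,\C}_{R',r'}(0)$ follows from $(f\inv)'(x)=(f'(f\inv(x)))\inv$ together with the defining property of $r_0^{op}$, while $\|f\inv(x)-x\|<r_0$ on $B^{k,\C}_{R',r'}(0)$ is immediate from Remark \ref{Klar} and $\|f-\id\|_\8<r_0^{op}\leq r_0$.

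The central step is the inclusion $B^{k,\C}_{R',r'}(0)\subs f(B^{k,\C}_{R,r}(0))$. Given $y\in B^{k,\C}_{R',r'}(0)$, write $y=y_0+y_1$ with $y_0\in B^k_{R'}(0)$, $y_1\in B^\C_{r'}(0)$. The restriction $f|_{B^k_R(0)}$ satisfies the hypotheses of Lemma \ref{WennNahe1}, so that lemma provides $x_0\in B^k_R(0)$ with $f(x_0)=y_0$. Since $B^\C_r(x_0)\subs B^{k,\C}_{R,r}(0)$ and this ball is convex, I would apply Lemma \ref{StandQuand} to $f|_{B^\C_r(x_0)}$ with $A=\id_{\C^m}$ and $g=f-\id$ (so $\Lip(g)\leq\|f'-\id\|_\8<r_0^{op}$), obtaining $B^\C_{(1-r_0^{op})r}(f(x_0))\subs f(B^\C_r(x_0))$. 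Because $r'=(1-r_0)r\leq (1-r_0^{op})r$, the point $y=f(x_0)+y_1$ lies in this ball and hence in $f(B^\C_r(x_0))\subs f(B^{k,\C}_{R,r}(0))$.

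The two remaining inclusions are routine norm estimates. For $f(B^{k,\C}_{R,r}(0))\subs B^{k,\C}_{R,r+r_0}(0)$, write $x=x_0+x_1$ with $x_0\in B^k_R(0)$, $x_1\in B^\C_r(0)$ and observe $f(x)=x_0+(x_1+(f(x)-x))\in B^k_R(0)+B^\C_{r+r_0}(0)$, using $\|f(x)-x\|<r_0^{op}\leq r_0$. For $f(B^{k,\C}_{R,r}(0))\subs B^{k,\C}_{R'',r''}(0)$, I would instead decompose $f(x)=f(x_0)+(f(x)-f(x_0))$: Lemma \ref{WennNahe1} applied on $B^k_R(0)$ gives $f(x_0)\in B^k_{R+r_0}(0)=B^k_{R''}(0)$, while the mean value estimate $\|f(x)-f(x_0)\|\leq\|f'\|_\8\cdot\|x_1\|<(1+r_0^{op})r\leq(1+r_0)r=r''$ places the remaining summand in $B^\C_{r''}(0)$. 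The main (but quite controlled) obstacle is the surjectivity step, since it requires the compatibility $r_0^{op}\leq r_0$ between the two ``closeness'' parameters to make the complex fibre ball large enough to absorb $y_1$; once that bookkeeping is set up, everything else reduces to standard estimates and direct appeals to Lemmas \ref{WennNahe1}, \ref{StandQuand} and Remark \ref{PraktischPraktisch}.
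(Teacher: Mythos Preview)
Your proposal is correct and follows essentially the same route as the paper's proof: injectivity via Remark \ref{PraktischPraktisch} on the convex domain, the surjectivity step by combining Lemma \ref{WennNahe1} on the real slice $B^k_R(0)$ with Lemma \ref{StandQuand} on the complex fibres $B^\C_r(x_0)$, and the norm estimates for (c)--(e) handled exactly as in Lemma \ref{WennNahe1}. Your explicit assumption $r_0^{op}\leq r_0$ is precisely what the paper uses tacitly (e.g.\ when it writes $\Lip(g)\leq r_0$ and $\|f-\id\|_\8^0<r_0$), so making it visible is an improvement; the only cosmetic differences are that you argue pointwise where the paper argues setwise in (b), and that you invoke the mean value inequality directly in (c) where the paper cites the upper inclusion of Lemma \ref{StandQuand}.
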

\begin{proof}
\begin{compactenum}
\item From Remark \ref{PraktischPraktisch} and $r_0<1$ we get that $f\co B^{k,\C}_{R,r}(0) \ra \C^m$ has open image and is a real analytic  diffeomorphism onto its image. 
\item Let $g:= f-\id_{\C^m} \co B^{k,\C}_{R,r}(0) \ra \C^m$. Then as in Lemma \ref{WennNahe1} one gets $\|g(x)-g(y)\| <r_0 \|x-y\|$ for all $x,y \in B^{k,\C}_{R,r}(0)$. Hence $\Lip(g)\leq r_0<1$. Now let $x\in B^k_R(0)$. Then $B^\C_r(x) = x+B^\C_r(0) \subs B^{k,\C}_{R,r}(0)$. We consider the map $f|_{B_r^\C(x)}$. Thus using the identification $\C^m\cong \R^{2m}$ in combination with Lemma \ref{StandQuand} we get $B_{(1-r_0)r}^\C(f(x)) \subs f(B_r^\C(x)) \subs f(B^{k,\C}_{R,r}(0))$. Hence $B^\C_{(1-r_0)r}(0)+f(x) \subs f(B^{k,\C}_{R,r}(0))$ for all $x \in B^{k}_{R}(0)$. Therefore $B^\C_{(1-r_0)r} (0) + f(B_R^k(0)) \subs f(B^{k,\C}_{R,r}(0))$. With Lemma \ref{WennNahe1} we get $B^k_{R'}(0) \subs f(B^k_R(0))$, because $f(\partial(B_l^{k}(0))) \subs \partial \R^m_{k}$ and $f(\partial^0(B_l^{k}(0))) \subs \partial^0 \R^m_{k}$. Therefore
\begin{align*}
B_{R',r'}^{k,\C}(0) = B_{R'}^k(0) + B^\C_{r'}(0) \subs f(B^{k,\C}_{R,r}(0)).
\end{align*}
Hence the map $f\inv \co B^{k,\C}_{R',r'}(0) \ra B^{k,\C}_{R,r}(0)$ makes sense and is a real analytic diffeomorphism. 
\item Let $x\in B^{k,\C}_{R,r}(0)$ then $f(x) =x + (f(x)-x)  \in B^{k,\C}_{R,r}(0) + B^\C_{r_0}(0) =B^{k,\C}_{R,r+r_0}(0)$. Now let $x_0 \in B_R^k(0)$. Using Lemma \ref{WennNahe1} we get $f(x_0) \in B^{k}_{R''}(0)$. Again we consider the map $f|_{B_r^\C(x_0)}$. Lemma \ref{StandQuand} yields
\begin{align*}
f(B_r^{\C}(x_0)) \subs B^\C_{(1+r_0)r}(f(x_0))= B^\C_{r''}(0)+f(x_0)\subs B^\C_{r''}(0)+B^k_{R''}(0)= B_{R'',r''}^{k,\C}(0).
\end{align*}
\item Because $\|f-\id\|^0_\8 <r_0$ we can use Remark \ref{Klar}  and get $\|f\inv(x)-x\| <r_0$ for all $x\in B^{k,\C}_{R',r'}(0)$.
\item Because $\|f'(x)-\id\|_{op} <r_0^{op}$ for all $x \in B^{k,\C}_{R,r}(0)$ we can argue as in Lemma \ref{Nahe1} and get $\|(f\inv)'(x)-\id_{\C^m}\|_{op} <r_0$ for all $x\in B^{k,\C}_{R',r'}(0)$.
\end{compactenum}
\end{proof}

\begin{lemma}\label{invEcken}
There exists $\ep_\partial >0$ such that for all $i \in \set{1,\dots,n}$, $j \in \set{1,\dots, m}$, all connected components $C \subs \partial^jB_5^{k_i}(0)$, $x,y \in C$ with $\|x-y\|<\frac{\ep_\partial}{2}$, $v\in B_{\ep_\partial}(0)$ with $y = \exp^i(x,v)$ we get $v\in T_x \partial^jB_5^{k_i}(0)$.
\end{lemma}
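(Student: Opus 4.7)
My plan rests on the observation that each connected component $C$ of $\partial^j B_5^{k_i}(0)$ is a totally geodesic submanifold of $B_5(0)$ with respect to $g_i$, and has the explicit form $C = B_5(0) \cap \{x \in \R^m_{k_i} : x_l = 0 \text{ for all } l \in S,\ x_l > 0 \text{ for all } l \in \{1,\dots,k_i\}\sm S\}$ for a uniquely determined $j$-subset $S \subs \{1,\dots,k_i\}$. Total geodesicness is inherited from the boundary-respecting hypothesis on $g$ via the chart $\til{\ph}_i$, which carries $\partial^j M$ onto $\partial^j B_5^{k_i}(0)$ and pulls $g$ back to $g_i$. In particular, $C$ is an open subset of the fixed linear subspace $V_S := \{v \in \R^m : v_l = 0 \text{ for all } l \in S\}$, so $T_xC = V_S$ for every $x \in C$, independently of $x$.

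The quantitative step uses that, by total geodesicness, the restricted map $\exp_i(x,\bl)|_{V_S}$ sends a neighbourhood of $0 \in V_S$ into $C$ (since the ambient geodesics with initial velocity in $T_xC$ are also geodesics of $C$ and thus stay in $C$), while $d_2\exp_i(x,0)=\id_{\R^m}$ forces its derivative at $0$ to be $\id_{V_S}$. I would then apply the quantitative inverse function theorem (Lemma \ref{StandQuand}) uniformly in $x$ on the compact set $C \cap \ol{B}_{4.5}(0)$, where Lemma \ref{ExpDef} provides the requisite control on $\exp_i$ and bounds its second derivatives by compactness, to produce constants $\rho_{i,j,C}>0$ and $\delta_{i,j,C}>0$ independent of $x$ such that $\exp_i(x,\bl)|_{V_S}$ restricts to a real analytic diffeomorphism from $B_{\rho_{i,j,C}}(0) \cap V_S$ onto an open subset of $C$ containing $\{y \in C : \|y-x\| < \delta_{i,j,C}\}$.

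Setting $\ep_{i,j,C} := \min(\rho_{i,j,C},\,2\delta_{i,j,C},\,\ep_{inj})$ with $\ep_{inj}$ from Lemma \ref{HP}, the implication follows directly: given $x,y \in C$ with $\|x-y\| < \ep_{i,j,C}/2 \leq \delta_{i,j,C}$, the uniform statement above furnishes a vector $w \in V_S = T_xC$ with $\|w\| < \rho_{i,j,C} \leq \ep_{inj}$ and $\exp_i(x,w) = y$; if additionally $v \in B_{\ep_{i,j,C}}(0) \subs B_{\ep_{inj}}(0)$ satisfies $\exp_i(x,v) = y$, then both $v$ and $w$ lie in the injectivity ball of $\exp_i(x,\bl)$ supplied by Lemma \ref{HP} (transferred to the chart), so that $v = w \in V_S = T_xC$. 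Finally, $\ep_\partial := \min_{i,j,C} \ep_{i,j,C}$ is positive because there are only finitely many triples $(i,j,C)$ to consider. The main obstacle is the uniformity of $\rho_{i,j,C},\,\delta_{i,j,C}$ in $x$: since $C$ is not compact in $B_5(0)$, I would deal with this by restricting to $C \cap \ol{B}_{4.5}(0)$ and shrinking $\ep_\partial$ so that the interesting configurations $(x,y,v)$ already occur in this relatively compact region, or equivalently by invoking a slightly enlarged ambient chart in which the relevant piece of $C$ becomes relatively compact.
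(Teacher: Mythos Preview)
Your approach is essentially the paper's: use total geodesicness to show that $\exp_i(x,\cdot)$ carries $V_S$-vectors into $V_S$, apply a quantitative inverse function theorem within $V_S$ to cover a small ball around $x$, and conclude by injectivity of $\exp_i(x,\cdot)$. Two simplifications the paper makes are worth noting. First, the uniformity issue you flag disappears if one exploits real analyticity: for fixed $x\in C$ and $v\in V_S\cap B_{\ep_{\exp}}(0)$, the function $t\mapsto \pr_l(\exp_i(x,tv))$ is real analytic on $[0,1]$ and vanishes for small $t$ by total geodesicness, hence vanishes identically; this gives $\exp_i(x,V_S\cap B_{\ep_{\exp}}(0))\subseteq V_S$ for \emph{every} $x\in C$ at once, and the quantitative estimate then only requires the single uniform choice $\|d_2\exp_i(x,y)-\id\|<\tfrac12$ on a fixed compact region, with no need to juggle $x$-dependent neighbourhoods landing in $C$. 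Second, you do not need $\ep_{inj}$ from Lemma~\ref{HP}: Lemma~\ref{ExpDef}(b) already gives that $\exp_i(x,\cdot)$ is injective on $B_{\ep_{\exp}}(0)$, which is all the final step requires.
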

\begin{proof}
There exists $\ep_\partial \in ]0,\ep_{\exp}]$ such that $\|d\exp^i_x(y)-\id_{\R^m}\|<\frac{1}{2}$ for all $i \in \set{1,\dots,n}$, $x \in B_4(0)$ and $y \in B_{\ep_\partial}(0)$. Given $x\in \partial^jB_5^{k}(0)$ we define the subset $J_x \subs \set{1,\dots,m}$ such that $x_l=0$ if and only if $l \in J_x$. Moreover we write $I_x:= \set{1,\dots,m}\setminus J_x$ and $\R^m_{I_x}:=\langle e_i:i \in I_x \rangle$. Obviously we have $T_x\partial^j B_5^k(0) = \R^m_{I_x}$. Let $v \in B_{\ep_{\exp}}(0) \cap T_x\partial^jB_5^{k_i} = B_{\ep_{\exp}}(0) \cap \R^m_{I_x} = B^{\R^m_{I_x}}_{\ep_{\exp}}(0)$. Because $\partial^jM$ is totally geodetic there exist $t_0 \in ]0,1[$ such that  $d_x\ph_i\inv (tv) \in \Omega_{\partial^jM}$ for all $t \in [0,t_0]$. Hence for all $t \in [0,t_0]$ we have $\exp^i_x(tv)\in \partial^jB_5^{k_i}(0)$. Therefore $\pr_l\ci \exp_x^i(tv) =0$ for all $t\in [0,t_0]$ and $l \in J_x$ (note that $t\ms \exp^i_x(tv)$ stays in the connected component of $x$). Because $\exp_x^i$ is real analytic we can use the Identity Theorem and obtain $\pr_l\ci \exp^i_x(v)=0$. Therefore $\exp^i_x (B^{\R^m_{I_x}}_{\ep_{\exp}}(0)) \subs \R^m_{I_x}$. Especially we can consider the continuous map 
$\exp_x^i \co B^{\R^m_{I_x}}_{\ep_{\partial}}(0) \ra \R^m_{I_x}$. We write $C$ for the connected component of $x$ in $\partial^jB^{k_i}_5(0)$. And because $\|d\exp^i_x(y)-\id\|<\frac{1}{2}$, we get 
\begin{align*}
C\cap \partial^j B_{\frac{1}{2}\ep_\partial}^{k_i}(x)\subs B^{\R^m_{I_x}}_{\frac{1}{2}\ep_{\partial}}(x) = B^{\R^m_{I_x}}_{\frac{1}{2}\ep_{\partial}}(\exp_x(0)) \subs \exp_x(B^{\R^m_{I_x}}_{\ep_{\partial}}(0))
\end{align*}
with the quantitative inverse function theorem. Now the assertion follows from the injectivity of $\exp_x^i \co B_{\ep_{\exp}} (0) \ra \R^m$.
\end{proof}

\begin{definition}
We use the notation of Lemma \ref{del} and define $\delta_\mathcal{U}:=\delta_{\min(\ep_{\mathcal{U}},\ep_\partial)}$, $D_\mathcal{U}:=D_{\min(\ep_{\mathcal{U}},\ep_\partial)}$ and
\begin{align*}
\log_i\co D_{\mathcal{U}} \ra B_{\min(\ep_{\mathcal{U}},\ep_{\partial})}(0), (x,z)\ms \exp_i(x,\bl)\inv(z)
\end{align*}
for all $i\in \set{1,\dots,n}$.
\end{definition}

\begin{remark}\label{OBdAConnected}
For all $j \in \set{1,\dots,m}$, $i \in \set{1,\dots,n}$ and all connected components $C$ in $\partial^jB_1^{k_i}(0)$ we choose $p_C^{i,j}$. After shrinking $\ep_{\mathcal{U}}$ we assume without loose of generality $\ps_{\eta_{(i)}}^i(p_C^{i,j}) \in C$ for all $i,j,C$ as above and $\eta \in \mathcal{B}^0_{1,\ep_\mathcal{U}}$.
\end{remark}

\begin{definition}\label{AllesKlein}
Let $i\in \set{1,\dots,n}$. We define the real analytic map 
\begin{align*}
\a_i \co B_4(0)\ti B_{\delta_{\mathcal{U}}}(0) \ra B_{\ep_\mathcal{U}}(0),~ (x,y)\ms \log_i(x,x+y) 
\end{align*}
and get $\a_i(x,0)=0$ for all $x\in B_4(0)$. Hence $d_1\a_i(x,0)=d\a(\bl,0)(x) =0$ in $\mathcal{L}(\R^m)$ for all $x\in B_4(0)$. Therefore $\ol{B}_3(0)\ti \set{0} \subs (d_1\a_i)\inv(\set{0})$.  We find $\nu_\a>0$ such that $\|d_1\a_i(x,y)\|_{op}<\frac{\ep_\mathcal{U}}{2}$ for all $x\in \ol{B}_3(0)$, $y \in B_{\nu_\a}(0)$ and $i\in \set{1,\dots,n}$. Let $K_\a\geq \sup \set{\|d_2\a_i(x,y)\|_{op} : x\in \ol{B}_3(0), y \in \ol{B}_{\nu_\a}(0)}$ for all $i\in \set{1,\dots,n}$.
\end{definition}

\section{Smoothness of composition}\label{SecSmoothness of composition}
In this section we show the smoothness of the composition in the local chart $\gph$.

\begin{definition}
Let $r_\diamond = \min (\frac{\delta_\mathcal{U}}{2}, \ep_\mathcal{U}, \frac{\ep_\mathcal{U}}{8K_\a}, \frac{\nu_\a}{2} , \frac{\ep_\partial}{4})$, $r_\diamond^\C:= \frac{\delta^\C(\ep_{\exp^\ast})}{2}$ and $\ep_\diamond:=\min( \ep_{\mathcal{U}}, \ep_{4,r_\diamond} \ep^\C_{r^\C_\diamond})$.
\end{definition}

The corresponding  result to the following lemma, in the case of a non compact smooth manifold without corners, is \cite[4.17]{Gloeckner2}.
\begin{lemma}
Let $\eta, \zeta \in \mathcal{B}_{\ep_\diamond}$
\begin{compactenum}
\item The map $\ps^i_{\eta_{(i)}} \ci \ps^i_{\zeta_{(i)}} \co B_3^{k_i}(0)\ra \R^m_{k_i}$ makes sense and the map
\begin{align*}
\eta_{(i)} \diamond \zeta_{(i)} \co B_3^{k_i}(0) \ra \R^m,~ x\ms \log_i (x,\ps_{\eta_{(i)}}^i \ci \ps^i_{\zeta_{(i)}} (x))
\end{align*}
makes sense and is a real analytic function with $\|\eta_{(i)} \diamond \zeta_{(i)}(x)\|<\min(\ep_{\mathcal{U}},\ep_{\partial})$ for all $x\in B_3^{k_i}(0)$. Moreover we have 
\begin{align}\label{split}
\ps^i_{\eta_{(i)} \diamond \zeta_{(i)}} = \ps^i_{\eta_{(i)}} \ci \ps^i_{\zeta_{(i)}} 
\end{align}
on $B_3^{k_i}(0)$ and $\eta_{(i)}\diamond \zeta_{(i)}$ is stratified.
\item For the map $\ps_\eta \ci \ps_\zeta \co M \ra M$ we have $\ps_\eta \ci \ps_\zeta(U_{i,3}) \subs U_{i,4}$ and 
\begin{align}\label{diamondsplitt2}
\ph_i \ci \ps_\eta \ci \ps_\zeta \ci \ph_i\inv =  \ps^i_{\eta_{(i)}} \ci \ps^i_{\zeta_{(i)}}.
\end{align}
\item The vector field 
\begin{align*}
\eta \diamond \zeta \co M \ra TM,~ p \ms \exp|_{W_pM}\inv (\ps_\eta \ci \ps_\zeta (p))
\end{align*}
makes sense and is real analytic and stratified. Moreover we have
\begin{align*}
(\eta\diamond \zeta)_{(i)}(x) = \eta_{(i)} \diamond \zeta_{(i)}(x)
\end{align*}
for all $x\in B_3^{k_i}(0)$ and $p = \ph_i\inv(x)$ and
\begin{align*}
\ps_{\eta\diamond \zeta} = \ps_\eta \ci \ps_\zeta.
\end{align*} 
\item $\eta \diamond \zeta$ is in $\mathcal{V}$.
\end{compactenum}
\end{lemma}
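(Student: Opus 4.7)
The plan is to verify the four assertions in order, drawing the needed bounds from the way $\ep_\diamond$ was assembled. For~(i), I would apply Lemma~\ref{Nahe1} with $l=4$ and $r=r_\diamond$ (legitimate because $\ep_\diamond\leq\ep_{4,r_\diamond}$) to get $\|\ps^i_{\zeta_{(i)}}(x)-x\|<r_\diamond$ and $\|{\ps^i_{\zeta_{(i)}}}'(x)-\id\|_{op}<r_\diamond$ on $B_4^{k_i}(0)$, together with the analogous bounds for $\eta$. Since $r_\diamond<1$, this forces $\ps^i_{\zeta_{(i)}}(B_3^{k_i}(0))\subs B_4^{k_i}(0)$, so the composition makes sense; the triangle inequality then gives $\|\ps^i_{\eta_{(i)}}\ci\ps^i_{\zeta_{(i)}}(x)-x\|\leq 2r_\diamond\leq\delta_\mathcal{U}$, placing $(x,\ps^i_{\eta_{(i)}}\ci\ps^i_{\zeta_{(i)}}(x))$ in $D_\mathcal{U}$ and allowing the definition of $\eta_{(i)}\diamond\zeta_{(i)}$ via $\log_i$ as a real analytic map with image in $B_{\min(\ep_\mathcal{U},\ep_\partial)}(0)$. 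Identity~\eqref{split} is immediate from $\exp_i(x,\log_i(x,y))=y$. For stratification I would combine Lemma~\ref{GehtnachM} (each $\ps_\eta,\ps_\zeta$ preserves every stratum $\partial^jM$) with Lemma~\ref{invEcken}, which promotes membership of $\exp_i(x,v)$ in the same coordinate stratum as $x$ to tangency of $v$ to that stratum.

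Parts~(ii) and~(iii) are essentially bookkeeping. For~(ii), Lemma~\ref{Hilfreich2} translates the estimate $\|\ps^i_{\zeta_{(i)}}(x)-x\|<r_\diamond$ into the inclusion $\ps_\zeta(U_{i,3})\subs U_{i,4}$; a second application handles $\ps_\eta$ on $U_{i,4}$, and identity~\eqref{diamondsplitt2} drops out by composing the two chart factorisations. For~(iii), given $p\in U_{i,3}$ and $x=\ph_i(p)$, the splitting identity of~(i) together with Remark~\ref{Hilfreich1} rewrites $\ps_\eta\ci\ps_\zeta(p)=\exp(T\ph_i\inv(x,\eta_{(i)}\diamond\zeta_{(i)}(x)))$; since $\|\eta_{(i)}\diamond\zeta_{(i)}(x)\|<\ep_\mathcal{U}\leq\ep_{inj}$ the argument lies in $W_pM$, and injectivity of $\exp|_{W_pM}$ from Lemma~\ref{HP} identifies it with $\eta\diamond\zeta(p)$. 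Real analyticity, stratification, and the identity $\ps_{\eta\diamond\zeta}=\ps_\eta\ci\ps_\zeta$ all transfer chart by chart from the corresponding properties of $\eta_{(i)}\diamond\zeta_{(i)}$.

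The main obstacle is~(iv), showing $\eta\diamond\zeta\in\mathcal{V}=\mathcal{B}^1_{1,\ep_\mathcal{U}}$. The sup-norm half is elementary: rewriting $\eta_{(i)}\diamond\zeta_{(i)}(x)=\a_i(x,\ps^i_{\eta_{(i)}}\ci\ps^i_{\zeta_{(i)}}(x)-x)$ with the $\a_i$ of Definition~\ref{AllesKlein}, the vanishing $\a_i(x,0)=0$ and the bound $K_\a$ on $d_2\a_i$ give $\|\eta_{(i)}\diamond\zeta_{(i)}(x)\|\leq K_\a\cdot 2r_\diamond\leq\ep_\mathcal{U}/4$, valid because $2r_\diamond\leq\nu_\a$ and $r_\diamond\leq\ep_\mathcal{U}/(8K_\a)$. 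The derivative bound is the reason the complexification machinery was developed. I would extend $\eta_{(i)}$ and $\zeta_{(i)}$ to complex analytic maps on some neighbourhood $U^{k_i}_{4,n}$ of $\ol{B}_4^{k_i}(0)$ in $\C^m$, invoke Lemma~\ref{Nahe1II} with $r_0=r^\C_\diamond$ (legitimate because $\ep_\diamond\leq\ep^\C_{r^\C_\diamond}$) to keep both $\ps^{i,\ast}_{\eta^\ast_{(i)}}$ and $\ps^{i,\ast}_{\zeta^\ast_{(i)}}$ within $\delta^\C(\ep_{\exp^\ast})/2$ of the identity in $\|\cdot\|^1$, so that their composition lies within $\delta^\C(\ep_{\exp^\ast})$ of the identity and thus in the domain of the complex logarithm $\log^\ast_i$ from Lemma~\ref{DeltaAst}. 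This yields a complex analytic extension of $\eta_{(i)}\diamond\zeta_{(i)}$ on a neighbourhood of $\ol{B}_1^{k_i}(0)$ in $\C^m$. A Cauchy integral estimate, exactly as in the restriction map $\Hol^0_b\to\Hol^1_b$ used earlier, then bounds the operator norm of the derivative by a uniform multiple of the sup norm of the extension; since that sup norm is itself controlled by the same $\a_i$ argument applied to the complexified maps, the multiplicative constant is absorbed in the design of $\ep_\diamond$ and yields $\|(\eta\diamond\zeta)_{(i)}\|^1_{\ol{B}_1^{k_i}(0)}<\ep_\mathcal{U}$, completing the verification that $\eta\diamond\zeta\in\mathcal{V}$.
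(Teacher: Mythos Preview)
Your treatment of parts~(i)--(iii) and the sup-norm half of~(iv) matches the paper's argument essentially line for line; the only cosmetic difference is that for stratification you quote Lemma~\ref{GehtnachM} on the global level, whereas the paper works locally via Remark~\ref{OBdAConnected}, but both feed the same hypothesis into Lemma~\ref{invEcken}.

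The genuine divergence is in the derivative bound for~(iv). The paper does \emph{not} complexify here at all: it writes $\eta_{(i)}\diamond\zeta_{(i)}(x)=\a_i(x,h(x))$ with $h:=\ps^i_{\eta_{(i)}}\ci\ps^i_{\zeta_{(i)}}-\id$, computes directly
\[
d(\eta_{(i)}\diamond\zeta_{(i)})(x;v)=d_1\a_i(x,h(x);v)+d_2\a_i(x,h(x);dh(x;v)),
\]
and then bounds the two summands separately using the constants prepared in Definition~\ref{AllesKlein}: $\|d_1\a_i(x,y)\|_{op}<\ep_\mathcal{U}/2$ for $\|y\|<\nu_\a$, and $\|d_2\a_i\|_{op}\le K_\a$. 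A short chain-rule estimate shows $\|h'(x)\|_{op}<\ep_\mathcal{U}/(2K_\a)$ (this is where $r_\diamond\le\ep_\mathcal{U}/(8K_\a)$ is used), and the two pieces add to $<\ep_\mathcal{U}$. This is purely a real-variable argument, and the specific constants $\nu_\a$, $K_\a$, $r_\diamond\le\ep_\mathcal{U}/(8K_\a)$ were chosen precisely to make it work.

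Your Cauchy-estimate route is a different idea and, as written, has a gap. The Cauchy bound produces $\|f'\|_\8\le (2/\rho)\,\|f\|_\8$ where $\rho$ is the distance from $\ol{B}_1^{k_i}(0)$ to the boundary of the complex neighbourhood on which the extension lives; this constant has nothing to do with $K_\a$. Moreover, to control the sup norm of the \emph{complex} extension you would need a complex-analytic extension $\a_i^\ast$ together with a bound on $d_2\a_i^\ast$ over a complex neighbourhood, none of which the paper provides (Definition~\ref{AllesKlein} only bounds $d_2\a_i$ on the real box $\ol{B}_3(0)\times\ol{B}_{\nu_\a}(0)$). So the assertion that ``the multiplicative constant is absorbed in the design of $\ep_\diamond$'' is not supported: $\ep_\diamond$ was engineered for the direct chain-rule computation, not for a Cauchy estimate. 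Your approach could be made to work, but it would require introducing and bounding additional complex constants and then shrinking $r_\diamond$ accordingly; the paper's direct estimate avoids all of this.
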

\begin{proof}
\begin{compactenum}
\item Using Lemma \ref{Nahe1} and Lemma \ref{WennNahe1} we get $\ps_{\zeta_{(i)}}(B_3^{k_i}(0)) \subs B_4^{k_i}(0)$. Hence $\ps^i_{\eta_{(i)}} \ci \ps^i_{\zeta_{(i)}} \co B_3^{k_i}(0)\ra \R^m_{k_i}$ makes sense. We have $r_\diamond\leq \frac{\delta_\mathcal{U}}{2}$ and so $\|\ps^i_{\eta_{(i)}} -\id\|< \frac{\delta_\mathcal{U}}{2}$ and $\|\ps^i_{\zeta_{(i)}} -\id\|< \frac{\delta_\mathcal{U}}{2}$ on $B_4^{k_i}(0)$. Hence $\ps^i_{\eta_{(i)}} (\ps^i_{\zeta_{(i)}} (x)) -x \in B_{\delta_{\mathcal{U}}}(0)$. Therefore $\eta_{(i)} \diamond \zeta_{(i)} \co B_3^{k_i}(0) \ra \R^m$ makes sense and $\|\eta_{(i)} \diamond \zeta_{(i)} (x)\| < \min(\ep_{\mathcal{U}},\ep_{\partial})$ for all $x\in B_3^{k_i}(0)$ and so 
\begin{align*}
\ps^i_{\eta_{(i)} \diamond \zeta_{(i)}} = \ps^i_{\eta_{(i)}} \ci \ps^i_{\zeta_{(i)}}
\end{align*}
on $B_3^{k_i}(0)$ (note $\ep_\mathcal{U} < \ep_{\exp}$).
Now we show that $\eta_{(i)}\diamond \zeta_{(i)}$ is stratified. With $r_\diamond \leq \min(\nu_\a,\ep_\partial)$ we calculate for $x \in B_3^{k_i}(0)$:
\begin{align}\label{EckPartundNuAlph}
&\|\ps^i_{\eta_{(i)}} \ci \ps_{\zeta_{(i)}}(x) - x\| \leq  \|\ps^i_{\eta_{(i)}} (\ps_{\zeta_{(i)}}(x)) - \ps_{\zeta_{(i)}}(x)\| + \|\ps_{\zeta_{(i)}}(x) -x\| \nonumber\\
&<\min\left(\nu_\a,\frac{\ep_\partial}{2}\right).
\end{align}
Now let $x \in \partial^j B_3^{k_i}(0)$ and $C$ the connected component of $x$ in $\partial^jB_5^{k_i}(0)$. We define $y:= \ps^i_{\eta_{(i)}}\ci \ps^i_{\zeta_{(i)}}(x)$ and $v:= \eta_{(i)} \diamond \zeta_{(i)}(x) \in B_{\ep_{\partial}}(0)$. With Lemma  Remark \ref{OBdAConnected} we deduce $x,y \in C$ and $y =\exp^i_x(v)$. With (\ref{EckPartundNuAlph}) and Lemma \ref{invEcken} we deduce $v \in T_x\partial^jB_5^{k_i}(0)$. Hence $\eta_{(i)}\diamond \zeta_{(i)}$ is stratified.
\item The maps $\ps_\eta$ and $\ps_\zeta$ make sense, because $r_\diamond <\ep_{\exp}$. With Lemma \ref{Hilfreich2} we calculate
\begin{align*}
&\ps_\eta(\ps_\zeta(U_{i,3})) = \ps_\eta (\ps_\zeta (\ph_i\inv(B_3^{k_i}(0)))) = \ps_\eta(\ph_i\inv(\ps^i_{\zeta_{(i)}}(B_3^{k_i}(0) ) ))\\
=& \ph_i\inv (\ps_{\eta_{(i)}}^i \ci \ps_{\zeta_{(i)}}^i (B_3^{k_i}(0))).
\end{align*}
This calculation also shows (\ref{diamondsplitt2}).
\item Let $p \in M$ and $i \in \set{1,\dots,n}$ with $p \in U_{i,1}$. Let $x:= \ph_i(p) \in B_1^{k_i}(0)$ and $v:= T\ph_i\inv(x,\eta_{(i)}\diamond \zeta_{(i)} (x)) \in T_pM$. Then $v \in \Omega$ and because $\ep_\mathcal{U}\leq \ep_{inj}$ we get $v \in W_pM$. Now we calculate 
\begin{align*}
\exp(v) = \ph_i\inv (\exp_i(x, \eta_{(i)} \diamond \zeta_{(i)} (x))) = \ph_i\inv \ci \ps_{\eta_{(i)}}^i \ci \ps_{\zeta_{(i)}}^i \ci \ph_i (p) = \ps_\eta \ci \ps_\zeta (p).
\end{align*}
Hence the vector field $\eta\diamond \zeta$ makes sense. Now let $x\in B_3^{k_i}(0)$ and $p := \ph_i(x)$. We calculate
\begin{align*}
&(\eta\diamond \zeta)_{(i)} (x) = d\ph_i \ci \exp|_{W_pM}\inv \ci \ps_\eta \ci \ps_\zeta (p)\\
=& d_p \ph_i \ci \ci \exp|_{W_pM}\inv \ci \ph_i\inv \ci \ps_{\eta_{(i)}}^i \ci \ps_{\zeta_{(i)}}^i (\ph_i(p))\\
=& (\ph_i \ci \exp|_{W_pM}\inv \ci d_p \ph_i\inv)\inv \ci \ps_{\eta_{(i)}}^i \ci \ps_{\zeta_{(i)}}^i (x) \\
=& \exp_i(x,\bl)\inv (\ps_{\eta_{(i)}}^i \ci \ps_{\zeta_{(i)}}^i (x))= \eta_{(i)} \diamond \zeta_{(i)}(x).
\end{align*}
Obviously we have $\ps_{\eta\diamond \zeta} = \ps_{\eta} \ci \ps_\zeta$. The vector field $\eta\diamond \zeta$ is stratified, because  its local representation is stratified.
\item First we show
\begin{align*}
\|(\eta\diamond \zeta)_{(i)}\|_{\ol{B}_1^{k_i}(0)}^1 = \|\eta_{(i)} \diamond \zeta_{(i)}\|_{\ol{B}_1^{k_i}(0)}^1 <\ep_\mathcal{U}
\end{align*}
for all $i \in \set{1,\dots, n}$. From $\ps^i_{\eta_{(i)}} \ci \ps_{\zeta_{(i)}}^i (B_3^{k_i}(0)) \subs B_{\delta_\mathcal{U}}(0)$ we get $\|\eta_{(i)} \diamond \zeta_{(i)}\|_{\ol{B}_1^{k_i}(0)}^0 <\ep_\mathcal{U}$. Now we show $\|(\eta_{(i)} \diamond \zeta_{(i)})'(x)\|_{op} <\ep_\mathcal{U}$ for all $x\in B_3^{k_i}(0)$. We define the auxiliary function $h:= \ps^i_{\eta_{(i)}} \ci \ps_{\zeta_{(i)}} - \id$ on $B_3^{k_i}(0)$. Let $x \in B_3^{k_i}(0)$. Using (\ref{EckPartundNuAlph}) we see $\|h(x)\|<\nu_\a$. With $r_\diamond \leq \min(\frac{\ep_\mathcal{U}}{2K_\a} \cdot \frac{1}{4},1)$ we calculate
\begin{align*}
&\|h'(x)\|_{op} = \|{\ps^i_{\eta_{(i)}}}'(\ps^i_{\zeta_{(i)}}(x)) \ci {\ps^i_{\zeta_{(i)}}}'(x) - \id\|_{op}\\
\leq & \|{\ps^i_{\eta_{(i)}}}'(\ps^i_{\zeta_{(i)}}(x)) \ci {\ps^i_{\zeta_{(i)}}}'(x) - \id\ci {\ps^i_{\zeta_{(i)}}}'(x)\|_{op} + \|{\ps^i_{\zeta_{(i)}}}'(x) - \id\|_{op} \\
\leq& \|{\ps^i_{\eta_{(i)}}}'(\ps^i_{\zeta_{(i)}}(x)) - \id\|_{op} \cdot \|{\ps^i_{\zeta_{(i)}}}'(x)\|_{op} + \|{\ps^i_{\zeta_{(i)}}}'(x) - \id\|_{op}\\
<& \frac{\ep_\mathcal{U}}{2K_\a} \cdot \frac{1}{4} \cdot  2 +  \frac{\ep_\mathcal{U}}{2K_\a} \cdot \frac{1}{2} = \frac{\ep_\mathcal{U}}{2K_\a}.
\end{align*}
For all $x\in B_3^{k_i}(0)$ we have 
\begin{align*}
\eta_{(i)}\diamond\zeta_{(i)}(x) = \a_i\left(x, (\ps_{\eta_{(i)}}^i \ci \ps_{\zeta_{(i)}}^i -\id)(x)\right)  = \a_i(x,h(x)).
\end{align*}
Let $x\in B_3^{k_i}(0)$ and $v\in \R^m$. We show $\|d(\eta_{(i)}\diamond\zeta_{(i)})(x;v)\| \leq \ep_\mathcal{U} \|v\|$:  
\begin{align*}
&\|d(\eta_{(i)}\diamond\zeta_{(i)})(x;v)\| = \|d\a_i(x,h(x); v,dh(x,v))\| \\
\leq & \|d_1\a_i(x,h(x); v) \| + \|d_2\a_i(x,h(x); dh(x,v))\|\\
\leq & \left(\|d_1\a_i(x,h(x); \bl) \|_{op} + \|d_2\a_i(x,h(x);\bl) \|_{op} \cdot \|dh(x,\bl))\|_{op} \right) \cdot \|v\|\\
< & \frac{\ep_\mathcal{U}}{2} + K_{\a} \cdot \frac{\ep_\mathcal{U}}{2K_\a} = \ep_\mathcal{U}.
\end{align*}
\end{compactenum}
\end{proof}

\begin{lemma}
For all $m \in \N$ we define $m':= 2m+N_0$. If $\eta \in \Hol_{\ep_\diamond}^1(U_{4,n}; \C^m)_\str^\R$, $m \in \N$ with $m\geq n$ and $R\in ]1,4[$, then
\begin{align*}
\ps_\eta^i(\ol{U}^{k_i}_{R-1,m'}) \subs U^{k_i}_{R,m} \tx{and} \|\ps_\eta^i - \id\|^0_{\ol{U}^{k_i}_{3,n'}} < r_\diamond^\C =\frac{\delta^\C(\ep_{\exp^\ast})}{2}.
\end{align*}
\end{lemma}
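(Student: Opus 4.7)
The plan is to derive both assertions from the $C^1$-bound provided by Lemma~\ref{Nahe1II}, combined with a complex mean-value argument that exploits the Minkowski splitting $U^{k_i}_{R,n} = B_R^{k_i}(0) + B^\C_{1/(n+N_0)}(0)$. Since $\ep_\diamond \leq \ep^\C_{r^\C_\diamond}$ by definition, Lemma~\ref{Nahe1II} immediately yields
\[
\|\ps^i_\eta - \id\|^1_{U^{k_i}_{4,n}} < r^\C_\diamond.
\]
From $n' = 2n + N_0$ one has $1/(n'+N_0) = 1/(2(n+N_0)) < 1/(n+N_0)$, together with $3 < 4$ this gives $\ol{U}^{k_i}_{3,n'} \subs U^{k_i}_{4,n}$, and restricting the $C^0$-part of the above bound proves the second assertion of the lemma.

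For the first assertion I would take $x \in \ol{U}^{k_i}_{R-1,m'}$ and decompose it as $x = a + b$ with $a \in \ol{B}^{k_i}_{R-1}(0)$ and $b \in \ol{B}^\C_{1/(m'+N_0)}(0)$. The set $U^{k_i}_{4,n}$ is convex (Minkowski sum of convex sets) and contains the segment $\{a + tb : t \in [0,1]\}$ (using $R - 1 < 4$ and $1/(m'+N_0) < 1/(n+N_0)$, which holds because $m \geq n$), so the complex mean-value formula
\[
\ps^i_\eta(a+b) = \ps^i_\eta(a) + \int_0^1 (\ps^i_\eta)'(a+tb)(b)\,dt
\]
applies. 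The integral is bounded in norm by $(1+r^\C_\diamond)\|b\| \leq (1+r^\C_\diamond)/(2(m+N_0))$, which is strictly less than $1/(m+N_0)$ because $r^\C_\diamond = \delta^\C(\ep_{\exp^\ast})/2 < 1/2$; hence the integral term lies in $B^\C_{1/(m+N_0)}(0)$. For the real piece, $\ps^i_\eta(a) = \exp_i(a,\eta|_{\R^m}(a))$ is real (as $\eta$ sends reals to reals) and $\|\ps^i_\eta(a)\| \leq (R-1) + r^\C_\diamond < R$. The stratified character of $\eta|_{\R^m}$ together with the boundary-respecting metric then forces $\ps^i_\eta(a) \in \R^m_{k_i}$, so $\ps^i_\eta(a) \in B_R^{k_i}(0)$. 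Combining the two pieces gives $\ps^i_\eta(x) \in B_R^{k_i}(0) + B^\C_{1/(m+N_0)}(0) = U^{k_i}_{R,m}$.

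The one delicate step is the stratification claim $\ps^i_\eta(\R^m_{k_i} \cap \ol{B}^{k_i}_{R-1}(0)) \subs \R^m_{k_i}$: although Lemma~\ref{GehtnachM} proves this globally on $M$, the local version needed here follows by the same mechanism, namely, if $a \in \partial^j B^{k_i}_5(0)$ then stratification of $\eta|_{\R^m}$ places $\eta|_{\R^m}(a) \in T_a\partial^j B^{k_i}_5(0)$, and the totally-geodesic property of $\partial^j B^{k_i}_5(0)$ under $g_i$ keeps $\exp_i(a,\eta|_{\R^m}(a))$ inside the same stratum, with an induction over $j$ (from top to bottom) mirroring that of Lemma~\ref{GehtnachM}. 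Past this point the entire argument reduces to the arithmetic observation $(1+r^\C_\diamond)/2 < 1$. A naive attempt to invoke Lemma~\ref{WennNahe1II} directly is what I expect to be the main obstacle, because the constraint $R' = (1-r_0)(R-1) - r_0 > 0$ together with $r_0^{op} > r^\C_\diamond$ cannot be satisfied simultaneously when $R$ is close to $1$; splitting $x = a+b$ and estimating the two contributions separately is precisely what bypasses this problem.
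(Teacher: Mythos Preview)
Your argument is correct and, in substance, reproduces exactly what the paper does: Lemma~\ref{Nahe1II} gives $\|\ps^i_\eta-\id\|^1_{U^{k_i}_{4,n}}<r^\C_\diamond$, which settles the second claim, and the first claim is part~(c) of Lemma~\ref{WennNahe1II} applied with the radii $R-1$ and $\frac{1}{2(m+N_0)}$ and any $r_0<1$. Your mean-value computation for the complex increment and the estimate $\|\ps^i_\eta(a)\|<R$ on the real part are precisely the proof of Lemma~\ref{WennNahe1II}\,(c), written out inline.

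The one genuine difference is that you unfold the argument instead of quoting the lemma, and this is actually cleaner here: as you noticed, Lemma~\ref{WennNahe1II} is stated with the global hypothesis $R'=(1-r_0)R-r_0>0$, which need not hold when $R$ is replaced by $R-1$ with $R$ near $1$. That hypothesis, however, is only used in parts~(b),\,(d),\,(e) of the lemma; part~(c) --- the image inclusion $f(B^{k,\C}_{R,r}(0))\subs B^{k,\C}_{R+r_0,(1+r_0)r}(0)$ --- does not need it, and that is all the paper's one-line proof invokes. So your direct computation bypasses a purely formal defect in the lemma's packaging rather than a mathematical obstacle. Your treatment of the stratification step (that $\ps^i_\eta(a)\in\R^m_{k_i}$ for real $a$) is also more explicit than the paper's, which silently relies on the boundary hypotheses of Lemma~\ref{WennNahe1II} being satisfied; your sketch via the totally-geodesic property, mirroring Lemma~\ref{GehtnachM}, is the right justification.
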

\begin{proof}
This follows from Lemma \ref{Nahe1II} and Lemma \ref{WennNahe1II} with $r_0<1$.
\end{proof}

Bowling down \cite[Proposition 4.23]{Gloeckner1} to our situation we obtain the following Lemma.
\begin{lemma}\label{EssentialI}
Let $U,Z,U_g,V_g\subs \C^m$ be open subsets, such that $\ol{Y}\subs U\subs U_g$ and $\ol{Y}$ is compact (hear the closure of $Y$ is taken in $\C^m$). If $g\co U_g\ti V_g \ra \C^m$ is a complex analytic map, then 
\begin{align*}
\Hol(U;\C^m)\cap \lfloor\ol{Y},V_g \rfloor \ra \Hol(Y;\C^m),~ \gamma \ms g(\bl,\gamma(\bl))|_Y
\end{align*}
is a complex analytic map.
\end{lemma}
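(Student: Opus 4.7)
The plan is to invoke the general composition result \cite[Proposition 4.23]{Gloeckner1}, so the main task is to verify that its hypotheses match the present setting and to identify the topology under which complex analyticity is to be interpreted. First I would equip $\Hol(U;\C^m)$ with its standard topology of uniform convergence on compact subsets of $U$, under which it is a Fr\'echet space and so a natural domain for complex analytic calculus. Since $\ol{Y}$ is compact and $V_g$ is open, an application of the Wallace lemma shows that $\lfloor\ol{Y},V_g\rfloor = \{\gamma\in\Hol(U;\C^m):\gamma(\ol{Y})\subseteq V_g\}$ is open: for $\gamma_0$ with $\gamma_0(\ol{Y})\subseteq V_g$ one obtains an $\ep>0$ such that the $\ep$-neighbourhood of $\gamma_0(\ol{Y})$ still lies in $V_g$, whence all $\gamma$ that are $\ep$-close to $\gamma_0$ on $\ol{Y}$ belong to $\lfloor\ol{Y},V_g\rfloor$.

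Next I would match the remaining data with Gl\"ockner's framework. The product $U_g\ti V_g$ already has the required form, $g$ is complex analytic on it by assumption, and the inclusion $\ol{Y}\subs U\subs U_g$ together with the compactness of $\ol{Y}$ provides exactly the containment and compactness assumption needed to form the composite $x\ms g(x,\gamma(x))$ uniformly on $\ol{Y}$ for $\gamma$ in a neighbourhood of any fixed $\gamma_0$. Plugging these data into \cite[Proposition 4.23]{Gloeckner1} then yields the complex analyticity of the superposition operator $\gamma\ms g(\bl,\gamma(\bl))$ as a map from the open set $\Hol(U;\C^m)\cap\lfloor\ol{Y},V_g\rfloor$ into $\Hol(Y;\C^m)$.

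The one bookkeeping issue to resolve is that Gl\"ockner's proposition produces a holomorphic function on an open set containing $\ol{Y}$, whereas our target is $\Hol(Y;\C^m)$; but the restriction map from any such intermediate Hol-space to $\Hol(Y;\C^m)$ is continuous linear and hence complex analytic, so composition with it gives exactly $\gamma\ms g(\bl,\gamma(\bl))|_Y$. The genuinely hard analytic content -- expanding $g$ into a Taylor series in the second slot, controlling the remainders by Cauchy estimates that are uniform over $\ol{Y}$, and summing the resulting series in the Fr\'echet topology -- is already carried out in \cite{Gloeckner1} and does not need to be redone here. This is the aspect I would expect to be the main obstacle if one attempted a self-contained proof, but in the present ``boiling down'' it reduces to citing the proposition.
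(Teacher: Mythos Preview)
Your proposal is correct and takes essentially the same approach as the paper: the paper does not give an independent proof but simply states that the lemma is obtained by ``boiling down \cite[Proposition 4.23]{Gloeckner1} to our situation,'' which is exactly what you do. Your additional remarks on the openness of $\lfloor\ol{Y},V_g\rfloor$ and on post-composing with the restriction map are reasonable bookkeeping details that the paper leaves implicit.
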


By applying \cite[Proposition 11.4]{Gloeckner1} to our situation we obtain the following Lemma.  
\begin{lemma}\label{EssentialII}
If $U,Z,Y \subs \C^m$ are open subsets such that $\ol{Y}$ is compact and contained in $Z$, then
\begin{align*}
\Hol(U;\C^m)\ti (\Hol(Z;\C^m)\cap \lfloor \ol{Y};U\rfloor) \ra \Hol(Y;\C^m),~ (\eta,\tau)\ms \eta\ci \tau|_{Y}
\end{align*}
is complex analytic.
\end{lemma}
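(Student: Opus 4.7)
The plan is to verify the hypotheses of \cite[Proposition 11.4]{Gloeckner1}, which establishes complex analyticity of the composition operator on spaces of holomorphic functions equipped with the compact-open topology. The main work is therefore to set up the correct data for that general result and to check openness of the domain.

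First I would show that $\Hol(Z;\C^m)\cap \lfloor \ol{Y};U\rfloor$ is open in $\Hol(Z;\C^m)$, so that the product in the statement is an open subset of $\Hol(U;\C^m)\ti \Hol(Z;\C^m)$ and complex analyticity is meaningful. Given $\ta_0$ in this set, the image $\ta_0(\ol{Y})$ is compact and contained in $U$, so by the Lemma of Wallace there exists $\ep>0$ such that every $\ta\in \Hol(Z;\C^m)$ with $\|\ta-\ta_0\|_\8^{\ol{Y}}<\ep$ satisfies $\ta(\ol{Y})\subs U$. This also gives me a relatively compact $W$ with $\ta_0(\ol{Y})\subs W\subs \ol{W}\subs U$ that I will use for Cauchy estimates.

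Next I would establish complex analyticity of $F\co (\e,\ta)\ms \e\ci \ta|_Y$ via Hartogs' theorem for locally convex spaces: it is enough to check continuity together with separate complex analyticity in each variable. Separate analyticity in $\e$ is immediate, since for fixed $\ta$ the map $\e \ms \e\ci\ta|_Y$ is continuous linear. For fixed $\e$, I would expand $\e$ in a Taylor series about the points of $\ta_0(\ol{Y})$ and, using Cauchy estimates on $\ol{W}$, obtain a uniformly convergent power series representation of $\ta \ms \e\ci\ta|_Y$ on a neighbourhood of $\ta_0$, which gives complex analyticity in $\ta$. For continuity at $(\e_0,\ta_0)$ I would split
\begin{align*}
\e\ci \ta - \e_0\ci \ta_0 = (\e-\e_0)\ci \ta + \big(\e_0\ci \ta - \e_0\ci \ta_0\big)
\end{align*}
and bound the first summand uniformly on $Y$ by $\|\e-\e_0\|_\8^{\ol{W}}$, while controlling the second via the mean value inequality together with the Cauchy bound for $\e_0'$ on $\ol{W}$.

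The main obstacle will be bookkeeping rather than substance: making sure the estimates take place in the correct Banach subspaces of the inductive-limit structure used to topologise germs, and matching the formulation of \cite[Proposition 11.4]{Gloeckner1} to our spaces, so that one may cite it cleanly in place of spelling out the Hartogs argument by hand. Once the openness of the domain and the pointwise bound $\ta(\ol{Y})\subs W\subs \ol{W}\subs U$ are in place, the conclusion follows directly from the cited proposition applied to the composition map on $\ol{Y}$, followed by postcomposition with the continuous linear restriction $\Hol(\ol{Y};\C^m) \ra \Hol(Y;\C^m)$.
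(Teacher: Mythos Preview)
Your proposal is correct and follows the same route as the paper: the paper simply states that the lemma is obtained by applying \cite[Proposition 11.4]{Gloeckner1} to the present situation, without spelling out any details. Your openness argument and the Hartogs/Cauchy-estimate sketch are more than the paper provides, but they are compatible with (and indeed part of the content of) the cited proposition, so there is no genuine divergence in approach.
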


\begin{lemma}
The map
\begin{align*}
\diamond\co \mathcal{B}_{4,\ep_\diamond} \ti \mathcal{B}_{4,\ep_\diamond} \ra \mathcal{B}_{1,\ep_\mathcal{U}},~ (\eta, \zeta)  \ms \eta\diamond \zeta 
\end{align*}
is smooth.
\end{lemma}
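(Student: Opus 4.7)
The strategy is to lift $\diamond$ to the complexifications level-by-level in the Silva-space direct limits and verify smoothness there via the parameter-dependent composition lemmas. Precisely, by the linear topological embedding
\[
\gg^\o_\str(TM) \hookrightarrow \prod_{i=1}^n \mathcal{G}(\C;\C|\ol{B}^{k_i}_{4}(0))_\str^\R
\]
and the Silva-space structure
\[
\mathcal{G}(\C;\C|\ol{B}^{k_i}_{4}(0))_\str^\R = \varinjlim_{n\to\infty} \Hol^1_b(U^{k_i}_{4,n};\C^m)_\str^\R,
\]
Lemma \ref{SilvaGleichTop}(iii) reduces the smoothness of $\diamond$ to the smoothness, for each fixed $N\in\N$ with $N\geq n$, of the induced map on
\[
\prod_{i=1}^n \Hol^1_{\ep_\diamond}(U^{k_i}_{4,N};\C^m)_\str^\R \times \prod_{i=1}^n \Hol^1_{\ep_\diamond}(U^{k_i}_{4,N};\C^m)_\str^\R,
\]
where $(\eta^\ast_{(i)},\zeta^\ast_{(i)})_i$ is sent to the $n$-tuple of germs of the extensions of $\eta_{(i)}\diamond\zeta_{(i)}$ at $\ol{B}^{k_i}_1(0)$.

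Next I would produce these extensions as a composition of manifestly complex-analytic steps. Fix $i$, choose $N'=2N+N_0$ large enough, and consider the $\C$-analytic maps on $U^{k_i}_{4,N'}$ obtained from the complex extensions of the ingredients of $\eta_{(i)}\diamond\zeta_{(i)}$:
\begin{enumerate}
\item[(S1)] $\zeta^\ast_{(i)} \mapsto \bigl(x\mapsto \exp^\ast_i(x,\zeta^\ast_{(i)}(x))\bigr)=\ps^{i,\ast}_{\zeta^\ast_{(i)}}$, which by Lemma \ref{EssentialI} (with $g=\exp^\ast_i$, $Y\subset U^{k_i}_{4,N'}$ with closure contained in a slightly larger $U^{k_i}_{4,N'-1}$) is complex-analytic in $\zeta^\ast_{(i)}$; moreover, by the preceding Lemma (the Lemma just above stating that $\ps^{i}_\eta$ maps $\ol{U}^{k_i}_{R-1,m'}$ into $U^{k_i}_{R,m}$), the image of $\ps^{i,\ast}_{\zeta^\ast_{(i)}}$ lands in a complex neighbourhood contained in the domain of $\eta^\ast_{(i)}$.
\item[(S2)] $(\eta^\ast_{(i)}, \ps^{i,\ast}_{\zeta^\ast_{(i)}})\mapsto \eta^\ast_{(i)}\ci \ps^{i,\ast}_{\zeta^\ast_{(i)}}$, which is complex-analytic by Lemma \ref{EssentialII}.
\item[(S3)] $(\ps^{i,\ast}_{\zeta^\ast_{(i)}},\eta^\ast_{(i)}\ci\ps^{i,\ast}_{\zeta^\ast_{(i)}})\mapsto \bigl(x\mapsto \exp^\ast_i(\ps^{i,\ast}_{\zeta^\ast_{(i)}}(x),\,\eta^\ast_{(i)}(\ps^{i,\ast}_{\zeta^\ast_{(i)}}(x)))\bigr)=\ps^{i,\ast}_{\eta^\ast_{(i)}}\ci\ps^{i,\ast}_{\zeta^\ast_{(i)}}$, again complex-analytic via Lemma \ref{EssentialI} (parameter $(\gamma_1,\gamma_2)$, $g=\exp^\ast_i$).
\item[(S4)] Using the preceding Lemma together with the definition of $r^\C_\diamond=\delta^\C(\ep_{\exp^\ast})/2$, the pair $(x,\ps^{i,\ast}_{\eta^\ast_{(i)}}\ci\ps^{i,\ast}_{\zeta^\ast_{(i)}}(x))$ stays in $D^\C_{\ep_{\exp^\ast}}$ from Lemma \ref{DeltaAst}, so we may apply $\log^\ast_i$; one more application of Lemma \ref{EssentialI} with $g=\log^\ast_i$ yields the complex-analytic dependence of
\[
x\mapsto \log^\ast_i\bigl(x,\,\ps^{i,\ast}_{\eta^\ast_{(i)}}\ci\ps^{i,\ast}_{\zeta^\ast_{(i)}}(x)\bigr),
\]
which is a $\C$-analytic extension of $\eta_{(i)}\diamond\zeta_{(i)}$ to a neighbourhood of $\ol{B}^{k_i}_1(0)$.
\end{enumerate}
Composition of these four complex-analytic (in particular smooth) maps is smooth, and restriction to the closed real forms $\Hol^1_b(\cdot;\C^m)_\str^\R$ is a continuous linear (hence smooth) operation, so the resulting map into $\Hol^1_{\ep_\mathcal{U}}(U^{k_i}_{1,N''};\C^m)_\str^\R$ for suitable $N''$ is smooth. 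Composing with the canonical continuous linear maps into $\mathcal{G}(\C;\C|\ol{B}^{k_i}_1(0))_\str^\R$ and using the topological embedding of $\gg^\o_\str(TM)$ gives smoothness of $\diamond$ on each Banach level; by Lemma \ref{SilvaGleichTop}(iii) and the Silva structure on $\gg^\o_\str(TM)$, smoothness of $\diamond$ on the full $\mathcal{B}_{4,\ep_\diamond}\times\mathcal{B}_{4,\ep_\diamond}$ follows.

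The main obstacle is purely bookkeeping: one must track, at each of the Silva levels $N$, that the chosen $\ep_\diamond$ forces the intermediate objects $\ps^{i,\ast}_{\zeta^\ast_{(i)}}$ and $\ps^{i,\ast}_{\eta^\ast_{(i)}}\ci\ps^{i,\ast}_{\zeta^\ast_{(i)}}$ to take values in the complex neighbourhoods on which the next ingredient ($\eta^\ast_{(i)}$, respectively $\log^\ast_i$) is defined. This is exactly what the choice $\ep_\diamond\leq \min(\ep^\C_{r^\C_\diamond},\ep_{4,r_\diamond},\ep_\mathcal{U})$ ensures, combined with Lemma \ref{Nahe1II}, Lemma \ref{WennNahe1II}, and Lemma \ref{DeltaAst}; once those inclusions are verified, Lemmas \ref{EssentialI} and \ref{EssentialII} do all the analytic work.
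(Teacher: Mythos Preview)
Your proposal is correct and follows essentially the same approach as the paper: reduce via the Silva-space structure and the embedding $\gg^\o_\str(TM)\hookrightarrow\prod_i\mathcal{G}(\C^m;\C^m|\ol{B}_4^{k_i}(0))^\R_\str$ to fixed Banach levels, then exhibit the local map as a finite composition handled by Lemmas \ref{EssentialI} and \ref{EssentialII}. The paper uses the slightly leaner three-step decomposition $(\eta,\zeta)\mapsto(\ps^i_\eta,\ps^i_\zeta)\mapsto \ps^i_\eta\circ\ps^i_\zeta\mapsto \log^\ast_i(\cdot,\ps^i_\eta\circ\ps^i_\zeta(\cdot))$, applying Lemma \ref{EssentialII} once to compose the two already-formed $\ps$-maps, whereas your (S1)--(S4) rebuild $\ps^i_\eta\circ\ps^i_\zeta$ by first pulling back $\eta^\ast$ and then reapplying $\exp^\ast_i$; both are valid and rest on the same bookkeeping inclusions guaranteed by the choice of $\ep_\diamond$.
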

\begin{proof}
In this proof, all vector spaces except of $\C^m$ are considered as $\R$ vector spaces. If $k\in \N$ we write $k':=2k+N_0$.
After identification we have $\mathcal{B}_{1,\ep_\mathcal{U}} \subs \prod_{i=1}^n \mathcal{G}(\C^m;\C^m|\ol{B}_1^{k_i}(0))^\R_\str$ and it is left to show the smoothness of
\begin{align*}
\mathcal{B}_{4,\ep_\diamond} \ti\mathcal{B}_{4,\ep_\diamond} \ra \mathcal{G}(\C^m;\C^m|\ol{B}_1^{k_i}(0)), ~ (\eta,\zeta) \ms \left[ {{(\eta\diamond \zeta)}^\ast}_{(i)} \right]_{\ol{B}^{k_i}_1(0)} = \left[ {{(\eta_{(i)} \diamond \zeta_{(i)})}^\ast} \right]_{\ol{B}^{k_i}_1(0)}
\end{align*}
We consider the diagram
\begin{align*}
\begin{xy}
\xymatrixcolsep{8pc}\xymatrix{
\mathcal{B}_{4,\ep_\diamond} \ti \mathcal{B}_{4,\ep_\diamond} \ar@{_{(}->}[d] \ar[r]^-{\diamond} & \mathcal{G}(\C^m;\C^m|\ol{B}_1^{k_i}(0))^\R_\str\\
\left(\prod_{i=1}^n G_{\ep_\diamond}^1(\C^m;\C^m| \ol{B}_4^{k_i}(0))_\str^\R \ar[r]^-{\pr_i} \right)^2
\ar[r]^-{\pr_i} & \left(G_{\ep_\diamond}^1(\C^m;\C^m| \ol{B}_4^{k_i}(0))_\str^\R\right)^2 \ar[u]_-{\Phi}
}
\end{xy} 
\end{align*}
with $\Phi([f], [g]) = \left[\left(f|_{\ol{B}_4^{k_i}(0)}\diamond g|_{\ol{B}_4^{k_i}(0)}\right)^\ast \right]$. Because $\mathcal{G}( \C^m ; \C^m | \ol{B}_4^{k_i}(0) )_\str^\R = \varinjlim \Hol_b^1(U^{k_i}_{4,n};\C^m)_\str^\R$ we have to show the smoothness of 
\begin{align*}
\left(\Hol_{\ep_\diamond}^1 (U_{4,n}^{k_i};\C^m)_\str^\R\right)^2 \ra \mathcal{G}(\C^m ; \C^m |\ol{B}_1^{k_i}(0))_\str^\R,~
(\eta, \zeta) \ms \left[ {\eta|_{\ol{B}_4^{k_i}(0)}\diamond \zeta|_{\ol{B}_4^{k_i}(0)}}^\ast \right] = \left[ \eta\diamond \zeta\right]
\end{align*}
with $\eta\diamond \zeta \co U_{1,n'''}^{k_i} \ra \C^m$, $x\ms \exp_i^\ast(x,\bl)\inv ({\ps^i_\eta}\inv(x) - x)$.
Therefore we show the smoothness of $(\Hol_{\ep_\diamond}^1 (U_{4,n}^{k_i};\C^m)_\str^\R)^2 \ra \Hol (U_{1,n'''}^{k_i};\C^m)_\str^\R$, $(\eta,\zeta) \ms \eta\diamond \zeta$.
We write this map as the following composition:
\begin{align*}
&\left(\Hol_{\ep_\diamond}^1(U_{4,n}^{k_i};\C^m)_\str^{\R}\right)^2  \xrightarrow{\circled{1}} \Hol(U_{4,n} ; \C^m) \cap \lfloor \ol{U}_{3,n'}; B^\C_{r^\C_\diamond}(0)\rfloor +\id\\
&\ti \Hol(U_{4,n} ; \C^m) \cap \lfloor \ol{U}_{3,n'}; B^\C_{r^\C_\diamond}(0)\rfloor +\id  \cap \lfloor \ol{U}_{2,n''}; U_{3,n'}\rfloor\\
&\xrightarrow{\circled{2}} \Hol(U_{2,n''};\C^m) \cap \lfloor \ol{U}_{1,n'''}^{k_i} ; B_{\delta^\C(\ep_{\exp^\ast})}^{\C}(0) \rfloor+\id
\xrightarrow{\circled{3}} \Hol(U_{1,n'''} ;\C^m)\\
&\text{with } \circled{1}(\eta,\zeta)= (\ps^i_\eta,\ps_\zeta^i) , ~\circled{2}(f,g) = f\ci g \tx{and}  \circled{3}(g) = x\ms \exp_i^\ast(x,\bl)\inv(g(x)).
\end{align*} 
Now the smoothness of $\circled{1}$ and $\circled{3}$ follows from Lemma \ref{EssentialI} and the smoothness of $\circled{2}$ from Lemma \ref{EssentialII}.
\end{proof}

\section{Smoothness of the inversion}\label{SecSmoothnes of the inversion}

In this section we proof the smoothness of the inversion in the local chart $\gph$.

\begin{definition}\label{AstaaaD}
We use the constants of Lemma \ref{AllesKlein}. Let $r_{\star} := \min\left(\frac{1}{4},\delta_\mathcal{U}, \frac{\ep_\mathcal{U}}{2\cdot K_{\a}}, \nu_\a , \frac{\ep_\partial}{2} \right)$ and $\ep_\ast:= \min (\ep_\mathcal{U},\ep_{4,r_\star})$.
\end{definition}

In the case of an smooth manifold without corners one can use smooth bump functions to show the smoothness of the inversion \cite[Lemma 3.8]{Gloeckner2}. Obviously this is not possible in the real analytic case. As in \cite[Lemma D.4]{BobDis} we use a quantitative inverse function theorem to show the smoothness of the inversion. Therefore it was necessary to show the quantitative inverse function theorem for open sets with corners (Lemma \ref{QanUKFEcke}) to apply it to show Lemma \ref{WennNahe1}.
\begin{lemma}\label{llAstaaa}
Let $\eta \in \mathcal{B}_{4,\ep_\star}$ and $i\in \set{1,\cdots,n}$ 
\begin{compactenum}
\item\label{Astaaa} The map ${\ps_{\eta_{(i)}}^i}\inv\co B_2^{k_i}(0)\ra \R^m_{k_i}$ makes sense. Moreover the map  
\begin{align*}
(\eta_{(i)})^\star \co B_2^{k_i}(0)\ra \R^m,~  x\ms \log_i\left(x,{\ps_{\eta_{(i)}}^i}\inv(x)\right)
\end{align*}
makes sense and is a stratified real analytic function with $\|(\eta_{(i)})^\star(x)\|< \min(\ep_\mathcal{U}, \ep_\partial)$ for all $x\in B_2^{k_i}(0)$ and $\ps^i_{(\eta_{(i)})^\star} = (\ps^i_{\eta_{(i)}})\inv$ on $B^{k_i}_2(0)$.
\item The map $\ps_\eta\inv\co M\ra M$ makes sense and $\ps_\eta\inv(U_{i,2}) \subs U_{i,4}$. Moreover $\ps_\eta\inv|_{U_{i,2}} = \ph_i\inv \ci (\ps^i_{\eta_{(i)}})\inv \ci \ph_i|_{U_{i,2}}$.
\item The vector field 
\begin{align*}
\eta^\star \co M\ra TM, ~ p \ms \exp|_{W_pM}\inv(\ps_\eta\inv(p))
\end{align*}
makes sense, is real analytic and stratified. Moreover we have $\ps_{\eta^\star} = \ps_\eta\inv$ and 
\begin{align}\label{lokalAst}
(\eta^\star)_{(i)}|_{B_2^{k_i}(0)} = (\eta_{(i)})^\star.
\end{align}
\item We have $\eta^\star \in \mathcal{V}= \mathcal{B}^1_{1,\ep_\mathcal{U}}$.
\end{compactenum}
\end{lemma}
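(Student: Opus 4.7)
The plan is to produce, for each chart index $i$, an explicit local inverse of $\ps^i_{\eta_{(i)}}$ on $B_2^{k_i}(0)$ with sharp quantitative estimates, and then to glue these into a global vector field $\eta^\star$ on $M$ whose chart representatives coincide with the local inverses composed with $\log_i$. The quantitative inverse function theorem with corners (Lemma \ref{QanUKFEcke}), packaged as Lemma \ref{WennNahe1}, is the workhorse; the constants $r_\ast$ and $\ep_\ast$ from Definition \ref{AstaaaD} are engineered precisely so that this machine applies on the nose.

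For part (\ref{Astaaa}): from $\eta \in \mathcal{B}_{4,\ep_\ast}$ and $\ep_\ast \leq \ep_{4,r_\ast}$, Lemma \ref{Nahe1} yields $\|\ps^i_{\eta_{(i)}}-\id\|^1_{\ol{B}^{k_i}_4(0)} < r_\ast^{op}$. The boundary-preservation hypotheses of Lemma \ref{WennNahe1} follow from $\eta$ being stratified together with the totally geodesic property of the strata of $\til M$: a geodesic starting in a stratum $\partial^jM$ in the direction of $T\partial^jM$ remains in that stratum. Applying Lemma \ref{WennNahe1} with $l=4$ and $r=r_\ast \leq 1/4$ (so that $l' = 4-5r_\ast > 2$) then produces a real analytic inverse on $B^{k_i}_{l'}(0) \sups B^{k_i}_2(0)$ with $\|(\ps^i_{\eta_{(i)}})\inv(x)-x\| < r_\ast \leq \delta_\mathcal{U}$, so $(x,(\ps^i_{\eta_{(i)}})\inv(x)) \in D_\mathcal{U}$ and $(\eta_{(i)})^\star(x) := \log_i(x,(\ps^i_{\eta_{(i)}})\inv(x))$ is a well-defined real analytic map $B^{k_i}_2(0) \ra B_{\min(\ep_\mathcal{U},\ep_\partial)}(0)$; the identity $\ps^i_{(\eta_{(i)})^\star} = (\ps^i_{\eta_{(i)}})\inv$ is then immediate. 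To verify stratification, take $x \in \partial^jB^{k_i}_2(0)$ and $y := (\ps^i_{\eta_{(i)}})\inv(x)$: by a component-tracking argument analogous to Lemma \ref{GehtnachM} (made possible by Remark \ref{OBdAConnected}) $y$ lies in the same connected component of $\partial^jB^{k_i}_5(0)$ as $x$, and since $\|x-y\| < r_\ast \leq \ep_\partial/2$, Lemma \ref{invEcken} gives $(\eta_{(i)})^\star(x) = \log_i(x,y) \in T_x\partial^jB^{k_i}_5(0)$.

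For parts (2) and (3): Theorem \ref{Zentral} immediately yields that $\ps_\eta \co M \ra M$ is a real analytic diffeomorphism (as $\ep_\ast \leq \ep_\mathcal{U} \leq \ep_{diff}$). For $p \in U_{i,2}$ with $x := \ph_i(p)$, the local formula from Lemma \ref{Hilfreich2} combined with (\ref{Astaaa}) gives $\ps_\eta\inv(p) = \ph_i\inv((\ps^i_{\eta_{(i)}})\inv(x)) \in U_{i,4}$, proving (2). To construct $\eta^\star$, pick any $i$ with $p \in U_{i,1}$, set $x := \ph_i(p)$ and $v := T\ph_i\inv(x,(\eta_{(i)})^\star(x)) \in W_pM$ (legal since $\|(\eta_{(i)})^\star(x)\| < \ep_\mathcal{U} \leq \ep_{inj}$); by Remark \ref{Hilfreich1}, $\exp(v) = \ph_i\inv(\ps^i_{(\eta_{(i)})^\star}(x)) = \ph_i\inv((\ps^i_{\eta_{(i)}})\inv(x)) = \ps_\eta\inv(p)$, so $\eta^\star(p) := \exp|_{W_pM}\inv(\ps_\eta\inv(p)) = v$ is chart-independent and (\ref{lokalAst}) holds on all of $B^{k_i}_2(0)$ by the same computation. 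Real-analyticity, the stratified property of $\eta^\star$ (via Remark \ref{SilvaUVR}) and the identity $\ps_{\eta^\star} = \ps_\eta\inv$ are then immediate consequences.

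For part (4) the sup-norm bound $\|(\eta^\star)_{(i)}\|^0_{\ol B^{k_i}_1(0)} < \ep_\mathcal{U}$ is already part of (\ref{Astaaa}). The derivative estimate follows the exact pattern of the composition lemma: write $(\eta_{(i)})^\star(x) = \a_i(x,h(x))$ with $h(x) := (\ps^i_{\eta_{(i)}})\inv(x) - x$; Lemma \ref{WennNahe1} delivers $\|h(x)\| < r_\ast \leq \nu_\a$ and $\|h'(x)\|_{op} < r_\ast$, so by the chain rule and the bounds $\|d_1\a_i(x,h(x);\bl)\|_{op} < \ep_\mathcal{U}/2$, $\|d_2\a_i(x,h(x);\bl)\|_{op} \leq K_\a$ from Definition \ref{AllesKlein}, one gets $\|((\eta_{(i)})^\star)'(x)\|_{op} < \ep_\mathcal{U}/2 + K_\a r_\ast \leq \ep_\mathcal{U}$ using $r_\ast \leq \ep_\mathcal{U}/(2K_\a)$. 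The single step genuinely requiring care is the stratification in (\ref{Astaaa}): in the absence of bump functions one must argue that the local inverse sends each connected component of a stratum back to itself, and it is precisely this component-tracking that the combination of Remark \ref{OBdAConnected} and the identity-theorem-based Lemma \ref{invEcken} makes possible.
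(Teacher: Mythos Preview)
Your proposal is correct and follows essentially the same route as the paper's proof: both obtain the local inverse via Lemma \ref{Nahe1} combined with Lemma \ref{WennNahe1} (with $l=4$, $r=r_\star$, giving $l'=4-5r_\star>2$), both establish stratification of $(\eta_{(i)})^\star$ through the component-tracking argument of Remark \ref{OBdAConnected} followed by Lemma \ref{invEcken}, and both bound the derivative in part (4) by writing $(\eta_{(i)})^\star(x)=\a_i(x,h(x))$ with $h=(\ps^i_{\eta_{(i)}})\inv-\id$ and applying the $\nu_\a$, $K_\a$ estimates from Definition \ref{AllesKlein}. If anything you are slightly more explicit than the paper in flagging that the boundary-preservation hypotheses $f(\partial B^{k_i}_l(0))\subs\partial\R^m_{k_i}$, $f(\partial^0 B^{k_i}_l(0))\subs\partial^0\R^m_{k_i}$ of Lemma \ref{WennNahe1} must be checked, and in tracing them back to the totally geodesic property of the strata.
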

\begin{proof}
\begin{compactenum}
\item Because $\ep_\star \leq \ep_{4,r_\star}$ and $r_\star\leq \min(\delta_U,\frac{1}{4}, \frac{\ep_\partial}{2})$ we can use Lemma \ref{Nahe1} and see that ${\ps_{\eta_{(i)}}^i}\inv\co B_2^{k_i}(0)\ra \R^m_{k_i}$ makes sense and that $\|{\ps_{\eta_{(i)}}^i}\inv(x)-x\|< \min(\delta_\mathcal{U}, \frac{\ep_\partial}{2})$ for all $x\in B_2^{k_i}(0)$. Hence the map $(\eta_{(i)})^\star \co B^{k_i}_2(0)\ra \R^m$, $x\ms \log_i(x,{\ps_{\eta_{(i)}}^i}\inv(x))$ makes sense and $\|(\eta_{(i)})^\star (x)\| < \min(\ep_\mathcal{U}, \ep_{\partial}) \leq \ep_{\exp}$ for all $x\in B_2^{k_i}(0)$. Thus
\begin{align}\label{starst}
\exp_i(x,\eta_{(i)}^\star(x)) = {\ps_{\eta_{(i)}}^i}\inv(x) \text{ for all }x\in B_2^{k_i}(0).
\end{align}
Now let $x\in \partial^j B_2^{k_i}(0)$. Obviously $y_0:=(\ps_{\eta_{(i)}}^i)\inv(x)\in \partial^j B_4^{k_i}(0)$. Let $C$ be th connected component of $x$ in $\partial^j B_5^{k_i}$. With Remark \ref{OBdAConnected} we see $y \in C$. We have $v:= \eta^\star_{(i)}(x) \in B_{\ep_\partial}(0)$ and $\exp_x^i(v)=y$ and $\|y-x\|< \frac{\ep_{\partial}}{2}$. With Lemma \ref{invEcken} we get $v \in T_x\partial^jB_5^{k_i}(0)$
and so $(\eta_{(i)})^\star \in C^\o(B^{k_i}_2(0); \R^m)_\str$.
\item Because $\ep_\star \leq \ep_\mathcal{U}$ the map $\ps_\eta \co M\ra M$ is a real analytic diffeomorphism. Using Lemma \ref{Hilfreich2}, Lemma \ref{Nahe1} and $r_{\mathcal{U}}\leq \frac{1}{4}$ we calculate
\begin{align*}
U_{i,2} = \ph_i\inv(B_2^{k_i}(0)) \subs \ph_i\inv (\ps^i_{\eta_{(i)}}(B_4^{k_i}(0))) = \ph_i\inv\ci \ps_{\eta_{(i)}}^i \ci \ph_i(U_{i,4}) = \ps_\eta(U_{i,4}).
\end{align*}
Let $p \in U_{i,2}$. To see $\ps_\eta\inv(p) = \ph_i\inv \ci (\ps_{\eta_{(i)}}^i)\inv \ci \ph_i(p)$ we use Lemma \ref{Hilfreich2} and calculate
$\ps_\eta\left(\ph_i\inv \ci (\ps_{\eta_{(i)}}^i)\inv \ci \ph_i(p)\right) = p$.
\item We show that the map $\eta^\star\co M\ra TM$, $p \ms \exp|_{W_pM}\inv(\ps_\eta\inv(p))$ makes sense. Let $p \in M$ and $i\in \set{1,\dots, n}$ such that $p \in U_{i,1}$. We define $x:= \ph_i(p)$ and $v:= T\ph_i\inv(x,(\eta_{(i)})^\star(x))$. Because $\ep_{\mathcal{U}}\leq \ep_{inj}$ we get $v\in W_pM$. Now we calculate
\begin{align*}
\exp(v) = \ph_i\inv\ci \exp_i(x,\eta^\star_{(i)}(x)) = \ph_i\inv \ci \ps_{\eta_{(i)}}^i \ci \ph_i(p) = \ps_\eta\inv(p).
\end{align*}
Hence $\eta^\star$ makes sense. Obviously $\ps_{\eta^\star} = \ps_\eta\inv$. Now let $x\in B_2^{k_i}(0)$ and $p:=\ph_i\inv(x)$.  We calculate
\begin{align*}
&(\eta^\star)_{(i)}(x) = d_p\ph_i \ci \exp|_{W_pM}\inv \ci \ps_\eta\inv(p) = d_p\ph_i \ci \exp|_{W_pM}\inv \ci \ph_i\inv \ci (\ps^i_{\eta_{(i)}})\inv(\ph_i(p))\\
=& \left(\ph_i \ci  \exp|_{W_pM}\ci d_p\ph_i\inv  \right)\inv \ci (\ps^i_{\eta_{(i)}})\inv(x)= \exp_{i}(x,\bl)\inv \ci (\ps^i_{\eta_{(i)}})\inv(x)= (\eta_{(i)})^\star(x).
\end{align*}
This shows (\ref{lokalAst}). We conclude with this local representation and (\ref{Astaaa}) that $\eta^\star$ is real analytic and stratified.
\item It is enough to show $\|d\eta^\ast_{(i)}(x,v)\| \leq \ep_\mathcal{U}\cdot \|v\|$ for all $x\in B_2^{k_i}(0)$ and $v\in \R^m$.
Because $r_\star<\delta_{U}$ we have $(\ps_{\eta_{(i)}}^i)\inv (x)-x \in B_{\delta_\mathcal{U}}(0)$ (Lemma \ref{Nahe1}) and so 
\begin{align*}
\eta_{(i)}^\star(x)=a_i\left(x,(\ps_{\eta_{(i)}}^i)\inv (x)-x\right)
\end{align*} 
for all $x\in B_2^{k_i}(0)$. Now let $x\in B_2^{k_i}(0)$ and $v\in \R^m$. We get
\begin{align}\label{detastar}
d&\eta_{(i)}^\star(x,v)= d_1a_i \left( x , (\ps_{\eta_{(i)}}^i)\inv(x)-x );v\right) \nonumber\\
&+ d_2 a_i \left(x , (\ps_{\eta_{(i)}}^i)\inv(x)-x ; d(\ps_{\eta_{(i)}}^i)\inv(x;v)-v\right).
\end{align}
Using $r_\star \leq \nu_\a$ and Lemma \ref{Nahe1} we see $(\ps_{\eta_{(i)}}^i)\inv(x)-x ) \in B_{\nu_\a}(0)$ and so $\|d_1a_i ( x , (\ps_{\eta_{(i)}}^i)\inv(x)-x ;v))\| \leq \frac{\ep_\mathcal{U}}{2} \|v\|$. Analogously  we get 
\begin{align*}
&\|d_2 a_i (x , (\ps_{\eta_{(i)}}^i)\inv (x)-x ; d(\ps_{\eta_{(i)}}^i)\inv(x;v)-v )\| \leq K_\a \cdot \|d(\ps_{\eta_{(i)}}^i)\inv(x;\bl)-\id\|_{op} \cdot \|v\|\\
&\leq \frac{\ep_{\mathcal{U}}}{2} \|v\|.
\end{align*}
Now the assertion follows from (\ref{detastar}).
\end{compactenum}
\end{proof}

\begin{lemma}\label{SIgmaAA}
Let $n \in \N$. As before we define $n':=2n+N_0$. There exists $\sigma>0$ such that for all $x\in U^{k_i}_{3,n'}$ and $y \in B_{\sigma}^\C(0)$ we have
\begin{compactenum}[(i)]
\item $\exp^\ast_i(x,y) \in U_{4,n}^{k_i}$;
\item $\|d_2\exp_i^\ast(x,y;\bl) - \id\|_{op} <\frac{1}{2}$.
\end{compactenum}
Especially we have $\|d_2\exp^\ast_i(x,y)\|_{op} \leq \frac{3}{2}$.  
\end{lemma}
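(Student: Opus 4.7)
The plan is to reduce this to two uniform-continuity arguments on the compact set $\ol{U}^{k_i}_{3,n'}$, exploiting the two known identities $\exp_i^\ast(x,0)=x$ and $d_2\exp_i^\ast(x,0)=\id_{\C^m}$. First I would check that $\ol{U}^{k_i}_{3,n'}=\ol{B}_3^{k_i}(0)+\ol{B}^\C_{1/(2n+N_0)}(0)$ is compact and, since $\frac{1}{2n+N_0}\le \frac{1}{N_0}<r_{\exp^\ast}$ and $3<4.5$, sits inside $\ol{B}^\C_{4.5,r_{\exp^\ast}}(0)\subs U_{\exp^\ast}$, so that Lemma \ref{ExpDefII} guarantees it lies in the domain of $\exp_i^\ast$, with $\exp_i^\ast$ continuous and $d_2\exp_i^\ast$ continuous there.

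For (i), given $x\in U^{k_i}_{3,n'}$ write $x=a+b$ with $a\in B_3^{k_i}(0)$ and $b\in B^\C_{1/(2n+N_0)}(0)$. To force $\exp_i^\ast(x,y)\in U^{k_i}_{4,n}=B_4^{k_i}(0)+B^\C_{1/(n+N_0)}(0)$ it suffices to keep $a$ as real part and check that $b+\bigl(\exp_i^\ast(x,y)-x\bigr)\in B^\C_{1/(n+N_0)}(0)$. Since $\|b\|<\tfrac{1}{2n+N_0}$, it suffices that
\begin{align*}
\|\exp_i^\ast(x,y)-x\|<\frac{1}{n+N_0}-\frac{1}{2n+N_0}=\frac{n}{(n+N_0)(2n+N_0)}=:c_n>0.
\end{align*}
The map $(x,y)\ms \exp_i^\ast(x,y)-x$ is continuous on its domain and vanishes on $\ol{U}^{k_i}_{3,n'}\ti\{0\}$, so by the Lemma of Wallace applied to its preimage of $B_{c_n}^\C(0)$, I get a $\sigma_1^i>0$ with $\|\exp_i^\ast(x,y)-x\|<c_n$ for all $x\in\ol{U}^{k_i}_{3,n'}$ and $y\in B^\C_{\sigma_1^i}(0)$.

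For (ii), the continuous map $(x,y)\ms d_2\exp_i^\ast(x,y;\bl)-\id_{\C^m}$ into $\mathcal{L}(\C^m)$ vanishes identically on $\ol{U}^{k_i}_{3,n'}\ti\{0\}$ because $d_2\exp_i^\ast(x,0)=\id_{\C^m}$ on $B^\C_{4.7,r^\ast_1}(0)\sups\ol{U}^{k_i}_{3,n'}$. The same Wallace argument yields $\sigma_2^i>0$ with $\|d_2\exp_i^\ast(x,y;\bl)-\id\|_{op}<\tfrac12$ for $x\in\ol{U}^{k_i}_{3,n'}$, $y\in B^\C_{\sigma_2^i}(0)$. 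Since there are only finitely many indices $i$, I set $\sigma:=\min_{i}\min(\sigma_1^i,\sigma_2^i)>0$; both (i) and (ii) then hold simultaneously. The trailing inequality follows from the triangle inequality, $\|d_2\exp_i^\ast(x,y;\bl)\|_{op}\leq\tfrac12+1=\tfrac32$. No real obstacle arises here—this is a packaging of continuity on a compact set, and the only subtle point is the mildly nontrivial comparison of the two radii $\tfrac{1}{n+N_0}$ and $\tfrac{1}{2n+N_0}$, which provides the positive slack $c_n$ needed for the inclusion in (i).
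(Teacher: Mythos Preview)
Your argument is correct and follows essentially the same route as the paper: both (i) and (ii) are obtained by applying the Lemma of Wallace to the compact set $\ol{U}^{k_i}_{3,n'}\times\{0\}$, using $\exp_i^\ast(x,0)=x$ and $d_2\exp_i^\ast(x,0)=\id_{\C^m}$, then taking a finite minimum over $i$. One small arithmetic slip: since $n'=2n+N_0$, the radius in $\ol{U}^{k_i}_{3,n'}$ is $\tfrac{1}{n'+N_0}=\tfrac{1}{2n+2N_0}$, not $\tfrac{1}{2n+N_0}$; this only increases your slack $c_n$ and is harmless. For (i) the paper is slightly more direct: it simply notes $\ol{U}^{k_i}_{3,n'}\subs U^{k_i}_{4,n}$ (so $\ol{U}^{k_i}_{3,n'}\times\{0\}\subs (\exp_i^\ast)^{-1}(U^{k_i}_{4,n})$) and applies Wallace to the open preimage, without your explicit decomposition $x=a+b$ and the computation of $c_n$.
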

\begin{proof}
We have $\exp^\ast_i(x,0) \in U_{4,n}^{k_i}$ for all $x \in \ol{U}_{3,n'}^{k_i}$ and  so $\ol{U}_{3,n'}^{k_i} \ti \set{0} \subs \exp\inv(U_{4,n}^{k_i})$. Moreover $d_2\exp_i^\ast(x,0;\bl)- \id_{\C^m} =0$ for all $x \in \ol{U}^{k_i}_{3,n'}$. Let $\ph \co U_{4,n}^{k_i} \ti B_{\ep_{\exp^\ast}}^\C (0)\ra  [0,\8[$, $(x,y) \ms \|d_2\exp^\ast_i(x,y) - \id_{\C^m}\|_{op}$. Then $\ol{U}^{k_i}_{3,n'} \ti \set{0} \subs \ph\inv([0,\frac{1}{2}[)$. The rest follows from the Lemma of Wallace.
\end{proof}

\begin{definition}
With the notation from Lemma \ref{DeltaAst} and Lemma \ref{SIgmaAA} we define $\delta_{log^\ast}:=\min( \delta^\C(\sigma),(\frac{1}{4})^{op})$. Without loose of generality we can assume $\ep_\star < \ep^\C_{\delta_{\log^\ast}}$.
\end{definition}

\begin{lemma}\label{CUmkehrallesbls}
\begin{compactenum}
\item If $\eta \in \Hol_{\ep_\star}^1(U^{k_i}_{4,n};\C^m)_\str^\R$ then $\ps_\eta^i\co U_{4,n}^i \ra \C^m$ is stratified and $\|\ps_\eta^i - \id_{\C^m}\|^1 < \delta_{\log^\ast} \leq  (\frac{1}{4})^{op}$.
\item\label{bStar} As before we define $n':=2n+N_0$. We have $U^i_{2,n'} \subs \ps_\eta^i(U_{3,n}^i)$ and so $(\ps_\eta^i)\inv\co U_{2,n'}^i \ra \C^m$ makes sense and $\|(\ps_\eta^i)\inv - \id\|^0 < \delta_{\log^\ast} \leq \delta^\C(\sigma)$.
\item The map
\begin{align*}
\eta^\star \co U_{2,n'}^{k_i} \ra B_\sigma^\C(0), ~ x\ms \exp_i^\ast(x,\bl)\inv({\ps^i_\eta}\inv(x))
\end{align*}
makes sense and is a complex analytic extension of $\eta|_{B_4^{k_i}(0)}^\star \co B_2^{k_i}(0) \ra \R^m$.
\end{compactenum}
\end{lemma}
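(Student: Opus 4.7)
The plan is to derive all three assertions directly from the three preparatory tools set up in the preceding paragraphs: Lemma~\ref{Nahe1II} supplies the $C^1$-bound on $\ps^i_\eta-\id$; Lemma~\ref{WennNahe1II} supplies the quantitative complex inverse function theorem, hence the inclusion $U^{k_i}_{2,n'}\subs\ps^i_\eta(U^{k_i}_{3,n})$ together with the $C^0$-control of $(\ps^i_\eta)^{-1}$; and Lemma~\ref{DeltaAst} supplies the complex-analytic logarithm. The very choices $\delta_{\log^\ast}:=\min(\delta^\C(\sigma),(\tfrac14)^{op})$ and $\ep_\star<\ep^\C_{\delta_{\log^\ast}}$ are designed so that the hypotheses of these three lemmas mesh.

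For (i), I apply Lemma~\ref{Nahe1II} with $r_0:=\delta_{\log^\ast}\in\,]0,1[$; since $\|\eta\|^1_{U^{k_i}_{4,n}}<\ep_\star<\ep^\C_{\delta_{\log^\ast}}$, the conclusion reads $\|\ps^i_\eta-\id\|^1<\delta_{\log^\ast}\le(\tfrac14)^{op}$. That $\ps^i_\eta$ is stratified along $B^{k_i}_4(0)$ is a condition on real points only: if $x\in B^{k_i}_4(0)$ satisfies $x_j=0$ for some $j\le k_i$, then stratification of $\eta$ forces $\eta(x)_j=0$, so $\eta(x)$ is tangent to the totally geodetic hyperplane $\{z_j=0\}\cap\R^m_{k_i}$ at $x$, hence the geodetic $t\mapsto\exp_i(x,t\eta(x))$ stays in that hyperplane, giving $\ps^i_\eta(x)_j=0$.

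For (ii), I apply Lemma~\ref{WennNahe1II} to $f:=\ps^i_\eta$ with $R=3$, $r=\tfrac{1}{n+N_0}$ and $r_0=\delta_{\log^\ast}$. Part~(i) provides $\|f-\id\|^1_\infty<r_0^{op}$, and the boundary conditions $f(\partial B^{k_i}_3(0))\subs\partial\R^m_{k_i}$, $f(\partial^0 B^{k_i}_3(0))\subs\partial^0\R^m_{k_i}$ follow from the stratification of $\ps^i_\eta$ (each face maps into itself) combined with $\ps^i_\eta$ being a local real-analytic diffeomorphism, which by invariance of domain sends the interior to the interior. A one-line estimate gives $R':=(1-r_0)\cdot 3-r_0\ge 2$ (as $r_0<\tfrac14$) and $r':=(1-r_0)/(n+N_0)\ge 1/(2(n+N_0))=1/(n'+N_0)$, so $U^{k_i}_{2,n'}\subs B^{k_i,\C}_{R',r'}(0)\subs\ps^i_\eta\bigl(B^{k_i,\C}_{3,1/(n+N_0)}(0)\bigr)=\ps^i_\eta(U^{k_i}_{3,n})$. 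The bound $\|(\ps^i_\eta)^{-1}-\id\|^0<r_0=\delta_{\log^\ast}\le\delta^\C(\sigma)$ is exactly the fourth bullet of Lemma~\ref{WennNahe1II}.

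For (iii), fix $x\in U^{k_i}_{2,n'}$. Since $\tfrac{1}{n'+N_0}\le\tfrac{1}{N_0}<r_{\log^\ast}$, we have $U^{k_i}_{2,n'}\subs B^\C_{4,r_{\log^\ast}}(0)$, so Lemma~\ref{DeltaAst} is applicable at $x$. Part~(ii) places $(\ps^i_\eta)^{-1}(x)\in B^\C_{\delta^\C(\sigma)}(x)\subs\exp^\ast_i(x,B^\C_\sigma(0))$, hence $\exp^\ast_i(x,\cdot)^{-1}\bigl((\ps^i_\eta)^{-1}(x)\bigr)\in B^\C_\sigma(0)$ is well-defined and depends complex analytically on $x$ by the second conclusion of that lemma. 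On real $x\in B^{k_i}_2(0)$, both $\exp^\ast_i$ and $(\ps^i_\eta)^{-1}$ specialise to their real counterparts, so the formula reduces to $\log_i\bigl(x,(\ps^i_\eta)^{-1}(x)\bigr)=(\eta|_{B^{k_i}_4(0)})^\star(x)$, which is exactly the real $\star$-operation from Lemma~\ref{llAstaaa}. The only step requiring care is the verification of the two boundary conditions for Lemma~\ref{WennNahe1II}; everything else is bookkeeping with the already fixed constants.
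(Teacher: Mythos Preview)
Your approach matches the paper's exactly: the paper's proof is a single line applying Lemma~\ref{WennNahe1II} with $R=3$ and $r=\tfrac{1}{n+N_0}$, treating parts~(a) and~(c) as immediate from the preceding set-up. Your write-up is considerably more detailed and, modulo the point below, correct.

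There is one genuine slip in part~(ii). You set $r_0=\delta_{\log^\ast}$ and then assert that part~(i) provides the hypothesis $\|f-\id\|^1_\infty<r_0^{op}=\delta_{\log^\ast}^{op}$ of Lemma~\ref{WennNahe1II}. But part~(i) only yields $\|f-\id\|^1<\delta_{\log^\ast}$, and in general $\delta_{\log^\ast}^{op}<\delta_{\log^\ast}$ (e.g.\ from the Neumann series one may take $r^{op}=\tfrac{r}{1+r}<r$), so the hypothesis is \emph{not} verified with that choice of $r_0$. The fix is to take $r_0=\tfrac14$ instead: then $r_0^{op}=(\tfrac14)^{op}\ge\delta_{\log^\ast}$ by definition of $\delta_{\log^\ast}$, so part~(i) does give $\|f-\id\|^1<r_0^{op}$, and your estimates $R'=3-4r_0=2$ and $r'=\tfrac{3}{4(n+N_0)}>\tfrac{1}{n'+N_0}$ go through. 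The remaining bound $\|(\ps^i_\eta)^{-1}-\id\|^0<\delta_{\log^\ast}$ no longer comes from conclusion~(d) of Lemma~\ref{WennNahe1II} (which now gives only $<\tfrac14$), but follows directly from Remark~\ref{Klar}: $\|(\ps^i_\eta)^{-1}-\id\|^0=\|\ps^i_\eta-\id\|^0\le\|\ps^i_\eta-\id\|^1<\delta_{\log^\ast}$. This is evidently what the paper intends as well, given its terse ``$R'=2$''.
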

\begin{proof}
It is enough to show (\ref{bStar}). With $R=3, r_0\leq (\frac{1}{4})^{op}, r= \frac{1}{n+N_0}$ we use Lemma \ref{WennNahe1II} and get $R'=2$ and $r'>\frac{1}{(2n+N_0)+N_0}$.
\end{proof}

The following lemma is a consequence of \cite{BobHamza}.
\begin{lemma}\label{ExponentialGesetz}
Let $U \subs \C^m$ be open and $f\co \R \ti U \ra \C^m$ a map that is complex analytic in the second argument. In this situation the map $f$ considered as a map between finite dimensional $\R$-vector spaces is smooth if and only if the map 
$\check{f} \co \R \ra (\Hol(U;\C^m))_\R$ considered as a map between locally convex $\R$-vector spaces is smooth.
\end{lemma}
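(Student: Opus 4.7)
The plan is to apply an exponential law in both directions and appeal to the cited paper \cite{BobHamza} for the non-trivial direction. I would split the statement into the two implications.

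For the easy direction, assume that $\check f\co\R\ra(\Hol(U;\C^m))_\R$ is smooth. The evaluation map $\ev\co\Hol(U;\C^m)\ti U\ra\C^m$, $(\gamma,z)\ms\gamma(z)$ is smooth; in the Silva/Fr\'echet setting this is a standard consequence of holomorphy of its first slot and holomorphy in the second slot together with continuity of the evaluation. Writing $f=\ev\ci(\check f\ti\id_U)$ then exhibits $f$ as a composition of smooth maps between open subsets of finite-dimensional $\R$-vector spaces and locally convex $\R$-vector spaces, and hence smooth. This step requires no analyticity assumption on $\check f$; just smoothness.

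For the hard direction, assume $f\co\R\ti U\ra\C^m$ is smooth in the usual sense and complex analytic in the second argument. The goal is to show that the curve $\check f\co t\ms f(t,\bl)$ takes values in $\Hol(U;\C^m)$ (which is clear by assumption) and is smooth as a map into the locally convex space $(\Hol(U;\C^m))_\R$. The main obstacle lies here: one must show that for each $k\in\N_0$ the $k$-th derivative $\pa{t}^k f(t_0,\bl)\in\Hol(U;\C^m)$ exists as a limit in the topology of $\Hol(U;\C^m)_\R$ (uniform convergence on compact subsets of $U$ together with convergence of derivatives, via Cauchy estimates). This is precisely the content of the exponential law proved in \cite{BobHamza}; I would therefore directly invoke that theorem after verifying that its hypotheses match our setting, namely: $\R$ is finite-dimensional (in particular $k$-space in every relevant sense), $U\subs\C^m$ is open, the target space $\C^m$ is a Banach space, and the partial map $f(t,\bl)$ is complex analytic for every $t$.

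The bulk of the argument is thus a translation of the hypotheses of the present lemma into those of the exponential-law theorem of \cite{BobHamza}; no additional analysis is required, and the main point to check carefully is that $(\Hol(U;\C^m))_\R$ is the correct underlying locally convex space (with the compact-open topology, which makes it a Fr\'echet space when $U$ is considered with its usual exhaustion by relatively compact open subsets), so that smoothness there is characterized by iterated directional derivatives existing as limits in that topology. Once this identification is in place, both directions follow formally and the lemma is proved.
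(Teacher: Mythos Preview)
Your proposal is correct and matches the paper's approach: the paper simply states that the lemma is a consequence of \cite{BobHamza} without giving any further argument, and your write-up is just a more detailed unpacking of exactly that citation, splitting off the easy direction via the evaluation map and invoking the exponential law of \cite{BobHamza} for the substantive implication.
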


We follow the idea of \cite[Lemma 3.8]{Gloeckner2} and use an argument from the convenient setting to show the following Lemma.
\begin{lemma}\label{EndeInv}
The map $\gph \co \Hol^1_{\ep_\star} (U^{k_i}_{4,n} ; \C^m)_\str^\R \ra \Hol(U_{2,n'};\C^m)$, $\eta \ms \eta^\star$ is smooth (hear we consider the vector spaces as real vector spaces).
\end{lemma}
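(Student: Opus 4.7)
My plan is to prove smoothness of $\gph$ by a convenient-calculus curve test combined with the parametric implicit function theorem. Both the domain $\Hol^1_{\ep_\star}(U^{k_i}_{4,n};\C^m)^\R_\str$ (an open subset of a Banach space) and the target $\Hol(U_{2,n'};\C^m)$ (a Fr{\'e}chet space with the compact-open topology) are sequentially complete Fr{\'e}chet-type spaces on which c-smoothness, Keller $C^\infty_c$-smoothness, and the usual Fr{\'e}chet notion of smoothness agree. Hence it suffices to fix an arbitrary smooth curve $c \co \R \to \Hol^1_{\ep_\star}(U^{k_i}_{4,n};\C^m)^\R_\str$ and verify that $t \ms c(t)^\star$ is smooth into $\Hol(U_{2,n'};\C^m)$.

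Composing $c$ with the continuous linear inclusion $\Hol^1_b \hra \Hol$ yields a smooth curve into $\Hol(U^{k_i}_{4,n};\C^m)_\R$, so by Lemma \ref{ExponentialGesetz} the map $\check c \co \R \ti U^{k_i}_{4,n} \to \C^m$, $(t,y)\ms c(t)(y)$, is smooth in the real sense and complex analytic in $y$. Likewise, smoothness of $t\ms c(t)^\star$ into $\Hol(U_{2,n'};\C^m)$ is, via Lemma \ref{ExponentialGesetz}, equivalent to the smoothness of the evaluation $(t,x)\ms c(t)^\star(x)$ on $\R \ti U_{2,n'}$ together with its complex analyticity in $x$. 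So it is enough to establish these two properties.

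To this end, define the auxiliary map
\[
F \co \R \ti U_{2,n'} \ti U^{k_i}_{4,n} \to \C^m, \qquad (t,x,y) \ms \exp^\ast_i(y,c(t)(y)) - x.
\]
This $F$ is smooth in $(t,x,y)$ (composition of smooth maps, since $\exp^\ast_i$ is complex analytic and $\check c$ is smooth) and jointly complex analytic in $(x,y)$ for each fixed $t$. By Lemma \ref{CUmkehrallesbls}, for every $(t_0,x_0)$ there is a unique $y_0 \in U^{k_i}_{3,n}$ with $F(t_0,x_0,y_0)=0$, namely $y_0 = {\ps^i_{c(t_0)}}\inv(x_0)$, and the partial differential $d_3F(t_0,x_0,y_0) = {\ps^i_{c(t_0)}}'(y_0)$ is a $\C$-linear isomorphism since $\|\ps^i_{c(t_0)}-\id\|^1 < (\tfrac14)^{op}$. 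The finite-dimensional implicit function theorem then furnishes a local smooth solution $y=g(t,x)$ of $F(t,x,y)=0$; because $F$ is complex analytic in $(x,y)$ and $d_3F$ is a $\C$-linear isomorphism, $g$ is in addition holomorphic in $x$. Global uniqueness forces $g(t,x) = {\ps^i_{c(t)}}\inv(x)$ on all of $\R \ti U_{2,n'}$, so $g$ is globally smooth and complex analytic in $x$.

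Finally, by the choice of $\delta_{\log^\ast}$ and Lemma \ref{DeltaAst}, $g(t,x)-x$ lies in the domain of $\exp^\ast_i(x,\bl)\inv$, and therefore
\[
c(t)^\star(x) \;=\; \exp^\ast_i(x,\bl)\inv\bigl(g(t,x)\bigr)
\]
is smooth in $(t,x)$ and complex analytic in $x$. Applying Lemma \ref{ExponentialGesetz} once more, $t\ms c(t)^\star$ is smooth into $\Hol(U_{2,n'};\C^m)$, completing the proof. The principal obstacle is arranging that the implicit function theorem preserves both the real smoothness in the parameter $t$ and the complex analyticity in the spatial variable $x$ simultaneously; this succeeds precisely because $F$ is jointly holomorphic in $(x,y)$ and the relevant partial differential is $\C$-linearly invertible, so the local solution inherits holomorphy in $x$ automatically.
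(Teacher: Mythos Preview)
Your proof is correct and follows essentially the same strategy as the paper: both reduce to a curve test via the convenient setting (Lemma \ref{ExponentialGesetz}) and then invoke the finite-dimensional implicit function theorem with parameters. The only difference is the function to which the IFT is applied. The paper works directly with
\[
\Lambda(t,x,y) \;=\; \ps^i_{c_t}\bigl(\exp^\ast_i(x,y)\bigr) - x, \qquad y \in B^\C_\sigma(0),
\]
so that the implicit solution is $y = c_t^\star(x)$ in one step; this requires the derivative estimate of Lemma \ref{SIgmaAA} to bound $\|d_3\Lambda - \id\|_{op}$. You instead apply the IFT to $F(t,x,y) = \ps^i_{c(t)}(y) - x$ to obtain $g(t,x) = (\ps^i_{c(t)})\inv(x)$, and then compose with the fixed holomorphic map $(x,z)\ms \exp^\ast_i(x,\bl)\inv(z)$ from Lemma \ref{DeltaAst}. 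Your derivative condition $d_3F = (\ps^i_{c(t)})'$ is simpler to verify (directly from $\|\ps^i_{c(t)}-\id\|^1 < 1$), at the cost of an extra composition; the paper's one-step version avoids that composition but needs the auxiliary bound on $d_2\exp^\ast_i$. Both routes are equally valid and of comparable length.
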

\begin{proof}
Let $c \co \R \ra \Hol^1_{\ep_\star} (U^{k_i}_{4,n} ; \C^m)_\str^\R$, $t \ms c_t$ be smooth.  We have to show that $\what{\gph \ci c} \co \R \ti U_{2,n'} \ra \C^m \cong \R^{2m}$, $(t,x) \ms c_t^\star (x)$ is smooth. From Lemma \ref{CUmkehrallesbls} we get $c_t^\star \in \Hol_\sigma^0(U_{2,n'};\C^m)$. Hence $\ps^i_{c_t^\star}(U_{2,n'}^{k_i}) \subs U_{4,n}^{k_i}$. Therefore $\ps^i_{c_t} \ci \ps_{c_t^\star}$ makes sense and
\begin{align}\label{ddddddddddd}
&\ps^i_{c_t} \ci \ps^i_{c_t^\star}(x) =x \tx{for all} x \in {U_{2,n'}^{k_i}}\\
\Rightarrow& \exp_i^\ast(\exp_i^\ast(x, c_t^\star(x)), c_t(\exp^i(x,c_t^\ast(x))) ) -x =0 \tx{for all} x\in  {U_{2,n'}^{k_i}}
\end{align}
We define the smooth function
\begin{align*}
&\Lambda \co \R \ti U^{k_i}_{2,n'} \ti B_\sigma^\C(0) \ra \C^m\\
& (t,x,y) \ms \exp^\ast_i(\exp_i^\ast(x,y) , c_t(\exp_i^\ast(x,y))) - x = \ps_{c_t}(\exp^\ast_i(x,y)) - x
\end{align*}
For $(t,x,y) \in \R \ti U^{k_i}_{2,n'} \ti B_\sigma^\C(0)$ we calculate 
\begin{align*}
&\|d_3\Lambda (t,x,y;\bl) - \id\|_{op}  = \|d\ps_{c_t}(\exp^\ast_i(x,y); d_2\exp^\ast_i(x,y;\bl)) - \id \|_{op} \\
=& \|d\ps_{c_t}(\exp^\ast_i(x,y); \bl) \ci  d_2\exp^\ast_i(x,y;\bl) - \id \|_{op}\\
\leq & \|d\ps_{c_t}(\exp^\ast_i(x,y); \bl) \ci  d_2\exp^\ast_i(x,y;\bl) - \id  \ci d_2\exp^\ast_i(x,y;\bl) \|_{op}
+ \| \id  \ci d_2\exp^\ast_i(x,y;\bl) - \id\|_{op}\\
\leq& \|d\ps_{c_t}(\exp^\ast_i(x,y); \bl)\|_{op} \cdot  \|d_2\exp^\ast_i(x,y;\bl)\|_{op} + \|d_2\exp^\ast_i(x,y;\bl) - \id\|_{op}\\
<& \frac{1}{4} \cdot \frac{3}{2} + \frac{1}{2} = \frac{7}{8}.  
\end{align*}
Hence $d_3\Lambda (t,x,y;\bl) \in GL(\C^m)$. Obviously $\Lambda (t,x,\bl) \co B_\sigma^\C(0) \ra \C^m$ is injective. With the implicit function theorem and (\ref{ddddddddddd}) we see that $\gph$ is smooth.
\end{proof}

\begin{lemma}
The map
\begin{align*}
i_M\co \mathcal{B}_{4,\ep_\star} \ra \mathcal{B}_{1,\ep_\mathcal{U}},~ \eta \ms \eta^\star 
\end{align*}
is smooth.
\end{lemma}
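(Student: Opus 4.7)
The plan is to imitate the proof of smoothness of $\diamond$ from the previous section, using Lemma \ref{EndeInv} as the main smooth building block. First, since $\mathcal{B}_{1,\ep_\mathcal{U}}$ embeds topologically with closed image into $\prod_{i=1}^n \mathcal{G}(\C^m;\C^m|\ol{B}_1^{k_i}(0))_\str^\R$, it suffices to show for every $i$ that $\mathcal{B}_{4,\ep_\star} \ra \mathcal{G}(\C^m;\C^m|\ol{B}_1^{k_i}(0))_\str^\R$, $\eta \ms [(\eta^\star)^\ast_{(i)}]$ is smooth. By (\ref{lokalAst}) this germ equals $[((\eta_{(i)})^\star)^\ast]$, so the map factors through the projection $\mathcal{B}_{4,\ep_\star} \hra \prod_i G^1_{\ep_\star}(\C^m;\C^m|\ol{B}_4^{k_i}(0))_\str^\R$ onto the $i$-th coordinate.

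Second, I would exploit the Silva structure of the target. Since $\mathcal{G}(\C^m;\C^m|\ol{B}_4^{k_i}(0))_\str^\R = \varinjlim \Hol_b^1(U_{4,n}^{k_i};\C^m)_\str^\R$ as a Silva space, and by Lemma \ref{SilvaGleichTop} the associated topology coincides with the inductive-limit topology in the category of topological spaces, smoothness reduces to smoothness of
\begin{equation*}
\Hol_{\ep_\star}^1(U_{4,n}^{k_i};\C^m)_\str^\R \ra \mathcal{G}(\C^m;\C^m|\ol{B}_1^{k_i}(0))_\str^\R, \quad \eta \ms [\eta^\star]
\end{equation*}
for each $n \in \N$.

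Third, for fixed $n$ I would decompose this map as the composition
\begin{equation*}
\Hol_{\ep_\star}^1(U_{4,n}^{k_i};\C^m)_\str^\R \xra{\gph_n} \Hol(U_{2,n'};\C^m) \xra{\text{res}} \Hol_b^1(U_{1,m}^{k_i};\C^m)_\str^\R \hra \mathcal{G}(\C^m;\C^m|\ol{B}_1^{k_i}(0))_\str^\R,
\end{equation*}
where $\gph_n$ is the smooth map from Lemma \ref{EndeInv} and $m$ is chosen large enough that $\ol{U}_{1,m}^{k_i} \subs U_{2,n'}^{k_i}$ is compact. The restriction arrow is continuous linear by the Cauchy-integral bound proved at the beginning of Section~3.2 (which gives a bound on the first derivative in terms of the uniform norm on a slightly larger open set), and the inclusion of $\Hol_b^1$ into the direct limit is continuous linear by definition.

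The main obstacle is verifying that the image really lands in the stratified, real-fixing subspace, so that the germ makes sense in $\mathcal{G}(\C^m;\C^m|\ol{B}_1^{k_i}(0))_\str^\R$. The real-valuedness on $U_{1,m}^{k_i}\cap \R^m$ and the stratification along $B_1^{k_i}(0)$ both follow from the fact that $(\eta_{(i)})^\star$ is a real analytic stratified map (Lemma \ref{llAstaaa}) together with the identity theorem applied to the connected open set $U_{1,m}^{k_i}$. The required nesting $\ol{U}_{1,m}^{k_i} \subs U_{2,n'}^{k_i}$ is arranged by choosing $m$ sufficiently large relative to $n$, using the definition of the connected filtration and the enlargement constant $N_0$.
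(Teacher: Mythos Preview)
Your proposal is correct and follows essentially the same approach as the paper: embed into the product of germ spaces, factor through the $i$-th coordinate using (\ref{lokalAst}), reduce via the Silva structure to the step maps $\Hol_{\ep_\star}^1(U_{4,n}^{k_i};\C^m)_\str^\R \ra \mathcal{G}(\C^m;\C^m|\ol{B}_1^{k_i}(0))_\str^\R$, and then invoke Lemma \ref{EndeInv}. The paper's proof is slightly terser at the end, simply restricting to $\Hol(U_{1,n''}^{k_i};\C^m)_\str^\R$ and saying ``this follows from Lemma \ref{EndeInv}''; your explicit discussion of the restriction step and of why the image lands in the stratified, real-fixing subspace just fills in details that the paper leaves implicit.
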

\begin{proof}
After identification we have $\mathcal{B}_{1,\ep_\mathcal{U}} \subs \prod_{i=1}^n \G(\C^m;\C^m|\ol{B}_1^{k_i}(0))$ and it is left to show the smoothness of
\begin{align*}
\mathcal{B}_{4,\ep_\star} \ra \mathcal{G}(\C^m;\C^m|\ol{B}_1^{k_i}(0)), ~ \eta \ms \left[ {{(\eta^\star)}^\ast}_{(i)} \right]_{\ol{B}^{k_i}_1(0)} = \left[ {{(\eta^\star_{(i)})}^\ast} \right]_{\ol{B}^{k_i}_1(0)}
\end{align*}
Consider the diagram
\begin{align*}
\begin{xy}
\xymatrixcolsep{5pc}\xymatrix{
\mathcal{B}_{4,\ep_\star} \ar@{_{(}->}[d] \ar[r]^-{i_M} & \mathcal{G}(\C^m;\C^m|\ol{B}_1^{k_i}(0))\\
\prod_{i=1}^n G_{\ep_\star}^1(\C^m;\C^m| \ol{B}_4^{k_i}(0))_\str^\R \ar[r]^-{\pr_i} & G_{\ep_\star}^1(\C^m;\C^m| \ol{B}_4^{k_i}(0))_\str^\R \ar[u]_-{\Phi}
}
\end{xy} 
\end{align*}
with $\Phi([f]) = \left[{f|_{\ol{B}_4^{k_i}(0)}^\star}^\ast \right]$. Because $\mathcal{G}( \C^m ; \C^m | \ol{B}_4^{k_i}(0) )_\str^\R = \varinjlim \Hol_b^1(U^{k_i}_{4,n};\C^m)_\str^\R$ we have to show the smoothness of 
\begin{align*}
\Hol_{\ep_\star}^1 (U_{4,n}^{k_i};\C^m)_\str^\R \ra \mathcal{G}(\C^m; \C^m |\ol{B}_1^{k_i}(0))_\str^\R,~
\eta \ms \left[ {\eta|^\star_{\ol{B}_4^{k_i}(0)}}^\ast \right] = \left[ \eta^\star \right]
\end{align*}
with $\eta^\star \co U_{1,n''}^{k_i} \ra \C^m$, $x\ms \exp_i^\ast(x,\bl)\inv ({\ps^i_\eta}\inv(x))$.
Therefore we have to show the smoothness of $\Hol_{\ep_\star}^1 (U_{4,n}^{k_i};\C^m)_\str^\R \ra \Hol (U_{1,n''}^{k_i};\C^m)_\str^\R$, $\eta \ms \eta^\star$. But this follows from Lemma \ref{EndeInv}.
\end{proof}

\section{Existence and uniqueness of the Lie group structure}\label{SecExistence and uniqueness of the Lie group structure}
In this section we follow the strategy of \cite[Section 5]{Gloeckner2}: First we use the theorem about the local description of Lie groups to obtain a Lie group structure on a subgroup $\Diff(M)_0$ of $\Diff(M)$. Then we show that this structure dose not depend on the choice of the Riemannian metric (Lemma \ref{UnabhaengigkeitMetrik}). With the help of this result we show the smoothness of the conjugation map (Lemma \ref{ConjugationGlatt}).

The following Lemma comes from \cite[Proposition 1.20]{Gloeckner2}:
\begin{lemma}[Theorem about the local description of Lie groups]\label{LokaleBeschreibung}
Given a group $G$ and a subset $U \subs G$ that is a smooth manifold, such that there exists a symmetric subset $V \subs U$ that contains the identity and fulfils  $V \cdot V \subs U$, we get from \cite[Proposition 1.20]{Gloeckner2}  the following results: If the restriction of the inversion and the multiplication on $V$ are smooth, then there exists a unique manifold structure on $\langle V \rangle$ such that 
\begin{compactitem}
\item $\langle V \rangle$ is a Lie group; 
\item $V$ is open in $\langle V\rangle$;
\item $U$ and $\langle V\rangle$ induce the same manifold structure on $V$.
\end{compactitem}
Moreover if $\langle V \rangle$ is a normal subgroup of $G$ and for all $g \in G$ the conjugation $\interi_g \co \langle V \rangle \ra \langle V \rangle $, $h \ms ghg\inv$ is smooth, then there exists a unique manifold structure on $G$ such that 
\begin{compactitem}
\item $G$ becomes a Lie group;
\item $V$ is an open submanifold of $G$.
\end{compactitem}
\end{lemma}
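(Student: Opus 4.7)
The plan is to construct the Lie group structure on $\langle V \rangle$ by using left translations to transport the given manifold structure on $V$ across the entire group, relying on the local hypotheses (smoothness of multiplication and inversion on $V$, symmetry, and $V \cdot V \subs U$) to check that transitions and group operations are smooth.

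For the first assertion, I would begin by declaring, for each $g \in \langle V \rangle$, that the bijection $L_g|_V \co V \ra gV$ is a diffeomorphism; this places a manifold structure on each $gV$. The central compatibility check is that if $g_1V \cap g_2V \neq \emptyset$, picking any equality $g_1 v_1 = g_2 v_2$ gives $g_2\inv g_1 = v_2 v_1\inv \in V \cdot V \subs U$ (using symmetry of $V$), so the chart transition is the restriction of $v \ms (v_2 v_1\inv) v$, which is a slice of the multiplication map and hence smooth by hypothesis. The final topology on $\langle V\rangle$ in which each $gV$ is open and carries its prescribed structure is then Hausdorff by a standard translation argument (after possibly shrinking $V$ so that $e$ has a basis of nice neighbourhoods).

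Next I would check that multiplication and inversion on $\langle V \rangle$ are smooth. Inversion near a point $g$ is written in charts as $v \ms v\inv$ composed with a fixed translation $L_{g\inv}$, hence smooth on $V$ by hypothesis. For multiplication at a point $(a,b)$, I would use the identity
\begin{align*}
(a v_1)\.(b v_2) \;=\; ab \. \bigl(b\inv v_1 b\bigr) \. v_2,
\end{align*}
which reduces the problem to showing that $v_1 \ms b\inv v_1 b$ is smooth on a sufficiently small identity neighbourhood. For $v_1$ close enough to $e$, this can be written as $(b\inv v_1)\. b$, which is two applications of the local multiplication on $V$ after shrinking $V$ so that all intermediate products remain in the permitted domain; conjugation by any fixed $b \in \langle V \rangle$ is then handled by iterating this argument, since any $b$ is a finite product of elements of $V$. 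Uniqueness follows because any Lie group structure on $\langle V \rangle$ under which $V$ is open with the given structure must make each $L_g$ a diffeomorphism, and this forces the chart at $g$ to coincide with $L_g$ composed with the chart at $e$.

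For the second assertion I would proceed analogously, transporting the Lie group structure of $\langle V\rangle$ to each coset $g\langle V \rangle$ via left translation. The compatibility between different coset representatives is mediated exactly by the conjugation maps $\interi_{g_2\inv g_1} \co \langle V \rangle \ra \langle V \rangle$, whose smoothness is now part of the hypothesis. Global smoothness of multiplication on $G$ follows by the same left-translation-and-conjugation manipulation as above, now using the assumed smoothness of $\interi_g$ for arbitrary $g \in G$ in place of the argument used inside $\langle V \rangle$. The hard part, as in the first case, is the multiplication at a general point: it is the step where one must cleanly combine the local smoothness on $V$ with the translation/conjugation identities, and where shrinking of neighbourhoods is necessary so that all intermediate products stay within the sets on which smoothness is guaranteed. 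All other claims (openness of $V$ in $\langle V \rangle$ and of $V$ in $G$, agreement of induced structures, uniqueness) are formal consequences of the construction by left translations.
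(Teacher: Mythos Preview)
The paper does not prove this lemma; it simply records it as \cite[Proposition 1.20]{Gloeckner2} and uses it as a black box. There is therefore nothing in the paper's own text to compare your argument against. Your sketch is the standard construction (transport the local manifold structure by left translations, check transitions via $g_2\inv g_1 \in V\cdot V \subs U$, reduce smoothness of multiplication at an arbitrary point to smoothness of conjugation by elements of $V$ and iterate), and it is the approach one finds in the cited reference and in similar treatments. One point to be slightly more careful about is Hausdorffness of the resulting topology on $\langle V\rangle$: this typically requires an additional shrinking step (choose an open symmetric $W\subs V$ with $W\cdot W\subs V$ and verify that if $gW\cap hW\neq\emptyset$ then $g\inv h\in V$, which separates points), rather than following automatically from the final topology; but this is a routine refinement, not a gap in the strategy.
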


The following Lemma is analogous to \cite[6.2 (b)-(c)]{Gloeckner2}.
\begin{lemma}\label{UnabhaengigkeitMetrik}
\begin{compactenum}
\item There exists $\ep_0\in ]0,\ep_\star[$ such that $\mathcal{B}_{4.1,\ep_0}^1 \subs i_M\inv  (\mathcal{B}_{4,\ep_\star}^1)$.
\item The set $\mathcal{U}_0^1 := \gps(\mathcal{B}_{4.1,\ep_0}^1)$ is an open connected $1$-neighbourhood. Thus the set $\mathcal{U}_0:= \mathcal{U}_0^1 \cup (\mathcal{U}_0^1)\inv \subs \mathcal{U}_\star$ is an open connected symmetric $1$-neighbourhood. We define $\mathcal{V}_0 := \gph\inv(\mathcal{U}_0)$ and $\mathcal{V}^1_0 := \mathcal{B}_{4.1,\ep_0}^1$.
\item \label{LieGroupStructure}Analogous to \cite[6.2]{Gloeckner2} we use Lemma \ref{LokaleBeschreibung} to find a unique Lie group structure on $\Diff(M)_0:=\langle \mathcal{U}_0\rangle $.
\item The Lie group structure in (\ref{LieGroupStructure}) is independent of the choice of the atlas $\ph_i \co U_{i,5} \ra B_5^{k_i}(0)$.
\item The Lie group structure in (\ref{LieGroupStructure}) is independent of the choice of the Riemannian metric $g$.
\end{compactenum}
\end{lemma}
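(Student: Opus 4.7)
For (1), the map $i_M$ is smooth with $i_M(0)=0$ on the open neighbourhood $\mathcal{B}^1_{4,\ep_\star}$, hence $i_M^{-1}(\mathcal{B}^1_{4,\ep_\star})$ is an open $0$-neighbourhood in $\Gamma^\o_\str(TM)$; by the very definition of its topology the sets $\mathcal{B}^1_{4.1,\ep_0}$ form a fundamental system of $0$-neighbourhoods as $\ep_0 \downarrow 0$, so some such set is contained in the preimage. Alternatively, the estimates behind Lemma \ref{llAstaaa} give an explicit $\ep_0$ after a Cauchy-type derivative bound passing from radius $4.1$ to radius $4$. For (2), the convex set $\mathcal{V}_0^1$ is path-connected and open; since $\Psi\colon \mathcal{V}\to\mathcal{U}$ is a homeomorphism (with inverse $\Phi$), $\mathcal{U}_0^1=\Psi(\mathcal{V}_0^1)$ is an open connected $\id_M$-neighbourhood, and $(\mathcal{U}_0^1)^{-1}$ shares these properties, so $\mathcal{U}_0$ is open, symmetric and connected (both parts contain $\id_M$); then $\mathcal{V}_0=\Phi(\mathcal{U}_0)$ is the corresponding open symmetric $0$-neighbourhood.

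For (3), I would apply Lemma \ref{LokaleBeschreibung} with $U:=\mathcal{U}$ carrying the manifold structure transported from $\mathcal{V}$ through $\Phi$, and $V:=\mathcal{U}_0$. Symmetry and openness of $V$ in $U$ follow from (2). Shrinking $\ep_0$ further if needed so that $\mathcal{V}_0\subseteq\mathcal{B}_{4,\ep_\diamond}$, the diamond operation of Section \ref{SecSmoothness of composition} is defined on $\mathcal{V}_0\times\mathcal{V}_0$ and takes values in $\mathcal{B}_{1,\ep_\mathcal{U}}=\mathcal{V}$; this yields $V\cdot V\subseteq U$. Smoothness of the restricted multiplication and inversion, read through the charts, are exactly the main results of Sections \ref{SecSmoothness of composition} and \ref{SecSmoothnes of the inversion}. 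Lemma \ref{LokaleBeschreibung} then delivers the asserted unique Lie group structure on $\Diff(M)_0:=\langle\mathcal{U}_0\rangle$ such that $\mathcal{U}_0$ is an open submanifold.

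For (4), let $\{\ph'_j\}$ be a second atlas as in Lemma \ref{Existenz von Karten}. First, the locally convex topology on $\Gamma^\o_\str(TM)$ is unaffected: chart transitions are real analytic with complex-analytic extensions, so pre- and post-composition with them induce topological linear isomorphisms between the two models of the ambient product of Silva spaces of germs. Secondly, on a common small $\id_M$-neighbourhood the transition from the $(\ph_i)$-chart $\Psi$ to the $(\ph'_j)$-chart $\Psi'$ is a composition of maps of the type considered in Lemmas \ref{EssentialI}--\ref{EssentialII} (exponential, inverse exponential, chart change), hence smooth. The two local manifold structures on a $\id_M$-neighbourhood therefore coincide, and the uniqueness clause of Lemma \ref{LokaleBeschreibung} propagates the agreement to all of $\Diff(M)_0$.

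The main obstacle is (5). Given two boundary-respecting metrics $g,g'$ on $\til M$ with exponentials $\exp,\exp'$, I would define the transition $\Theta\colon\eta\mapsto\eta'$ by the pointwise relation $\exp'_p(\eta'(p))=\exp_p(\eta(p))$, on a sufficiently small $0$-neighbourhood of $\Gamma^\o_\str(TM)$, and show that $\Theta$ and its inverse $\Theta^{-1}$ (obtained by swapping $g$ and $g'$) are smooth. In a fixed common atlas this reads
\[
\eta'_{(i)}(x)=\exp_{i,g'}\bigl(x,\cdot\bigr)^{-1}\bigl(\exp_{i,g}(x,\eta_{(i)}(x))\bigr),
\]
a pointwise composition of real analytic maps whose complex analytic extensions are obtained by exactly the same constructions as Lemma \ref{ExpDefII} and Lemma \ref{DeltaAst}, applied to both $g$ and $g'$. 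Smoothness of $\Theta$ at the level of germs then follows from the Silva-space composition and substitution techniques of Lemmas \ref{EssentialI}, \ref{EssentialII} together with the exponential law of Lemma \ref{ExponentialGesetz}, exactly as in Sections \ref{SecSmoothness of composition}--\ref{SecSmoothnes of the inversion}. Consequently the chart built from $g'$ is smoothly compatible with the one built from $g$ near $0$, so both metrics induce the same local manifold structure on $\mathcal{U}_0$; uniqueness in Lemma \ref{LokaleBeschreibung} finally yields independence of the global Lie group structure from the choice of Riemannian metric.
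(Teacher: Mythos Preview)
Your overall strategy for parts (2), (3) and (5) is correct and coincides with the paper's approach; in particular your transition map $\Theta$ in (5) is exactly the paper's map $\Delta$ (up to swapping the roles of $g$ and $g'$), and the paper proves its smoothness via the same route you sketch, namely by writing the local formula $\eta^\dagger_{(i)}(x)=\log_i(x,\exp_i'(x,\eta_{(i)}(x)))$, passing to complex analytic extensions, and invoking Lemma~\ref{EssentialI}. The paper also verifies explicitly, by the $\alpha_i$-estimates of Definition~\ref{AllesKlein}, that $\|\eta^\dagger\|^1_{4.1}<\ep_0$, which is a point you leave implicit.

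There is, however, a genuine gap in your primary argument for (1). The claim that the sets $\mathcal{B}^1_{4.1,\ep_0}$ form a fundamental system of $0$-neighbourhoods in $\gg^\o_\str(TM)$ is false: this space carries a Silva topology, which is strictly finer than the topology of $C^1$-convergence on the compacta $\ol{B}^{k_i}_{4.1}(0)$. Concretely, a $0$-neighbourhood in the Silva limit may impose conditions that become arbitrarily restrictive as one moves up the inductive system, whereas $\mathcal{B}^1_{4.1,\ep_0}$ only controls the restriction to a fixed compact and imposes no constraint on the extension beyond. So continuity of $i_M$ alone does not yield (1). Moreover, the map $i_M$ as constructed in Section~\ref{SecSmoothnes of the inversion} takes values in $\mathcal{B}^1_{1,\ep_\mathcal{U}}$, not in anything controlled at radius~$4$; hence the preimage $i_M\inv(\mathcal{B}^1_{4,\ep_\star})$ is not even defined without further work. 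The paper therefore proceeds exactly along your ``alternative'': it redoes the quantitative estimates of Lemma~\ref{llAstaaa} (and Definition~\ref{AllesKlein}) with the radii shifted from $4$ to $4.1$, choosing $r>0$ with $4\le (1-r)\cdot 4.1 - r$, so that Lemma~\ref{WennNahe1} directly gives $\eta^\star\in\mathcal{B}^1_{4,\ep_\star}$ for $\eta\in\mathcal{B}^1_{4.1,\ep_0}$. You should make this explicit.

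For (4) you overcomplicate matters. The chart $\gps\co\eta\mapsto\exp\ci\eta$ depends only on the Riemannian metric, not on the atlas; two atlases give literally the same $\gps$. What changes is only the family of open sets $\mathcal{B}^1_{r,\ep}$ used to define $\mathcal{V}_0$, and the embedding into the product of germ spaces. Since the Silva topology on $\gg^\o_\str(TM)$ is atlas-independent (as used in Lemma~\ref{fPullBack}), the two choices produce open $0$-neighbourhoods $\mathcal{V}_0,\mathcal{V}_0'$ in the \emph{same} topological vector space; their intersection $W$ is again such a neighbourhood, $\gps(W)$ is open in both Lie group structures, and connectedness plus the uniqueness clause of Lemma~\ref{LokaleBeschreibung} finish the argument. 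No transition-map smoothness is needed here.
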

\begin{proof}
\begin{compactenum}
\item Reconsidering the proof of Lemma \ref{del} we can achieve that $\log_i$ is also defined and real analytic on the bigger set $\bigcup_{x\in B_{4.2}(0)}\set{x} \ti B_{\delta_{\mathcal{U}}} (x)$. Hence we consider the maps 
\begin{align*}
&\log_i \co \bigcup_{x\in B_{4.2}(0)} \set{x} \ti B_{\delta_{\mathcal{U}}} (x) \ra B_{\min(\ep_\mathcal{U},\ep_\partial)}(0),~ (x,y)\ms \exp(x,\bl)\inv(y) \tx{and}\\
&\a_i \co B_{4.2}(0)\ti B_{\delta_\mathcal{U}} \ra B_{\ep_\mathcal{U}}(0),~(x,y)\ms \log_i(x,x+y).
\end{align*}
Reconsidering the Definition \ref{AllesKlein} we can achieve 
\begin{align*}
\|d_1\a_i(x,y)\|_{op} <\frac{\ep_\mathcal{U}}{2}  
\end{align*}
for all $x\in \ol{B}_{4.1}(0)$, $y \in \ol{B}_{\nu_\a}(0)$, $i \in \set{1,\dots,n}$ and
\begin{align*}
K_\a\geq \sup \set{\|d_2\a_i(x,y)\|_{op} : x\in \ol{B}_{4.1}(0), y \in \ol{B}_{\nu_\a}(0)}
\end{align*}
for all $i\in \set{1,\dots,n}$. We find $r>0$ such that $4 \leq (1-r)4.1 -r$. Now the assertion follows from Definition \ref{AstaaaD} and Lemma \ref{llAstaaa} with $r$ instead of $4$.
\item As in \cite{Gloeckner2} we have $\iota\inv(\mathcal{U}_0^1) = \iota(\mathcal{U}_0^1)$.
\item Clear.
\item Let $\ph_i' \co U_{i,5}' \ra B_5^{k_i'}(0)$ with $i \in \set{1,\dots,n'}$ be an other atlas with the same properties. 
We find an analogous open $0$-neighbourhood $V_0'\subs \gg^\o_\str(TM)$ such that $\mathcal{U}_0':=\gph(V_0')$ generates a Lie group $\Diff(M)'_0 \subs \Diff(M)$. We define the open $0$-neighbourhood  $W:= \mathcal{V}_0\cap \mathcal{V}_0'$. Then $X:= \gph(W) \subs \Diff(M)$. Obviously we get $X \subs \Diff(M)_0\cap \Diff(M)_0'$. Moreover $X$ is open in $\Diff(M)_0$ and in $\Diff(M)_0'$. Because both Lie groups are connected we get $\Diff(M)_0 = \langle X \rangle  = \Diff(M)_0'$ in the sense of sets and in the sense of Lie groups.
\item Let $g'$ be an other Riemannian metric with the same properties as $g$. In the following all objects induced by $g'$ are written with an extra ``$'$''. We choose $\ep>0$ with
\begin{align*}
\ep< \min\left(\ep_{4.1,\delta_\mathcal{U}}',
\ep^{\C}_{4.1,\delta^\C(\frac{1}{2})}, {\ep'}_{\delta_{\log^\ast}}^{\C}\right)
\end{align*}
If $\eta \in C^\o(B_5^{k_i}(0); \C^m)_\str$ and $\|\eta\|_1 <\ep$, then the map $\eta\dagger B_{4.1}^{k_i}(0) \ra \R^m$, $x\ms \log^i(x,\exp_i'(x,\eta(x)))$ makes sense and is real analytic. If $\eta \in \gg_\str^\o(TM)$ and $\|\eta\|<\ep$ then the map $\eta^\dagger \co M \ra TM$, $p \ms \exp|_{W_pM}\inv(\exp'(\eta(p)))$ makes sense and is a stratified, real analytic vector field  of $M$: To see this one chooses a chart $\ph_i$ around $p$ and  defines $x:= \ph_i(x)$ and $v:= \log_i(x,\exp_i'(x,\eta_{(i)}(x)))$. With $v\in B_{\min(\ep_\mathcal{U},\ep_\partial)}$ one directly gets $T\ph_i\inv(x,v) \in W_pM$. Obviously we have $(\eta^\dagger)_{(i)} = (\eta_{(i)})^\dagger$. Now again let $\eta \in C^\o(B^{k_i}_5(0);\R^m)_\str$ with $\|\eta\|_1 <\ep$. We claim $\|\eta^\dagger\|_1 <\ep_0$. We have $\eta^\dagger (x) = \a_i(x,\exp_i'(x,\eta(x)-x))$ for all $x\in B_{4.1}^{k_i}(0)$. Hence 
\begin{align*}
d\eta^\dagger(x,v) = d_1\a_i(x, {\ps'}_\eta^{i}(x)-x;v) + d_2\a_i(x,{\ps'}_\eta^{i}(x)-x; d{\ps'}_\eta^i(x,v)- v)
\end{align*}
and so 
\begin{align*}
&\|d\eta^\dagger(x,\bl)\|_{op} \leq \|d_1\a_i(x, {\ps'}_\eta^i(x)-x;\bl)\|_{op} + \|d_2\a_i(x, {\ps'}_\eta^i(x)-x;\bl)\|_{op} \\
&\cdot \|d{\ps'}_\eta^i(x)-\id\|_{op}< \frac{\ep_0}{2} + K_\a\cdot \frac{\ep_0}{K_\a\cdot 2} = \ep_0.
\end{align*}
Hence the map 
\begin{align*}
\Delta \co \mathcal{B}_{4.1,\ep}^1 \ra \mathcal{B}_{4.1,\ep_0}^1,~ \eta \ms \eta^\dagger
\end{align*}
makes sense. This also shows $\gps'(\mathcal{B}_{4.1,\ep}^1) \subs \mathcal{U}_0$ and $\Delta$ is nothing else then the inclusion in the charts $\gph'$ and $\gph$. Now we want to show that $\Delta$ is smooth. To this point we have to show the smoothness of the corresponding map between $G_\ep^1(\C^m;\C^m|B_{4.1}^{k_i}(0))$ and $G_\ep^1(\C^m;\C^m|B_{4.1}^{k_i}(0))$. Reconsidering Lemma \ref{DeltaAst} (b) we can achieve that the map $\log^\ast_i$ is defined and complex analytic on the bigger set $\bigcup_{x\in B_{4.1,r_{\log^\ast}}^\C(0)} \set{x} \ti B_{\delta^\C(\ep)}^\C (x) \subs \C^m \ti C^m$. 
If $\eta \in \Hol_\ep^1(U_{4.1,n}^{k_i};\C^m)_\str^\R$ then $\eta^\dagger \co U_{4.1,n}^{k_i}\ra \C^m$, $x\ms \log^\ast_i(x,{\ps'}_\eta^i(x))$ makes sense and is complex analytic. We want to show the smoothness of  $\Hol^1_\ep(U^{k_i}_{n,4.1}) \ra \Hol^1_\ep(U^{k_i}_{4n,4.1})$, $\eta \ms \eta^\dagger$. We can write this map as the following composition
\begin{align*}
&\Hol_{\ep}^1(U_{4.1,n}^{k_i};\C^m)_\str^{\R}  \xrightarrow{\circled{1}} \Hol(U_{4.1,2n} ; \C^m) \cap \lfloor \ol{U}_{4,4n}; B^\C_{\delta_{\log^\ast}} (0) \rfloor +\id
\xrightarrow{\circled{2}} \Hol(U^{k_i}_{4.1,4n};\C^m)\\
&\text{with } \circled{1}(\eta)= {\ps'}^i_\eta, ~\circled{2}(f) = x\ms \exp_i^\ast(x,\bl)\inv(f(x)).
\end{align*} 
Now the smoothness of $\circled{1}$ and $\circled{2}$ follows from Lemma \ref{EssentialI}. The set $\gps'(\mathcal{B}_{4.1,\ep}^1)$ is an open $1$-neighbourhood on $\Diff(M)_0'$ and with $\gps'(\mathcal{B}_{4.1,\ep}^1) \subs \mathcal{U}_0$ we get $\Diff(M)_0' \subs \Diff(M)_0$. The inclusion $\Diff(M)_0' \hookrightarrow \Diff(M)_0$ is smooth, because $\Delta$ is smooth. In the analogous way one sees $\Diff(M)_0 \subs \Diff(M)_0'$ and that $\Diff(M)_0 \hookrightarrow \Diff(M)_0'$ is smooth.  
\end{compactenum}
\end{proof}

\begin{lemma}\label{fPullBack}
Given $f\in \Diff(M)$ the map 
\begin{align*}
P_f \co \gg_\str^\o (TM) \ra \gg_\str^\o (TM),~ \eta \ms P_f\eta:= Tf \ci \eta \ci f\inv
\end{align*}
is continuous linear.
\end{lemma}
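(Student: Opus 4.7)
The linearity of $P_f$ is immediate since each $T_pf \in \GL(T_pM; T_{f(p)}M)$ is fibrewise linear; in particular $P_f$ sends stratified vector fields to stratified vector fields, because $f$ (being a diffeomorphism of $M$) preserves the stratification $\partial^jM$ and its tangent bundles. Only the continuity requires work. The plan is to lift everything to the complexification and reduce to standard continuity of precomposition and pointwise multiplication on the Banach spaces $\Hol_b^1(U;\C^m)$ out of which the Silva space $\mathcal{G}(\C^m;\C^m|\ol{B}^{k_i}_R(0))^\R_\str$ is built.

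First, using Lemma \ref{AusdehnenGlobal} and Lemma \ref{Inverse}, I extend $f$ and $f\inv$ to a real analytic diffeomorphism $\til f\co V\ra \til f(V)$ between open neighbourhoods of $M$ in $\til{M}$, and then further to a complex analytic diffeomorphism $f^\ast \co V^\ast \ra f^\ast(V^\ast)$ of open neighbourhoods of $M$ in $M_\C$. For each pair $(i,j)\in\{1,\dots,n\}^2$ write
\[
h_{j,i} := \ph^\ast_i\ci (f^\ast)\inv\ci (\ph^\ast_j)\inv, \qquad k_{i,j} := \ph^\ast_j\ci f^\ast\ci (\ph^\ast_i)\inv,
\]
both holomorphic where defined. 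By compactness of $\ol{B}^{k_j}_R(0)$ and the Lemma of Wallace I can cover $\ol{B}^{k_j}_R(0)$ by finitely many relatively compact open sets $O_{j,i}^\ast\subs \C^m$ (indexed by an appropriate $i=i(j,\a)$) such that $h_{j,i}$ is defined and bounded on a fixed neighbourhood of $\ol{O_{j,i}^\ast}$, and $h_{j,i}(O_{j,i}^\ast)$ is relatively compact in a fixed open neighbourhood of $\ol{B}^{k_i}_R(0)$ in $\C^m$; I may moreover assume the $O_{j,i}^\ast$ cover $\til\ph_j(\til f(\til U_{i,5})\cap\til U_{j,R})$ with $\ol{O_{j,i}^\ast}\cap \R^m\subs \til\ph_j(\til f(\til U_{i,5})\cap M)$. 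On $O_{j,i}^\ast$ one then has the identity
\[
(P_f\eta)^\ast_{(j)}\big|_{O_{j,i}^\ast} \;=\; \bigl((dk_{i,j})\ci h_{j,i}\bigr)\cdot\bigl(\eta^\ast_{(i)}\ci h_{j,i}\bigr),
\]
which follows from the chain rule and the defining formula $Tf\ci\eta\ci f\inv$ applied in the relevant local enveloping charts.

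Choose now connected filtrations $(U_n^{k_i})$ of $\ol{B}^{k_i}_R(0)$ as in the paper. After shrinking the $O_{j,i}^\ast$ (or passing sufficiently deep in the filtration) I can assume that for every $n$ the map $h_{j,i}$ sends some open neighbourhood $W_{j,i,n}^\ast\sups \ol{B}^{k_j}_R(0)$ of its own into $U_n^{k_i}$, and that $(dk_{i,j})\ci h_{j,i}$ is a bounded holomorphic matrix-valued function on $W_{j,i,n}^\ast$. Then the map
\[
\Hol^1_b(U_n^{k_i};\C^m)\;\ra\;\Hol^1_b(W_{j,i,n}^\ast;\C^m), \qquad \zeta \;\ms\; \bigl((dk_{i,j})\ci h_{j,i}\bigr)\cdot(\zeta\ci h_{j,i}),
\]
is continuous linear: precomposition with a fixed holomorphic map into a fixed domain is bounded linear on $\Hol^1_b$ (using $\|\zeta\ci h\|_\8\leq\|\zeta\|_\8$ together with the chain rule and the Cauchy estimate built into the $\|\cdot\|^1$-topology), and pointwise left-multiplication by a fixed element of $\Hol^1_b$ is continuous linear on $\Hol^1_b$.

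Summing over the finite cover in $j$ and using the closed embedding $\gg^\o_\str(TM)\hra \prod_i \mathcal{G}(\C^m;\C^m|\ol{B}^{k_i}_R(0))^\R_\str$, together with Lemma \ref{SilvaGleichTop} (functoriality and continuity of maps on inductive limits of Silva type), $P_f$ becomes continuous linear on each Banach stage of the inductive systems and hence continuous on the germ level; restriction to the closed subspace of stratified germs preserves this. The main point to watch is the bookkeeping of domains when passing from $j$ to $i$ via $h_{j,i}$: one must ensure that the finitely many local formulas above glue to a single germ, but this is automatic from the identity theorem (Lemma \ref{GlobId}) since they all represent the single vector field $P_f\eta$ on an open set containing $M$.
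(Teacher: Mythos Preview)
Your approach is correct in spirit but takes a noticeably heavier route than the paper. The paper avoids all the explicit transition-function bookkeeping by a single observation: setting $\ph_i':=\ph_i\ci f\co f\inv(U_{i,5})\to B_5^{k_i}(0)$ defines another atlas of $M$, and one computes directly that $(P_f\eta)_{(i)}=\eta_{\ph_i'}$, i.e.\ the $\ph_i$-representative of $P_f\eta$ is the $\ph_i'$-representative of $\eta$. Since the topology on $\gg^\o(T\til M|M)\cong\gg(TM_\C|M)$ is independent of the chosen atlas (this is the content of \cite[Lemma~A.16]{Bob2}, already invoked when the topology was defined), the map $\eta\ms[(\eta^\ast)_{{\ph^\ast_i}'}]$ is automatically continuous, and that is exactly $\eta\ms[((P_f\eta)^\ast)_{(i)}]$. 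So what you re-derive by hand via $h_{j,i}$, $k_{i,j}$, Cauchy estimates, and gluing is a special case of the atlas-independence that the paper simply quotes.

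One genuine slip in your write-up: the sentence ``for every $n$ the map $h_{j,i}$ sends some open neighbourhood $W_{j,i,n}^\ast\sups \ol{B}^{k_j}_R(0)$ \ldots'' cannot be right as stated, since $h_{j,i}=\ph_i^\ast\ci(f^\ast)\inv\ci(\ph_j^\ast)\inv$ is only defined where $(f^\ast)\inv\ci(\ph_j^\ast)\inv$ lands in $U_i^\ast$, which in general does not contain all of $\ol{B}^{k_j}_R(0)$. What you need (and presumably meant) is that for each piece $O_{j,i}^\ast$ of your finite cover there is, for every $n$, an $m(n)$ and a neighbourhood of $\ol{O_{j,i}^\ast}$ inside $U_{m(n)}^{k_j}$ mapped by $h_{j,i}$ into $U_n^{k_i}$; the glued function then lives on a neighbourhood of $\ol{B}^{k_j}_R(0)$. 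With that correction the argument goes through, but the paper's one-line reduction to atlas-independence is cleaner.
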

\begin{proof}
Because we can embed $\gg^\o_\str(TM)$ into $\prod_{i=1}^n \mathcal{G}(\C^m;\C^m|\ol{B}_4^{k_i}(0))$. It is enough to that $\gg^\o_\str(TM) \ra \mathcal{G}(\C^m;\C^m|\ol{B}^{k_i}_4(0))$, $\eta \ms ((P_f\eta)^\ast)_{(i)} = ((P_f\eta)_{(i)})^\ast$ is continuous. The map $\ph_i'\co f\inv(U_{i,5}) \ra B^{k_i}_{5}(0)$, $\ph_i'= \ph_i \ci f$ is a chart of $M$. If $\zeta \in \gg^\o_\str(TM)$ we write $\zeta_{\ph_i'}:= d\ph_i'\ci \zeta \ci {\ph_i'}\inv$ for the local representative. We get
\begin{align*}
(P_f\eta)_{(i)} = d\ph_i\ci P_f\eta|_{U_{i,5}}\ci \ph_i\inv = \eta_{\ph_i'}.
\end{align*}
Hence 
\begin{align*}
((P_f\eta)_{(i)})^\ast = (\eta_{\ph_i'})^\ast = (\eta^\ast)_{{{\ph}_i^\ast}'}.
\end{align*}
But because the topology of $\gg(TM_\C|M)$ dose not depend on the choice of the atlas the map $\eta \ms (\wtil{\eta}^\ast)_{{{\ph}_i^\ast}'}$ is continuous.
\end{proof}

The following Lemma and its proof are completely analogous to \cite[5.5,5.6,5.8]{Gloeckner2}. For convenience of reader we recall Gl{\"o}ckners arguments:
\begin{lemma}\label{ConjugationGlatt}
The subgroup $\Diff(M)_0$ of $\Diff(M)$ is normal and for $f \in \Diff(M)$ the conjugation $\interi_f\co \Diff(M)_0 \ra \Diff(M)_0$, $h \ms f\ci h \ci f\inv$ is smooth.
\end{lemma}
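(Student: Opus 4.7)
My plan is to follow closely the strategy of \cite[5.5, 5.6, 5.8]{Gloeckner2}. Fix $f \in \Diff^\o(M)$. Using Lemma \ref{AusdehnenGlobal} applied to $f$ and $f\inv$ together with Lemma \ref{Inverse}, I would first produce a real analytic diffeomorphism $\til{f}\co U_1 \ra U_2$ between open neighbourhoods of $M$ in $\til{M}$ that extends $f$. Since $f$ permutes the strata $\partial^jM$, the extension $\til{f}$ restricts to a diffeomorphism of each $\partial^jM$ (near $M$) onto itself. Consequently $g' := \til{f}^\ast g$ is a Riemannian metric on $U_1$ which makes each $\partial^jM$ totally geodesic, because $\til{f}\co (U_1, g') \ra (U_2, g)$ is by construction an isometry and isometries send totally geodesic submanifolds to totally geodesic ones. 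Thus $(U_1, g')$ is another enveloping-manifold/boundary-respecting-metric datum for $M$.

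Next I would repeat the construction of Sections \ref{SecLocal manifoldstructure}--\ref{SecSmoothnes of the inversion} using $(U_1, g')$ in place of $(\til{M}, g)$, obtaining a chart $\gps' \co \mathcal{V}'_0 \ra \mathcal{U}'_0$ and a Lie group $\Diff(M)'_0 = \langle \mathcal{U}'_0 \rangle$. By (the argument of) Lemma \ref{UnabhaengigkeitMetrik}~(v) this coincides with $\Diff(M)_0$ as a Lie group. The crucial identity is the isometry relation
\begin{align*}
\til{f}\bigl(\exp^{g'}_p(v)\bigr) \;=\; \exp^g_{\til{f}(p)}\bigl(T\til{f}\cdot v\bigr),
\end{align*}
valid for $v$ in the $g'$-exponential domain at $p$. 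Substituting $p = f\inv(q)$ and $v = \eta(f\inv(q))$ for $\eta \in \mathcal{V}'_0$ sufficiently small, I obtain
\begin{align*}
(\interi_f \ci \gps')(\eta)(q) \;=\; f\ci \gps'_\eta \ci f\inv(q) \;=\; \exp^g\bigl(Tf(\eta(f\inv(q)))\bigr) \;=\; \gps_{P_f\eta}(q),
\end{align*}
so $\interi_f \ci \gps' = \gps \ci P_f$ on an open identity neighbourhood of $\Diff(M)_0$.

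From this identity both claims would fall out immediately. Since $P_f$ is continuous linear (Lemma \ref{fPullBack}) and $\gps$, $\gps'$ are smooth charts of the common Lie group structure, $\interi_f$ is smooth on an identity neighbourhood of $\Diff(M)_0$, and hence smooth on all of $\Diff(M)_0$ by left/right translation. The same formula further shows that $\interi_f$ maps a symmetric identity neighbourhood contained in $\mathcal{U}'_0$ into $\mathcal{U}_0 \subs \Diff(M)_0$; because $\Diff(M)_0$ is connected (being generated by the connected symmetric identity neighbourhood $\mathcal{U}_0$ of Lemma \ref{UnabhaengigkeitMetrik}~(ii)), the open subgroup $\interi_f\inv(\Diff(M)_0) \cap \Diff(M)_0$ exhausts $\Diff(M)_0$, yielding $\interi_f(\Diff(M)_0) \subs \Diff(M)_0$, i.e. normality.

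The main subtlety I expect is not in the key computation but in the formal bookkeeping: strictly speaking, we are simultaneously replacing the enveloping manifold $\til{M}$ by the smaller $U_1$ \emph{and} the metric $g$ by $g' = \til{f}^\ast g$, so Lemma \ref{UnabhaengigkeitMetrik}~(v) is not quite applied verbatim. The hard part will be to check that the proof of Lemma \ref{UnabhaengigkeitMetrik}~(v) — which is entirely local near $M$ and only uses the existence of the comparison map $\Delta$ between the two charts together with Theorem \ref{EnvelopTheo} to identify enveloping manifolds near $M$ — applies equally well to the present situation, so that $\Diff(M)_0$ and $\Diff(M)'_0$ may indeed be identified as Lie groups.
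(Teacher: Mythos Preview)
Your proposal is correct and follows essentially the same route as the paper. The only cosmetic difference is the direction of the metric change: the paper takes $g'$ to be the \emph{pushforward} $(f^{-1})^\ast g$ and obtains $\interi_f\circ\gps=\gps'\circ P_f$, whereas you take $g'=\til{f}^\ast g$ and obtain $\interi_f\circ\gps'=\gps\circ P_f$; either identity yields smoothness and normality by the same continuity-of-$P_f$ / independence-of-metric argument. Your explicit passage through Lemma~\ref{AusdehnenGlobal} and Lemma~\ref{Inverse} to extend $f$ to $\til{f}$ on a neighbourhood of $M$ in $\til{M}$ is in fact a point the paper's proof leaves implicit, and your remark that Lemma~\ref{UnabhaengigkeitMetrik}~(v) must be read together with Theorem~\ref{EnvelopTheo} (so that replacing $\til{M}$ by $U_1$ is harmless) is exactly the right justification.
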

\begin{proof}
Let $f \in \Diff(M)$. Then also the pullback metric $g'$  from $g$ over $f$ induces a Riemannian exponential function $\exp' \co \Omega' \ra M$, with $\Omega' = Tf(\Omega)$ and $\exp' = f \ci \exp \ci Tf\inv|_{\Omega'}$. Hence for $\eta \in \mathcal{V}^1_0$ we get $f\ci \gps_\eta \ci f\inv = \exp'\ci P_f \eta = \gps_{P_f\eta}'$. Now we can use Lemma \ref{UnabhaengigkeitMetrik} and find a $0$-neighbourhood $\mathcal{V}_0'\subs \gg_\str^\o(TM)$ such that $\gph' \co \mathcal{V}_0' \ra \Diff(M)_0$ is a diffeomorphism onto a $1$-neighbourhood. Because the map $P_f \co \gg^\o_\str(TM) \ra \gg^\o_\str(TM)$ is continuous linear we find a $0$-neighbourhood $W \subs \mathcal{V}_0^1 \subs \gg^\o_\str(TM)$ such that $P_f (W) \subs \mathcal{V}_0'$. Hence $\interi_f \ci \gps_\eta = \gps'_{P_f \eta} \in \Diff(M)_0$ for all $\eta \in W$. Therefore $\interi_f(\gps(W)) \subs \Diff(M)_0$. And so $\interi_f(\Diff(M)_0) = \interi_f(<\gps(W)>) \subs \Diff(M)_0$. Moreover we have $\interi_f|_{\gps(W)} = \gps'\ci P_f \ci \gph|_{\gps(W)}$. Hence $\interi_f$ is smooth.
\end{proof}

Now we get the main result of this paper. As in the case of \cite{Gloeckner2} we just have to use Lemma \ref{LokaleBeschreibung} and the results above.
\begin{theorem}
There exists a unique smooth Lie group structure on $\Diff(M)$ modelled over $\gg^\o_\str(TM)$ such that for one (and hence for all) boundary respecting Riemannian metrics on $M$ the map $\eta \ms \gps_\eta$ is a diffeomorphism from an open $0$-neighbourhood of $\gg^\o_\str(TM)$ onto an open $1$-neighbourhood of $\Diff(M)$.
\end{theorem}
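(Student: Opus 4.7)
The plan is to invoke the theorem on local description of Lie groups (Lemma \ref{LokaleBeschreibung}) and package all the technical work that has already been carried out in the preceding sections. The ingredients are all in place: Theorem \ref{Zentral} gives us the set $\mathcal{U}_0 \subs \Diff(M)$ parametrised by the chart $\gph$; the smoothness of composition on $\mathcal{B}_{4,\ep_\diamond}$ proved in Section \ref{SecSmoothness of composition}; the smoothness of inversion on $\mathcal{B}_{4,\ep_\star}$ established in Section \ref{SecSmoothnes of the inversion}; and the conjugation lemma (Lemma \ref{ConjugationGlatt}) together with Lemma \ref{UnabhaengigkeitMetrik}.

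First, I would recall that by Lemma \ref{UnabhaengigkeitMetrik} we already have a symmetric identity neighbourhood $\mathcal{U}_0 = \mathcal{U}_0^1 \cup (\mathcal{U}_0^1)\inv$ in $\Diff(M)$ together with an open connected $0$-neighbourhood $\mathcal{V}_0 = \gph(\mathcal{U}_0) \subs \gg_\str^\o(TM)$, a smaller symmetric neighbourhood $\mathcal{V}_0^1$ closed under the multiplication induced by $\diamond$ and under the inversion induced by $\star$, and the fact that the induced multiplication and inversion are smooth on $\mathcal{V}_0^1$. Applying the first part of Lemma \ref{LokaleBeschreibung} then yields a unique smooth manifold structure on $\Diff(M)_0 := \langle \mathcal{U}_0\rangle$ that turns it into a Lie group in which $\mathcal{U}_0$ is an open submanifold; modelling space is $\gg_\str^\o(TM)$ by construction. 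This part has essentially already been executed in Lemma \ref{UnabhaengigkeitMetrik}(\ref{LieGroupStructure}).

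Next, to promote this local Lie group structure to all of $\Diff(M)$, I would apply the second part of Lemma \ref{LokaleBeschreibung}. For this I need to verify (a) $\Diff(M)_0 \trianglelefteq \Diff(M)$ and (b) $\interi_f : \Diff(M)_0 \to \Diff(M)_0$ is smooth for every $f \in \Diff(M)$. Both are exactly Lemma \ref{ConjugationGlatt}, whose proof proceeds by pushing the Riemannian metric forward via $f$, observing that $f\ci \gps_\eta\ci f\inv = \gps'_{P_f\eta}$, invoking the metric-independence from Lemma \ref{UnabhaengigkeitMetrik}, and using that the pullback operator $P_f$ is continuous linear (Lemma \ref{fPullBack}). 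Lemma \ref{LokaleBeschreibung} then produces a unique Lie group structure on $\Diff(M)$ for which $\mathcal{U}_0$ (equivalently, $\mathcal{V}_0^1$ via $\gps$) is an open submanifold.

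The remaining two points are uniqueness and metric-independence of the assertion. For metric-independence, note that Lemma \ref{UnabhaengigkeitMetrik}(e) already shows that another boundary respecting Riemannian metric $g'$ produces the same topological subgroup $\Diff(M)_0$ with the same smooth structure near the identity (the transition map $\Delta : \eta \mapsto \eta^\dagger$ between the two charts was shown to be smooth), so the same Lie group $\Diff(M)$ results. For uniqueness, any other smooth Lie group structure on $\Diff(M)$ modelled over $\gg_\str^\o(TM)$ with $\gps$ a diffeomorphism on an open $0$-neighbourhood must agree with ours on an open identity neighbourhood; since both are Lie group structures, left translations extend this local identification globally, forcing the two manifold structures to coincide. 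The main obstacle, and the reason this final step reads so painlessly, is that the real work was absorbed into the preceding sections --- particularly verifying the hypotheses of Lemma \ref{LokaleBeschreibung} in the corner/real-analytic setting, where neither bump functions nor the double construction are available.
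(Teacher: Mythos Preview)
Your proposal is correct and follows essentially the same approach as the paper: the paper's proof is a single sentence stating that one merely has to apply Lemma \ref{LokaleBeschreibung} together with the results already established (Lemma \ref{UnabhaengigkeitMetrik} and Lemma \ref{ConjugationGlatt}), and you have simply spelled out how these pieces fit together. One small inaccuracy: $\mathcal{V}_0^1$ is not itself closed under $\diamond$ and $\star$; rather, the point is that $\mathcal{U}_0$ is symmetric with $\mathcal{U}_0 \cdot \mathcal{U}_0 \subs \mathcal{U}$, which is what Lemma \ref{LokaleBeschreibung} requires.
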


\appendix

\section{Basic definitions and results for manifolds with corners}\label{SecBasic definitions and results for manifolds with corners}
In this section we fix some notation and recall well known basic definitions and results about manifolds with corners for the convenience of the reader. For more details we recommend \cite{Margalef}.

\begin{definition}
We define $\R^m_l:=[0,\8[^l\times \R^{m-l}$. Given $J\subs {1,...,l}$ we define $\pr_J\co \R^m\ra \R^{\# J}$, $v\ms (v_j)_{j\in J}$. Moreover we write $X_J:= \pr_J(X)$ for $X\subs \R^m$ and $v_J:=\pr_J(v)$ for $v\in \R^m$.
\end{definition}

\begin{definition}
We say a point $x\in \R^m_l$ has index $k\leq l$, if exactly $k$ of the first $l$ components of $x$ equal $0$. This means that the maximal index-set $J\subs \set{1,...,l}$ such that $\pr_J(x) =0$ has the cardinality $\# J = k$.
\end{definition}

\begin{lemma}\label{IndexEindeutig}
Let $M$ be a manifold with corners, $p\in M$ and $\ph\co U_\ph\ra V_\ph\subs \R^m_{i_1}$ and $\ps\co U_\ps \ra V_\ps\subs \R_{i_2}^m$ charts around $p$. Suppose $\ph(p)$ has index $k$, then $\ps(p)$ has index $k$.
\end{lemma}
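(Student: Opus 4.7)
My plan is to transfer the question to $\R^m$ via the change of charts and then argue via tangent cones. Set $x:=\ph(p)\in\R^m_{i_1}$ and $y:=\ps(p)\in\R^m_{i_2}$, and let $k'$ denote the index of $y$. The change of charts $f:=\ps\ci\ph\inv$ is a real analytic homeomorphism from an open neighbourhood of $x$ in $\R^m_{i_1}$ onto an open neighbourhood of $y$ in $\R^m_{i_2}$, with real analytic inverse $g:=\ph\ci\ps\inv$. By definition of real analyticity on subsets of $\R^m_l$, both $f$ and $g$ extend to real analytic maps $\til{f}$, $\til{g}$ on open neighbourhoods $\til{U}$, $\til{V}$ of $x$, $y$ in $\R^m$. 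After passing to the connected component of $\til{U}$ containing a neighbourhood of $x$ in $\R^m_{i_1}$, the identity theorem for real analytic maps on connected open sets of $\R^m$ forces $\til{g}\ci\til{f}=\id$ on $\til{U}$ (because it holds on the nonempty open subset $\R^m_{i_1}\cap \til{U}$ of $\R^m$, which has nonempty interior by Lemma~\ref{KonvexSchnitt}). In particular $A:=\til{f}'(x)\in\GL(\R^m)$.

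Next I would compute the tangent cones. After permuting the first $i_1$ coordinates, we may assume $x_j=0$ iff $j\in\set{1,\dots,k}$, and similarly for $y$ with $k'$. For small $v\in\R^m$ with $v_j\geq 0$ for $j\leq k$ (no constraint on the other coordinates), we have $x+tv\in\R^m_{i_1}$ for all sufficiently small $t>0$, so
\begin{align*}
\psi(p)\text{-side: } f(x+tv)=y+tAv+o(t)\in\R^m_{i_2}.
\end{align*}
Dividing by $t$ and letting $t\ra 0^+$ yields $(Av)_j\geq 0$ for all $j\leq k'$; hence $A(\R^m_k)\subs \R^m_{k'}$. The symmetric argument applied to $A\inv=\til{g}'(y)$ gives $A\inv(\R^m_{k'})\subs\R^m_k$, so in fact $A(\R^m_k)=\R^m_{k'}$, i.e.\ $A$ restricts to a linear isomorphism of the two closed convex cones.

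Finally I would extract an invariant from this isomorphism. The maximal linear subspace contained in $\R^m_k=[0,\8[^k\ti\R^{m-k}$ is $\set{0}^k\ti\R^{m-k}$, of dimension $m-k$, since any linear subspace $L\subs\R^m_k$ must satisfy $L=-L\subs\R^m_k\cap(-\R^m_k)$. A linear isomorphism of convex cones preserves this intrinsic datum, so $m-k=m-k'$ and therefore $k=k'$, as required. The only delicate step is the passage from $f$ to the full diffeomorphism $\til{f}$ between open sets of $\R^m$ with $\til{f}'(x)\in\GL(\R^m)$; after that, the tangent-cone computation and the linear-algebra conclusion are routine.
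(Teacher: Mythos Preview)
Your proof is correct, modulo one imprecision: $\R^m_{i_1}\cap\til{U}$ is not open in $\R^m$ when $i_1>0$; what you need (and what Lemma~\ref{KonvexSchnitt} actually gives) is that its \emph{interior} $]0,\8[^{i_1}\ti\R^{m-i_1}\cap\til{U}$ is nonempty, and on that open set $\til{g}\ci\til{f}=\id$, so the identity theorem applies on the connected component of $x$ in $\til{U}$.

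Your route differs genuinely from the paper's. The paper argues by contradiction: assuming $l<k$, it restricts $\ph\ci\ps\inv$ to the $(m-l)$-dimensional linear stratum through $\ps(p)$, observes this restriction is an immersion, and then finds by a dimension count a tangent direction along which some $J$-coordinate of the image must become negative, contradicting $\im\subs\R^m_{i_1}$. That argument uses only first-order differentiability and never extends the change of charts beyond the corner. Your argument, by contrast, exploits real analyticity to extend $f$ and $g$ to open sets of $\R^m$ and to conclude $\til{f}'(x)\in\GL(\R^m)$ via the identity theorem; the payoff is a clean linear-algebra statement, namely that $A$ carries the tangent cone $\R^m_k$ isomorphically onto $\R^m_{k'}$, from which $k=k'$ follows by comparing the maximal linear subspaces. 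The paper's proof is more elementary and transfers verbatim to the $C^1$ category; yours is more conceptual and makes the invariant (the tangent cone up to linear isomorphism) explicit, at the cost of invoking extensions and the identity theorem that are specific to the analytic setting of this paper.
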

\begin{proof}
Without lose of generality we assume $U_\ps\subs U_\ph$. Let $l$ be the index of $\ps(p)$. First we show $k\leq l$. Suppose $l< k$. Let $J\subs \set{1,...,i_1}$ be the maximal index-set with $(\ph(p))_J=0$ and analogously $I\subs \set{1,...,i_2}$ be the  set of components of $\ps(p)$ that equal $0$, we define the $m-l$-dimensional subspace  $E:=\pr_I\inv(\set{0})= \bigcap_{i\in I}\set{x\in \R^m:x_i=0}$ of $\R^m$. We find an open $\ps(p)$-neighbourhood $V\subs E$ with $V\subs V_\ps$. 
The map $\eta :=\ph\ci \ps\inv|_{V} \co V \ra V_\ph$ is an immersion, because $\ps\ci \ph\inv\ci \eta = \id_V$.
Therefore $F:=d\eta(\ps(p),\bl)(E)$ is a $m-l$-dimensional subspace of $\R^m$. The subspace $F$ must contain a vector such that one of its  $J$ components is not equal to $0$, because otherwise $F$ would be contained in the $m-k$-dimensional subspace $\pr_J\inv(\set{0})$ and this would contradict  $m-l > m-k$. Let $j\in J$ such that $v_j\neq 0$. Without lose of generality we can assume $v_j<0$. We choose a smooth curve $\g \co ]-\epsilon, \epsilon[\ra E$ with $\g(0)= \ps(p)$, $\im(\g)\subs V$ and $d\eta(\ps(p),\g'(0)) =v$. Let $f:=\eta\ci \g \co ]-\epsilon, \epsilon[ \ra V_\ph$. Then $\im (f) \subs V_\ph \subs \R^m_{i_1}$ and so $\im (\pr_j\ci f) \subs [0,\8[$, because $j\leq i_1$. But $\pr_j\ci f(0) = \pr_j(\ph(p)) = 0$ and $(\pr_j\ci f)'(0)=v_j<0$. Hence we find $t\in ]0,\epsilon[$ with $\pr_j\ci f(t)<0$. Hence $k\leq l$. In the analogous way one shows $l\leq k$. Hence $k=l$.
\end{proof}

\begin{definition}
Let $M$ be a manifold with corners. We say a point $p\in M$ has index $j\in \set{0,...,m}$ if we find a chart $\ph\co U_\ph\ra V_\ph\subs \R^m_{i}$ around $p$ such that $\ph(p)$ has index $j$. Because of Lemma \ref{IndexEindeutig} this definition is independent of the choice of the chart $\ph$. We write $\ind(p):=j$ and define the $j$-{\it stratum} $\partial^jM:=\set{x\in M: \ind(x)=j}$. We call the $0$-stratum $\partial^0M$ the {\it interior} of $M$.
\end{definition}

\begin{lemma}
Given an $m$-dimensional manifold with corners $M$ and $j\in \set{0,...,m}$ the $j$-stratum $\partial^j$ is a $m-j$-dimensional submanifold without boundary of $M$. Obviously we have $M= \bigcup_{j\in \set{0,...,m}} \partial^jM$.
\end{lemma}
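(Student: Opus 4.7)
The decomposition $M=\bigcup_{j=0}^m \partial^j M$ is immediate from Lemma~\ref{IndexEindeutig}: every point of $M$ has a well-defined index in $\{0,\dots,m\}$ independent of the chart, and the strata are defined by that index. So the real content is that each $\partial^j M$ carries the structure of an $(m-j)$-dimensional real analytic manifold without boundary, and is a submanifold of $M$.

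My plan is to construct charts explicitly. Given $p\in\partial^j M$, pick any chart $\varphi\co U_\varphi\to V_\varphi\subs\R^m_l$ of $M$ around $p$ (so $l\geq j$), and let $J\subs\{1,\dots,l\}$ be the unique maximal subset with $(\varphi(p))_i=0$ for $i\in J$, which has $\#J=j$. By continuity we may shrink $U_\varphi$ so that for every $q\in U_\varphi$ with $q\in\partial^j M$ we have $(\varphi(q))_i>0$ for all $i\in\{1,\dots,l\}\setminus J$; together with Lemma~\ref{IndexEindeutig} this shows
\begin{align*}
\varphi(U_\varphi\cap\partial^j M)=\{x\in V_\varphi: x_i=0\text{ for }i\in J,\ x_i>0\text{ for }i\in\{1,\dots,l\}\sm J\}.
\end{align*}
Denote this set by $W$. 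It is an open subset of the linear subspace $E_J:=\{x\in\R^m: x_i=0,\ i\in J\}\cong\R^{m-j}$, carrying no boundary. Composing $\varphi|_{U_\varphi\cap\partial^j M}$ with the canonical identification $E_J\cong\R^{m-j}$ (deleting the $J$-coordinates) yields a homeomorphism $\bar\varphi$ onto an open subset of $\R^{m-j}$.

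Next I verify that the family of maps $\bar\varphi$ obtained in this way forms a real analytic atlas on $\partial^j M$. If $\psi\co U_\psi\to V_\psi\subs \R^m_{l'}$ is another chart around a point $q\in\partial^j M\cap U_\varphi$, then $\psi\ci\varphi\inv$ is real analytic on an open subset of $\R^m$ containing a neighbourhood of $\varphi(q)$ (by the definition of a real analytic manifold with corners), and the coordinate formula above shows that it maps the stratum portion $W$ into the analogous set $W'$ for $\psi$. Its restriction to $W$ is therefore a real analytic map between open subsets of $E_J$ and the corresponding linear subspace $E_{J'}$; identifying both with $\R^{m-j}$ gives a real analytic transition $\bar\psi\ci\bar\varphi\inv$ between open subsets of $\R^{m-j}$. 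The analogous argument for $\bar\varphi\ci\bar\psi\inv$ shows these transitions are real analytic diffeomorphisms.

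Finally, $\partial^j M$ is a submanifold of $M$ in the natural sense: with respect to the chart $\varphi$, the inclusion $\partial^j M\hra M$ has the local form $E_J\cap V_\varphi\hra V_\varphi$, which is a real analytic embedding with image the intersection of $V_\varphi$ with a coordinate subspace (plus the open positivity conditions). The main technical check is the compatibility assertion about transition maps, and this reduces cleanly to the fact that a real analytic diffeomorphism between open sets of $\R^m_l$ and $\R^m_{l'}$ must (by Lemma~\ref{IndexEindeutig}) send points of index $j$ to points of index $j$, so it preserves the stratification and restricts to a real analytic map on each stratum.
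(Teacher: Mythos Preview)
Your argument is correct and follows exactly the same idea as the paper's proof, which is simply the one-line observation that being a submanifold is a local property and that locally $\partial^j M$ looks like $\partial^j \R^m_k$ for some $k\ge j$; you have just spelled out in detail what that local model is and why the induced charts are real analytically compatible.
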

\begin{proof}
To be a submanifold, is a local property and locally $\partial^jM$ looks like $\partial^j \R^m_k$ for a $k\geq j$.
\end{proof}

\begin{definition}
Let $M$ be a manifold with corners, $p \in M$ and $v\in T_pM$. We call $v$ an inner tangent vector, if we find a smooth curve $\eta \co [0,\epsilon[ \ra M$ with $\eta(0)=p$ and $\eta'(0)=v$.
\end{definition}

\begin{lemma}
Let $M$ be a manifold with corners, $p \in M$ and  $v\in T_pM$. The following conditions are equivalent: 
\begin{compactenum}
\item The tangent vector  $v$ is inner;
\- It exists a chart $\ph$ of $M$ around $p$, such that
\begin{align*}
\ph_j(p)=0 \Rightarrow d_p\ph_j(v)\geq 0;
\end{align*}
\- For all charts $\ph$ of $M$ around $p$ we have
\begin{align*}
\ph_j(p)=0 \Rightarrow d_p\ph_j(v)\geq 0.
\end{align*}
\end{compactenum}
\end{lemma}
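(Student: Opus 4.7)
The plan is to prove the equivalences in the cyclic pattern (1)$\Rightarrow$(3)$\Rightarrow$(2)$\Rightarrow$(1), which keeps the chart-independence step at the start where it is easiest.

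For (1)$\Rightarrow$(3), I would pick any chart $\ph\co U_\ph\ra V_\ph\subs \R^m_k$ around $p$ and a smooth curve $\eta\co [0,\ep[\ra M$ realising $v$ in the sense of the definition. After possibly shrinking $\ep$, the composition $\ph\ci \eta$ is a smooth curve into $V_\ph\subs \R^m_k$ with $\ph\ci\eta(0)=\ph(p)$ and $(\ph\ci\eta)'(0)=d_p\ph(v)$. For $j\in\set{1,\dots,k\}$ with $\ph_j(p)=0$, the function $t\ms \ph_j(\eta(t))$ is non-negative on $[0,\ep[$ and vanishes at $0$, so its right derivative at $0$ — which equals $d_p\ph_j(v)$ — must be $\geq 0$. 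The implication (3)$\Rightarrow$(2) is immediate since $M$ admits at least one chart around $p$.

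The only step requiring a small argument is (2)$\Rightarrow$(1). Given the chart $\ph$ with the stated property, set $x_0:=\ph(p)\in \R^m_k$ and $w:=d_p\ph(v)\in \R^m$, and consider the straight line $\g\co [0,\ep[\ra \R^m$, $t\ms x_0+tw$. The goal is to choose $\ep>0$ so small that $\im(\g)\subs V_\ph\subs \R^m_k$; then $\eta:=\ph\inv\ci\g$ is a smooth curve in $M$ with $\eta(0)=p$ and $\eta'(0)=v$, proving that $v$ is inner. That $\g$ stays in $\R^m_k$ for small $t\geq 0$ is the core observation: for each $j\in\set{1,\dots,k}$ either $(x_0)_j>0$, in which case $(x_0)_j+tw_j\geq 0$ for all sufficiently small $t$, or $(x_0)_j=0$, in which case the hypothesis gives $w_j\geq 0$ and hence $tw_j\geq 0$ for every $t\geq 0$. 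Taking $\ep$ as the minimum of the finitely many upper bounds arising from the first case together with a bound ensuring $\g([0,\ep[)\subs V_\ph$ (possible since $V_\ph$ is open and $x_0\in V_\ph$) finishes the construction.

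I do not expect a serious obstacle: the only potentially subtle point is making sure the finitely many smallness conditions on $\ep$ can be satisfied simultaneously, which is automatic because $V_\ph$ is open and we have only finitely many indices $j\leq k$ to control.
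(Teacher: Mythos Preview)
Your proof is correct and follows the standard cyclic argument. Note, however, that the paper does not give its own proof of this lemma: it is stated in Appendix~\ref{SecBasic definitions and results for manifolds with corners} as a basic background fact about manifolds with corners and left without proof. So there is nothing to compare against; your argument simply fills in the routine details that the paper omits.
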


\section{Proof of Theorem \ref{EnvelopTheo}}\label{ProofEnv}

As mentioned in section \ref{SecEnveloping Manifold} we can use our Lemma \ref{GottVerdammt} to proof Theorem \ref{EnvelopTheo}. The proof is completely analogous to \cite[Proposition 1]{Bruhat}.
\begin{proof}
First we show the uniqueness result. Using Lemma \ref{AusdehnenGlobal} we find a neighbourhood $U$ of $M$ in $\til{M}_1$ and a real analytic map $\til{f}\co U\ra \til{M}_2$ with $\til{f}|_M=\id_M$. For the same reason we find a neighbourhood $V$ of $M$ in $\til{M}_2$ and a real analytic map $\til{g}\co V\ra \til{M}_1$ with $\til{g}|_M=\id_M$. With Lemma \ref{Inverse} we find a neighbourhood $U_1$ of $M$ in $\til{M}_1$, and a neighbourhood $U_2$ of $M$ in $\til{M}_2$ such that $\til{f}(U_1)=U_2$ and $\til{f}|_{U_1}^{U_2}$ is a real analytic diffeomorphism.

Now we construct the enveloping manifold. For $x\in M$ let $\ph_x \co U_x^1 \ra V^1_x \subs [0,\8[^m$ be a chart of $M$ around $x$. Because $M$ is normal we find relatively compact $x$-neighbourhoods $U_x^3$ and $U_x^2$ in $M$ such that
\begin{align*}
M \sups U_x^1 \sups \ol{U_x^2} \sups U_x^2 \sups \ol{U_x^3} \sups U_x^3.
\end{align*}
Hear the closures are taken in the space $M$ and hence coincide with the closures in the topological subspaces. The family $(U_x^3)_{x\in M}$ is an open cover of the compact manifold $M$ hence we find finite subcover  $(U_{x_i}^3)_{i\in I}$. We define 
$U_i^1:=U_{x_i}^1$, $V_i^1:=V_{x_i}^1$, $\ph_i:= \ph_{x_i}$,
$U_i^2:=U_{x_i}^2$, $V_i^2:=\ph_i(U_i^2)$,
$U_i^3:=U^3_{x_i}$ and $V_i^3:=\ph_i(U_i^3)$. Hence we get 
\begin{align*}
V_i^1\sups \ol{V_i^2} \sups V^2_i \sups \ol{V_i^3} \sups V^3_i.
\end{align*}
Hear the sets $\ol{V_i^2}$ and $\ol{V_i^3}$ are compact and hence the closure in the topological subspace coincides with the closure in $[0,\8[^m$ respectively $\R^m$. Moreover we define the sets $V_{i,j}^1:=\ph_i(U_i^1\cap U_j^1)$, $V^2_{i,j}:=\ph_i(U^2_i \cap U^2_j)$ and $V^3_{i,j}:= \ph_i(U^3_i\cap U^3_j)$. Given $i,j\in I$ we  use Lemma \ref{Inverse} to find an open neighbourhood $\til{V}^1_{i,j}$ of $V^1_{i,j}$ in $\R^m$ with $\til{V}^1_{i,j}\cap [0,\8[^m = V^1_{i,j}$ and a real analytic diffeomorphism $\ps_{i,j}\co \til{V}^1_{i,j} \ra \til{V}^1_{j,i}$ with $\ps_{i,j}|_{\til{V}^1_{i,j}\cap [0,\8[^m} = \ph_j\ci \ph_i\inv|_{V^1_{i,j}}$ with inverse $\ps_{j,i}$. Because $\ol{V^2_{i,j}}\subs \ol{V^2_i}$ and $V^2_i$ is relative compact, $\ol{V^2_{i,j}}$ is compact and hence the closure in $V^2_i$ coincides with the closure in $[0,\8[^m$ respectively $\R^m$. 
Using Lemma \ref{GottVerdammt} we find an open  neighbourhood $\til{V}^2_{i,j}$ of $V^2_{i,j}$ in $\R^m$ with $\ol{V}_{i,j}^2\subs \til{V}^1_{i,j}$, $\til{V}^2_{i,j}\cap [0,\8[^m = V^2_{i,j}$ and $\ol{\til{V}^2_{i,j}}\cap [0,\8[^m = \ol{V^2_{i,j}}$. 
We can assume $\ps_{i,j}(\til{V}^2_{i,j})= \til{V}^2_{j,i}$, because $\ps_{i,j}(V^2_{i,j}) = V^2_{j,i}$. The set $\ol{V^3_i} \cap \ps_{j,i}\inv (\ol{V^3_j}\cap \ol{V^2_{j,i}})$ is compact and contained in $\til{V}^2_{i,j}$. Hence we find an open neighbourhood $\til{Z}_{i,j}$ of $\ol{V^3_i} \cap \ps_{j,i}\inv (\ol{V_j^3}\cap \ol{V^2_{j,i}})$ in $\R^m$ with $\ol{\til{Z}}_{i,j}\subs \til{V}^2_{i,j}$ and $\ps_{i,j}(\til{Z}_{i,j}) = \til{Z}_{j,i}$.
Because $\ps_{j,i}(\ol{V}^3_j \cap \ol{V^2_{j,i}}) \cap \ol{V^3_i} \subs \til{Z}_{i,j}$ we get $(\ps_{j,i}(\ol{V_j^3} \cap \ol{V^2_{j,i}}) \setminus \til{Z}_{i,j}) \cap (\ol{V^3_i} \setminus \til{Z}_{i,j}) = \emptyset$. The sets $\ps_{j,i}(\ol{V_j^3} \cap \ol{V^2_{j,i}})\setminus \til{Z}_{i,j}$ and $\ol{V_i^3} \setminus \til{Z}_{i,j}$ are closed, hence we find open disjoint sets $\til{X}_{i,j}$ and $\til{Y}_{i,j}$ with $\ps_{j,i}(\ol{V^3_j}\cap \ol{V^2_{j,i}})\setminus \til{Z}_{i,j} \subs \til{X}_{i,j}$ and $\ol{V^3_i} \setminus \til{Z}_{i,j} \subs \til{Y}_{i,j}$. Thus $\ps_{j,i}(\ol{V_j^3}\cap \ol{V^2_{j,i}}) \subs\til{Z}_{i,j} \cup  \til{X}_{i,j}$ and $\ol{V_i^3} \subs \til{Z}_{i,j} \cup \til{Y}_{i,j}$.
Because $I$ is finite the set $\bigcap_{j\in I,\ol{V^1_{i,j}}\neq \emptyset} \til{Y}_{i,j} \cup \til{Z}_{i,j}$ is open. Obviously it contains $\ol{V^3_i}$. Using Lemma \ref{GottVerdammt} we find an open set $\hat{V}^3_i$ in $\R^m$ with $\hat{V}^3_i \cap [0,\8[^m = V^3_i$, 
$\ol{\hat{V}^3_i} \cap [0,\8[^m = \ol{V^3_i}$ 
and $\hat{V}^3_i\subs \til{Y}_{i,j}\cup \til{Z}_{i,j}$ for all $j\in I$ with $\til{V}^1_{i,j}\neq \emptyset$. Now we calculate
\begin{align*}
&\ol{\ps_{i,j}(\hat{V}^3_i \cap \til{V}^2_{i,j})}\cap [0,\8[^m = \ps_{i,j}(\ol{\hat{V}^3_i \cap \til{V}^2_{i,j}})\cap [0,\8[^m = \ps_{i,j}(\ol{\hat{V}^3_i \cap \til{V}^2_{i,j}}\cap [0,\8[^m)\\
\subs&\ps_{i,j}(\ol{\hat{V}^3_i} \cap \ol{\til{V}^2_{i,j}} \cap [0,\8[^m) = \ps_{i,j}(\ol{V^3_i} \cap \ol{V^2_{i,j}} )
\end{align*}
Given $x\in V^2_i$ we find an open $x$-neighbourhood $\til{V}^2_{i,x}$ with $x\in V^2_{i,j}\Rightarrow \til{V}^2_{i,x}\subs \til{V}^2_{i,j}$. This is possible because $I$ is finite and so $\bigcap_{j\in I, x\in \til{V}^2_{i,j}}\til{V}^2_{i,j}$ is open. Now we shrink $\til{V}_{i,x}^2$ such that $x\in \ps_{j,i}(\ol{V_j^3} \cap \ol{V^2_{j,i}}) \subs \til{Z}_{i,j} \cup \til{X}_{i,j} \Rightarrow \til{V}^2_{i,x}\subs \til{Z}_{i,j} \cup \til{X}_{i,j}$ again this is possible because $I$ is finite. Suppose $\ph_i\inv (x)\notin \ol{U^3_j}$, then 
\begin{align*}
x\notin \ph_i(\ol{U^3_j} \cap U^2_i) = \ph_i(\ol{U^3_j} \cap \ol{U^2_i}) \sups \ph_i (\ph_j\inv (\ol{V^3_j} \cap \ol{V^2_{j,i}}))  = \ps_{j,i}(\ol{V^3_j} \cap \ol{V^2_{j,i}}) \sups \ol{\ps_{j,i}(\hat{V}^3_j \cap \til{V}^2_{j,i})}.
\end{align*}
Hence we can shrink $\til{V}^2_{i,x}$ such that $\ph_i\inv(x) \notin \ol{U^3_j} \Rightarrow \til{V}^2_{i,x} \cap \ps_{j,i}(\hat{V}^3_j \cap \til{V}^2_{j,i}) = \emptyset$.

We calculate
\begin{align*}
 V_{i,j}^2\cap V^2_{i,k} = \ph_i(U^2_i\cap U^2_j \cap U^2_k) = \ph_i(\ph_j\inv(\ph_j(U^2_i \cap U_j^2) \cap \ph_j (U_j^2 \cap U_k^2))).
\end{align*}
Let $S\subs \R^m$ be open with $V^2_{i,j} \cap V^2_{i,k} = S\cap [0,\8[^m$.
Given $x\in V_{i,j}^2\cap V_{i,k}^2$ we get $x\in \ps_{j,i}(\til{V}^2_{j,i}\cap \til{V}^2_{j,k})$. Analogously we get $x\in \ps_{j,i}(\til{V}^2_{j,i}\cap \til{V}^2_{j,k})$. 
Therefore we can shrink $\til{V}^2_{i,x}$ such that $x\in V^2_{i,j}\cap V^2_{i,k} \Rightarrow \til{V}^2_{i,x} \subs \ps_{j,i}(\til{V}^2_{j,i} \cap \til{V}^2_{j,k}) \cap \ps_{k,i}(\til{V}^2_{k,i} \cap \til{V}^2_{k,j})$. Moreover by replacing $\til{V}^2_{i,x}$ with $\til{V}^2_{i,x} \cap S$ we can assume $\til{V}^2_{i,x}\cap [0,\8[ \subs V^2_{i,j} \cap V^2_{i,k}$. Because $x\in S$ we get $x\in \til{V}^2_{i,x}$.  Now we shrink $\til{V}^2_{i,x}$ further by replacing $\til{V}^2_{i,x}$ with the connected component of $x$ in $\til{V}^2_{i,x}$. The maps $\ps_{i,j}|_{\til{V}^2_{i,x}}$ and $\ps_{k,j}\ci \ps_{i,k}|_{\til{V}^2_{i,x}}$ are real analytic and coincide on $\til{V}^2_{i,x}\cap [0,\8[^m \subs V^2_{i,j}\cap V^2_{i,k}$ hence they coincide on the connected set $\til{V}^2_{i,x}$. Now we define the open set $\til{V}^2_i:= \bigcup_{x\in V^2_i}V^2_{i,x}$ that is a neighbourhood of $V^2_i$ in $\R^m$ and so $\til{V}^2_i$ is also a neighbourhood of $\ol{V^3_i}$. Hence we find an open neighbourhood $\til{V}^3_i$ of $\ol{V^3_i}$  with $\til{V}^3_i \subs \til{V}^2_i \cap \hat{V}^3_i$ and $\ol{\til{V}^3_i} \subs \til{V}^2_i$. We get $\til{V}^3_i\cap [0,\8[^m = \til{V}^2_i \cap \hat{V}_i^3 \cap [0,\8[^m = \til{V}^2_i \cap V^3_i = V^3_i$ and so $\ol{V^3_i} \subs \ol{\til{V}^3_i} \cap [0,\8[^m$. We also get $\ol{\til{V}^3_i} \cap [0,\8[^m \subs \ol{\til{V}^2_i} \cap \ol{\hat{V}_i^3} \cap [0,\8[^m = \ol{\til{V}^2_i} \cap \ol{V^3_i} = \ol{V^3_i}$. Defining $\til{V}^3_{i,j}:= \til{V}^3_i \cap \ps_{j,i}(\til{V}^3_j \cap \til{V}^2_{j,i})$ for $i,j\in I$ we get $\ps_{i,j}(\til{V}^3_{i,j}) = \til{V}^3_{j,i}$.

Now we define the sets $\til{V}^3_{i,j,k}:= \til{V}^3_{i,j} \cap \til{V}^3_{i,k}$ and want to show $\ps_{i,j}(\til{V}^3_{i,j,k}) = \til{V}^3_{j,i,k}$ and $\ps_{i,j}|_{\til{V}^3_{i,j,k}} = \ps_{k,j} \ci \ps_{i,k}|_{\til{V}^3_{i,j,k}}$. If $y \in \til{V}_{i,j,k}^3$ we find $x\in V^2_i$ with $y\in \til{V}^2_{i,x}$. Hence $y \in \til{V}^2_{i,x} \cap \ps_{j,i}(\til{V}^3_j \cap \til{V}^2_{j,i}) \cap \ps_{k,i}(\til{V}^3_k \cap \til{V}^2_{k,i}) \subs \til{V}^2_{i,x}\cap \ps_{j,i}(\hat{V}^3_j \cap \til{V}^2_{j,i}) \cap \ps_{k,i}(\hat{V}^3_k \cap \til{V}^2_{k,i})$. Because of $\ph_i\inv(x) \notin \ol{U^3_j}\Rightarrow \til{V}^2_{i,x}\cap \ps_{j,i}(\hat{V}^3_j \cap \til{V}^2_{j,i}) = \emptyset$ and $\ph_i\inv(x) \notin \ol{U_k^3} \Rightarrow \til{V}^2_{i,x}\cap \ps_{k,i}(\hat{V}^3_k \cap \til{V}^2_{k,i}) = \emptyset$ we get $\ph_i\inv(x)\in \ol{U^3_j}$. Hence $x\in \ph_i(U_i^2\cap U_j^2) \cap \ph_i (U_i^2 \cap U_k^2)= V^2_{i,j}\cap V^2_{i,k}$. Therefore $\ps_{i,j}|_{\til{V}^2_{i,x}} = \ps_{k,j}\ci \ps_{i,k}|_{\til{V}^2_{i,x}}$ and $\til{V}^2_{i,x} \subs \ps_{j,i}(\til{V}^2_{j,i} \cap \til{V}^2_{j,k}) \cap \ps_{k,i}(\til{V}^2_{k,i} \cap \til{V}^2_{k,j})$. Especially $y\in \ps_{j,i}(\til{V}^2_{j,i} \cap \til{V}^2_{j,k}) \cap \ps_{k,i}(\til{V}^2_{k,i} \cap \til{V}^2_{k,j})$ and $\ps_{i,j}(y) = \ps_{k,j}\ci \ps_{i,k}(y)$.
Hence $\ps_{i,k}(y)\in \til{V}^2_{k,j}$. Because $y\in \til{V}^3_{i,j,k}= \til{V}^3_{i,j}\cap \til{V}^3_{i,k}$ we get $\ps_{i,k}(y)\in \til{V}^3_k \cap \til{V}^2_{k,j}$. Thus $\ps_{k,j}(\ps_{i,k}(y)) \in \ps_{k,j}(\til{V}^3_k\cap \til{V}^2_{k,j})$. On the other hand $\ps_{i,j}(y)\in \til{V}^3_j$, because $y\in \til{V}^3_{i,j}$. Hence $\ps_{i,j}(y)= \ps_{k,j}(\ps_{i,k}(y)) \in \til{V}^3_j \cap \ps_{k,j}(\til{V}_k^3 \cap \til{V}^2_{k,j}) = \til{V}^3_{j,k}$. Moreover we have $\ps_{i,j}(y)\in \til{V}_{j,i}^3$, because $y\in \til{V}_{i,j}^3$. Therefore $\ps_{i,j}(y)\in \til{V}^3_{j,i,k}$. We get $\ps_{i,j}(\til{V}^3_{i,j,k})\subs \til{V}_{j,i,k}^3$ and because $\ps_{i,j}\inv = \ps_{j,i}$ we see that $\ps_{i,j}(\til{V}_{i,j,k}^3) = \til{V}^3_{j,i,k}$.

Now we define the topological space  $\til{M}^1:=\coprod_{i\in I}\til{V}_i^3$ as the disjoint topological union and on $\til{M}^1$ we define an relation $\sim$: Let $x,y\in \til{M}^1$, e.g. $x\in \til{V}^3_i$ and $y\in \til{V}^3_j$. We say $x$ is equivalent to $y$ if $x\in \til{V}^3_{i,j}$, $y\in \til{V}_{j,i}^3$ and $y= \ps_{i,j}(x)$. To show that $\sim$ is an equivalence relation on $\til{M}^1$ we have to show its transitivity. Let $x\in \til{V}_i^3$, $y\in \til{V}_j^3$ and $z\in \til{V}_k^3$. Moreover let $z\in \til{V}_{k,j}^3$, $y\in \til{V}^3_{j,k}$, $y\in \til{V}_{j,i}^3$, $x\in \til{V}_{i,j}^3$, $z= \ps_{j,k}(y)$ and $y= \ps_{i,j}(x)$. Directly we get $y\in \til{V}_{j,i}^3 \cap \til{V}_{j,k}^3 = \til{V}^3_{j,i,k}$. Hence $x= \ps_{j,i}(y)\in \ps_{j,i}(\til{V}_{j,i,k}^3) \in \til{V}_{i,j,k}^3$ and $z= \ps_{j,k}(y)\in \til{V}_{k,j,i}^3$, because $y\in \til{V}_{j,i,k}^3 = \til{V}^3_{j,k,i}$. Moreover we have $\ps_{i,k}(x)= \ps_{j,k}(\ps_{i,j}(x)) =z$ and so $x$ and $z$ are equivalent. Now we define $\til{M}:=\til{M}^1/\sim$ as the topological quotient. Let $\pi \co \til{M}^1\ra \til{M},~ x\ms [x]$ be the canonical projection. 
Given $j\in I$ let $\i_j\co \til{V}_j^3 \hra \til{M}^1,~ x\ms (x,j)$ be the canonical inclusion. 
The topology on $\til{M}$ is final with respect to the maps $\pi\ci \i_i \co \til{V}_i^3 \ra \til{M}$ with $i\in I$. 
We show that the maps $\pi\ci \i_i \co \til{V}_i^3 \ra \til{M}$ are open. To this point let $U\subs \til{V}^3_i$ be open and $j\in I$. 
We calculate
\begin{align}\label{OPEN}
&\i_j\inv(\pi\inv(\pi(U))) = \i_j\inv\left( \set{(y,k)\in \til{M}^1: (\exists x\in U\subs\til{V}_i^3) ~ y\sim x }\right)\\
=&\set{y\in \til{V}^3_j: (\exists x\in U\subs \til{V}^3_i) ~ y\sim x}
=\ps_{j,i}\inv(U) \subs \til{V}^3_{j,i}.
\end{align}
Hence $\pi\ci \i_i \co \til{V}^3_i\ra \til{M}$ is continuous and open.
Now we define the maps $\ps_i \co \pi(\til{V}^3_i)\ra \til{V}^3_i,~ p \ms x$ if $\pi(x) =p$ and $x\in \til{V}_i^3$. The map $\ps_i$ is well-defined because $\ps_{i,i}=\id_{\til{V}_i^3}$. Moreover $\ps_i$ is bijective because its inverse is given by $\ps_i\inv= \pi\ci \i_i \co \til{V}^3_i \ra \pi(\til{V}^3_i),~ x\ms \p(x)$. Hence $\ps_i$ is a homeomorphism. 
To show that the maps $\ps_i$ form a real analytic atlas of $\til{M}$ we mention $\ps_i\inv(\til{V}_j^3) = \til{V}^3_{i,j}$ and calculate for $x\in \til{V}_{i,j}^3$ 
\begin{align*}
\ps_j\ci \ps_i\inv(x) = \ps_j (\pi(x)) =\ps_{j,i}(x).
\end{align*}

Now we show that $\til{M}$ is a Hausdorff space. Therefore we show $\ol{\til{V}^3_{i,j}}\subs \til{V}^2_{i,j}$. Given $y\in \til{V}^3_{i,j} \subs \til{V}^2_i$ we find $x\in V^2_i$ with $y\in \til{V}^2_{i,x}$. We want to show $x\in \ps_{j,i}(\ol{V^3_j} \cap \ol{V^2_{j,i}})$. If this not true  then $\ph_i\inv(x)\notin \ol{U^3_j}$. With $\ph\inv(x)\notin \ol{U^3_j} \Rightarrow \til{V}^2_{i,x}\cap \ps_{j,i}(\hat{V}^3_j \cap \til{V}^2_{j,i}) = \emptyset$ we get $y\notin \ps_{j,i}(\hat{V}_j^3 \cap \til{V}^2_{j,i}) \sups \ps_{j,i}(\til{V}^3_j \cap \til{V}^2_{j,i}) \sups \ps_{j,i} (\til{V}^3_{j,i})= \til{V}^3_{i,j}$. But since this is a contradiction we get $x\in \ps_{j,i}(\ol{V_j^3} \cap \ol{V^2_{j,i}})$. With $x\in \ps_{j,i}(\ol{V_j^3} \cap \ol{V^2_{j,i}}) \Rightarrow \til{V}^2_{i,x}\subs \til{Z}_{i,j}\cup \til{X}_{i,j}$ we get $y \in \til{Z}_{i,j}\cup \til{X}_{i,j}$. Moreover we have $y\in \til{V}^3_i \subs \hat{V}^3_i \subs \til{Y}_{i,j}\cup \til{Z}_{i,j}$ and with $\til{Y}_{i,j}\cap \til{X}_{i,j} = \emptyset$ we get $y\in \til{Z}_{i,j}$. Hence $\til{V}_{i,j}^3 \subs \til{Z}_{i,j}$. And therefore $\ol{\til{V}^3_{i,j}} \subs \ol{\til{Z}}_{i,j} \subs \til{V}^2_{i,j}$. Now let $p\neq q \in \til{M}$. We choose $x,y\in \til{M}^1$ with $\pi(x) = p$ and $\pi(y)=q$. Let $x\in \til{V}^3_i$ and $y\in \til{V}_j^3$. If there are an open $x$-neighbourhood $W_x\subs \til{V}_i^3$ and an open $y$-neighbourhood $W_y\subs \til{V}_j^3$ with $\pi(W_x)\cap \pi(W_y)= \emptyset$ then $\til{M}$ has to be Hausdorff, because of (\ref{OPEN}). Suppose there do not exist such neighbourhoods $W_x$ and $W_y$, then we find a sequence $(x_n)_{n\in \N}$ in $\til{V}_i^3$ and a sequence $(y_n)_{n\in \N}$ in $\til{V}_j^3$ with $x_n\sim y_n$ for all $n\in \N$. Hence $x_n \in \til{V}_{i,j}^3$ and $y_n \in \til{V}_{j,i}^3$ and so $x\in \ol{\til{V}^3_{i,j}} \subs V^2_{i,j}$ and $y \in \ol{\til{V}^3_{j,i}}\subs V^2_{j,i}$. Since $y_n = \ps_{i,j}(x_n)$ for all $n\in \N$ we get  $y= \ps_{i,j}(x)$. Therefore $y\in \til{V}^3_j \cap \ps_{i,j}(\til{V}_i^3 \cap \til{V}^2_{i,j}) = \til{V}^3_{j,i}$. With $x = \ps_{j,i}(y)$ we get $x\in \til{V}_{i,j}^3$. We conclude $x\sim y$. But this contradicts $p \neq q$.  

We define the map $\ph \co M \ra \til{M}$ by $\ph|_{U^3_i}:= \p \ci \i_i \ci \ph_i$. To see that $\ph$ is well-defined choose $p \in U^3_i\cap U^3_j$. We get $\ph_i(p)\in \til{V}^3_i$ and $\ph_j(p)\in \til{V}^3_j$ moreover we have $\ph_i(p)\in \ph_i(U^3_i \cap U^3_j) = V^3_{i,j} \subs \til{V}^3_{i,j}$, $\ph_j(p)\in \ph_j(U^3_i \cap U^3_j) = V^3_{j,i} \subs \til{V}^3_{j,i}$ and $\ps_{i,j}(\ph_i(p)) = \ph_j(p)$. Hence $\ph_i(p)\sim \ph_j(p)$. And so $\ph$ is well-defined. Now we show that $\ph$ is injective. Let $p_1,p_2\in M$ with $\ph(p_1)=\ph(p_2)$ and $p_1\in U^3_i$ and $p_2\in U^3_j$. We conclude $\ph_i(p_1)\sim \ph_j(p_2)$ and so $\ph_i(p_1)\in \til{V}_{i,j}^3 \cap [0,\8[^m$,  $\ph_j(p_2)\in \til{V}_{j,i}^3$ and $\ps_{i,j}(\ph_i(p_1)) = \ph_j(p_2)$. Hence $\ph_j(p_1)= \ph_j(p_2)$ and so $p_1=p_2$. We give $\ph(M)$ the real analytic structure such that $\ph$ becomes an real analytic diffeomorphism. If we can show that $\til{M}$ is an enveloping manifold of $\ph(M)$ we are done, because we can identify $M$ and $\ph(M)$. We have $\ph(M) = \pi(\coprod_{i\in I} V^3_i)$ with $\coprod_{i\in I} V^3_i \subs \coprod_{i\in I} \til{V}_i^3$. If $x\in V^3_i$ then $\ps_i\co \pi(\til{V}^3_i) \ra 
\til{V}^3_i$ is a chart of $\til{M}$ around $\pi(x)$. We show $\ps_i(\pi(\til{V}^3_i) \cap \ph(M))) = V^3_i$. Let $p = \pi(x)$ with $x\in \til{V}_i^3$ and $p=\p(y)$ with $y \in V_j^3$. Then $x\sim y$ and so $x\in V_i^3$ because $x= \ps_{j,i}(y) \in \ps_{j,i}(V_{j,i}^3) = \ph_i\ci \ph_j (V^3_{j,i}) \subs V_i^3$. Now let $x\in V_i^3$. Obviously $x = \ps_i (\pi(x))$ and  $\pi(x)\in \pi(\til{V}^3_i)\cap \ph(M)$. It is left to show that $\ps_i|_{\pi(\til{V}^3_i) \cap \ph(M))}^{V^3_i}$ is a chart of $\ph(M)$. To this point we show $\ps_i\ci \ph|_{\ph\inv (\pi(\til{V}^3_i)\cap \ph(M))}$ is a chart of $M$.
At first we show $\ph\inv (\pi(\til{V}^3_i) \cap \ph(M)) =  U^3_i$.  Let $p \in M$ with $\ph(p)\in \p(\til{V}^3_i) \cap \ph(M)$. We find $j \in I$ with $p \in U^3_j$  more over we find $i \in I$ and $x\in \til{V}_i^3$ with  $\ph(p)\sim x$. Hence $\ph_j(p)\sim x$. Therefore $\ps_{j,i}(\ph_j(p)) =x$ and so $\ph_i(p)=x$. We conclude $p \in U^3_i$.  Now let $p \in U^3_i$. Then $\ph(p) = \pi(\ph_i\inv(p)) \in \pi(\til{V}_i^3)$, because $\ph_i\inv(p)\in V^3_i$. Now we show $\ps_i\ci \ph|_{\ph\inv (\pi(\til{V}_i^3)\cap \ph(M))} = \ph_i$. Let $p \in \ph\inv (\pi(\til{V}_i^3)\cap \ph(M)) = U_i^3$. Then $\ps_i \ci \ph(p) = \ps_i(\pi(\ph_i(p))) = \ph_i(p)$.
\end{proof}

\end{document}